\documentclass{amsart}
\usepackage{array}
\newcolumntype{P}[1]{>{\raggedright\let\newline\\\arraybackslash\hspace{0pt}}m{#1}}
\usepackage{graphicx,verbatim, amsmath, amssymb, amsthm, amsfonts, epsfig, amsxtra,ifthen,mathtools,
caption,enumitem,hhline,bbm,capt-of,longtable}	
\usepackage[bookmarks=true, bookmarksopen=false,%
    colorlinks=true,%
    linkcolor=darkblue,%
    citecolor=darkblue,%
    filecolor=darkblue,%
    menucolor=darkblue,%
    linktoc=all,%
    urlcolor=darkblue
]{hyperref}
\usepackage{xcolor}
\definecolor{darkblue}{cmyk}{1,0.3,0,0.1}  
\DeclareFontFamily{U}{mathx}{\hyphenchar\font45}
\DeclareFontShape{U}{mathx}{m}{n}{<-> mathx10}{}
\DeclareSymbolFont{mathx}{U}{mathx}{m}{n}
\DeclareMathAccent{\widebar}{0}{mathx}{"73}

\newtheorem{proposition}{Proposition}[section]
\newtheorem{theorem}[proposition]{Theorem}
\newtheorem{corollary}[proposition]{Corollary}
\newtheorem{lemma}[proposition]{Lemma}

\theoremstyle{definition}

\theoremstyle{remark}
\newtheorem{remark}[proposition]{Remark}

\numberwithin{equation}{section}

\newcommand{\newword}[1]{\textbf{\emph{#1}}}

\newcommand{\integers}{\mathbb Z}

\newcommand{\reals}{\mathbb R}

\newcommand{\ep}{\varepsilon}
\newcommand{\thet}{\vartheta}

\newcommand{\cut}{\operatorname{cut}}

\newcommand{\sgn}{\operatorname{sgn}}

\newcommand{\gFan}{\g\!\operatorname{Fan}}

\newcommand{\set}[1]{{\lbrace #1 \rbrace}}
\newcommand{\sett}[1]{{\bigl\lbrace #1 \bigr\rbrace}}

\newcommand{\br}[1]{{\langle #1 \rangle}}

\newcommand{\brrr}[1]{{\Bigl\langle #1 \Bigr\rangle}}

\newcommand{\A}{{\mathcal A}}

\newcommand{\F}{{\mathcal F}}
\newcommand{\D}{{\mathfrak D}}

\newcommand{\pp}{{\mathbf p}}

\newcommand{\ck}{\spcheck}

\renewcommand{\th}{^\text{th}}

\newcommand{\FF}{\mathbb{F}}

\newcommand{\PP}{\mathbb{P}}

\newcommand{\g}{\mathbf{g}}

\renewcommand{\k}{\mathbbm{k}}
\newcommand{\kk}{{\boldsymbol{k}}}

\newcommand{\x}{\mathbf{x}}

\renewcommand{\v}{\mathbf{v}}

\newcommand{\tB}{{\tilde{B}}}
\newcommand{\T}{\mathcal{T}}

\newcommand{\I}{\mathcal{I}}

\newcommand{\Clear}{\operatorname{Clear}}

\newcommand{\Scat}{\operatorname{Scat}}
\newcommand{\Fan}{\operatorname{Fan}}
\newcommand{\ScatFan}{\operatorname{ScatFan}}

\newcommand{\re}{\mathrm{re}}

\renewcommand{\d}{{\mathfrak d}}
\newcommand{\seg}[1]{\overline{#1}}
\newcommand{\hy}{\hat{y}}

\newcommand{\fin}{\mathrm{fin}}
\renewcommand{\th}{^\text{th}}

\newcommand{\ap}{a^{\circ}}
\newcommand{\anp}{a^{\mathrm{n}\circ}}

\newcommand{\RSChar}{\Phi}
\newcommand{\RS}{\RSChar}

\newcommand{\RSpos}{\RS^+}

\newcommand{\RSfin}{\RS_\fin}

\newcommand{\SimplesChar}{\Pi}
\newcommand{\Simples}{\SimplesChar}

\newcommand{\SimplesTChar}{\Xi}
\newcommand{\SimplesT}[1]{\SimplesTChar^{#1}}

\newcommand{\SuppT}{\operatorname{Supp}_\SimplesTChar}

\newcommand{\AP}[1]{\RS_{#1}}
\newcommand{\APre}[1]{\AP{#1}^\re}
\newcommand{\APTChar}{\Lambda}
\newcommand{\APT}[1]{\APTChar_{#1}}      
\newcommand{\APTre}[1]{\APT{#1}^\re}

\allowdisplaybreaks

\newcommand{\const}{c}
\newcommand{\bl}{\mathfrak{s}}
\newcommand{\mapchar}{t}

\newcommand{\margincolor}{red}      
\definecolor{darkgreen}{rgb}{0,0.7,0}

\newcounter{margincounter}
\newcommand{\marginnum}{
\ifnum\value{margincounter}<10
\textcolor{\margincolor}{\begin{picture}(0,0)\put(2.2,2.4){\circle{9}}\end{picture}\footnotesize\arabic{margincounter}}
\else\ifnum\value{margincounter}<100
\textcolor{\margincolor}{\begin{picture}(0,0)\put(4.256,2.5){\circle{11}}\end{picture}\footnotesize\arabic{margincounter}}
\else
\textcolor{\margincolor}{\begin{picture}(0,0)\put(6.8,2.5){\circle{14}}\end{picture}\footnotesize\arabic{margincounter}}
\fi\fi
}

\makeatletter
\newcommand{\switchmargin}{
\if@reversemargin
\normalmarginpar
\else
\reversemarginpar
\fi
}
\makeatother

\author{Nathan Reading}
\author{Salvatore Stella}
\title{Theta functions in acyclic affine type}
\address[N. Reading]{Department of Mathematics, North Carolina State University, Raleigh, NC, USA}
\address[S. Stella]{Dipartimento di Ingegneria e Scienze dell'Informazione e Matematica, Università degli Studi dell'Aquila, IT}
\thanks{Nathan Reading was partially supported by the National Science Foundation (DMS-2054489) and by the Simons Foundation (581608).
Salvatore Stella was partially supported by PRIN 20223FEA2E - PE1 and by INdAM - GNSAGA}
\date{February 52, 2026}

\begin{document}

\begin{abstract}
We characterize the theta functions for vectors in the imaginary wall in a cluster algebra of acyclic affine type and compute some of their structure constants.
One of the structure constant computations can be interpreted as new ``imaginary'' exchange relations among cluster variables.
We show that theta functions in the imaginary wall span a subalgebra of the cluster algebra that we call the imaginary subalgebra, which decomposes as a tensor product of tube subalgebras that are generalized cluster algebras of type~C.
Our proofs exploit mutation-symmetries of the exchange matrix, an earlier characterization of dominance regions in affine type, and combinatorial models for cluster scattering diagrams of acyclic affine type.
\end{abstract}

\maketitle

\vspace{-10pt}

\setcounter{tocdepth}{2}
{\small\tableofcontents}

\section{Introduction}\label{intro}  
Cluster algebras of finite type (i.e.\ with finitely many seeds) can be understood through combinatorial models arising from the associated finite Coxeter groups/root systems, including the almost positive roots model \cite{ga,MRZ}, sortable elements, and Cambrian lattices/fans \cite{cambrian,sortable,camb_fan}.
This paper is the culmination of an extended effort to understand cluster algebras of affine type just as well by creating and exploiting the analogous tools.

A cluster algebra is of affine type if it has a seed whose exchange matrix is acyclic and is obtained from a Cartan matrix of affine type.
Equivalently when the exchange matrix is larger than $2\times2$, the cluster algebra is of affine type if it has infinitely many seeds but the number of seeds grows linearly as a function of the distance from the initial seed \cite{FeShThTu12,Seven}.
Previous work realized the affine $\g$-vector fan as a doubled Cambrian fan~\cite{afframe}, developed an affine almost positive roots model and extended the fan structure beyond the $\g$-vector fan to fill the whole space~\cite{affdenom}, explicitly constructed the cluster scattering diagram (in the sense of~\cite{GHKK}) and the corresponding cluster scattering fan~\cite{affscat}, and explicitly characterized~\cite{affdomreg} the dominance regions (in the sense of~\cite{RSDom}), which mediate basis changes between nice bases of the cluster algebra~\cite{FanQin}.

In this paper, we apply these combinatorial tools to study theta functions and their structure constants in the acyclic affine case.
Theta functions provide a way to extend the cluster monomials to an additive bases for the whole cluster algebra (under some conditions that we will not make explicit here, because they are known to hold in the acyclic affine case).
Attached to each integer point in the ambient space of the cluster scattering diagram, there is a theta function, defined as a sum of monomials indexed by certain piecewise linear curves called broken lines.
Structure constants for multiplying theta functions are determined by certain pairs of broken lines, and many of the results given here amount to computations of structure constants. 

In specific cases, constructing the cluster scattering diagram explicitly, constructing the relevant broken lines, and ruling out other candidates for broken lines may all be prohibitively difficult.
In affine type we can use the explicit construction of the cluster scattering diagram of acyclic affine type from~\cite{affscat} and the associated combinatorial tools from \cite{afframe,affdenom} to construct the relevant broken lines.
The most technically challenging issue remaining is how to rule out other candidates for broken lines.
For that, we use general tools developed in~\cite{canonical} and affine-type tools developed in this paper.
One tool from~\cite{canonical} rules out broken lines based on mutation-symmetries (sequences of mutations that preserve the exchange matrix).
Another tool from~\cite{canonical} uses dominance regions to vastly shrink the set of structure constants that might be nonzero.
Thus the dominance region computations from~\cite{affdomreg} are crucial to the paper.

The data that determine the theta functions includes an exchange matrix $B$, extended to a matrix~$\tB$ by adding additional entries below $B$. 
The definition in~\cite{GHKK} of theta functions requires the columns of $\tB$ to be linearly independent, but for some $B$, including $B$ of affine type, there is a reasonable notion of theta functions with no conditions on $\tB$, pointed out in~\cite{canonical}, and our results apply in that generality.

The $\g$-vector fan in acyclic affine type covers all of the ambient space except for the interior of a codimension-$1$ cone $\d_\infty$ that we call the imaginary wall because it is a wall in the cluster scattering diagram.
The imaginary wall is the star, in the cluster scattering fan, of a ray called the imaginary ray.
The cones of the cluster scattering fan that contain the imaginary ray are called imaginary cones.

We now provide an informal description of our main results.

\noindent
\begin{itemize}
\item
\textbf{Theorem~\ref{thet xi}} is a formula for the theta function for the primitive integer vector in the imaginary ray in terms of theta functions associated to vectors in the relative boundary of the imaginary wall (and thus essentially in terms of cluster variables).
\item
\textbf{Theorem~\ref{delta cheby}} is a Chebyshev-recursion formula for the theta function for a non-primitive integer vector in the imaginary ray in terms of shorter vectors in that ray.
In light of work of Mandel and Qin~\cite{MandelQin}, this formula, in affine types $\tilde A$ and $\tilde D$, is a special case of a formula due to Musiker, Schiffler, and Williams~\cite{MSW2}.
Here, we show that the recursion applies to all acyclic cluster algebras of affine type and give a proof using broken lines.
\item
\textbf{Theorem~\ref{imag general}} generalizes Theorem~\ref{delta cheby} by describing the product of an arbitrary pair of theta functions for integer vectors in the imaginary ray.
\item
Each vector $\lambda$ is in the relative interior of a unique cone $C$ of the cluster scattering fan.
\textbf{Theorem~\ref{expansion prod}} shows that the theta function for $\lambda$ is the product of theta functions of vectors in the rays of~$C$.
\item
We consider theta functions for vectors in the $\g$-vector fan to be ``known'', because they are essentially cluster monomials.  
The theorems already mentioned amount to a determination of all remaining theta functions, namely those associated to points in the relative interior of the imaginary wall, in terms of known theta functions.
\item
Some rays in the relative boundary of the imaginary wall are exchangeable in an ``imaginary'' sense (by exchanges among maximal imaginary cones) even though they are not exchangeable in the usual (``real'') sense (by exchanges among $\g$-vector cones).
\textbf{Theorem~\ref{im exch}} establishes imaginary exchange relations among the corresponding theta functions.
This relation is central to the main result of \cite{PlamondonStella} on infinite friezes of affine type.
\item
\textbf{Theorem~\ref{subalgebra}} proves the existence of an ``imaginary subalgebra'' of the cluster algebra, the linear span, over the ground ring, of the theta functions for vectors in the imaginary wall. 
\item
The imaginary wall contains subfans that are analogous to the ``tubes'' that occur in the representation theory of tame hereditary algebras.
\textbf{Theorem~\ref{tube subalgebra}} proves the existence of a ``tube subalgebra'' for each tube.
\item
  \textbf{Theorem~\ref{gen clus alg}} proves that, under a certain nondegeneracy condition on the columns added below $B$ to form the extended exchange matrix $\tB$ (satisfied, for example, by principal coefficients), each tube subalgebra is a generalized cluster algebra of finite type C.
The appearance of type-C-Catalan combinatorics in Coxeter-Catalan combinatorics of affine type has been noted in various settings, including \cite{affncA,McSul,affncD,affdomreg,affdenom}, and our results develop the cluster-algebraic manifestation of the phenomenon.
\item
  \textbf{Theorem~\ref{gen clus alg almost}} shows that every imaginary subalgebra is obtained from a generalized cluster algebra by a specialization that depends on the choice of extension of~$B$.
\item
\textbf{Corollary~\ref{cf cor}} says that in the coefficient-free case, the tube subalgebras and imaginary subalgebras are all generalized cluster algebras.
\item
Our results also imply that if we allow a broader ``non-normalized'' definition of a generalized cluster algebra, then every tube subalgebra and imaginary subalgebra is isomorphic to a generalized cluster algebra, with no condition on coefficients.
\end{itemize}

Section~\ref{back sec} contains the background that allows us to state our results precisely in Section~\ref{res sec}.
We gather tools in Section~\ref{tools sec} before giving the proofs in Section~\ref{proofs sec}.

\section{Background}\label{back sec} 

\subsection{Basic background}\label{basic sec}
We assume the basic definitions surrounding cluster algebras as in~\cite{ca4} and the most basic notions of scattering diagrams from~\cite{GHKK}.
Our purpose in this section is to describe how our conventions relate to those of~\cite{GHKK}.
We work in the setting of roots and weights, with a global transpose relative to~\cite{GHKK}.
We also leave out, from the scattering diagram, the extra dimensions related to frozen variables.
(See \cite[Remarks~2.12--2.13]{scatfan}.)
For more background with mostly the same notational conventions, see \cite{scatcomb,affscat}.

We begin with an $n\times n$ \newword{exchange matrix} (that is, a skew-symmetrizable integer matrix) $B=[b_{ij}]$, extended to an \newword{extended exchange matrix} $\tB$ \emph{by adding additional rows}.
The fact that $B$ is skew-symmetrizable means that there exist positive constants $d_1,\ldots,d_n$ such that $d_ib_{ij}=-d_jb_{ji}$ for all $i,j\in\set{1,\ldots,n}$.
We will choose these skew-symmetrizing constants of $B$ in a special way:  Such that $d_i^{-1}$ is an integer for each~$i$ and $\gcd(d_i^{-1}:i\in\set{1,\ldots,n})=1$.

The construction of scattering diagrams and theta functions in \cite{GHKK} requires that the columns of $\tB$ be linearly independent, but we will be able to relax this requirement for theta functions in affine type, as explained in Section~\ref{coeffs sec}.

We take $V$ to be an $n$-dimensional real vector space with a basis $\alpha_1,\ldots,\alpha_n$.
We define a second basis $\alpha_1\ck,\ldots,\alpha_n\ck$ of $V$ by $\alpha_i\ck=d_i^{-1}\alpha_i$.
Let $Q$ be the lattice generated by $\alpha_1,\ldots,\alpha_n$ and write $Q^{\ge0}$ for the subset of $Q$ consisting of nonzero vectors that are nonnegative integer combinations of $\alpha_1,\ldots,\alpha_n$.
We also write $Q^+$ for $Q^{\ge0}\setminus\set{0}$.
Let $Q\ck$ be the lattice generated by $\alpha_1\ck,\ldots,\alpha_n\ck$.
The special choice of skew-symmetrizing constants $d_1,\ldots,d_n$ ensures that $Q\ck$ is a sublattice of $Q$, of finite index.
An element $\beta$ of a lattice $L$ is \newword{primitive} if there do not exist $\beta'\in L$ and an integer $k>1$ with $\beta=k\beta'$.
Given a primitive element $\beta$ of $Q$, let $\beta\ck$ be the primitive element of $Q\ck$ such that $\beta\ck=k\beta$ for some $k\ge1$.

We write $V^*$ for the dual space to $V$, writing $\br{\,\cdot\,,\cdot\,}:V^*\times V\to\reals$ for the usual pairing.
Let $\rho_1,\ldots,\rho_n$ be the basis of $V^*$ with $\br{\rho_i,\alpha_j\ck}=\delta_{ij}$ (Kronecker delta).
Also define a basis $\rho_1\ck,\ldots,\rho_n\ck$ of $V^*$ with $\br{\rho_i\ck,\alpha_j}=\delta_{ij}$.
Let $P$ be the lattice generated by $\rho_1,\ldots,\rho_n$ and let $P\ck$ be the lattice generated by $\rho_1\ck,\ldots,\rho_n\ck$.
The lattice $P\ck$ is a finite-index sublattice of $P$.

From the exchange matrix $B$, we define a Cartan matrix $A=[a_{ij}]$ by setting $a_{ii}=2$ for $i\in\set{1,\ldots n}$ and setting $a_{ij}=-|b_{ij}|$ for $i,j\in\set{1,\ldots,n}$ with $i\neq j$.
We see that $d_ia_{ij}=d_ja_{ji}$ for all $i,j\in\set{1,\ldots,n}$, so that $A$ is symmetrizable.
The~$\alpha_i$ are the \newword{simple roots} of the root system $\RS$ associated to $A$, and the~$\rho_i$ are the \newword{fundamental weights}.
Also, the $\alpha_i\ck$ are the \newword{simple co-roots} and the $\rho_i\ck$ are the \newword{fundamental co-weights}.
The \newword{positive roots} $\RSpos$ are the roots in $\RS$ that are are nonnegative linear combinations of simple roots.
The Cartan matrix defines a symmetric bilinear form $K$ on $V$ by $K(\alpha\ck_i, \alpha_j)=a_{ij}$. 
In other words,~$K$ is computed as a row of simple co-root coordinates times~$A$ times a column of simple root coordinates. 
The lattices $Q$, $Q\ck$, $P$ and $P\ck$ are the \newword{root lattice}, \newword{co-root lattice}, \newword{weight lattice}, and \newword{co-weight lattice}, respectively.
(The standard conventions of Lie theory place roots and weights in the same space and co-roots and co-weights in the same space.
The convention here is much more compatible with the constructions in~\cite{GHKK}.
See \cite[Remark~2.1]{scatcomb} or \cite[Remark~2.1]{affdenom}.)

Of primary importance in this paper is the notion of matrix mutation.
Following the definition in \cite[(2.5)]{ca4} and writing $[x]_+$ for $\max(0,x)$, for each $k=1,\ldots,n$, the \newword{mutation} of $\tB$ in direction $k$ is the matrix $\mu_k(\tB)=[b'_{ij}]$ given by
\begin{equation}\label{b mut}
b_{ij}'=\left\lbrace\!\!\begin{array}{ll}
-b_{ij}&\mbox{if }i=k\mbox{ or }j=k;\\
b_{ij}+[-b_{ik}]_+b_{kj}+b_{ik}[b_{kj}]_+&\mbox{otherwise.}
\end{array}\right.
\end{equation}

The notation $\kk=k_q,\ldots,k_1$ stands for a sequence of indices in $\set{1,\ldots,n}$, so that $\mu_\kk$ means $\mu_{k_q}\circ\mu_{k_{q-1}}\circ\cdots\circ\mu_{k_1}$.

When we mutate $\tB$, passing from $\tB$ to $\mu_\kk(\tB)=[b'_{ij}]$ and thus from $B$ to $\mu_\kk(B)$, the $d_1,\ldots,d_n$ are skew-symmetrizing constants for $\mu_\kk(B)$ as well.  
(That is, $d_ib'_{ij}=-d_jb'_{ji}$.)
As we mutate, we fix the spaces $V$ and $V^*$ and the lattices $Q$, $Q\ck$, $P$, and $P\ck$, but in marked contrast to the approach of \cite{GHKK}, we keep the same preferred bases $\alpha_1,\ldots,\alpha_n$, $\alpha_1\ck,\ldots,\alpha_n\ck$, $\rho_1,\ldots,\rho_n$, and $\rho_1\ck,\ldots,\rho_n\ck$.

However, the Cartan matrix associated to $B$ is not typically the same as the Cartan matrix associated to $\mu_\kk(B)$.
Thus, when we mutate, we change the Cartan matrix and thus change the root system, but the new root system is constructed with the same simple roots, fundamental weights, etc.

Matrix mutation leads to a family of piecewise-linear maps on $V^*$ known as mutation maps, and then to a fan structure on~$V^*$.
In this paper, all mutation maps are relative to the transpose $B^T$ of the relevant exchange matrix $B$, so we define the \newword{mutation map} ${\eta_\kk^{B^T}:V^*\to V^*}$.
To compute $\eta_\kk^{B^T}(v)$ for $v=\sum_{i=1}^n\ell_i\rho_i\in V^*$, we extend $B^T$ by appending a single row $(\ell_i:i=1,\ldots,n)$ below $B^T$, apply $\mu_\kk$, and read off the row $(\ell'_i:i=1,\ldots,n)$ below $\mu_\kk(B^T)$ in the mutated matrix.
Then $\eta_\kk^{B^T}(v)=\sum_{i=1}^n\ell'_i\rho_i\in V^*$.
(Equivalently, we place the entries $\ell_i$ to the right of~$B$ as an additional \emph{column}, mutate, and read off the resulting column.)
The map $\eta_\kk^{B^T}:V^*\to V^*$ is a piecewise linear homeomorphism.

Two vectors $v$ and $v'$ in $V^*$ are \newword{$B^T$-equivalent} if, for all sequences $\kk$ of indices, writing $\eta_\kk^{B^T}(v)=\sum_{i=1}^n\ell_i\rho_i$ and $\eta_\kk^{B^T}(v')=\sum_{i=1}^n\ell'_i\rho_i$, the entries $\ell_i$ and $\ell'_i$ have \emph{strictly} the same sign ($+$, $-$, or $0$) for all~$i$.
A \newword{$B^T$-cone} is the closure of a $B^T$-equivalence class, and is a closed convex cone.
The set of all $B^T$-cones and their faces is a complete fan called the \newword{mutation fan} $\F_{B^T}$.

Following \cite[Section~7]{ca4}, let $x_1,\ldots,x_n$ be indeterminates (corresponding to the rows of $B$).
Let $u_1,\ldots,u_m$ be indeterminates (the \newword{frozen variables} or \newword{tropical variables}) corresponding to the rows of $\tB$ below row $n$.
For $j=1,\ldots,n$, let $y_j=\prod_{i=1}^mu_i^{b_{n+i,j}}$ (the Laurent monomial in the $u_i$ described by the bottom part of the $j\th$ column of~$\tB$).
For $j=1,\ldots,n$, we also define $\hy_j=y_j\prod_{i=1}^nx_i^{b_{ij}}$ (the Laurent monomial in the $x_i$ and $u_i$ described by the entire $j\th$ column of~$\tB$).
In this paper, the indeterminates $u_i$ will often disappear in favor of the monomials~$y_j$ and~$\hy_j$.
For ${\lambda=\sum_{i=1}^n\ell_i\rho_i\in P}$ and $\beta=\sum_{i=1}^nm_i\alpha_i\in Q$, we write $x^\lambda y^\beta$ for the Laurent monomial $x_1^{\ell_1}\cdots x_n^{\ell_n}y_1^{m_1}\cdots y_n^{m_n}$, and similarly we write $x^\lambda \hy^\beta$ for $x_1^{\ell_1}\cdots x_n^{\ell_n}\hy_1^{m_1}\cdots\hy_n^{m_n}$.
Let $\k$ be a field of characteristic zero, let $\k[y]$ be the ring of polynomials in the $y_i$, and let $\k[[\hy]]$ be the ring of formal power series in the~$\hy_i$.

We say that $B$ is \newword{acyclic} if, possibly after reindexing, it has the property that $b_{ij}>0$ implies $i<j$.
We call $\tB$ acyclic whenever $B$ is acyclic.
When $B$ is acyclic, let $c$ be the element in the associated affine Weyl group obtained by multiplying the simple reflections $\set{s_1,\ldots,s_n}$ in an order such that if $s_i$ precedes $s_j$ then $b_{ij}\ge0$.
We define a skew-symmetric bilinear form $\omega_c$ by the formula $\omega_c(\alpha_i\ck,\alpha_j)=b_{ij}$.
In other words, $\omega_c$ is computed as a row of simple co-root coordinates times~$B$ times a column of simple root coordinates. 
For that reason, for $\beta\in Q$, the monomial~$\hy^\beta$ is $x^\lambda y^\beta$ where $\lambda=\omega_c(\,\cdot\,,\beta)\in V^*$.
We also define a bilinear form $E_c$~by
\[
E_c(\alpha\ck_i,\alpha_j)=\begin{cases}
1&\text{if }i=j,\text{ or}\\
[b_{ij}]_-&\text{otherwise},
\end{cases}
\]
where $[x]_-$ means $\min(0,x)$.  
We reuse the notation $E_c$ for the matrix that corresponds to this form, again with simple co-root coordinates on the left and simple root coordinates on the right.
Thus $E_c$ is obtained from $B$ by changing all positive entries to $0$ and then changing the $0$'s on the diagonal to $1$.

\subsection{Scattering diagrams and theta functions}\label{scat sec}
We will construct scattering diagrams with the transposed matrix $\tB^T$ corresponding to the first $n$ rows of the matrix called $[\ep_{ij}]$ in \cite{GHKK}.
This global transpose relative to~\cite{GHKK} makes our construction compatible with the conventions of \cite{ca4}, as explained in the last paragraph of \cite[Section~1]{scatcomb}.
For more about how to translate between the present conventions and the conventions of \cite{GHKK} and \cite{scatfan,canonical}, see \cite[Table~2]{scatcomb}.

A \newword{wall} is a pair $(\d,f_\d)$, where $\d$ is a codimension-$1$ cone $\d$ in $V^*$ and $f_\d$ is in~$\k[[\hy]]$.
The cone $\d$ is orthogonal to a vector $\beta\in Q^+$ that is primitive in $Q$, and the scattering term $f_\d$ is a univariate power series in $\hy^\beta$ with constant term~$1$.
We sometimes write $(\d,f_\d(\hy^\beta))$ as a way of naming $\beta$ explicitly.
A \newword{scattering diagram} is a collection $\D$ of walls, satisfying a finiteness condition that amounts to requiring that all relevant computations are valid as limits of formal power series.

The \newword{transposed cluster scattering diagram} $\Scat^T(\tB)$ associated to $\tB$ is the unique consistent scattering diagram consisting of walls $\sett{(\alpha_i^\perp,1+\hy_i):i=1,\ldots,n}$ together with additional walls, all of which are outgoing.
(The word ``transposed'' and the superscript $T$ serve as a reminder of the global transpose relative to \cite{GHKK}.)
We will not need the definitions of \emph{consistent} and \emph{outgoing} here, but they can be found in \cite[Section~1.1]{GHKK} or, with conventions similar to the conventions of this paper, in \cite[Section~3.3]{affscat} or \cite[Section~2]{scatcomb}.
Instead, we will rely on the explicit construction of the transposed cluster scattering diagram in acyclic affine type, from~\cite{affscat}, which is reviewed in Section~\ref{aff back sec}.

Any consistent scattering diagram cuts out a complete fan in $V^*$ called the \newword{scattering fan} \cite[Theorem~3.1]{scatfan}.
We will not need the precise definition here, but instead it will suffice to understand that the walls of the scattering diagram cut~$V^*$ into cones.
We are particularly interested in the scattering fan for the transposed cluster scattering diagram $\Scat^T(\tB)$, and we write $\ScatFan^T(B)$ and call this the \newword{transposed cluster scattering fan}.
In general, $\ScatFan^T(B)$ is a refinement of the mutation fan $\F_{B^T}$ defined in Section~\ref{basic sec} (see \cite[Theorem~4.10]{scatfan}), although in the affine case that is the subject of the paper, the two coincide \mbox{\cite[Theorem~2.10]{affscat}}.

We now define theta functions, reinterpreting the definition from \cite{GHKK} as in \cite{scatcomb}.
Theta functions are defined in terms of the cluster scattering diagram, whose construction requires $\tB$ to have linearly independent columns, but in Section~\ref{coeffs sec}, we explain how theta functions can be defined with no conditions on $\tB$.
There is a theta function for each pair $(\chi,\lambda)\in V^*\times P$, with $\chi$ not contained in any hyperplane $\beta^\perp$ for $\beta\in Q$.  
By convention, $\thet_{\chi,0}=1$.
For $\lambda\neq0$, $\thet_{\chi,\lambda}$ is defined as follows.

A \newword{broken line} in $\Scat^T(\tB)$, for $\lambda$, with endpoint $\chi$, is a piecewise linear path~$\bl:(-\infty,0]\to V^*$ (with finitely many of domains of linearity), together with an assignment of a monomial $\const_Lx^{\lambda_L}y^{\beta_L}$ (with $\const_L\in\k$ and $(\lambda_L,\beta_L)\in P\oplus Q$) to each domain $L$ of linearity of $\bl$, satisfying the following five conditions.
\begin{enumerate}[label=\rm(\roman*), ref=(\roman*)]
\item \label{brok endpoint}
$\bl(0)=\chi$.
\item \label{brok generic}
$\bl$ does not intersect the relative boundary of any wall of $\Scat^T(\tB)$ and does not pass through any intersection of walls of $\Scat^T(\tB)$ (unless the walls are contained in the same hyperplane). 
\item \label{brok slope}
On each domain $L$ of linearity, $\lambda_L$ is $-\bl'$ (where $\bl'$ is the derivative of $\bl$).
\item \label{brok unbounded}
On the unbounded domain $L$ of linearity, $\const_Lx^{\lambda_L}y^{\beta_L}$ is $x^\lambda$.
\item \label{brok change slope}
Suppose $L_1$ and $L_2$ are domains of linearity, listed in order of increasing parameter in $(-\infty,0]$ and intersecting at a point $p$.
Condition \ref{brok generic} implies that there exists $\beta\ck$ in $Q\ck$ such that every wall containing $p$ is in $\beta^\perp$.
Taking~$\beta\ck$ primitive in $Q\ck$ and ${\br{\lambda_{L_1},\beta\ck}>0}$, let $f$ be the product of the $f_\d$ for all walls $(\d,f_\d)$ with $p\in\d$.
Then $\const_{L_2}x^{\lambda_{L_2}}y^{\beta_{L_2}}$ is $\const_{L_1}x^{\lambda_{L_1}}y^{\beta_{L_1}}$ times a term in $f^{\br{\lambda_{L_1},\beta\ck}}$.
(We say that the broken line $\bl$ \newword{bends at $p$} and \newword{picks up a term in $f^{\br{\lambda_{L_1},\beta\ck}}$}.)
\end{enumerate}

We have deviated from the usual notation $\gamma$ for a broken lines because Greek letters are heavily used in this paper to represent vectors in $V$ and $V^*$; we instead adopted the letter~$\bl$, taking inspiration from the Italian word ``spezzata'' which indicates a piecewise linear curve. 

Writing $\const_\bl x^{\lambda_\bl}y^{\beta_\bl}$ for the monomial on the last domain of linearity of $\bl$, the \newword{theta function} $\thet_{\chi,\lambda}$ is $\sum \const_\bl x^{\lambda_\bl}y^{\beta_\bl}\in x^\lambda \k[[\hy]]$ over broken lines for $\lambda$ with endpoint~$\chi$.

We are concerned with theta functions such that $\chi$ is in the interior of the positive orthant $D=\bigcap_{i=1}^n\sett{v\in V^*: \br{v,\alpha_i}\ge 0}$.
Such a theta function does not depend on the exact choice of $\chi$ in the interior of $D$, so we write $\thet_\lambda$ for $\thet_{\chi,\lambda}$ with $\chi$ in the interior of $D$.
Each $\thet_\lambda$ is $x^\lambda\cdot F_\lambda$ for $F_\lambda\in\k[[\hy]]$.

\begin{remark}\label{useless dimensions}
As already mentioned, our definition of (cluster) scattering diagrams follows \cite{scatfan,scatcomb,affscat,canonical} in leaving out extra dimensions.
The definition of broken lines and theta functions has been adjusted accordingly.
This change in convention adds no essential complications and removes no richness from the construction, but simplifies the story considerably.
(See \cite[Remark~2.12]{scatfan}, \cite[Remark~5.1]{scatfan}, and \cite[Remark~2.3]{canonical}.) 
\end{remark}

We now explain results of \cite{GHKK} on structure constants for multiplication of theta functions $\thet_\lambda$.
These results are rephrased in our setting where unnecessary dimensions have been removed.
For more details (in the untransposed setting), see \cite[Section~2.4]{canonical}.

Take $p_1,p_2,\lambda\in P$ and take $\chi\in V^*$ disjoint from all hyperplanes $\beta^\perp$ for $\beta\in Q$.
Define
\[a_\chi(p_1,p_2,\lambda)=\sum_{(\bl_1,\bl_2)}\const_{\bl_1}\const_{\bl_2} y^{\beta_{\bl_1}+\beta_{\bl_2}}.\]
The sum is over pairs $(\bl_1,\bl_2)$ of broken lines for $p_1$ and $p_2$ respectively with $\lambda_{\bl_1}+\lambda_{\bl_2}=\lambda$, both having endpoint $\chi$.
By the definition of broken lines and by \cite[Definition-Lemma~6.2]{GHKK}, $a_\chi(p_1,p_2,\lambda)$ is well defined as a formal power series in $y_1,\ldots,y_n$ with nonnegative integer coefficients.
For a sequence of points $\chi\in V^*$ approaching~$\lambda$, with each $\chi$ disjoint from all hyperplanes $\beta^\perp$ for $\beta\in Q$, define
\[a(p_1,p_2,\lambda)=\lim_{\chi\to\lambda}a_\chi(p_1,p_2,\lambda).\]
This is a limit of formal power series.
The following is part of \cite[Proposition~6.4]{GHKK} specialized to the present setting.
In \cite{GHKK}, the proposition is stated with the hypotheses of principal coefficients.
The following more general version is \cite[Proposition~2.9]{canonical} and we will extend the proposition further as Proposition~\ref{struct plus plus}.

\begin{proposition}\label{struct}
Suppose $\tB$ has linearly independent columns and $p_1,p_2\in P$.
Then for every $\lambda\in P$, the formal power series $a(p_1,p_2,\lambda)$ does not depend on the sequence of points $\chi$ approaching $\lambda$.
Furthermore,
\begin{equation}\label{struct eq}
\thet_{p_1}\cdot\thet_{p_2}=\sum_{\lambda\in P}a(p_1,p_2,\lambda)\,\thet_\lambda.
\end{equation}
\end{proposition}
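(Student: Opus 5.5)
This proposition is a translation of \cite[Proposition~6.4]{GHKK} (in the form \cite[Proposition~2.9]{canonical}) into the present conventions. So the plan is to transport the GHKK argument rather than reprove it from scratch, checking that each ingredient survives the global transpose and the removal of the frozen dimensions.

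First, pass to the GHKK setting. Because the columns of $\tB$ are linearly independent, we can build the full cluster scattering diagram of \cite{GHKK} in the larger space that includes the frozen directions. Its walls are the walls of $\Scat^T(\tB)$ thickened by the frozen directions, with the same scattering terms. The dictionary is \cite[Table~2]{scatcomb} together with \cite[Remark~2.12]{scatfan}. Under this dictionary:
\begin{itemize}
\item broken lines in the big space with endpoint $(\chi,0)$ project to broken lines in $\Scat^T(\tB)$ in the sense above;
\item each broken line in $\Scat^T(\tB)$ lifts uniquely once the initial direction is fixed;
\item the frozen coordinates of the final exponent are recorded exactly by the monomial $y^{\beta_\bl}$.
\end{itemize}
Hence $\thet_{\chi,\lambda}$ and $a_\chi(p_1,p_2,\lambda)$ agree with their GHKK counterparts, after specializing frozen exponents to powers of the $u_i$ via the $y_j$.

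Second, invoke \cite[Proposition~6.4]{GHKK} in the big space. That result gives two things:
\begin{itemize}
\item the structure constant defined by pairs of broken lines ending near $\lambda$ is independent of the generic endpoint $\chi$ approaching $\lambda$;
\item the product expansion \eqref{struct eq} holds.
\end{itemize}
The proof there rests on three facts. Theta functions for different generic endpoints are related by wall-crossing automorphisms (consistency of the scattering diagram). Expanding $\thet_{p_1}\thet_{p_2}$ at an endpoint $\chi$ near $\lambda$ and comparing the coefficient of $x^\lambda$, only the leading term of $\thet_{\chi,\lambda}$ contributes. Finally, an induction on the order in $\hy$ (or the finiteness of broken lines up to a given order) makes the limit well defined.

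Third, remove the principal-coefficients hypothesis, following \cite[Proposition~2.9]{canonical}. For arbitrary $\tB$ with independent columns, the needed positivity and finiteness properties are the ones used in \cite{GHKK}. Specifically, $f_\d$ are power series in $\hy^\beta$ with $\beta$ in a strictly convex cone, so the completions and the well-definedness from \cite[Definition-Lemma~6.2]{GHKK} hold verbatim.

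The main obstacle is the bookkeeping in the first step. In the transposed, dimension-reduced setting, the condition \ref{brok change slope} uses $\br{\lambda_{L_1},\beta\ck}$ with $\beta\ck$ primitive in $Q\ck$. We must verify that this exponent matches the GHKK exponent, which is computed with the pairing on the big lattice, and that the limit $\chi\to\lambda$ in $V^*$ corresponds to a limit in the big space. I would handle this by writing the GHKK bending exponent as $|\{\cdot,\cdot\}|$ in the skew form. I would then check, using the choice of $d_i$ with $d_i^{-1}$ integral and coprime, that it coincides with $\br{\lambda_{L_1},\beta\ck}$.
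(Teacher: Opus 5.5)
Your proposal is correct and takes the same approach as the paper. The paper gives no independent proof: it quotes this statement as part of \cite[Proposition~6.4]{GHKK}, which assumes principal coefficients, and uses \cite[Proposition~2.9]{canonical} for the extension to $\tB$ with linearly independent columns, translating conventions via the dictionary of \cite[Table~2]{scatcomb}.

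One correction to your bookkeeping step. The GHKK bending exponent is the pairing $\langle n',m\rangle$ with $n'$ primitive in $N^\circ$, not anything built from the skew form; the skew form only enters through the monomials $\hy^\beta$ in $f_\d$. So matching it with $\br{\lambda_{L_1},\beta\ck}$ is just the identification $N^\circ\leftrightarrow Q\ck$ and $M^\circ\leftrightarrow P$, which is made possible by choosing $d_i^{-1}$ integral and coprime.
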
  
The sum in Proposition~\ref{struct} may not be a finite sum, but in any case, it makes sense as a limit of formal power series, as explained in \cite[Section~2.4]{canonical}.
Moreover, in the cases covered in this paper, the sum is always finite, and furthermore, each $a(p_1,p_2,\lambda)$ is a polynomial rather than a formal power series, as we now explain.

Let $\Theta\subseteq P$ be the set of weights $\lambda\in P$ such that only finitely many broken lines appear in the definition of $\thet_\lambda$.
Thus for $\lambda\in\Theta$, the theta function $\thet_\lambda$ is a polynomial, rather than a formal power series.
Finally, \cite[Proposition~0.7]{GHKK} says, among other things, that if the set of $\g$-vectors of cluster variables associated to~$B$ is not contained in a halfspace, then $\Theta$ is all of $P$.
The following theorem is a part of \cite[Theorem~7.5]{GHKK}, but it has been rephrased to use the conventions of this paper.
\begin{theorem}\label{Theta facts}  
If $p_1p_2\in\Theta$ then the right side of \eqref{struct eq} has finitely many nonzero terms, each $a(p_1,p_2,\lambda)$ is a polynomial, and if $a(p_1,p_2,\lambda)\neq0$, then $\lambda\in\Theta$.
\end{theorem}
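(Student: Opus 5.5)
The plan is to deduce the statement from \cite[Theorem~7.5]{GHKK} by a dictionary argument rather than reprove it. The hypothesis should be read as $p_1,p_2\in\Theta$, and, as in Proposition~\ref{struct}, we assume that $\tB$ has linearly independent columns so that $\Scat^T(\tB)$ and the structure constants are defined.

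First I would fix the translation between the two settings, using \cite[Table~2]{scatcomb} and \cite[Section~2.4]{canonical}:
\begin{itemize}
\item the global transpose replaces the GHKK matrix $[\ep_{ij}]$ by our $\tB^T$;
\item GHKK's $M^\circ$ and $N$ become our $P$ and $Q$;
\item the monomials $z^{m}$ become $x^\lambda y^\beta$;
\item the extra frozen dimensions are dropped, as in Remark~\ref{useless dimensions} and \cite[Remark~2.12]{scatfan}.
\end{itemize}
The key check is that broken lines in $\Scat^T(\tB)$, as defined by conditions (i)--(v), correspond bijectively to GHKK broken lines in the full-dimensional scattering diagram. The correspondence should preserve coefficients and final monomials. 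It follows from the fact that the full diagram is obtained from ours by pulling walls back along the projection that forgets frozen directions.

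Granting this correspondence, three things transport directly:
\begin{itemize}
\item our $\thet_\lambda$ and $a_\chi(p_1,p_2,\lambda)$ are the images of the GHKK theta functions and structure constants;
\item our $\Theta$ is the image of GHKK's set $\Theta$;
\item the limit defining $a(p_1,p_2,\lambda)$ matches GHKK's choice of a basepoint near $\lambda$.
\end{itemize}
Then \cite[Theorem~7.5]{GHKK} gives the three conclusions in their language:
\begin{itemize}
\item finiteness of the sum;
\item polynomiality of each coefficient;
\item $\Theta$ is closed under taking summands, i.e.\ $a(p_1,p_2,\lambda)\neq0$ forces $\lambda\in\Theta$.
\end{itemize}
Translating back gives the statement.

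If a more self-contained route is wanted, I would outline GHKK's argument directly.
\begin{enumerate}
\item Finiteness of $\thet_{p_1}$ and $\thet_{p_2}$ means there are finitely many broken lines for each with endpoint $\chi$. Since $\thet_{p_1}\thet_{p_2}$ is then a Laurent polynomial, only finitely many pairs $(\bl_1,\bl_2)$ contribute, so the left side of \eqref{struct eq} has finitely many monomials.
\item Because the $\thet_\lambda$ are pointed ($\thet_\lambda\in x^\lambda\k[[\hy]]$) and $a(p_1,p_2,\lambda)$ has nonnegative coefficients, one peels off leading terms in the $\hy$-adic order.
\item Positivity of all coefficients involved means no cancellation occurs. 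Hence every $\thet_\lambda$ with $a(p_1,p_2,\lambda)\neq0$ has all its monomials among those of the product, which forces $\thet_\lambda$ to be a polynomial, i.e.\ $\lambda\in\Theta$.
\item The same bound yields finitely many $\lambda$ and polynomial coefficients.
\end{enumerate}

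The main obstacle is the no-cancellation step: showing that every monomial of $a(p_1,p_2,\lambda)\thet_\lambda$ genuinely appears in the product. This needs GHKK's positivity, namely that all broken-line coefficients $\const_\bl$ are nonnegative, together with the identity \eqref{struct eq} from Proposition~\ref{struct}. I would simply cite this positivity rather than reprove it.
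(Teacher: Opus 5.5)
Your main route is the paper's own: the paper gives no independent argument and simply quotes \cite[Theorem~7.5]{GHKK}, rephrased in its transposed, dimension-reduced conventions. Your reading of the hypotheses is also the intended one, namely $p_1,p_2\in\Theta$ (the ``$p_1p_2$'' is a typo) and $\tB$ with linearly independent columns as in Proposition~\ref{struct}. Your optional self-contained sketch, in which positivity of broken-line coefficients and pointedness make each $a(p_1,p_2,\lambda)\,\thet_\lambda$ termwise dominated by the Laurent polynomial $\thet_{p_1}\thet_{p_2}$, is essentially GHKK's own argument and is fine as an outline.
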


\subsection{The choice of coefficients}\label{coeffs sec} 
Following (and applying a transpose to) \cite[Section~2.6]{canonical}, we now describe how, for some exchange matrices $B$ including those of affine type, theta functions can be defined for arbitrary extensions $\tB$ of $B$, with no requirement that the columns of $\tB$ are linearly independent.
We also describe the relationship between theta functions for different extensions of~$B$.

Given two extensions $\tB$ and $\tB'$ of the same exchange matrix $B$, we use the same indeterminates $x_1,\ldots,x_n$ in constructions for $\tB$ and $\tB'$, but use a different set of tropical variables for each, and define the $y_j$ and $\hy_j$ as in Section~\ref{scat sec} and similarly define $y'_j$ and $\hy'_j$ for~$\tB'$.
As explained in \cite[Section~2.6]{canonical}, as a consequence of \cite[Proposition~2.6]{scatfan}, if both $\tB$ and $\tB'$ have linearly independent columns, then $\Scat^T(\tB')$ is obtained from $\Scat^T(\tB)$ by leaving each wall $(\d,f_\d)$ unchanged geometrically, but replacing the $\hy$ by the $\hy'$ in each $f_\d$.
This substitution is unambiguous precisely because the columns of $\tB$ are linearly independent.

In order to unambiguously make the same kinds of substitutions to obtain theta functions and structure constants for $\tB'$ from theta functions and structure constants for $\tB$, we need a condition on $\tB$ that is stronger than linearly independent columns.
We say that $\tB$ has \newword{nondegenerate coefficients} if the submatrix of $\tB$ obtained by deleting the top square matrix $B$ has linearly independent columns.

If $\tB$ has nondegenerate coefficients, then we can use it to define certain theta functions for an arbitrary extension $\tB'$, with no requirement of linearly independent columns.
If $\tB'$ has linearly \emph{de}pendent columns, then for $\lambda\in\Theta$, the \newword{theta function} $\thet'_\lambda$ for $\tB'$ is defined to be the function obtained from the theta function~$\thet_\lambda$ for $\tB$ by replacing unprimed variables by primed variables.  
The following proposition repeats this definition for $\tB'$ with linearly dependent columns and is \cite[Proposition~2.6]{canonical} for $\tB'$ with linearly independent columns.
The proposition also implies that the definition of theta functions for arbitrary $\tB'$ is well posed (i.e.\ independent of the choice of an extension $\tB$ with nondegenerate coefficients).

\begin{proposition}\label{where prin plus}
Suppose $\tB$ and $\tB'$ are extensions of $B$ such that $\tB$ has nondegenerate coefficients.
Assume either that $\tB'$ has linearly independent columns or that $\lambda\in\Theta$.
Then the theta function $\thet'_\lambda$ defined in terms of~$\tB'$ is obtained from the theta function $\thet_\lambda$ defined in terms of~$\tB$ by replacing each $\hy$ by $\hy'$ throughout (or equivalently, replacing the $y$ by the $y'$).
\end{proposition}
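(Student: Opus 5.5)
We split according to the two hypotheses in Proposition~\ref{where prin plus}.

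\emph{Case 1: $\tB'$ has linearly dependent columns and $\lambda\in\Theta$.} Here $\thet'_\lambda$ is defined as the result of replacing the $\hy$ by the $\hy'$ in $\thet_\lambda$. The proposition is then a tautology, except for well-posedness: we must check that using a different extension $\tB''$ with nondegenerate coefficients produces the same $\thet'_\lambda$.

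For this we compare $\tB$ and $\tB''$ directly. Both have nondegenerate coefficients, so both have linearly independent columns. Hence Case~2 below, applied to the pair $(\tB,\tB'')$, shows that $\thet''_\lambda$ is obtained from $\thet_\lambda$ by $\hy\mapsto\hy''$. Substituting first $\hy\mapsto\hy''$ and then $\hy''\mapsto\hy'$ is the same as substituting $\hy\mapsto\hy'$ directly. The only point is that these substitutions are unambiguous. This holds because, with nondegenerate coefficients, the monomials $x^\mu y^\beta$ are in bijection with pairs $(\mu,\beta)\in P\oplus Q$ (the $y_j$ are algebraically independent Laurent monomials in the $u_i$), so replacing $y$ by $y'$ is a well-defined ring homomorphism on the relevant polynomials. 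Since $\lambda\in\Theta$, $\thet_\lambda$ is a polynomial and no convergence issue arises.

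\emph{Case 2: $\tB'$ has linearly independent columns.} This is \cite[Proposition~2.6]{canonical}; the argument I would give is as follows. By \cite[Proposition~2.6]{scatfan}, $\Scat^T(\tB')$ has the same walls as $\Scat^T(\tB)$, with each $f_\d(\hy^\beta)$ replaced by $f_\d(\hy'^\beta)$. Now compare broken lines. Their support paths, bending points, and the exponents $\lambda_L$ depend only on the geometry of the walls and on the exponents $\beta$ of the terms picked up, and the pairings $\br{\lambda_{L_1},\beta\ck}$ are identical in both diagrams. Therefore there is a bijection $\bl\mapsto\bl'$ between broken lines for $\lambda$ with endpoint $\chi$ in the two diagrams. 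Under this bijection each monomial $\const_Lx^{\lambda_L}y^{\beta_L}$ becomes $\const_Lx^{\lambda_L}y'^{\beta_L}$, since the term chosen from $f^{\br{\lambda_{L_1},\beta\ck}}$ is $c\,\hy^{k\beta}$, which corresponds to $c\,\hy'^{k\beta}$.

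Summing over broken lines then gives the claim, provided $\hy\mapsto\hy'$ is a continuous homomorphism $\k[[\hy]]\to\k[[\hy']]$ on the power series involved. Nondegeneracy of $\tB$ ensures that $\hy^\beta$ determines $\beta$, so the substitution is well defined termwise. Both sums converge in the $\hy$-adic and $\hy'$-adic topologies by finiteness in $\Scat^T$, because the $\hy$-degree of a final monomial equals the $\hy'$-degree of its counterpart.

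\emph{Main obstacle.} The delicate point is the termwise identification of broken-line monomials. A final monomial is recorded as $x^{\lambda_\bl}y^{\beta_\bl}$, while $\thet_\lambda$ lives in $x^\lambda\k[[\hy]]$, and we need $x^{\lambda_\bl}y^{\beta_\bl}=x^\lambda\hy^{\beta_\bl}$ consistently in both settings. I would verify this by induction along $\bl$, using $\hy^\beta=x^{\omega_c(\cdot,\beta)}y^\beta$ and condition~\ref{brok slope}. This identification is exactly what makes replacing the $\hy$ equivalent to replacing the $y$, as asserted in the parenthetical of Proposition~\ref{where prin plus}.
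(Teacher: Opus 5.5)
Your proposal is correct and matches the paper's treatment. The dependent-columns case is just the definition, with well-posedness deduced (as the paper remarks) from the independent-columns case applied to two extensions with nondegenerate coefficients; the independent-columns case is \cite[Proposition~2.6]{canonical}, which you reconstruct in the expected way from \cite[Proposition~2.6]{scatfan} together with the resulting bijection of broken lines preserving curves, coefficients $\const_L$, and exponents $(\lambda_L,\beta_L)$. One minor correction: the identity $x^{\lambda_\bl}y^{\beta_\bl}=x^\lambda\hy^{\beta_\bl}$ follows by induction from conditions~\ref{brok unbounded} and~\ref{brok change slope} (not~\ref{brok slope}), using $\hy^\beta=x^{B\beta}y^\beta$, which can be written as $x^{\omega_c(\cdot,\beta)}y^\beta$ only when $B$ is acyclic.
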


Furthermore, the following result \cite[Proposition~2.10]{canonical} turns results on structure constants relative to an extension with nondegenerate coefficients into valid relations on theta functions defined in terms of an arbitrary extension.

\begin{proposition}\label{struct plus plus}
Suppose $\tB$ and $\tB'$ are extensions of $B$ such that $\tB$ has nondegenerate coefficients.
Assume either that $\tB'$ has linearly independent columns or that the vectors $p_1$ and $p_2$ are in~$\Theta$.
Write $a(p_1,p_2,m)$ for the structure constants for theta functions defined in terms of $\tB$.
Replacing each~$\thet$ by~$\thet'$ in \eqref{struct eq} and replacing each $y_i$ by $y'_i$ in each $a(p_1,p_2,m)$ yields a valid relation among theta functions $\thet'$ defined in terms of $\tB'$.
\end{proposition}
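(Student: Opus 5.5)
The plan is to split according to the two alternative hypotheses on $\tB'$, and in both cases to view the relation \eqref{struct eq} for $\tB$ as an identity among formal objects graded by $P\oplus Q$. The substitution $y\mapsto y'$ is then simply a specialization of such an identity. The key observation is that the data of a broken line are the same for every extension of $B$: its path, its slopes $\lambda_L\in P$, and its monomials $\const_Lx^{\lambda_L}y^{\beta_L}$, recorded as pairs $(\lambda_L,\beta_L)$. This holds because, by \cite[Proposition~2.6]{scatfan} as recalled above, the walls of $\Scat^T$ have the same supports for every extension with linearly independent columns, and their scattering terms are the same univariate series in $\hy^\beta$. Hence the bending rule \ref{brok change slope} produces the same coefficients $\const_L$ and exponents $\beta_L$.

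\textbf{Case 1: $\tB'$ has linearly independent columns.}
\begin{enumerate}[label=(\alph*)]
\item Introduce the auxiliary extension $\tB''$ whose coefficient rows are those of $\tB$ stacked on those of $\tB'$. It has nondegenerate coefficients, hence linearly independent columns.
\item By the observation above, the broken lines for $\tB''$, $\tB$, and $\tB'$ are in bijection, with identical $(\const_L,\lambda_L,\beta_L)$.
\item Consequently the series $a_\chi(p_1,p_2,\lambda)$ for $\tB$ and for $\tB'$ are the same element of $\k[[Q^{\ge0}]]$, read with $y$ or with $y'$. The same then holds for their limits $a(p_1,p_2,\lambda)$.
\item Proposition~\ref{struct} applies directly to $\tB'$, since its columns are independent. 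The resulting relation is exactly \eqref{struct eq} with $\thet$ replaced by $\thet'$ and $y_i$ by $y'_i$.
\end{enumerate}
Here Proposition~\ref{where prin plus} identifies $\thet'_\lambda$ with the substituted $\thet_\lambda$.

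\textbf{Case 2: $p_1,p_2\in\Theta$.} By Theorem~\ref{Theta facts}, the right side of \eqref{struct eq} is a finite sum. Each coefficient $a(p_1,p_2,\lambda)$ is a polynomial, and each $\lambda$ with $a(p_1,p_2,\lambda)\neq0$ lies in $\Theta$, so each such $\thet_\lambda$ is a polynomial. Thus \eqref{struct eq} is an identity of Laurent polynomials. Every term has the form $x^\mu y^\beta$, so the identity lives in the subring generated by $x_i^{\pm1}$ and $y_j^{\pm1}$.

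Nondegeneracy of the coefficients of $\tB$ means that the monomials $y_1,\ldots,y_n$ in the $u_i$ have linearly independent exponent vectors. Hence the $y_j$ are algebraically independent, and this subring is a Laurent polynomial ring freely generated by the $x_i$ and the $y_j$. There is therefore a well-defined ring homomorphism to the Laurent ring for $\tB'$ sending $x_i\mapsto x_i$ and $y_j\mapsto y'_j$. It sends $\hy_j\mapsto\hy'_j$, and so, by Proposition~\ref{where prin plus} (in its definitional form when the columns of $\tB'$ are dependent), it sends $\thet_\lambda\mapsto\thet'_\lambda$ for all relevant $\lambda\in\Theta$. Applying this homomorphism to \eqref{struct eq} gives the desired relation.

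The main obstacle is Case 1, specifically justifying rigorously that the broken-line data, and hence the limits $a(p_1,p_2,\lambda)$, are literally extension-independent when expressed in $Q$-graded form. The concern is that the $\hy'$-monomials might collide when $\tB'$ is degenerate in its coefficients. I would handle this by working throughout with formal symbols indexed by $Q$, using $\tB''$, and specializing only at the end. The specialization is well defined on formal power series because each $\hy'^\beta$ with $\beta\in Q^+$ has positive degree in the grading by $Q^{\ge0}$, so each coefficient is a finite sum.
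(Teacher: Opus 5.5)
Your argument is correct. The paper gives no proof of this statement; it quotes it from \cite[Proposition~2.10]{canonical}. The surrounding text introduces nondegenerate coefficients precisely so that replacing unprimed by primed variables is unambiguous, which points to a single substitution argument. Since the $x_i$ and $y_j$ are algebraically independent, $y_j\mapsto y'_j$ is a ring homomorphism. One applies it to \eqref{struct eq} and identifies the image of each $\thet_\lambda$ with $\thet'_\lambda$ by Proposition~\ref{where prin plus}. The two alternative hypotheses are exactly what make Proposition~\ref{where prin plus} available for every $\thet_\lambda$ that occurs: all $\lambda$ in the first case, and $\lambda\in\Theta$ by Theorem~\ref{Theta facts} in the second.

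Your Case~2 is this argument. Your Case~1 takes a different route. Instead of pushing a possibly infinite identity through the substitution, which would require checking that the substitution commutes with the formal limit, you apply Proposition~\ref{struct} directly to $\tB'$. You then match the structure constants as formal series in the $y_i$ via the broken-line bijection from \cite[Proposition~2.6]{scatfan}. This is clean, and it uses only that $\tB$ and $\tB'$ both have linearly independent columns.

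Two simplifications are available in Case~1. The auxiliary matrix $\tB''$ is unnecessary, since the bijection holds directly between $\tB$ and $\tB'$. Your worry about collisions also does not arise there. The monomials $\hy'^\beta$ are distinct when the columns of $\tB'$ are independent. Any coincidences among the $y'^\beta$ are harmless, because $a(p_1,p_2,\lambda)$ is by definition a formal series in the $y_i$ before the substitution is made.
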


We emphasize that for the exchange matrices $B$ of affine type that are the subject of this paper, $\Theta=P$, so Proposition~\ref{where prin plus} applies to define theta functions $\thet_\lambda$ for all $\lambda$ for arbitrary extensions of $B$.
Furthermore, Proposition~\ref{struct plus plus} applies to convert all structure constant computations made under the assumption of nondegenerate coefficients to valid relations among theta functions for arbitrary extensions of $B$.

In order to use results of \cite{canonical}, we will need a requirement on $\tB$ that is stronger than nondegenerate coefficients.
We say that $\tB$ has \newword{signed-nondegenerating coefficients} if, for every sequence~$\kk$ of indices (including the empty sequence), $\mu_\kk(\tB)$ has nondegenerate coefficients and every column of the submatrix below $\mu_\kk(B)$ has a sign, meaning that it consists of either nonnegative entries or nonpositive entries.
For any $B$, there exists an extension $\tB$ with signed-nondegenerating coefficients, namely principal coefficients (in light of ``sign-coherence of $C$-vectors'', conjectured in \cite{ca4} and proved in \cite[Corollary~5.5]{GHKK}).
If $\tB$ has signed-nondegenerating coefficients, then for each ${i\in\set{1,\ldots,n}}$, we write $\sgn(y_j)\in\set{\pm1}$ for the sign (nonnegative or nonpositive) of the exponents appearing on $y_j$.

\subsection{The cluster monomials and $\g$-vectors}\label{clus sec}
The relevance of cluster scattering diagrams to cluster algebras is rooted in the fact that cluster monomials can essentially be obtained as theta functions.
However, there is some subtlety about coefficients.
A \newword{cluster} is a set of $n$ variables obtained from the initial cluster $\set{x_1,\ldots,x_n}$ by a sequences of mutations in the usual sense, and the elements of clusters are called \newword{cluster variables}. 
Importantly, our clusters do \emph{not} include include frozen variables.
Since we have left out frozen variables in our definition of a cluster, we define a \newword{cluster monomial} to be a monomial (ordinary, not Laurent) in the entries in some cluster.
Each theta function $\thet_\lambda$ is a Laurent polynomial in~$\set{x_1,\ldots,x_n}\cup\set{u_1,\ldots,u_m}$.
Define $\Clear(\thet_\lambda)$ to be $\thet_\lambda\prod_{i=1}^mu_i^{q_i}$, where each $q_i$ is chosen as small as possible so that $u_i$ is not in the denominator of any term in~$\thet_\lambda\prod_{i=1}^mu_i^{q_i}$.

The \newword{$\g$-vector fan} $\gFan(B)$ associated to $B$ is the set of cones spanned by the $\g$-vectors of clusters in the sense of \cite{ca4}, and all faces of those cones.
We will say \newword{$\g$-vector cone} for a cone in the $\g$-vector fan and \newword{maximal $\g$-vector cone} for the nonnegative span of the $\g$-vectors of a cluster.
We write $|\gFan(B)|$ for the support of the $\g$-vector fan (the union of all its cones).
The following theorem is a special case of results of \cite{GHKK}, expanded to allow arbitrary extensions of $\tB$ as explained in Section~\ref{coeffs sec}.
Under the hypothesis that $\tB$ has linearly independent columns, the theorem is the combination of \cite[Theorem~5.2]{scatfan} and \cite[Corollary~2.6]{scatcomb}, both of which are restatements of results of~\cite{GHKK}.
The theorem applies to arbitrary extensions because every $\g$-vector of a cluster monomial is in $\Theta$ and by \cite[Corollary~6.3]{ca4}.

\begin{theorem}\label{clus mon thm}    
Suppose $\tB$ is an extended exchange matrix. 
The map $\lambda\mapsto\Clear(\thet_\lambda)$ is a bijection from $P\cap|\gFan(B)|$ to the set of cluster monomials determined by $\tB$, and the $\g$-vector of $\thet_\lambda$ is~$\lambda$.
There is a bijection from rays of $\gFan(B)$ to cluster variables determined by $\tB$ sending each ray to $\Clear(\thet_\lambda)$ such that~$\lambda$ is the shortest vector in $P$ contained in the ray.
\end{theorem}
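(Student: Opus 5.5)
The plan is to prove the statement first when $\tB$ has linearly independent columns, and then transport it to an arbitrary extension using the substitution principle of Section~\ref{coeffs sec} together with the separation-of-additions formula of \cite{ca4}.

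\emph{Step 1: linearly independent columns.} Here the result is essentially the main structural theorem of \cite{GHKK} on cluster monomials, translated into our transposed conventions. I would quote \cite[Theorem~5.2]{scatfan}, which identifies the chambers of $\Scat^T(\tB)$ inside $|\gFan(B)|$ with maximal $\g$-vector cones. I would then quote \cite[Corollary~2.6]{scatcomb}. It says that for $\lambda$ in a maximal $\g$-vector cone, $\thet_\lambda$ equals the corresponding cluster monomial up to a monomial in the frozen variables $u_i$, with $\g$-vector $\lambda$. Two facts then give the bijection. First, distinct cluster monomials have distinct $\g$-vectors. Second, every cluster monomial is a monomial in a single cluster, so its $\g$-vector is a nonnegative integer combination of the $\g$-vectors of that cluster, and these form a basis of $P$. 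Hence it lies in $P\cap|\gFan(B)|$. Restricting to rays, the shortest lattice vector on a ray of $\gFan(B)$ is the $\g$-vector of a single cluster variable. This gives the second sentence of the statement.

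\emph{Step 2: removing the independence hypothesis.} Fix an extension $\tB$ with nondegenerate coefficients, for example principal coefficients, and let $\tB'$ be arbitrary. Every $\lambda\in P\cap|\gFan(B)|$ lies in $\Theta$. Indeed, by Step~1, $\thet_\lambda$ is a cluster monomial times a monomial in the $u_i$, hence a Laurent polynomial, so only finitely many broken lines contribute. Proposition~\ref{where prin plus} then shows that $\thet'_\lambda$ is obtained from $\thet_\lambda$ by substituting $\hy\mapsto\hy'$, giving $\thet'_\lambda=x^\lambda F_\lambda(\hy')$. Separately, \cite[Corollary~6.3]{ca4} expresses the cluster monomial with $\g$-vector $\lambda$ for $\tB'$ as
\[
x^\lambda F_\lambda(\hy')/F_\lambda|_{\mathrm{trop}}(y').
\]
Here $F_\lambda$ is the $F$-polynomial for $\lambda$ (a product of $F$-polynomials of cluster variables), and it is independent of the coefficients.

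\emph{Step 3 (the main point to check): matching normalizations.} It remains to see that dividing by the tropical evaluation $F_\lambda|_{\mathrm{trop}}(y')$ is exactly the operation $\Clear$. The tropical evaluation in $\mathrm{Trop}(u_1,\ldots,u_m)$ is the monomial $\prod_i u_i^{e_i}$, where each $e_i$ is the minimum exponent of $u_i$ over the monomials of $F_\lambda(y')$. This uses that $F_\lambda$ has nonnegative coefficients, so the tropical sum really is the componentwise minimum over its monomials. Dividing by this monomial makes every $u_i$-exponent nonnegative and leaves some term with $u_i$-exponent exactly $0$. The factor $x^\lambda$ and the $x$-parts of the $\hy'$ contain no $u_i$, so they do not affect these exponents. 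This is precisely the defining property of $\Clear(\thet'_\lambda)$, with $q_i=-e_i$.

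The bijectivity and the $\g$-vector claim then carry over from Step~1. The $\g$-vector is read off from the leading $x^\lambda$, which is unchanged by the substitution. The set of cluster monomials for $\tB'$ corresponds bijectively to that for $\tB$ via \cite{ca4}. I expect Step~3 to be the only place where real care is needed, namely keeping track of the minimum-exponent normalization under our convention that frozen variables are not part of clusters.
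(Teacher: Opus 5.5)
Your proposal follows the paper's route: the paper cites \cite[Theorem~5.2]{scatfan} and \cite[Corollary~2.6]{scatcomb} for the case of linearly independent columns, then extends to arbitrary $\tB$ using that $\g$-vectors of cluster monomials lie in $\Theta$ (so Proposition~\ref{where prin plus} applies) together with \cite[Corollary~6.3]{ca4}; your Step~3 is exactly the observation recorded in Remark~\ref{inescapable}, that $\prod_iu_i^{q_i}$ is the reciprocal of the tropical denominator in \cite[(6.5)]{ca4}. One small point: deducing $\lambda\in\Theta$ from $\thet_\lambda$ being a Laurent polynomial requires positivity of broken-line coefficients (so no cancellation occurs), so it is cleaner to cite \cite{GHKK} directly for this fact, as the paper does.
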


\begin{remark}\label{inescapable}  
One wants $\lambda\mapsto\thet_\lambda$ to be bijection from $P\cap|\gFan(B)|$ to the set of cluster monomials determined by $\tB$, and that is the case when $\tB$ has principal coefficients, in the coefficient-free case $\tB=B$, and more generally whenever the rows of $\tB$ below $B$ have no negative entries.
However, the correct bijection in general is $\lambda\mapsto\Clear(\thet_\lambda)$.
The factor $\prod_{i=1}^mu_i^{q_i}$ used to define $\Clear(\thet_\lambda)$ is the reciprocal of the denominator of \cite[(6.5)]{ca4}.

We emphasize that this extra complication is \emph{not} an artifact of our conventions that treat frozen variables as coefficients.  
In \cite[Definition~4.8]{GHKK}, cluster monomials are defined as monomials in the frozen and unfrozen variables, and theta functions~$\thet_\lambda$ are defined for $\lambda$ in an $(n+m)$-dimensional superlattice of $P$.
In that setting $\lambda\mapsto\thet_\lambda$ is a bijection from the lattice points in an $(n+m)$-dimensional version of the $\g$-vector fan to the set of cluster monomials.
However, when $\tB$ does not have principal coefficients, the $\g$-vector of $\thet_\lambda$ (in $\integers^{n+m})$ might not be $\lambda$.  
Instead, there is a correction, corresponding to the operator $\Clear$ in Theorem~\ref{clus mon thm}.
\end{remark}

We close this section with the following observation, which will be useful in Section~\ref{per sec} and which is, in essence, a result of~\cite{GHKK}.
\begin{lemma}\label{z z'}
For fixed $p_1,p_2,\lambda\in P$, if $\chi$ and $\chi'$ are in the interior of the same maximal $\g$-vector cone, then $a_\chi(p_1,p_2,\lambda)=a_{\chi'}(p_1,p_2,\lambda)$.
\end{lemma}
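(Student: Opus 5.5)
The plan is to show that $a_\chi(p_1,p_2,\lambda)$ is locally constant as $\chi$ moves inside the interior $C^\circ$ of a maximal $\g$-vector cone $C$, and then use connectedness. The key input is the standard fact from \cite{GHKK} (via \cite{scatfan,scatcomb}) that the interior of a maximal $\g$-vector cone meets no wall of $\Scat^T(\tB)$. Thus $C^\circ$ lies in a single chamber of the scattering diagram, and broken lines ending there can be deformed without crossing walls near the endpoint.

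Concretely, fix $\chi,\chi'\in C^\circ$, both off all hyperplanes $\beta^\perp$. Since $C^\circ$ is convex, the segment $[\chi,\chi']$ lies in $C^\circ$ and so meets no wall. The theta-function version of this statement is \cite[Theorem~3.5]{GHKK} (or its restatement in \cite{scatcomb}): for $\chi,\chi'$ in the same chamber, $\thet_{\chi,p}=\thet_{\chi',p}$, and more finely, broken lines ending at $\chi$ correspond to broken lines ending at $\chi'$. The correspondence preserves the final monomial $\const_\bl x^{\lambda_\bl}y^{\beta_\bl}$, hence preserves the final slope $\lambda_\bl$ and the coefficient. I will recall this correspondence as follows.

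Given a broken line $\bl$ for $p_1$ ending at $\chi$, consider its last domain of linearity $L$. This is a segment or ray ending at $\chi$ with direction $-\lambda_L$. Move the endpoint from $\chi$ toward $\chi'$ and translate the final segment accordingly. Because no walls lie in $C^\circ$, the final bend point (which lies on a wall, hence outside $C^\circ$) can be adjusted by sliding along its wall. The earlier portion of $\bl$ is then deformed continuously. This is exactly the argument in \cite[Section~3]{GHKK} showing that $\thet_{\chi,p}$ is constant on chambers, and it yields a bijection that preserves final monomials. The one caveat is that the deformation must avoid nongeneric positions, namely condition~\ref{brok generic}. Here I would argue as GHKK do: generic perturbations avoid these positions, and crossings of codimension-$2$ loci are handled by consistency. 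In practice I will simply cite the bijection from \cite{GHKK} (their proof of Theorem~3.5 / Definition-Lemma~6.2) rather than reprove it.

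Applying this bijection separately to broken lines for $p_1$ and for $p_2$ gives a bijection between pairs $(\bl_1,\bl_2)$ ending at $\chi$ and pairs ending at $\chi'$. This bijection preserves $\lambda_{\bl_1}+\lambda_{\bl_2}$, $\const_{\bl_1}\const_{\bl_2}$, and $\beta_{\bl_1}+\beta_{\bl_2}$. Therefore the sums defining $a_\chi(p_1,p_2,\lambda)$ and $a_{\chi'}(p_1,p_2,\lambda)$ agree term by term.

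The main obstacle is justifying rigorously that the monomial-preserving bijection of broken lines exists when the endpoint moves within a chamber, including the genericity issues. My plan is to defer this to \cite{GHKK}, noting that the relevant chamber statement is precisely what underlies their proof that theta functions depend only on the chamber of the endpoint.
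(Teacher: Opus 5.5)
Your proposal is correct and is essentially the paper's proof: because the interior of a maximal $\g$-vector cone meets no wall, \cite[Theorem~3.5]{GHKK} gives $\thet_{\chi,p_i}=\thet_{\chi',p_i}$ for $i=1,2$, and this equality determines the structure constant. One caution: \cite{GHKK} provides only this equality of theta functions, which is what the paper's ``same multiset of final monomials'' amounts to; it does not provide a literal bijection of individual broken lines, since the consistency argument at codimension-$2$ loci matches only weighted sums, but no bijection is needed, because grouping pairs $(\bl_1,\bl_2)$ by $(\lambda_{\bl_1},\lambda_{\bl_2})$ shows that $a_\chi(p_1,p_2,\lambda)$ depends only on the $x$-graded pieces of $\thet_{\chi,p_1}$ and $\thet_{\chi,p_2}$.
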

\begin{proof}
As an immediate consequence of \cite[Theorem~3.5]{GHKK}, since $\chi$ and~$\chi'$ are in the interior of the same maximal $\g$-vector cone, the set of broken lines to $\chi$ from direction $p_1$ has the same multiset of final monomials as the the set of broken lines to $\chi'$ from direction $p_1$.
Similarly for $p_2$.
\end{proof}

\subsection{Mutation of cluster scattering diagrams and theta functions}\label{mut sec}
We now discuss the notion of mutation of theta functions that is relevant to this paper.
This notion is a version of the mutation operation that is central to~\cite{GHKK}.
For a comparison of the two notions, see \cite[Remark~4.1]{canonical}.

Suppose $\tB=[b_{ij}]$ has signed-nondegenerating coefficients.
Define $x_1,\ldots x_n$, $y_1,\ldots,y_n$, and $\hy_1,\ldots,\hy_n$ as above, and let $k\in\set{1,\ldots,n}$.
Let $x'_1,\ldots,x'_n$ be the cluster variables in the seed obtained by mutating at $k$, define $y'_1,\ldots,y'_n$ analogously in terms of $\mu_k(\tB)$, and define $\hy'_1,\ldots,\hy'_n$ in terms of $B'$ and $x'_1,\ldots,x'_n$ (using the same tropical variables $u_i$ for the added rows of $\mu_k(\tB)$ as for $\tB$).
Matrix mutation and the exchange relations imply the following relations between the primed and unprimed variables.
(See \cite[Proposition~3.9]{ca4} and its proof.)
\begin{align}
\label{exch rel}
x_kx'_k&=(1+\hy_k)y_k^{-[-\sgn(y_k)]_+}\prod_{i=1}^n(x_i)^{[-b_{ik}]_+}\\
x_i&=x'_i\quad\text{for }i\neq k\\
y'_k&=y_k^{-1}\\
y'_j&=y_j(y_k)^{[\sgn(y_k)b_{kj}]_+}\quad\text{for }j\neq k\\
\hy'_k&=\hy_k^{-1}\\
\label{hy mut}
\hy'_j&=\hy_j(\hy_k)^{[b_{kj}]_+}(1+\hy_k)^{-b_{kj}}\quad\text{for }j\neq k.
\end{align}

To describe mutation of theta functions, we will use the language of mutation maps from Section~\ref{basic sec}.
The following theorem is \cite[Theorem~4.2]{canonical}, rephrased in the transposed setting, and is a version of \cite[Proposition~3.6]{GHKK}.

\begin{theorem}\label{2 muts}
Suppose $\tB$ has signed-nondegenerating coefficients.  
For $\lambda\in P$ and $k\in\set{1,\ldots,n}$, write $\thet_{\lambda}^\tB$ for a theta function in the unprimed variables using $\tB$, write $\lambda'=\eta_k^{B^T}\!(\lambda)$, and write $\thet_{\lambda'}^{\mu_k(\tB)}$ for a theta function defined in the primed variables using $\mu_k(\tB)$.
Relating the primed and unprimed variables as in \eqref{exch rel}--\eqref{hy mut}, we have $\thet_{\lambda'}^{\mu_k(\tB)}=\thet_{\lambda}^\tB\cdot(y_k)^{-[\sgn(y_k)\br{\lambda,\alpha_k\ck}]_+}$.
\end{theorem}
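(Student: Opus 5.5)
The plan is to reduce the statement to the mutation-invariance of cluster scattering diagrams in \cite{GHKK} (their Theorem~1.24 and Proposition~3.6), transported into our transposed conventions via \cite[Table~2]{scatcomb}. There are two steps: first compare broken lines, then compare the variables in which the resulting Laurent polynomials are written. Throughout I assume $\tB$ has signed-nondegenerating coefficients, so both $\tB$ and $\mu_k(\tB)$ have linearly independent columns and $\Scat^T$ is defined for each. Every $y_j$ also then has a definite sign, which is what makes \eqref{exch rel}--\eqref{hy mut} monomial-exact up to the binomial $1+\hy_k$.

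\emph{Step 1: the diagrams.} Let $H_\pm=\set{v\in V^*:\pm\br{v,\alpha_k}\ge0}$. I would invoke the piecewise-linear identification $\eta_k^{B^T}$, which is linear on each of $H_+$ and $H_-$. It carries the walls of $\Scat^T(\tB)$ to the walls of $\Scat^T(\mu_k(\tB))$, apart from the wall $\alpha_k^\perp$, whose function $1+\hy_k$ becomes $1+\hy_k'=1+\hy_k^{-1}$. On one half-space the wall functions are carried over verbatim under the linear substitution $\hy\mapsto\hy'$ of \eqref{hy mut}. On the other half-space they are additionally conjugated by the wall-crossing automorphism $\hy_j\mapsto\hy_j(1+\hy_k)^{\pm b_{kj}}$. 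This is exactly the GHKK statement $\D_{\mu_k}=T_k(\D)$, rewritten with our fixed bases $\rho_i,\alpha_i$.

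\emph{Step 2: the broken lines.} Take $\chi$ generic in the positive chamber $D$ for $\tB$. The image of $D$ under $\eta_k^{B^T}$ is the chamber adjacent across $\alpha_k^\perp$ to the positive chamber $D'$ for $\mu_k(\tB)$. I would push each broken line for $\lambda$ ending at $\chi$ through $\eta_k^{B^T}$. Each linear piece goes to a linear piece, because a crossing of $\alpha_k^\perp$ turns into a bend, and bends are matched with monomial coefficients transformed by the automorphism above. The result is a bijection with broken lines for $\lambda'=\eta_k^{B^T}(\lambda)$ ending at $\eta_k^{B^T}(\chi)$, which also preserves the final monomials after the substitution of \eqref{exch rel}--\eqref{hy mut}. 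This gives $\thet_{\eta(\chi),\lambda'}$ for $\mu_k(\tB)$ in terms of $\thet_\lambda^\tB$.

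\emph{Step 3: moving the endpoint.} Next I would move the endpoint from $\eta(\chi)$ across $\alpha_k^\perp$ into $D'$. By consistency this multiplies the theta function by the wall-crossing automorphism of $(\alpha_k^\perp,1+\hy'_k)$. Composed with the substitution above, this automorphism should be exactly the change of cluster variables $x_k\mapsto x'_k$ given by \eqref{exch rel}, up to a monomial in $y_k$. I would track this monomial by comparing leading terms: $\thet^\tB_\lambda$ has leading term $x^\lambda$, and the new function must have leading term $x'^{\lambda'}$. Rewriting $x^\lambda$ in primed variables using \eqref{exch rel} produces the power $y_k^{-[-\sgn(y_k)]_+\br{\lambda,\alpha_k\ck}}$ when $\br{\lambda,\alpha_k\ck}\ge0$, and an analogous power when it is negative. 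Combining the two sign cases should give $(y_k)^{-[\sgn(y_k)\br{\lambda,\alpha_k\ck}]_+}$.

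The main obstacle I expect is Step 3, and specifically the bookkeeping of signs and coefficients. That means checking that the GHKK automorphism with frozen directions removed (Remark~\ref{useless dimensions}) agrees with \eqref{exch rel}, including the factor $y_k^{-[-\sgn(y_k)]_+}$, in all four cases for the sign of $y_k$ and the sign of $\br{\lambda,\alpha_k\ck}$. For this I would first verify the formula on $\lambda=\pm\rho_k$ and on $\lambda=\rho_i$ with $i\neq k$, where both sides are explicit cluster variables. I would then argue that the correction is a monomial depending linearly on $\lambda$ on each half-space $H_\pm$, so these cases suffice.
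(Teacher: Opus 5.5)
Your architecture is the route the paper points to. The statement is quoted from \cite[Theorem~4.2]{canonical}, whose proof rests on the broken-line mutation of Lemma~\ref{mut broken line}, a version of \cite[Proposition~3.6]{GHKK}: transport broken lines by $\eta_k^{B^T}$ to broken lines ending at $\eta_k^{B^T}(\chi)$, then cross $\alpha_k^\perp$ into $D$. However, two steps are wrong as written, and the second is exactly where the content of the statement, the power of $y_k$, lives.

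\emph{First problem: the substitution used to transport labels.} In Steps~1--2 you cannot transport labels by \eqref{exch rel}--\eqref{hy mut}. That substitution is birational, not linear, so it does not send monomials to monomials and cannot produce broken lines. For the same reason, wall functions are not ``conjugated by wall-crossing'' on one side; a univariate wall function would not stay univariate. The correct transport is the side-dependent \emph{monomial} substitution stated before Lemma~\ref{mut broken line}:
\begin{itemize}
\item on $\br{p,\alpha_k}\ge0$, $x_k\mapsto(x_k')^{-1}(y'_k)^{-[\sgn(y_k)]_+}\prod_i(x'_i)^{[b_{ik}]_+}$;
\item on $\br{p,\alpha_k}\le0$, $x_k\mapsto(x_k')^{-1}(y'_k)^{[-\sgn(y_k)]_+}\prod_i(x'_i)^{[-b_{ik}]_+}$;
\item in both cases $y_k\mapsto(y'_k)^{-1}$ and $y_j\mapsto y'_j(y'_k)^{[\sgn(y_k)b_{kj}]_+}$.
\end{itemize}
The birational relation enters only in Step~3. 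The wall-crossing automorphism of $(\alpha_k^\perp,1+\hy'_k)$, composed with the substitution for the side $\br{p,\alpha_k}\ge0$ containing $\chi$, is exactly \eqref{exch rel}--\eqref{hy mut}.

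\emph{Second problem: the leading-term bookkeeping does not determine the factor.} For $\br{\lambda,\alpha_k\ck}\ge0$, your exponent $-[-\sgn(y_k)]_+\br{\lambda,\alpha_k\ck}$ ignores a contribution. In the primed order, where $\hy_k=(\hy'_k)^{-1}$, the factor $(1+\hy_k)^{\br{\lambda,\alpha_k\ck}}$ has leading term $\hy_k^{\br{\lambda,\alpha_k\ck}}$, which carries another $y_k^{\br{\lambda,\alpha_k\ck}}$. Your exponent therefore gives the wrong correction: for instance none when $\sgn(y_k)=1$, whereas the answer is $y_k^{-\br{\lambda,\alpha_k\ck}}$.

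For $\br{\lambda,\alpha_k\ck}<0$ the method fails outright, because other terms of $\thet^\tB_\lambda$ contribute to the leading term of $\thet^{\mu_k(\tB)}_{\lambda'}$. Take $n=1$, $B=[0]$, principal coefficients, $\lambda=-\rho_1$. Then $\thet^\tB_{-\rho_1}=x_1^{-1}(1+y_1)=x_1'$, so the factor is $1$. But $x_1^{-1}$ alone equals $x'_1y_1^{-1}(1+y_1^{-1})^{-1}$, whose leading term suggests a factor of $y_1$.

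Your fallback is to check $\pm\rho_k$ and $\rho_i$ and then extend ``linearly on $H_\pm$''. This has no justification, since theta functions are not multiplicative; for example $\thet_{\rho_1}\thet_{-\rho_1}=1+y_1\neq\thet_0$.

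\emph{The missing idea.} The factor is forced by condition~\ref{brok unbounded}. When $\br{\lambda,\alpha_k\ck}\neq0$, the unbounded domain eventually lies on the side of $\alpha_k^\perp$ given by the sign of $\br{\lambda,\alpha_k\ck}$. On that side the monomial substitution sends $x^\lambda$ to $(x')^{\lambda'}(y'_k)^{-[\sgn(y_k)\br{\lambda,\alpha_k\ck}]_+}$; check both signs. So every label must be multiplied by the constant
$(y'_k)^{[\sgn(y_k)\br{\lambda,\alpha_k\ck}]_+}=y_k^{-[\sgn(y_k)\br{\lambda,\alpha_k\ck}]_+}$,
which is exactly the multiplication built into the map on labeled curves. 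The wall-crossing at $\alpha_k^\perp$ fixes the $y$'s, so this constant survives unchanged to the endpoint in $D$. That yields the theorem with no leading-term comparison.
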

In interpreting Theorem~\ref{2 muts}, it is useful to remember that $y'_k=y_k^{-1}$, so that the conclusion of the theorem is equivalent to $\thet_{\lambda'}^{\mu_k(\tB)}=\thet_{\lambda}^\tB\cdot(y'_k)^{[\sgn(y_k)\br{\lambda,\alpha_k\ck}]_+}$.

The heart of the proof of Theorem~\ref{2 muts} in \cite{canonical} is a lemma (analogous to a proposition in~\cite{GHKK}) that describes how broken lines mutate.  
To state the lemma, we define a map on piecewise-linear curves $\bl:(-\infty,0]\to V^*$ with finitely many domains of linearity, each labeled by a constant in~$\k$ times a monomial in $x_1,\ldots,x_n$ andr~ $y_1,\ldots,y_n$, with the infinite domain labeled by a monomial in $x_1,\ldots,x_n$.
Suppose $\bl$ is such a curve, with infinite domain labeled by $x^\lambda$.
Ignoring the monomials, the map sends $\bl$ to $\eta_k^{B^T}\circ\bl$, so we re-use the name $\eta_k^{B^T}$ for the map on labeled curves.
By passing to smaller domains of linearity, we can assume that no domain of $\bl$ or $\eta_k^{B^T}\circ\bl$ crosses~$e_k^\perp$.
In particular, each domain $L'$ of $\eta_k^{B^T}\circ\bl$ is $\eta_k^{B^T}(L)$ where $L$ is some domain of $\bl$.
Starting with the monomial $\const_Lx^{\lambda_L}y^{\beta_L}$ labeling $L$, we obtain the label for $L'$ by substitution and multiplication as follows.
Simultaneously make the following substitutions: 
\begin{alignat*}{2}
\text{replace }&x_k&&\text{ by}\quad\!\!\!\!\!
\begin{cases}\displaystyle
(x_k')^{-1}(y'_k)^{[-\sgn(y_k)]_+}\prod_{i=1}^n(x_i')^{[-b_{ik}]_+}&\!\!\!\text{if }L\subseteq \sett{p\in V^*:\br{p,\alpha_k}\le0}\\\displaystyle
(x_k')^{-1}(y'_k)^{-[\sgn(y_k)]_+}\prod_{i=1}^n(x_i')^{[b_{ik}]_+}&\!\!\!\text{if }L\subseteq \sett{p\in V^*:\br{p,\alpha_k}\ge0}
\end{cases}\\
\text{replace }&x_i&&\text{ by }\,\, x'_i \quad\text{for }i\neq k\\
\text{replace }&y_k&&\text{ by }\, (y_k')^{-1}\\
\text{replace }&y_j&&\text{ by }\,\, y'_j(y'_k)^{[\sgn(y_k)b_{kj}]_+}  \quad\text{for }j\neq k.
\end{alignat*}
Then multiply by $(y'_k)^{[\sgn(y_k)\br{\lambda,\alpha\ck_k}]_+}$.
(Note that we multiply by a monomial that depends only the monomial labeling the infinite domain of linearity of $\bl$, independent of $L$.)

The following is \cite[Lemma~4.9]{canonical}, rewritten in the setting of this paper.  
(See also \cite[Proposition~3.6]{GHKK}.)

\begin{lemma}\label{mut broken line}
Suppose $\tB$ has signed-nondegenerating coefficients and take $\lambda\in P$ and $\chi\in V^*$.
Then $\bl$ is a broken line, relative to $\Scat^T\!(\tB)$, for $\lambda$ with endpoint $\chi$ if and only if $\eta_k^{B^T}(\bl)$ is a broken line, relative to $\Scat^T\!(\mu_k(\tB))$, for $\eta_k^{B^T}(\lambda)$ with endpoint~$\eta_k^{B^T}(\chi)$.
\end{lemma}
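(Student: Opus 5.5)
This lemma is the transposed-setting version of \cite[Lemma~4.9]{canonical}, so the plan is to translate that proof (itself modeled on \cite[Proposition~3.6]{GHKK}). The outer structure is fixed. By symmetry, and because $\mu_k$ and $\eta_k^{B^T}$ are involutions up to the obvious identification, it is enough to prove one direction. The map on labeled curves is then checked to be invertible in the same sense, so the converse follows by applying the argument to $\mu_k(\tB)$.

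The first key input is the mutation of the scattering diagram itself, in the form of \cite[Theorem~1.24]{GHKK}. Since $\tB$ has signed-nondegenerating coefficients, $\mu_k(\tB)$ also has nondegenerate (hence linearly independent) columns, so $\Scat^T(\mu_k(\tB))$ is defined. The plan is to show that $\Scat^T(\mu_k(\tB))$ is equivalent to the diagram obtained from $\Scat^T(\tB)$ by the following procedure.
\begin{enumerate}
\item Apply $\eta_k^{B^T}$ to each wall (it is linear on each side of $\alpha_k^\perp$).
\item Rewrite each scattering term via the substitutions \eqref{hy mut}, with the two cases according to the side of $\alpha_k^\perp$.
\item Replace the wall $(\alpha_k^\perp,1+\hy_k)$ by $(\alpha_k^\perp,1+\hy_k')$.
\end{enumerate}
I would cite this from \cite{GHKK}, after adjusting for the transpose and dropped dimensions, rather than reprove it.

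Next, verify the five broken-line conditions for $\eta_k^{B^T}(\bl)$.
\begin{itemize}
\item Conditions (i) and (iv) are immediate. The endpoint maps to $\eta_k^{B^T}(\chi)$. On the unbounded domain, the substitution applied to $x^\lambda$, times the multiplier $(y'_k)^{[\sgn(y_k)\br{\lambda,\alpha_k\ck}]_+}$, should give exactly $x'^{\eta_k^{B^T}(\lambda)}$ by the definition of the mutation map. I will check this in both sign cases.
\item Condition (ii) follows because $\eta_k^{B^T}$ is a homeomorphism carrying walls to walls.
\item Condition (iii), that the exponent equals minus the derivative, holds on each domain. On each side of $\alpha_k^\perp$, the substitution for $x_k$ acts on exponents exactly as the linear piece of $\eta_k^{B^T}$, and the $y$ substitutions do not change $x$-exponents. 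Both substitution rules are chosen so that the $x'$-exponents are those of the piecewise-linear image.
\end{itemize}

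The main obstacle is condition (v), the bending rule. Away from $\alpha_k^\perp$, the substitution is a ring homomorphism carrying $f_\d$ to the mutated scattering term, and it carries the exponent $\br{\lambda_{L_1},\beta\ck}$ to the correct new pairing, since $\eta_k^{B^T}$ is linear there and transforms $\beta$ compatibly. So terms of $f^{\br{\lambda_{L_1},\beta\ck}}$ go to terms of the new power.

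The delicate case is when $\bl$ crosses $\alpha_k^\perp$, where the substitution rule switches. Suppose the line crosses from $\br{p,\alpha_k}\le0$ to $\ge0$, possibly bending at the wall with function $1+\hy_k$ raised to the power $m=\br{\lambda_{L_1},\alpha_k\ck}$. Here I will compute the ratio of the two substitutions for $x_k$, which is $\hy_k'^{\pm1}$ times a $y'_k$ power, using \eqref{exch rel}. The goal is to show that the transformed monomial is the old one times a term of $(1+\hy_k')^{|m'|}$ for the new slope, after reindexing the binomial term $j\mapsto m-j$. The sign bookkeeping with $\sgn(y_k)$ and the global multiplier is exactly what makes the $y'_k$ exponents match. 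I would carry this out separately for the two signs of $\sgn(y_k)$ and the two crossing directions, which is the bulk of the computation.
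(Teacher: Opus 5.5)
Your outline is the standard argument behind the result the paper quotes without proof (\cite[Lemma~4.9]{canonical}, modeled on \cite[Proposition~3.6]{GHKK}). The crossing computation is simpler than you expect: the two $x_k$-substitutions differ by exactly $\hy'_k$, with no stray power of $y'_k$, so a term $\binom{|m|}{j}\hy_k^j$ picked up at $\alpha_k^\perp$ becomes $\binom{|m|}{|m|-j}(\hy'_k)^{|m|-j}$. One slip: the scattering terms of the mutated diagram should be rewritten by the side-dependent \emph{monomial} substitution (the one defining $\eta_k^{B^T}$ on labeled curves), not by \eqref{hy mut}, whose binomial factors $(1+\hy_k)^{-b_{kj}}$ would not yield walls with univariate scattering terms.
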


Since we will need to mutate pairs of broken lines in connection with Proposition~\ref{struct}, we use the shorthand $\eta^{B^T}_\kk(\bl_1,\bl_2)$ for $\bigl(\eta^{B^T}_\kk(\bl_1),\eta^{B^T}_\kk(\bl_2)\bigr)$.

At each step, the mutation map $\eta_\kk^{B^T}$ applies one of two linear maps, depending on which side of $\alpha_{k_i}^\perp$ the output of $\eta^{B^T}_{k_{i-1}\cdots k_1}$ is on.
Given two vectors, if the same case applies to both vectors at every step, we say that the two vectors are in the same \newword{domain of definition} of $\eta_\kk^{B^T}$.
The following is \cite[Proposition~5.3]{canonical}, rewritten in our transposed conventions.

\begin{proposition}\label{mut pair}
Suppose $\tB$ has signed-nondegenerating coefficients, suppose $\lambda\in P$, suppose $\chi\in V^*$ is not contained in any wall of $\Scat^T(\tB)$, and let $\kk$ be a sequence of indices.
If $\lambda$ and $\chi$ are in the same domain of definition of $\eta_\kk^{B^T}$, then a pair $(\bl_1,\bl_2)$ of broken lines contributes to $a_\chi(p_1,p_2,\lambda)$ if and only if $\eta_\kk^{B^T}(\bl_1,\bl_2)$ contributes to $a_{\eta_\kk^{B^T}(\chi)}(\eta_\kk^{B^T}(p_1),\eta_\kk^{B^T}(p_2),\eta_\kk^{B^T}(\lambda))$.
\end{proposition}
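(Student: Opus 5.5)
We plan to derive Proposition~\ref{mut pair} from Lemma~\ref{mut broken line} by induction on the length of $\kk$, reducing to the case of a single mutation $\kk=k$. For one mutation, Lemma~\ref{mut broken line} gives a bijection $\bl\mapsto\eta_k^{B^T}(\bl)$ between broken lines for $p_i$ with endpoint $\chi$ in $\Scat^T(\tB)$ and broken lines for $\eta_k^{B^T}(p_i)$ with endpoint $\eta_k^{B^T}(\chi)$ in $\Scat^T(\mu_k(\tB))$. So it suffices to check the defining condition of $a_\chi$: that $\lambda_{\bl_1}+\lambda_{\bl_2}=\lambda$ holds exactly when the corresponding final exponents of the mutated pair sum to $\eta_k^{B^T}(\lambda)$. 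For longer $\kk$ we iterate. The hypotheses persist: signed-nondegenerating coefficients are preserved by mutation by definition. The condition that $\chi$ avoids walls transfers because, by Lemma~\ref{mut broken line} and consistency, $\eta_k^{B^T}$ carries the wall structure of $\Scat^T(\tB)$ to that of $\Scat^T(\mu_k(\tB))$ (in the affine case, also via the identification of the scattering fan with the mutation fan). Finally, being in a common domain of definition of $\eta_\kk^{B^T}$ means that at each step $\eta_{k_{i-1}\cdots k_1}^{B^T}(\lambda)$ and $\eta_{k_{i-1}\cdots k_1}^{B^T}(\chi)$ lie weakly on the same side of $\alpha_{k_i}^\perp$.

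The key computation is the single-step exponent check. The final domain of linearity of each $\bl_i$ ends at $\chi$. Since $\chi$ is not on $\alpha_k^\perp$, the final domain lies on the same side of $\alpha_k^\perp$ as $\chi$, after shrinking domains. By hypothesis, $\lambda$ lies weakly on that side as well. On that side, the substitution rule for $x_k$ in the definition of $\eta_k^{B^T}$ on labeled curves uses a single case. Tracking the $x'$-exponents, the substitution acts on $x$-exponents by the linear map that agrees with $\eta_k^{B^T}$ on that half-space. The $y$-substitution and the global factor $(y'_k)^{[\ldots]_+}$ only affect $y$-exponents, together with the extra $y'_k$ powers attached to the $x_k$ substitution. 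Hence the new final $x$-exponents are $L(\lambda_{\bl_1})$ and $L(\lambda_{\bl_2})$, where $L$ is that linear map. The new condition then reads $L(\lambda_{\bl_1})+L(\lambda_{\bl_2})=\eta_k^{B^T}(\lambda)=L(\lambda)$. Since $L$ is invertible, this is equivalent to $\lambda_{\bl_1}+\lambda_{\bl_2}=\lambda$, using linearity of $L$.

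One subtlety needs care: the final exponents $\lambda_{\bl_i}$ themselves need not lie on $\chi$'s side. We must make sure the correct linear map is the one dictated by the location of the domain, not by the exponent. The rule in Lemma~\ref{mut broken line} is indeed indexed by the domain $L\ni\chi$, so the same $L$ applies to both broken lines. It is exactly the linear piece of $\eta_k^{B^T}$ valid at $\lambda$ by the domain-of-definition hypothesis. We expect this point, together with verifying that $x'$-exponent bookkeeping in the substitution yields precisely the linear map $\eta_k^{B^T}|$ on $\chi$'s side, to be the main obstacle. The remaining steps (bijectivity and the induction) are formal consequences of Lemma~\ref{mut broken line}.
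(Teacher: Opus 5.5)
Your proof is correct. This paper does not prove the proposition itself; it quotes it from \cite[Proposition~5.3]{canonical}. Your argument is the natural one: apply Lemma~\ref{mut broken line} to each broken line, then observe that the $x$-exponent of the final label transforms by the linear piece of $\eta_k^{B^T}$ on the side of $\alpha_k^\perp$ containing $\chi$, not the side containing $\lambda_{\bl_i}$. That piece is injective, and by the domain-of-definition hypothesis it agrees with $\eta_k^{B^T}$ at $\lambda$. The argument is complete except for one loosely justified step.

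That step is the transfer of the hypothesis on $\chi$ through the induction. Lemma~\ref{mut broken line} says nothing about walls, so it does not show that this hypothesis survives mutation. However, the induction only uses $\eta_{k_{i-1}\cdots k_1}^{B^T}(\chi)\notin\alpha_{k_i}^\perp$ for each $i$, and this holds for the following reason. A point in no wall lies in the interior of a maximal cone of the scattering fan. Since the scattering fan refines $\F_{B^T}$ \cite[Theorem~4.10]{scatfan}, the point lies in the interior of a maximal $B^T$-cone. There no coordinate of any $\eta_{\kk'}^{B^T}(\chi)$ can vanish, since otherwise the $B^T$-class of $\chi$ would have lower-dimensional closure. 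Alternatively, you can invoke the mutation invariance of the cluster scattering diagram from \cite{GHKK}.
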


\subsection{Acyclic affine type}\label{aff back sec}
We now summarize background material on the acyclic affine case.
Additional information is in \cite{afforb,affdenom,affscat}.
We fix an acyclic exchange matrix $B$ of affine type and the associated root system $\RS$ and Coxeter element~$c$.
There is an imaginary root $\delta$ and a finite root system $\RSfin\subset\RS$ such that every real root in $\RS$ is a positive scalar multiple of $\beta_0+k\delta$ for some $\beta_0\in\RSfin$ and $k\in\integers$.
(See \cite[Theorem~5.6]{Kac} and \cite[Proposition~6.3]{Kac}.) 

\subsubsection{The affine generalized associahedron fan}\label{agaf sec}
In \cite{affdenom}, a set $\AP{c}\subseteq\RS$ is constructed and a complete, infinite, simplicial fan $\Fan_c(\RS)$ is defined, with the rays of $\Fan_c(\RS)$ being precisely the rays spanned by roots in~$\AP{c}$ and the cones being the nonnegative spans of the sets of pairwise compatible roots for a certain compatibility relation called \newword{$c$-compatibility}.
A \newword{$c$-cluster} is a maximal set of pairwise $c$-compatible roots in~$\AP{c}$, and the maximal cones of $\Fan_c(\RS)$ are the spans of the $c$-clusters.
We will not need the details of the construction of the set $\AP{c}$, and we will only need some of the details of $c$-compatibility relation.

The unique imaginary root in $\AP{c}$ is $\delta$.
A $c$\nobreakdash-cluster is \newword{imaginary} if it contains~$\delta$ and \newword{real} if it does not.
Real clusters have $n$ elements and imaginary $c$-clusters have $n-1$ elements.
We write $\Fan_c^\re(\RS)$ for the subfan of $\Fan_c(\RS)$ consisting of cones not containing $\delta$.
Thus $\Fan_c(\RS)$ is the union of $\Fan_c^\re(\RS)$ and the \newword{star} of $\delta$ in $\Fan_c(\RS)$ (the set of cones of $\Fan_c(\RS)$ containing $\delta$).
Their intersection is the link of $\delta$ i.e. the set of cones $C$ of $\Fan_c(\RS)$ not containing~$\delta$ such that $C\cup\set\delta$ spans a cone in $\Fan_c(\RS)$.
The union of the cones in the star of~$\delta$ is a codimension-$1$ cone, and the relative interior of that cone is the complement of the subfan $\Fan_c^\re(\RS)$ in~$V$.

The set $\APT{c}$ is the set of roots in $\AP{c}$ that are in finite $c$-orbits.  
This set includes~$\delta$, which is fixed by the action of $c$.
We write $\APTre{c}$ for $\APT{c}\setminus\set{\delta}$, the set of real roots in~$\APT{c}$.
There is a unique subset $\SimplesT{c}$ of $\APT{c}$ that is minimal with respect to the property that every root in $\APT{c}$ is in the nonnegative linear span of $\SimplesT{c}$.
This subset does not contain $\delta$, or in other words, $\SimplesT{c}\subseteq\APTre{c}$.
The set $\SimplesT{c}$ decomposes into $1$, $2$, or $3$ $c$-orbits.  
Take some arbitrary numbering of the orbits as $\SimplesT{c}_o$ for $o\in\set{1}$ or $o\in\set{1,2}$ or $o\in\set{1,2,3}$.
Roots in $\SimplesT{c}$ that are in different $c$-orbits are orthogonal, and this is the finest possible partition of $\SimplesT{c}$ into mutually orthogonal subsets.  
Let $\APTre{c;o}$ be the subset of $\APTre{c}$ consisting of real roots in the nonnegative span of~$\SimplesT{c}_o$.
The sets $\APTre{c;o}$ and $\APTre{c;o'}$ are disjoint when $o\neq o'$, and $\APTre{c}$ is the union of the $\APTre{c;o}$.

Given a $c$-orbit $\SimplesT{c}_o$ with $|\SimplesT{c}_o|=k$ , choose a root in $\SimplesT{c}_o$ to name $\beta_{[0]}$, let $\beta_{[1]}=c\beta_{[0]}$ and continue cyclically, modulo the size of the orbit.
The roots in $\APTre{c;o}$ are $\beta_{[i,j]}=\beta_{[i]}+\beta_{[i+1]}+\cdots\beta_{[j]}$ with $i\le j$ and $j-i+1<k$.
This is the unique expression for $\beta_{[i,j]}$ as a linear combination of roots in $\SimplesT{c}$.
For a root $\phi=\beta_{[i,j]}$ write $\SuppT(\phi)$ for the set $\set{\beta_{[i]},\beta_{[i+1]},\ldots,\beta_{[j]}}$ of roots that appear with nonzero coefficients in the expression.
Viewing $\SimplesT{c}_o$ as a cycle, $\SuppT(\phi)$ is a path in that cycle.
(There is a different notion of support that one could define, namely the support of a root as a linear combination of simple roots, but that notion will not appear in this paper, so we will often refer to $\SuppT(\phi)$ simply as the support of~$\phi$.)

The vector $\delta$ does not have a well-defined support if~$\SimplesT{c}$ consists of more than one $c$-orbit, because the sum of any one $c$-orbit in $\SimplesT{c}$ is~$\delta$.

Suppose $\gamma,\gamma'\in\APTre{c}$.
The two roots are \newword{nested} if $\SuppT(\gamma)\subseteq\SuppT(\gamma')$ or $\SuppT(\gamma')\subseteq\SuppT(\gamma)$.
They are \newword{spaced} if ${\SuppT(\gamma)\cup\SuppT(c\gamma)\cup\SuppT(c^{-1}\gamma)}$ is disjoint from $\SuppT(\gamma')$.
If $\gamma\in\APTre{c;o}$ and $\gamma'\in\APTre{c;o'}$ with $o\neq o'$, then $\gamma$ and $\gamma'$ are spaced.
If they are both in the same $\APTre{c;o}$, then they are spaced if and only if their supports are disjoint and there is at least one root in $\SimplesT{c}_o$ between them on the cycle, on each side.
The following is the concatenation of \cite[Proposition~5.6]{affdenom} and \mbox{\cite[Proposition~5.12]{affdenom}}.

\begin{proposition}\label{tube compat}
A root $\gamma\in\APre{c}$ is compatible with $\delta$ if and only if $\gamma\in\APTre{c}$.
Two distinct roots $\gamma,\gamma'\in\APTre{c}$ are $c$-compatible if and only if they are nested or spaced.
\end{proposition}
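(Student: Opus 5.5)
Since this proposition is the concatenation of \cite[Proposition~5.6]{affdenom} and \cite[Proposition~5.12]{affdenom}, in the paper it suffices to cite those results. If I were to prove it from scratch, I would work directly from the definition of $c$-compatibility in \cite{affdenom}, which I am assuming (not recalled here) is recursive: simple negative roots and $\delta$ are handled as base cases, and the relation is propagated by the rotation-like action of $c$ (conjugated through sources/sinks of the acyclic orientation), under which $c$-compatibility is invariant. The first step is to record that invariance: $\gamma$ and $\gamma'$ are $c$-compatible if and only if $c\gamma$ and $c\gamma'$ are, whenever both pairs lie in $\AP{c}$. Since $\delta$ is fixed by $c$, this reduces the first assertion to a statement about $c$-orbits.

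For the first assertion I would split real roots by their orbit type. If $\gamma\in\APre{c}$ lies in an infinite $c$-orbit, then writing $\gamma=\beta_0+k\delta$ up to scaling, iterating $c$ (or $c^{-1}$) eventually moves $\gamma$ to a root whose compatibility with $\delta$ can be read off from the base case of the recursion; I would aim to show that this base case gives incompatibility. Invariance then gives incompatibility for $\gamma$ itself. Conversely, for $\gamma$ in a finite orbit, I would show compatibility with $\delta$ directly at the base case. Here I would use the fact that $\delta$ spans the imaginary ray, so that compatibility with $\delta$ amounts to $\gamma$ lying in the boundary of the imaginary wall, that is, in a cone of the star of $\delta$.

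For the second assertion, both roots lie in the tubes $\APTre{c;o}$. The action of $c$ is then the cyclic shift $\beta_{[i]}\mapsto\beta_{[i+1]}$, and each root $\beta_{[i,j]}$ behaves like an arc on a cycle of length $k=|\SimplesT{c}_o|$. When $o\neq o'$, the roots are orthogonal, and I expect compatibility to follow from orthogonality, so these pairs are always compatible (and are spaced by definition). Within a single tube, I would compute the compatibility degree of $\beta_{[i,j]}$ and $\beta_{[i',j']}$ and show it vanishes exactly when the supports are nested or spaced. The latter means the supports are disjoint and neither support is adjacent to the other. Using $c$-invariance, one support can be rotated to start at $\beta_{[0]}$, which leaves a finite case check.

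The main obstacle is this within-tube computation. One has to show that overlapping but non-nested supports, and disjoint but adjacent supports, are both incompatible. I would attempt this by relating the tube combinatorics to the type-$A$ (or type-$C$) almost positive roots model restricted to a cycle, where compatibility is noncrossing of arcs. Nested and spaced are exactly the noncrossing and non-adjacent configurations.
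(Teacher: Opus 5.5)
Citing \cite[Propositions~5.6 and~5.12]{affdenom} is exactly what the paper does; it gives no argument beyond that citation, so your first sentence already matches the paper's treatment. Your from-scratch sketch, however, has a genuine gap at its core.

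\textbf{Main gap: roots in finite orbits.} In affine type, $c$-compatibility cannot come from a recursion that moves roots toward negative simple roots. The roots of $\APT{c}$ lie in finite $\tau_c$-orbits (on which $\tau_c$ acts as $c$), and no finite orbit contains a negative simple root. So for these roots there is no base case to reach. Accordingly, \cite{affdenom} gives the compatibility degree by an explicit formula, not by such a recursion. The ``if'' direction of the first assertion and the whole second assertion concern only roots of this kind, so your step ``show compatibility with $\delta$ directly at the base case'' has nothing to work with.

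The fallback you suggest is circular, for three reasons. First, that $\gamma$ is compatible with $\delta$ means, by definition, that $\gamma$ spans a ray of the star of $\delta$ in $\Fan_c(\RS)$. Second, the imaginary ray is spanned by $\nu_c(\delta)\in V^*$, not by $\delta$. Third, the identification of the $\nu_c$-image of the star of $\delta$ with $\d_\infty$ comes from \cite{affscat}, which is built on the fan $\Fan_c(\RS)$ of \cite{affdenom}, and hence on this very proposition.

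\textbf{Two smaller points.} Your infinite-orbit argument does work once you phrase it with $\tau_c$ rather than $c$. Iterating the literal Coxeter element on a preprojective or preinjective root either stays among positive roots forever, or leaves $\AP{c}$ at a negative root that is typically not simple. By contrast, the $\tau_c$-orbit of such a root passes through some $-\alpha_i$, which is incompatible with $\delta$ because $\delta$ has full support.

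Orthogonality does not imply compatibility. Already in type $A_3$ with $c=s_1s_2s_3$, the roots $\alpha_1+\alpha_2$ and $\alpha_2+\alpha_3$ are orthogonal but not $c$-compatible. For $\gamma\in\APTre{c;o}$ and $\gamma'\in\APTre{c;o'}$ with $o\neq o'$, what you actually have is the stronger vanishing $\br{\nu_c(\gamma'),\gamma\ck}=\br{\nu_c(\gamma),(\gamma')\ck}=0$ of Proposition~\ref{nuc phip phi}, that is, vanishing of $E_c$ in both orders.

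So a from-scratch proof has to start from the explicit compatibility-degree formula of \cite{affdenom} and compute with pairings like those in Propositions~\ref{E delta in tubes} and~\ref{nuc phip phi}. In particular, the within-tube criterion ``nested or spaced'' must come out of such computations. Appealing to a finite-type arc model would presuppose the identification you are trying to prove.
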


Let $J_o$ be a maximal set of pairwise compatible real roots in $\APTre{c;o}$ for some~$o$.
The number of roots in $J_o$ is $|\SimplesT{c}_o|-1$.
There is a unique \newword{maximal root} in~$J_o$ whose support has size $|J_o|$.
The support of the maximal root in $J_o$ contains the support of all other roots in~$J_o$.
Given $\gamma\in J_o$, we say that $\phi\in J_o$ is the \newword{next larger root} from $\gamma$ in $J_o$ if $\phi$ has minimal support among roots in $J_o$ whose support contains $\SuppT(\gamma)$.
Every root in $J_o$ except the maximal root has a next larger root.
We say that $\phi$ is a \newword{next smaller root} from~$\gamma$ in $J_o$ if $\phi$ has maximal support among roots in $J_o$ whose support is contained in $\SuppT(\gamma)$.
The root $\gamma$ may have zero, one, or two next smaller roots.
A root $\phi$ is a next smaller root from~$\gamma$ if and only if $\gamma$ is the next larger root from $\phi$.

Choose a root $\gamma$ in a maximal pairwise compatible set $J_o$ and let $J_o'$ be the unique maximal set of pairwise compatible real roots in $\APTre{c;o}$ of the form $\bigl(J_o\setminus\set{\gamma}\bigr)\cup\set{\gamma'}$ with $\gamma'\neq \gamma$.
We say that $J_o'$ is obtained by \newword{exchanging}~$\gamma$ from $J_o$.

\subsubsection{The isomorphism $\nu_c$}\label{nuc sec}
There is a piecewise-linear map $\nu_c$ from $V$ to $V^*$ that induces an isomorphism of fans \cite[Theorem~2.9]{affscat} from $\Fan_c(\RS)$ to the mutation fan $\F_{B^T}$, which, as already mentioned, coincides with the cluster scattering fan in affine type.
The isomorphism restricts to an isomorphism of fans from $\Fan_c^\re(\RS)$ to the $\g$-vector fan $\gFan(B)$ \cite[Theorem~1.1(1)]{affdenom}.
Also, $\nu_c$ restricts to a bijection from the lattice $Q$ in $V$ spanned by the simple roots $\alpha_1,\ldots,\alpha_n$ to the lattice~$P$ in $V^*$ spanned by the fundamental weights $\rho_1,\ldots,\rho_n$.
We don't need the full definition of $\nu_c$ here (see \cite[Section~9.1]{affdenom}), but it will be useful to know that if $\phi$ is a positive root, then $\nu_c(\beta)=-E_c(\,\cdot\,,\beta)$.

The map $\nu_c$ is linear on $V\setminus|\Fan_c^\re(\RS)|$, so the isomorphism also restricts to a linear isomorphism from the star of $\delta$ in $\Fan_c(\RS)$ to the star of $\nu_c(\delta)$ in $\F_{B^T}$.
The extreme rays of the star of $\delta$ in $\Fan_c(\RS)$ are spanned by the roots in $\SimplesT{c}$.

The union of the cones in the star of $\nu_c(\delta)$ in $\F_{B^T}$ is a cone of dimension $n-1$ and is in fact a wall of the transposed cluster scattering diagram that we call the \newword{imaginary wall}~$\d_\infty$.
The imaginary wall is also the nonnegative linear span of~$\sett{\nu_c(\beta):\beta\in\SimplesT{c}}$.
The \newword{imaginary ray} is the ray spanned by $\nu_c(\delta)$.
An \newword{imaginary cone} is a cone of $\F_{B^T}$ that has the imaginary ray as an extreme ray.
In particular, every imaginary cone is contained in $\d_\infty$.
The maximal imaginary cones are indexed by maximal sets $J$ of pairwise $c$-compatible roots in $\APTre{c}$.
Choosing such a $J$ amounts to choosing, for each orbit in $\SimplesT{c}$, a maximal set $J_o$ of $c$-compatible roots in $\APTre{c;o}$, and taking the union.
The imaginary cone indexed by $J$ is the nonnegative span of $\nu_c(J\cup\set\delta)$.
Thus the combinatorics of imaginary cones in $\F_{B^T}$ is the combinatorics of a product of simplicial cyclohedra, in the sense of \cite{BottTaubes,CarrDevadoss,Markl}.

We present five propositions about $\nu_c$.
The first is a restatement of \cite[Proposition~2.16]{affdenom}, the second is a part of \cite[Proposition~7.14]{affscat} that has been rephrased in terms of $\nu_c$, and the third is \cite[Lemma~7.16]{affscat}.

\begin{proposition}\label{E delta in tubes}
If $\phi\in\APT{c}$, then $\br{\nu_c(\phi),\delta}=0$ and $\br{\nu_c(\delta),\phi}=0$.
\end{proposition}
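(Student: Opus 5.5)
Although the statement is attributed to \cite[Proposition~2.16]{affdenom}, I would prove it directly from the pieces already on the table: the formula $\nu_c(\beta)=-E_c(\,\cdot\,,\beta)$ for positive roots $\beta$, the fact that $\delta$ is fixed by $c$, and the structure of $\APT{c}$ as the set of roots in finite $c$-orbits. The plan is to reduce both identities to the single claim that $E_c(\phi\ck,\delta)=0$ and $E_c(\delta\ck,\phi)=0$ for $\phi\in\APT{c}$. Here we use $\br{\nu_c(\phi),\delta}=-E_c(\delta,\phi)$ up to the positive scalar relating $\delta$ to its co-root version, and similarly for the second pairing. Note that $\delta$ and every root of $\APT{c}$ are positive: $\APT{c}$ lies in the nonnegative span of $\SimplesT{c}\subseteq\APTre{c}$, and these are positive roots in $\AP{c}$. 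So the formula for $\nu_c$ applies in every case.

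The key algebraic input is the standard relation between $E_c$, $K$, $\omega_c$ and $c$. First, $E_c+E_c^T=K$ and $E_c-E_c^T=\omega_c$, in the appropriate co-root/root bookkeeping. Second, the Coxeter element satisfies $E_c(\beta\ck,\gamma)=-E_c(\gamma\ck,c\beta)$, or equivalently $K(\beta\ck,\gamma)=E_c(\beta\ck,\gamma)-E_c(\beta\ck,c\gamma)$ (the known identity for the Euler form in acyclic type; I would cite or reverify it on simple roots).

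From $c\delta=\delta$ and the second identity, we get $E_c(\beta\ck,\delta)=-E_c(\delta\ck,c\beta)$ for every $\beta$. Since $\delta$ is in the radical of $K$, we also get $E_c(\beta\ck,\delta)=-E_c(\delta\ck,\beta)$. Combining these gives $E_c(\delta\ck,\beta)=E_c(\delta\ck,c\beta)$: the functional $E_c(\delta\ck,\cdot)$ is $c$-invariant.

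Now take $\phi\in\APT{c}$ with $c$-orbit of size $m$. By invariance, $m\,E_c(\delta\ck,\phi)=E_c\bigl(\delta\ck,\sum_{i=0}^{m-1}c^i\phi\bigr)$. The main obstacle is showing that this orbit sum lies in the radical direction, i.e.\ is a multiple of $\delta$, and that $E_c(\delta\ck,\delta)=\tfrac12K(\delta\ck,\delta)=0$.

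The second point is immediate from $E_c+E_c^T=K$ and $\delta\in\operatorname{rad}K$. For the first point, the orbit sum is $c$-fixed. The $c$-fixed space of the affine Coxeter element acting on $V$ is the line spanned by $\delta$: the finite part $c$ has no eigenvalue $1$, since $1-c$ is invertible modulo $\delta$. So the orbit sum is a multiple of $\delta$, hence $E_c(\delta\ck,\phi)=0$, and then $E_c(\phi\ck,\delta)=-E_c(\delta\ck,\phi)=0$ as well.

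Translating back through $\nu_c$ gives both stated equalities. For $\phi=\delta$ the claim reduces to $E_c(\delta\ck,\delta)=0$, which we already have.
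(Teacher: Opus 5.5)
Your approach works and differs from the paper, which gives no argument here and simply quotes \cite[Proposition~2.16]{affdenom}. However, the step you flag as the main obstacle rests on a false claim.

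It is not true that $1-c$ is invertible modulo $\delta$. An affine Coxeter element has a nontrivial Jordan block at eigenvalue $1$; this non-diagonalizability is what Lemmas~\ref{delta perp} and~\ref{dom of def} exploit. Consequently the map induced by $c$ on $V/\reals\delta$ always fixes a nonzero vector. Concretely, take $B=\begin{bsmallmatrix}0&2\\-2&0\end{bsmallmatrix}$. Then $\delta=\alpha_1+\alpha_2$ and $c\alpha_1=s_1s_2\alpha_1=3\alpha_1+2\alpha_2=\alpha_1+2\delta$, so $c$ acts as the identity on $V/\reals\delta$.

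What your orbit-sum argument actually needs is that the fixed space of $c$ on $V$ itself is $\reals\delta$. That is true, but for a different reason. The identity you invoke, correctly oriented for the paper's conventions, is $K=E_c(1-c^{-1})$, i.e.\ $K(\beta,\gamma)=E_c(\beta,\gamma)-E_c(\beta,c^{-1}\gamma)$. This follows from Howlett's formula $c=-E_{c^{-1}}^{-1}E_c$, quoted in the proof of Proposition~\ref{crucial lemma}. Your version with $c$ in place of $c^{-1}$ already fails for $\beta=\gamma=\alpha_1$ in the example above: it gives $-2$ instead of $2$.

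Since $E_c$ is unitriangular, hence invertible, you get $\ker(1-c)=\ker(1-c^{-1})=\ker K=\reals\delta$. Alternatively, suppose $s_1\cdots s_nx=x$. Then $s_1x-x\in\reals\alpha_1$ equals $s_2\cdots s_nx-x$, which lies in the span of $\alpha_2,\ldots,\alpha_n$. So $s_1x=x$, and induction gives $K(\alpha_i\ck,x)=0$ for all $i$.

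The $c^{-1}$ correction is otherwise harmless. In fact, setting $\beta=\delta$ in the corrected identity gives the invariance $E_c(\delta,\gamma)=E_c(\delta,c^{-1}\gamma)$ in one line, using only $K(\delta,\,\cdot\,)=0$.

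With that repair your proof is complete. For comparison, within the paper's framework the quickest derivation uses $E_c=\frac12(K+\omega_c)$ and $K(\,\cdot\,,\delta)=0$ to get $\br{\nu_c(\delta),\phi}=-E_c(\phi,\delta)=-\frac12\omega_c(\phi,\delta)$. One then applies the criterion \cite[Proposition~4.10]{affdomreg}, quoted in the proof of Lemma~\ref{scatter region}: a vector has finite $c$-orbit if and only if $\omega_c(\delta,\,\cdot\,)$ vanishes on it. The other equality follows from $E_c(\delta,\phi)+E_c(\phi,\delta)=K(\delta,\phi)=0$.

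Since $E_c(\delta,\,\cdot\,)=\frac12\omega_c(\delta,\,\cdot\,)$, your invariance-plus-orbit-sum argument is precisely a self-contained proof of the needed direction of that criterion. It uses only Howlett's identity and $\ker K=\reals\delta$, and it applies to any vector in a finite $c$-orbit, not just to roots of $\APT{c}$.
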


\begin{proposition}\label{E omega delta in tubes}
If $\phi\in\AP{c}$, then $\br{\nu_c(\delta),\phi}=0$ if and only if $\phi\in\APT{c}$.
\end{proposition}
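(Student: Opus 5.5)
The plan is to compute the pairing $\br{\nu_c(\delta),\phi}$ explicitly and reduce the statement to a known characterization of the finite $c$-orbits in terms of the form $\omega_c$.

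\textbf{Reducing to a bilinear form.} The direction ``$\phi\in\APT{c}$ implies $\br{\nu_c(\delta),\phi}=0$'' is exactly the second half of Proposition~\ref{E delta in tubes}, so only the converse needs work. Since $\delta$ is a positive root, $\nu_c(\delta)=-E_c(\,\cdot\,,\delta)$. Hence $\br{\nu_c(\delta),\phi}$ is a nonzero multiple of $E_c(\phi\ck,\delta)$, where $\phi\ck$ is the coroot attached to $\phi$ (for $\phi=\delta$ one uses the corresponding positive multiple).

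\textbf{Rewriting $E_c$ in terms of $\omega_c$.} By construction, $E_c+E_c^T$ is the symmetrized Cartan form $K$ and $E_c-E_c^T$ is $\omega_c$, up to the $d_i$-rescalings built into the co-root/root coordinates. The imaginary root $\delta$ spans the radical of $K$, so $K(\,\cdot\,,\delta)=0$. This gives $E_c(\phi\ck,\delta)=-E_c(\delta\ck,\phi)$ after rescaling, and therefore $E_c(\phi\ck,\delta)=\tfrac12\omega_c(\phi\ck,\delta)$. The condition $\br{\nu_c(\delta),\phi}=0$ is thus equivalent to $\omega_c(\phi\ck,\delta)=0$.

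\textbf{Identifying the finite orbits.} It remains to show that a root $\phi\in\AP{c}$ with $\omega_c(\phi\ck,\delta)=0$ lies in a finite $c$-orbit.
\begin{itemize}
\item For real $\phi$, I would use the standard affine fact that $c$ acts on $\delta^\perp$ modulo $\delta$ with finite order $h$. Concretely, $c^h\beta=\beta+m\,\omega_c(\beta\ck,\delta)\,\delta$ for a fixed nonzero scalar $m$. Since $c$ fixes $\delta$, the orbit of a real root $\beta$ is finite exactly when $\omega_c(\beta\ck,\delta)=0$.
\item To prove the displayed formula, I would use $c=(1-E_c^{-1}E_c^T)$ in co-root/root coordinates, the formula $(1-c)\beta=E_c^{-1}$ applied to $K(\beta\ck,\cdot)$, and sum the telescoping differences over $h$ steps.
\item For $\phi=\delta$ the statement is trivial.
\item For the remaining elements of $\AP{c}$, namely the negative simple roots $-\alpha_i$ in the almost-positive-roots model, I would check directly that $E_c(\alpha_i\ck,\delta)\neq0$. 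This should be consistent with the fact that none of these roots lies in $\APT{c}$.
\end{itemize}

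\textbf{Main obstacle.} The hard step is establishing cleanly that finite $c$-orbit is equivalent to $\omega_c(\beta\ck,\delta)=0$, that is, controlling the drift term $c^h\beta-\beta$. I would try to cite this from the affine orbit analysis in \cite{afforb}, and otherwise prove it via the Jordan structure of $c$ on $V$: $c$ has the eigenvalue $1$ with a single Jordan block of size $2$ spanned by $\delta$ and a vector pairing nontrivially with $\omega_c(\,\cdot\,,\delta)$.
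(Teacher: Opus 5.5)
Your fourth bullet is false, and it is the one step of the plan that cannot be carried out. A negative simple root can pair to zero with $\nu_c(\delta)$. Take the acyclic $\tilde A_2$ exchange matrix with $b_{12}=b_{13}=b_{23}=1$, so that $c=s_1s_2s_3$ and $\delta=\alpha_1+\alpha_2+\alpha_3$. Then $E_c(\alpha_2\ck,\delta)=[b_{21}]_-+1+[b_{23}]_-=0$. Your own criterion predicts exactly this: $c\alpha_2=\alpha_1+\alpha_3$ and $c(\alpha_1+\alpha_3)=\alpha_2$, so $-\alpha_2$ lies in the finite $c$-orbit $\set{-\alpha_2,-\alpha_1-\alpha_3}$.

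The fix is to drop the bullet. Your orbit criterion holds for every $x\in V$, not just roots, so it already covers the $-\alpha_i$. If you read $\APT{c}$ literally as in Section~\ref{agaf sec} (all roots of $\AP{c}$ in finite $c$-orbits), the statement then follows, with $-\alpha_2\in\APT{c}$ in this example. If instead you insist that $\APT{c}$ consist of positive roots (as the claim that $\SimplesT{c}$ spans $\APT{c}$ nonnegatively suggests), then the statement is only correct for positive $\phi$. That is the only case the paper uses, since walls are normal to positive roots of $\AP{c}$.

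Apart from this, your argument is correct and takes a different route from the paper, which gives no proof and simply quotes \cite[Proposition~7.14]{affscat}. Writing $E_c=\tfrac12(K+\omega_c)$ and using $K(\,\cdot\,,\delta)=0$ gives $\nu_c(\delta)=-\tfrac12\omega_c(\,\cdot\,,\delta)$; the paper imports this identity as \cite[Lemma~4.15]{affdomreg}. The finite-orbit criterion is \cite[Proposition~4.10]{affdomreg}, which the paper itself invokes in Lemma~\ref{scatter region}. So your version reduces the proposition to two facts already in play, rather than a citation.

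Your Jordan-block justification of the criterion works. Since $c$ preserves $K$ and $Q$, it has finite order $h$ on $V/\reals\delta$, so $c^h=1+f(\,\cdot\,)\,\delta$ for a $c$-invariant functional $f$. Because $\ker(1-c)=\reals\delta$, the $c$-invariant functionals form the line spanned by $\omega_c(\,\cdot\,,\delta)$, and $f\neq0$ because $c$ has infinite order.

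Your matrix formula for $c$ is off, though. Howlett's theorem, as used in the proof of Proposition~\ref{crucial lemma}, gives $c=-E_{c^{-1}}^{-1}E_c$, whereas your $1-E_c^{-1}E_c^T$ (reading $E_c^T$ as $E_{c^{-1}}$) equals $1+c^{-1}$.
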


\begin{proposition}\label{rescue}
Suppose that $\beta\in\APTre{c}$ and that $\phi\in\APre{c}\setminus\APTre{c}$.
If $\omega_c(\phi,\beta)>0$, then ${\d_\infty\subseteq\set{x\in V^*:\br{x,\phi}\le0}}$.
\end{proposition}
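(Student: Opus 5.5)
The plan is to reduce the statement to a sign computation on the extreme rays of the imaginary wall. By Section~\ref{nuc sec}, $\d_\infty$ is the nonnegative linear span of $\sett{\nu_c(\gamma):\gamma\in\SimplesT{c}}$. Since $\br{\,\cdot\,,\phi}$ is linear, it suffices to show that $\br{\nu_c(\gamma),\phi}\le0$ for every $\gamma\in\SimplesT{c}$. Every $\gamma\in\SimplesT{c}$ is a positive real root, so $\nu_c(\gamma)=-E_c(\,\cdot\,,\gamma)$, and the goal becomes
\[
E_c(\phi,\gamma)\ge0\quad\text{for all }\gamma\in\SimplesT{c}.
\]

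The main tool is the dichotomy between the "preprojective-like" and "preinjective-like" real roots outside the tubes, read off from the Euler-type form $E_c$. From the definitions, $\omega_c(x,y)=E_c(x,y)-E_c(y,x)$ and $K(x,y)=E_c(x,y)+E_c(y,x)$.

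First I would treat the case that $\phi$ is a positive real root in $\APre{c}\setminus\APTre{c}$. I would show that either $E_c(\phi,\gamma)\ge0$ and $E_c(\gamma,\phi)\le0$ for all roots $\gamma$ in $\APTre{c}$, or $E_c(\phi,\gamma)\le0$ and $E_c(\gamma,\phi)\ge0$ for all such $\gamma$. In representation-theoretic language, $E_c$ is the Euler form of the tame hereditary algebra attached to $c$, the positive roots outside tubes are dimension vectors of preprojective or preinjective indecomposables, and the roots in $\APTre{c}$ are regular. For preprojective $P$ and regular $R$ we have $\operatorname{Ext}(P,R)=0$ and $\operatorname{Hom}(R,P)=0$, so $E_c(P,R)=\dim\operatorname{Hom}\ge0$ and $E_c(R,P)=-\dim\operatorname{Ext}\le0$; dually for preinjectives.

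To stay combinatorial, I would instead use the $c$-orbit description. Every root of $\APre{c}\setminus\APTre{c}$ lies in an infinite $c$-orbit, and $E_c(c\,\cdot,c\,\cdot)=E_c$. By Proposition~\ref{E omega delta in tubes}, the sign of $\br{\nu_c(\delta),\phi}$, which equals $-\tfrac12\omega_c(\phi,\delta)$ because $K(\,\cdot\,,\delta)=0$, is nonzero. I would show that this sign is what separates the two cases. Which sign corresponds to "preprojective" has to be checked against the paper's conventions, but the argument is symmetric.

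Given the dichotomy, the hypothesis $\omega_c(\phi,\beta)=E_c(\phi,\beta)-E_c(\beta,\phi)>0$ with $\beta\in\APTre{c}$ rules out the preinjective case, since there it would be $\le0$. So $E_c(\phi,\gamma)\ge0$ for all $\gamma\in\APTre{c}$, in particular for $\gamma\in\SimplesT{c}$, and we are done.

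Second, I would handle separately any negative simple roots $-\alpha_i$ in $\APre{c}\setminus\APTre{c}$. There $\br{\nu_c(\gamma),-\alpha_i}=E_c(\alpha_i,\gamma)$ can be evaluated directly. The hypothesis $\omega_c(-\alpha_i,\beta)>0$ forces a sign on the $b_{ij}$ relevant to $\beta$'s support, and acyclicity (sources versus sinks of $c$) should then give the needed inequality. This is also covered by the first case if one notes that $-\alpha_i$ for a source behaves like a preprojective-type root under $c$.

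The main obstacle is establishing the sign dichotomy for $E_c(\phi,\gamma)$ uniformly over all $\gamma\in\SimplesT{c}$ without importing representation theory. If needed, I would simply invoke the module-theoretic interpretation, namely the Euler form of the path algebra of the acyclic valued quiver of $B$, together with the standard Hom/Ext vanishing between preprojective, regular and preinjective components.
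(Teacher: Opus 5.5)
\paragraph{The negative simple roots are a real gap, and the statement fails there.} The paper gives no argument of its own here; it quotes \cite[Lemma~7.16]{affscat}. Your reduction to the extreme rays $\nu_c(\gamma)$, $\gamma\in\SimplesT{c}$, is sound. For a \emph{positive} real root $\phi\in\APre{c}\setminus\APTre{c}$, the preprojective/preinjective dichotomy together with $\omega_c(x,y)=E_c(x,y)-E_c(y,x)$ does prove the claim, and you are right that the orientation convention is irrelevant. Your second case, the negative simple roots, cannot be completed, because there the statement as written is false. Take $\tilde A_3$ with $b_{12}=b_{23}=b_{34}=b_{14}=1$, the other entries above the diagonal $0$, and $c=s_1s_2s_3s_4$. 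Then $c\alpha_2=\alpha_3$, $c\alpha_3=\alpha_1+\alpha_4$ and $c(\alpha_1+\alpha_4)=\alpha_2$, so $\SimplesT{c}=\set{\alpha_2,\alpha_3,\alpha_1+\alpha_4}$. Let $\phi=-\alpha_3$; it lies in $\APre{c}$ but not in $\APTre{c}$, whose elements are the positive roots $\beta_{[i,j]}$. Let $\beta=\alpha_2\in\APTre{c}$. Then $\omega_c(\phi,\beta)=-b_{32}=1>0$. Yet $\nu_c(\alpha_3)=\rho_4-\rho_3$ lies in $\d_\infty$ and $\br{\nu_c(\alpha_3),\phi}=1>0$; this is $\br{\nu_c(\alpha_3),\alpha_3\ck}=-1$ from Proposition~\ref{nuc phip phi}. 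In your notation, $E_c(\alpha_i,\gamma)\le0$ fails at $\gamma=\alpha_i$. The same happens, with $\beta=c^{-1}\alpha_i$, whenever a simple root lies in a $c$-orbit of $\SimplesT{c}$ of size at least $3$.

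What breaks is your claim that every root of $\APre{c}\setminus\APTre{c}$ lies in an infinite $c$-orbit. The root $-\alpha_3$ lies in a finite orbit of the linear action; only its $\tau_c$-orbit is infinite. Hence $\omega_c(-\alpha_3,\delta)=0$, and it is neither preprojective-like nor preinjective-like. For the same reason, Proposition~\ref{E omega delta in tubes}, which you invoke, also needs $\phi$ positive, since here $\br{\nu_c(\delta),-\alpha_3}=\br{\rho_4-\rho_1,-\alpha_3}=0$. So the proposition must be read for positive $\phi$. That is all Lemma~\ref{hopefully} uses, since a root that cuts $\beta$ is positive. Your ``sources versus sinks'' remark is correct exactly when $\alpha_i$ is not a tube root.

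\paragraph{A self-contained proof of the positive case.} For the corrected statement, the combinatorial dichotomy you left open has a short proof from tools already in the paper, with no species or Hom/Ext vanishing. For $\beta\in\APTre{c}$ we also have $c^{-1}\beta\in\APTre{c}$. Proposition~\ref{crucial lemma}, in the form of Proposition~\ref{crucial prop}, gives $\omega_c(\phi,\beta)=-\br{\nu_c(\beta)+\nu_c(c^{-1}\beta),\phi}$, where $\nu_c(\beta)+\nu_c(c^{-1}\beta)\in\d_\infty$. So the hypothesis exhibits a point of $\d_\infty$ strictly on the negative side of $\phi^\perp$. Now suppose $\omega_c(\phi,\delta)\neq0$; this holds for positive $\phi\in\APre{c}\setminus\APTre{c}$ by Proposition~\ref{E omega delta in tubes}, since $\br{\nu_c(\delta),\phi}=-\tfrac12\omega_c(\phi,\delta)$. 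Write $\phi$ as a positive multiple of $\beta_0+k\delta$ with $\beta_0\in\RSfin$. Apply the description of $\d_\infty$ from \cite[Corollary~4.9]{afframe} to $\beta_0$ or $-\beta_0$, according to the sign of $\omega_c(\beta_0,\delta)$. This puts all of $\d_\infty$ in one closed half-space bounded by $\phi^\perp$. The two facts together force $\d_\infty\subseteq\set{x\in V^*:\br{x,\phi}\le0}$.
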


\begin{proposition}\label{nuc phip phi}
If $\phi,\phi'\in\APTre{c}$, then 
\[\br{\nu_c(\phi'),\phi\ck}=\bigl|\SuppT(\phi)\cap c(\SuppT(\phi'))\bigr|-\bigl|\SuppT(\phi)\cap\SuppT(\phi')\bigr|.\]  
In particular, when $\phi$ and $\phi'$ are in different $c$-orbits, $\br{\nu_c(\phi'),\phi\ck}=0$.
Furthermore, $\br{\nu_c(\phi),\phi\ck}=-1$.
\end{proposition}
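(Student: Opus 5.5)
The plan is to reduce the statement to a computation of the bilinear form $E_c$ on pairs of roots in $\SimplesT{c}$. Since $\phi'\in\APTre{c}$ is a positive root, the formula $\nu_c(\beta)=-E_c(\,\cdot\,,\beta)$ recalled in Section~\ref{nuc sec} gives
\[
\br{\nu_c(\phi'),\phi\ck}=-E_c(\phi\ck,\phi').
\]
The roots in a single $c$-orbit $\SimplesT{c}_o$ all have the same length, since $c$ is an isometry for $K$. Writing $\phi=\beta_{[i,j]}$, the relation $K(\phi,\phi)=K(\beta_{[i]},\beta_{[i]})$ (checked below) then shows that $\phi\ck=\sum_{\beta\in\SuppT(\phi)}\beta\ck$. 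Expanding $\phi'$ along its support as well, bilinearity gives
\[
E_c(\phi\ck,\phi')=\sum_{\beta\in\SuppT(\phi)}\,\sum_{\beta'\in\SuppT(\phi')}E_c(\beta\ck,\beta').
\]
So everything reduces to computing $E_c(\beta_{[a]}\ck,\beta_{[b]})$ for elements of $\SimplesT{c}$.

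For that computation I would use two facts about $E_c$.
\begin{enumerate}[label=(\alph*)]
\item Its symmetrization is the Cartan form: $E_c(x\ck,y)+E_c(y\ck,x)=K(x\ck,y)$.
\item The standard Coxeter-element identity $E_c(x\ck,y)=-E_c(y\ck,cx)$, which follows by writing $c$ as the ordered product of simple reflections. Its orientation may need adjusting to match the conventions here.
\end{enumerate}
Combining (b) with the $c$-invariance of $E_c$, the quantity $E_c(\beta_{[a]}\ck,\beta_{[b]})$ depends only on $d=b-a$ modulo $k=|\SimplesT{c}_o|$; call it $f(d)$. Then (a) and (b) become
\[
f(d)+f(-d)=K(d),\qquad f(d)=-f(1-d).
\]
Here $K(d)$ is the Cartan pairing between orbit elements at distance $d$. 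Since these elements form the simple system of a tube, $K(0)=2$, $K(\pm1)=-1$, and $K(d)=0$ otherwise. The degenerate case $k=2$, where $K(1)=-2$, must be treated separately, though the same conclusion should follow.

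Solving the two relations gives $f(0)=1$ from $2f(0)=K(0)$, then $f(1)=-1$. The recurrence $f(d)=f(d-1)-K(d-1)$ then yields $f(d)=0$ for every other residue, and this is consistent around the cycle because $\sum_d K(d)=0$ (that is, $\delta$ is isotropic). The same recurrence shows that the computation of $K$ on an interval is the type-$A$ one, which confirms $K(\phi,\phi)=K(\beta_{[i]},\beta_{[i]})$ as used above.

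For roots in different orbits, $K$ vanishes because the orbits are mutually orthogonal. The same relations then make $f$ invariant under $d\mapsto d+1$ and odd, so the values for different orbits vanish. This gives $\br{\nu_c(\phi'),\phi\ck}=0$ in that case.

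Substituting into the double sum, the term $f(0)=1$ contributes $|\SuppT(\phi)\cap\SuppT(\phi')|$. The term $f(\pm1)=-1$ counts the pairs of adjacent elements, one in each support, which is $|\SuppT(\phi)\cap c(\SuppT(\phi'))|$ once the orientation of (b) is fixed. After the overall sign flip, this gives the stated formula. Finally, for $\phi=\phi'$ with support a path of $m<k$ consecutive elements, the intersection with its $c$-translate has $m-1$ elements, so the value is $(m-1)-m=-1$.

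The main obstacle is step (b): establishing the identity with the correct orientation of $c$ and the correct root/co-root placement in the paper's transposed conventions. Any sign or orientation slip there turns $c$ into $c^{-1}$ in the final formula.
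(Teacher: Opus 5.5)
Your route differs from the paper's. The paper also starts from $\br{\nu_c(\phi'),\phi\ck}=-E_c(\phi\ck,\phi')$. It then quotes \cite[Proposition~2.17]{affdenom} for the values of $\br{\nu_c(\beta'),\beta\ck}$ on pairs $\beta,\beta'\in\SimplesT{c}$ and finishes by bilinearity, tacitly using $\phi\ck=\sum_{\beta\in\SuppT(\phi)}\beta\ck$, which you justify explicitly. You instead re-derive those values from the symmetrization of $E_c$, Howlett's identity, $c$-invariance of $E_c$, and the cyclic Cartan data of an orbit. That strategy works and is more self-contained, but as written it has two gaps.

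\textbf{Orientation of (b).} Identity (b) is stated with the wrong orientation, and this is exactly what decides $c$ versus $c^{-1}$. The proof of Proposition~\ref{crucial lemma} quotes Howlett's theorem as $c=-E_{c^{-1}}^{-1}E_c$. Since $E_{c^{-1}}(x,y)=E_c(y,x)$ (check on simple roots using $d_ib_{ij}=-d_jb_{ji}$), this reads $E_c(x,y)=-E_c(cy,x)$, equivalently $E_c(x,y)=-E_c(y,c^{-1}x)$. Your version $E_c(x\ck,y)=-E_c(y\ck,cx)$ already fails for $x=\beta$, $y=c\beta$ in an orbit of size $k\ge3$: the left side is $0$ and the right side is $-1$. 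With the correct identity your relations become $f(d)+f(-d)=K(d)$ and $f(d)=-f(-1-d)$. These give $f(0)=1$, $f(-1)=-1$, and $f(d)=0$ otherwise (also for $k=2$). Thus $E_c(\beta\ck,\beta')=-1$ exactly when $\beta=c\beta'$, which produces the term $|\SuppT(\phi)\cap c(\SuppT(\phi'))|$. With (b) as you wrote it, the identical computation gives $f(1)=-1$ and hence $c^{-1}$ in place of $c$. So the step you flagged as the main obstacle must actually be carried out, and it comes out opposite to what you wrote.

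\textbf{Cross-orbit vanishing.} This does not follow from the relations you list. For $\beta\in\SimplesT{c}_o$ and $\beta'\in\SimplesT{c}_{o'}$ with $o\neq o'$, you have $K=0$. Together with (corrected) (b) and $c$-invariance, this only shows that $E_c(\beta\ck,\beta')$ is a constant $C$, independent of $\beta$ and $\beta'$. There is no ``oddness'' relation, and every value of $C$ is consistent with those identities. You need one more observation. Both orbits sum to $\delta$, so
\[|\SimplesT{c}_{o'}|\,C=E_c(\beta\ck,\delta)=\sum_{\beta''\in\SimplesT{c}_o}E_c(\beta\ck,\beta'')=f(0)+f(-1)=0\]
by your within-orbit computation. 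Alternatively, use Proposition~\ref{E delta in tubes}, which gives $E_c(\beta\ck,\delta)=-\br{\nu_c(\delta),\beta\ck}=0$.

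\textbf{Minor points.}
\begin{itemize}
\item Identity (a), in the form you wrote it, holds only for roots of equal length; the general identity is $E_c(x,y)+E_c(y,x)=K(x,y)$. You only apply it where this causes no harm.
\item The values $K(0)=2$, $K(\pm1)=-1$, and $K(d)=0$ otherwise on an orbit are an external fact about the tube simples that the paper does not state, so they need a reference. This is analogous to the paper's citation of \cite[Proposition~2.17]{affdenom}.
\end{itemize}
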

\begin{proof}
Since $\phi'$ is a positive root, $\nu_c(\phi')=-E_c(\,\cdot\,,\phi')$, and thus $\br{\nu_c(\phi'),\phi\ck}=-E_c(\phi\ck,\phi')$.
If $\beta,\beta'\in\SimplesT{c}$, then \cite[Proposition~2.17]{affdenom} can be restated as 
\[
\br{\nu_c(\beta'),\beta\ck}=
\begin{cases}
-1&\text{if }\beta'=\beta,\\
1&\text{if }c\beta'=\beta,\text{ or}\\
0&\text{otherwise.}
\end{cases}
\]
The proposition follows.
\end{proof}

\begin{proposition}\label{pairing with simple}
Suppose $c=s_1\cdots s_n$.
For all $k=1,\ldots,n$, 
\[\brrr{\nu_c(\delta)+\omega_c(\,\cdot\,,\sum_{i=1}^{k-1}\br{\rho_i\ck,\delta}\alpha_i),\alpha_k\ck}=-\br{\rho_k\ck,\delta}.\]
\end{proposition}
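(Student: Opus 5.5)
The plan is a direct computation in simple-root coordinates, using the formula $\nu_c(\phi)=-E_c(\,\cdot\,,\phi)$ for positive roots $\phi$ recalled in Section~\ref{nuc sec}, applied with $\phi=\delta$. Write $\delta=\sum_{i=1}^n\delta_i\alpha_i$, where $\delta_i=\br{\rho_i\ck,\delta}\ge0$ because $\br{\rho_i\ck,\alpha_j}=\delta_{ij}$.

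\emph{Step 1: compute $\br{\nu_c(\delta),\alpha_k\ck}$.} Since $\delta$ is a positive root, $\nu_c(\delta)=-E_c(\,\cdot\,,\delta)$, so
\[
\br{\nu_c(\delta),\alpha_k\ck}=-E_c(\alpha_k\ck,\delta)=-\delta_k-\sum_{j\neq k}[b_{kj}]_-\,\delta_j .
\]
Because $c=s_1\cdots s_n$, the defining convention on $c$ says that $s_k$ preceding $s_j$ forces $b_{kj}\ge0$. Thus $b_{kj}\ge0$ for $j>k$, and then $[b_{kj}]_-=0$. By skew-symmetrizability, $b_{kj}\le0$ for $j<k$, and then $[b_{kj}]_-=b_{kj}$. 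Hence
\[
\br{\nu_c(\delta),\alpha_k\ck}=-\delta_k-\sum_{j<k}b_{kj}\delta_j .
\]

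\emph{Step 2: compute the correction term.} For $\beta\in V$, the pairing $\br{\omega_c(\,\cdot\,,\beta),\alpha_k\ck}$ is just $\omega_c(\alpha_k\ck,\beta)$. By bilinearity and the definition $\omega_c(\alpha_k\ck,\alpha_i)=b_{ki}$,
\[
\omega_c\Bigl(\alpha_k\ck,\sum_{i<k}\delta_i\alpha_i\Bigr)=\sum_{i<k}b_{ki}\delta_i .
\]

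\emph{Step 3: combine.} Adding the two computations, the sums over indices less than $k$ cancel exactly. This leaves $-\delta_k=-\br{\rho_k\ck,\delta}$, which is the claim.

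The only point needing care is Step~1: whether the formula $\nu_c(\phi)=-E_c(\,\cdot\,,\phi)$ applies to the imaginary root $\delta$, and not only to real positive roots. I expect it does, since the definition of $\nu_c$ in \cite[Section~9.1]{affdenom} gives $-E_c(\,\cdot\,,\beta)$ on the whole positive cone. If not, I would first verify that $\delta$ lies in the region where $\nu_c$ agrees with the linear map $-E_c(\,\cdot\,,\cdot)$ (for instance, by linearity of $\nu_c$ on the star of $\delta$) before running the computation. Everything else is bookkeeping of signs of the $b_{kj}$ dictated by the order $s_1\cdots s_n$.
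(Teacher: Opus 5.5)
Your proof is correct and is essentially the paper's argument. The paper also starts from $\nu_c(\delta)=-E_c(\,\cdot\,,\delta)$, so your caveat about $\delta$ is the same assumption the paper makes. It then packages your entrywise cancellation as the identity $\omega_c=E_c-E_{c^{-1}}$, together with the triangularity of $E_c$ and $E_{c^{-1}}$ with respect to the order $s_1\cdots s_n$.
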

\begin{proof}
The left side is 
$-E_c(\alpha_k\ck,\delta)+\omega_c(\alpha_k\ck,\sum_{i=1}^{k-1}\br{\rho_i\ck,\delta}\alpha_i)$.
By the definition of~$E_c$ and because $\omega_c(\alpha_k\ck,\alpha_k)=0$, we can rewrite this as 
\[-E_c(\alpha_k\ck,\sum_{i=1}^k\br{\rho_i\ck,\delta}\alpha_i)+\omega_c(\alpha_k\ck,\sum_{i=1}^k\br{\rho_i\ck,\delta}\alpha_i).\]
Since $\omega_c=E_c-E_{c^{-1}}$, this becomes $-E_{c^{-1}}(\alpha_k\ck,\sum_{i=1}^k\br{\rho_i\ck,\delta}\alpha_i)=-\br{\rho_k\ck,\delta}$.
\end{proof}

\begin{proposition}\label{crucial lemma}
$\omega_c(\,\cdot\,,\phi)=-\nu_c((1+c^{-1})\phi)$ for any vector $\phi\in V$ such that $(1+c^{-1})\phi$ has nonnegative simple root coordinates.
\end{proposition}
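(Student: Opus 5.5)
The plan is to reduce the statement to a purely linear-algebraic identity about the forms $E_c$ and $\omega_c$, using that $\nu_c$ is given on nonnegative vectors by the same linear formula it has on positive roots.

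The first step is to use the explicit form of $\nu_c$ away from negative vectors. The paper records that $\nu_c(\beta)=-E_c(\,\cdot\,,\beta)$ for a positive root $\beta$. From the definition in \cite[Section~9.1]{affdenom}, I expect the same linear formula $\nu_c(v)=-E_c(\,\cdot\,,v)$ to hold for every $v\in V$ with nonnegative simple root coordinates. The reason is that the piecewise-linear definition only deviates from $-E_c$ on vectors with some negative coordinates, and that region is handled by an inductive $c$-action. I would check this in the definition and then apply it with $v=(1+c^{-1})\phi$, which is allowed by hypothesis. After this step the claim becomes
\[
\omega_c(x,\phi)=E_c\bigl(x,(1+c^{-1})\phi\bigr)=E_c(x,\phi)+E_c(x,c^{-1}\phi)\quad\text{for all }x.
\]

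The second step is to use $\omega_c=E_c-E_{c^{-1}}$, as in the proof of Proposition~\ref{pairing with simple}, where $E_{c^{-1}}$ is the transpose-type form with $E_{c^{-1}}(\alpha_i\ck,\alpha_j)=1$ if $i=j$ and $[-b_{ij}]_-=[b_{ij}]_-$ with the roles of $i$ and $j$ swapped, up to symmetrizers. With this, the identity reduces to
\[
E_c(x,c^{-1}\phi)=-E_{c^{-1}}(x,\phi).
\]
Equivalently, after transporting between co-roots and roots via the symmetrizers $d_i$, this says $E_c(x,c^{-1}y)=-E_c(y,x)$. This is a form of Howlett's formula expressing the Coxeter element as $-E^{-1}E^{T}$.

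The third step is to prove that identity. Writing $c=s_1\cdots s_n$, I would induct on $n$ by peeling off reflections. Alternatively, I would verify it on basis vectors $x=\alpha_i\ck$, $y=\alpha_j$, using the formula $s_k(y)=y-K(\alpha_k\ck,y)\alpha_k$ together with $K=E_c+E_{c^{-1}}$. The unitriangularity of $E_c$ in the order $1,\ldots,n$ makes the telescoping work: applying $s_n,\ldots,s_1$ successively to $\alpha_j$, each step subtracts exactly the needed term.

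I expect the main obstacle to be the bookkeeping in this third step: keeping track of which argument carries co-roots, where the $d_i$ enter when passing between $E_c(\alpha_i\ck,\alpha_j)$ and $E_{c^{-1}}$, and the direction of $c$ versus $c^{-1}$. A secondary point to verify carefully is that $\nu_c$ really agrees with $-E_c$ on all nonnegative vectors, and not only on positive roots.
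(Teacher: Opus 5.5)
Your proposal is correct and is essentially the paper's own proof. The paper also writes $B=E_c-E_{c^{-1}}$ and uses Howlett's identity $c=-E_{c^{-1}}^{-1}E_c$, i.e.\ $E_cc^{-1}=-E_{c^{-1}}$, to get $B\phi=E_c(1+c^{-1})\phi$; it then reads this as $-\nu_c((1+c^{-1})\phi)$, tacitly using, as you flag, that $\nu_c(v)=-E_c(\,\cdot\,,v)$ holds for all $v$ with nonnegative simple root coordinates. The only difference is that you reprove Howlett's identity by the reflection-by-reflection telescoping argument instead of citing it, and that argument works as you describe.
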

\begin{proof}
We use the matrices corresponding to these bilinear forms, as explained in Section~\ref{basic sec} and quote \cite[Theorem~2.1]{Howlett}, which says that $-E_{c^{-1}}^{-1}E_c$ is the matrix for $c$ in the basis of simple roots.
Thus ${B\phi=E_c(1+c^{-1})\phi=-\nu_c((1+c^{-1})\phi)}$.
\end{proof}

\subsubsection{The affine cluster scattering diagram and fan}\label{scatfan aff sec}
The roots $\AP{c}$ can also be used to construct the transposed cluster scattering diagram $\Scat^T(\tB)$ for $B$ of acyclic affine type.
The construction in \cite{affscat} assumes principal coefficients at the initial seed.
However, we can use that explicit construction more generally whenever $\tB$ has linearly independent columns, as explained in \cite[Remark~2.1]{affscat}.
There is one wall in $\Scat^T(\tB)$ normal to each positive root in $\AP{c}$, and these are all of the walls in $\Scat^T(\tB)$.
We will need only enough detail about these walls to prove Lemma~\ref{hopefully}, below.

Given $\beta,\phi\in\RSpos$, let $\RS'(\beta,\phi)$ be the subset of $\RS$ consisting of roots in the linear span of $\beta$ and $\phi$.
There is a unique pair of roots $\alpha,\alpha'\in\RS'(\beta,\phi)$ such that all positive roots in $\RS'(\beta,\phi)$ are in the nonnegative linear span of $\alpha$ and $\alpha'$.
We say that $\phi$ \newword{cuts} $\beta$ if $\phi\in\set{\alpha,\alpha'}$ but $\beta\not\in\set{\alpha,\alpha'}$.
When $\phi$ cuts $\beta$, the height of $\phi$ (the sum of its simple root coordinates) is less than the height of $\beta$.
As a consequence, the transitive closure of the cutting relation on positive roots is acyclic.
For the same reason, the set $\cut(\beta)$ of positive roots $\phi$ that cut $\beta$ is finite.
Define
\begin{equation}\label{d beta}
\d_\beta=\sett{x\in V^*:\br{x,\beta}=0\text{ and }\br{x,\phi}\le0,\,\forall\phi\in\cut(\beta)\text{ with }\omega_c(\phi,\beta)>0}.
\end{equation}
According to  \cite[Theorem~2.4]{affscat}, the walls of $\Scat^T(\tB)$ are the cones $\d_\beta$, with scattering terms $1+\hy^\beta$ except when $\beta=\delta$, in which case the scattering term is given by \cite[(2.1)]{affscat}.

Suppose $\beta\in\APre{c}$ and suppose $y$ is in the relative interior of an $(n-2)$-dimensional face $F$ of $\d_\beta$ and $\br{y,\delta}>0$.
Then there are two roots $\alpha$ and $\alpha'$ that cut $\beta$ such that~$\alpha^\perp$ and $(\alpha')^\perp$ define $F$ as a face of $\d_\beta$.
Now \cite[Theorem 9.8]{typefree} says that near $y$, the $\g$-vector fan looks like the $\g$-vector fan of a $2\times2$ exchange matrix of finite type.
Since the $\g$-vector fan is a subfan of the cluster scattering fan, in particular, both~$\d_\alpha$ and $\d_{\alpha'}$ contain $y$.
The following lemma is a weak version of this observation.

\begin{lemma}\label{last lemma}
If $\beta\in\APre{c}$ and $y$ is in the relative interior of an $(n-2)$-dimensional face $F$ of $\d_\beta$ and $\br{y,\delta}>0$, then there exists $\alpha\in\APre{c}$ that cuts $\beta$, with $y\in\d_\alpha$.
\end{lemma}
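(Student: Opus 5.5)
The plan is to argue directly from the description \eqref{d beta} of $\d_\beta$, working locally near $y$. Since $F$ is an $(n-2)$-dimensional face of the $(n-1)$-dimensional cone $\d_\beta$, it is cut out inside $\beta^\perp$ by at least one inequality $\br{x,\phi}\le0$ with $\phi\in\cut(\beta)$ and $\omega_c(\phi,\beta)>0$. Because $y$ lies in the relative interior of $F$, the tight inequalities at $y$ all define the same hyperplane $\phi^\perp\cap\beta^\perp$ within $\beta^\perp$. So the first step is to pick one such $\phi$ with $\br{y,\phi}=0$. Then $\phi\in\RSpos$ cuts $\beta$, and the remaining work is to show that $\phi$ (or a suitable replacement $\alpha$ in the same rank-two subsystem $\RS'(\beta,\phi)$) lies in $\APre{c}$ with $y\in\d_\alpha$.

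The second step uses the hypothesis $\br{y,\delta}>0$. It says $y$ lies outside the imaginary wall $\d_\infty\subseteq\delta^\perp$. More precisely, $y$ lies in the open halfspace where the $\g$-vector fan, which is $\nu_c(\Fan_c^\re(\RS))$, is locally complete. Hence a neighborhood of $y$ is covered by maximal $\g$-vector cones, and the walls of $\Scat^T(\tB)$ through $y$ are walls of the $\g$-vector fan, that is, $\d_\alpha$ with $\alpha\in\APre{c}$. Here I use that the transposed cluster scattering fan coincides with $\F_{B^T}$ and that $\nu_c$ identifies $\Fan_c^\re(\RS)$ with $\gFan(B)$.

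Since $\d_\beta$ ends along $F$ while the scattering fan is complete, some other codimension-one cone of the fan must meet $\beta^\perp$ transversally along $F$ at $y$. Otherwise $\beta^\perp$ would continue as a wall past $F$ near $y$, contradicting that $F$ is a boundary face. Any such wall lies in some $\alpha^\perp$ with $\alpha\in\APre{c}$ and $y\in\d_\alpha$. This gives $\alpha^\perp\supseteq\beta^\perp\cap\phi^\perp$ near $y$, so $\alpha\in\RS'(\beta,\phi)$ and $\alpha\ne\beta$.

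The main obstacle is the last step: showing that this $\alpha$ actually cuts $\beta$, i.e.\ that it is one of the two extreme positive roots of $\RS'(\beta,\phi)$. The plan is to invoke the observation preceding the lemma (via \cite[Theorem~9.8]{typefree}). Near $y$, the $\g$-vector fan looks like a rank-two finite-type $\g$-vector fan, so the roots of walls through $y$ form a finite rank-two root system whose positive roots are $\alpha$, $\alpha'$ and their positive combinations. In the rank-two finite-type scattering diagram, exactly the walls for the two extreme roots pass fully through the codimension-two locus, while the others, such as $\d_\beta$, terminate there. Hence the extreme roots $\alpha,\alpha'$, which cut $\beta$, both have walls through $y$, and we take either one. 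If appealing to that theorem feels too strong, the fallback is to check directly that a non-extreme root in $\RS'(\beta,\phi)$ whose wall contains $y$ would itself have a face at $y$. Choosing such a root of minimal height and iterating would then terminate at an extreme root, using acyclicity of the cutting relation.
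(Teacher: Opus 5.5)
Your main route is essentially the paper's argument. You take the cutting root $\phi$ that defines $F$, so that both extreme roots of $\RS'(\beta,\phi)$ cut $\beta$. You then invoke \cite[Theorem~9.8]{typefree}, which applies because $\br{y,\delta}>0$, to see that near $y$ the fan is a rank-two finite-type $\g$-vector fan in which the walls for both extreme roots pass through $y$. Your Steps~2--3 are needed only for the observation that walls through $y$ have normals in $\APre{c}$.

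The height-minimizing fallback would not work as sketched. Step~3 only produces \emph{some} other wall through $y$, with no control on the height of its root, so the iteration has no reason to reach an extreme root. You should rely on the cited theorem instead.
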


The bilinear form $\omega_c$ can be used to describe the imaginary wall $\d_\infty=\d_\delta$.
Rephrasing \cite[Corollary~4.9]{afframe}, we have
\[\d_\infty=\sett{x\in V^*:\br{x,\delta}=0,\br{x,\beta}\le0,\,\forall \beta\in\RSfin\text{ s.t. }\omega_c(\beta,\delta)>0}.\]
We define two full-dimensional open cones in $V^*$ that we will think of as the set of ``points on the positive side of $\d_\infty$'' or respectively the ``negative side''.
\begin{align*}
\d_\infty^+&=\sett{x\in V^*:\br{x,\delta}>0,\br{x,\beta}<0,\,\forall \beta\in\RSfin\text{ s.t. }\omega_c(\beta,\delta)>0}\subseteq V^*\\
\d_\infty^-&=\sett{x\in V^*:\br{x,\delta}<0,\br{x,\beta}<0,\,\forall \beta\in\RSfin\text{ s.t. }\omega_c(\beta,\delta)>0}\subseteq V^*
\end{align*}

Suppose $C$ is an imaginary cone in $\F_{B^T}$.
Recall that $\d_\infty$ is the union of all imaginary cones in $\F_{B^T}$ and that every imaginary cone, by definition, contains~$\nu_c(\delta)$.
Since $\F_{B^T}$ coincides with the cluster scattering fan, it is cut out by walls.
Thus~$C$ is defined, as a subset of $\d_\infty$, by inequalities of the form $\br{x,\phi}\le0$ for roots $\phi\in\pm\APre{c}$ such that $\br{\nu_c(\delta),\phi}=0$.
Thus, by Proposition~\ref{E omega delta in tubes}, $C$ is defined, as a subset of~$\d_\infty$, by inequalities of the form $\br{x,\phi}\le0$ for roots $\phi\in\pm\APTre{c}$.

\begin{lemma}\label{hopefully}
Suppose $C$ is a maximal imaginary cone in $\F_{B^T}$.
Let $\Gamma$ be the set $\sett{\phi\in\pm\APTre{c}:\br{x,\phi}\le0\,\forall x\in C}$, so that $C=\sett{x\in\d_\infty:\br{x,\phi}\le0\,\forall\phi\in\Gamma}$.
If $\beta\in\APTre{c}$, then $\d_\beta$ does not intersect ${\sett{x\in\d_\infty^+:\br{x,\phi}<0\,\forall\phi\in\Gamma}}$.
\end{lemma}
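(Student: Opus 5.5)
Let $U=\sett{x\in\d_\infty^+:\br{x,\phi}<0\,\forall\phi\in\Gamma}$. The plan is to argue by contradiction: assume some $\beta\in\APTre{c}$ and some $y\in\d_\beta\cap U$. Choose $\beta$ with this property of minimal height. The goal is then to produce a root of smaller height with the same property, contradicting minimality.

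The first step is to show that $\pm\beta\notin\Gamma$ is impossible. Since $C$ is a maximal imaginary cone, it is full-dimensional in $\d_\infty$. The hyperplane $\beta^\perp$ contains $\nu_c(\delta)$, by Proposition~\ref{E delta in tubes}, and it meets $\d_\infty$ in walls of the fan. Because $\F_{B^T}$ is cut out by walls, either $\beta^\perp$ supports $C$, so that $\pm\beta\in\Gamma$, or $\beta^\perp$ cuts through the interior of $C$. The latter would contradict $C$ being a cone of the fan, since the imaginary part of $\d_\beta$ lies in $\beta^\perp\cap\d_\infty$. I expect this requires the fact that $\d_\beta\cap\d_\infty$ contains relative-interior points near $\nu_c(\delta)$; that is the delicate part, and I would try first to sidestep it. In any case, once $\pm\beta\in\Gamma$, every $x\in U$ has $\br{x,\beta}\ne0$ on the slice where $\br{x,\delta}=0$. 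Since $U$ is open and lies off $\d_\infty$, this argument alone is not decisive. The main line of the proof is therefore the following.

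Observe that $U$ is open, while $\d_\beta$ has codimension one. Consider $\d_\beta\cap\overline U$. Since $\overline U\cap\sett{\br{x,\delta}=0}=C$, and $\d_\beta\cap\d_\infty$ lies in a proper face of $C$ (because $\pm\beta\in\Gamma$), the set $\d_\beta\cap U$ is a relatively open piece of $\d_\beta$ that approaches $\d_\infty$ only in $\beta^\perp$. Move $y$ within $\d_\beta\cap U$ in a direction that decreases $\br{x,\delta}$ while keeping $\br{x,\beta}=0$. Since $\br{x,\delta}\to 0$ cannot be reached inside $\d_\beta\cap U$ without hitting a point of $C\cap\beta^\perp$, the segment must exit $\d_\beta$ first. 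The exit happens through an $(n-2)$-face $F$ of $\d_\beta$ at a point $y'$ which is still in $U$ with $\br{y',\delta}>0$; this uses the openness of the defining inequalities of $U$, after a generic perturbation so that $y'$ lies in the relative interior of $F$.

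Now apply Lemma~\ref{last lemma}. It gives $\alpha\in\APre{c}$ cutting $\beta$ with $y'\in\d_\alpha$, and $\alpha$ has smaller height than $\beta$. There are two cases.

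If $\alpha\in\APTre{c}$, we have a root of smaller height whose wall meets $U$, contradicting minimality.

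If $\alpha\notin\APTre{c}$, then $F$ is defined inside $\beta^\perp$ by an inequality $\br{x,\alpha}\le0$ with $\alpha\in\cut(\beta)$ and $\omega_c(\alpha,\beta)>0$, by~\eqref{d beta}. Proposition~\ref{rescue} then gives $\d_\infty\subseteq\sett{\br{x,\alpha}\le0}$. I must turn this into the statement that crossing $\alpha^\perp$ from $\d_\beta$ is impossible while staying in $U$. The idea is that $U$ is a cone over $C$ in the $\delta$-positive direction, intersected with the inequalities defining $\d_\infty^+$. I expect this to be the main obstacle: establishing that the inequality $\br{x,\alpha}\le0$ holds strictly on all of $\d_\beta\cap U$. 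I would attempt it by writing $\alpha$ as $\beta_0+k\delta$ with $\beta_0\in\RSfin$ and using the $\d_\infty^+$ inequalities together with $\br{x,\delta}>0$ to control the sign of $\br{x,\alpha}$, possibly choosing the direction of motion (decreasing $\br{x,\delta}$) so that $\br{x,\alpha}$ is monotone. With that, the exit point cannot lie on such a face, so only the first case occurs. This yields the contradiction and proves the lemma.
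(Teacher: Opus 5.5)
\paragraph{There is a genuine gap: your first step is reversed, and the main line depends on it.}
The strict inequalities defining $U=\sett{x\in\d_\infty^+:\br{x,\phi}<0\,\forall\phi\in\Gamma}$ hold at every point of $U$, not only on the slice $\br{x,\delta}=0$. So $y\in\d_\beta\cap U\subseteq\beta^\perp\cap U$ immediately forces $\pm\beta\notin\Gamma$. If $\pm\beta\in\Gamma$ held, there would be nothing to prove.

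In fact it fails in general. Take a $c$-orbit of size $4$ and $J_o=\set{\beta_{[1]},\beta_{[3]},\beta_{[1,3]}}$; here $\beta_{[1]}$ and $\beta_{[3]}$ are spaced. Proposition~\ref{nuc phip phi} gives $\br{\nu_c(\beta_{[1]}),\beta_{[2,3]}\ck}=1$ and $\br{\nu_c(\beta_{[3]}),\beta_{[2,3]}\ck}=-1$. So $\beta_{[2,3]}^\perp$ passes through the relative interior of the corresponding maximal imaginary cone.

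Your dichotomy breaks because the fan structure only keeps the \emph{wall} $\d_\beta$ out of the relative interior of $C$, not the hyperplane $\beta^\perp$. Consequently you cannot use the claim that $\d_\beta\cap\d_\infty$ lies in a proper face of $C$. That claim is your only reason the moving point must leave $\d_\beta$, and the direction and endpoint of your segment are never pinned down.

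\paragraph{The missing idea is to use $\pm\beta\notin\Gamma$.}
It gives a point $x_1$ in the relative interior of $C$ with $x_1\in\beta^\perp$, and you should move along the segment $\seg{yx_1}$. This works as follows.
\begin{enumerate}
\item By convexity, every point of the segment other than $x_1$ lies in $U$: $x_1$ satisfies the weak forms of all defining inequalities of $U$.
\item Since $C$ is a cone of the scattering fan, $x_1\notin\d_\beta$.
\item Hence the segment, which stays in $\beta^\perp$, leaves $\d_\beta$ at some $y'\neq x_1$. After a generic choice, $y'$ lies in the relative interior of an $(n-2)$-face.
\end{enumerate}

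The same endpoint removes what you call the main obstacle. Suppose that face is cut out by $\br{x,\alpha}\le0$ with $\alpha\notin\APTre{c}$. Then $\br{\cdot,\alpha}$ is affine along the segment:
\begin{itemize}
\item it is $\le0$ at $y\in\d_\beta$;
\item it vanishes at $y'$;
\item it is positive just beyond $y'$.
\end{itemize}
Hence $\br{x_1,\alpha}>0$. This contradicts $x_1\in C\subseteq\d_\infty\subseteq\set{x:\br{x,\alpha}\le0}$, which is Proposition~\ref{rescue}.

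You need neither a sign analysis of $\alpha=\beta_0+k\delta$ on $\d_\infty^+$ nor a strict inequality on all of $\d_\beta\cap U$; the latter is not what is required. With this segment in place, your height-minimality argument for the case $\alpha\in\APTre{c}$ (via Lemma~\ref{last lemma}) goes through as you describe. That is essentially the paper's proof.
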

\begin{proof}
We argue by induction on the transitive closure of the cutting relation on~$\APTre{c}$.

Suppose for the sake of contradiction that there exists $\beta\in\APTre{c}$ and a point $x_0\in\d_\beta\cap{\sett{x\in\d_\infty^+:\br{x,\phi}<0\,\forall\phi\in\Gamma}}$.
Since $\d_\beta\subseteq\beta^\perp$, we see in particular that~$\beta\not\in\pm\Gamma$.
Thus by the definition of $\Gamma$, we see that $\br{x,\beta}\le0$ fails for some $x\in C$ and also $\br{x,-\beta}\le0$ fails for some $x\in C$.
Since $C$ is a convex cone, we conclude that~$\beta^\perp$ intersects the relative interior of $C$.
Let~$x_1$ be a point in the relative interior of $C$ that is also in $\beta^\perp$.
Then also every point~$y$ in the line segment $\seg{x_0x_1}$ has $\br{x,\phi}<0$ for all $\phi\in\Gamma$.
Thus every point $\seg{x_0x_1}$, except $x_1$, is in ${\sett{x\in\d_\infty^+:\br{x,\phi}<0\,\forall\phi\in\Gamma}}$.

Since $C$ is a cone in $\F_{B^T}$ and since $\F_{B^T}$ is the scattering fan, cut out by the walls $\d_\phi$ for $\phi\in\AP{c}$, we know that $\d_\beta$ does not intersect the relative interior of~$C$.
Therefore, the line segment $\seg{x_0x_1}$ passes through the relative boundary of $\d_\beta$ at some point $y\neq x_1$.
By choosing $x_0$ and $x_1$ sufficiently generically, we can assume that $y$ is in the relative interior of an $(n-2)$-dimensional face of~$\d_\beta$.
The relative boundary of $\d_\beta$ is defined by hyperplanes orthogonal to certain roots $\phi\in\APre{c}$ that cut $\beta$, as described specifically in~\eqref{d beta}.
If $\beta$ is minimal in the transitive closure of the cutting relation on $\APTre{c}$, then we have reached a contradiction.
Otherwise, Lemma~\ref{last lemma} says that there exists $\alpha\in\APre{c}$ that cuts $\beta$, with $y\in\d_\alpha$.
Comparing \eqref{d beta} with Proposition~\ref{rescue}, we see that if $\alpha\not\in\APTre{c}$, then~$C$ is contained in $\set{x\in V^*:\br{x,\alpha}\le0}$, so we conclude that $\alpha\in\Gamma$.
But then since $y\in{\sett{x\in\d_\infty^+:\br{x,\phi}<0\,\forall\phi\in\Gamma}}$, we see that $y\not\in\alpha^\perp$.
We conclude by this contradiction that $\alpha\in\APTre{c}$.
However, by induction, $\d_\alpha$ does not intersect ${\sett{x\in\d_\infty^+:\br{x,\phi}<0\,\forall\phi\in\Gamma}}$, so we have also reached a contradiction in this case.
We conclude that $\d_\beta$ does not intersect~${\sett{x\in\d_\infty^+:\br{x,\phi}<0\,\forall\phi\in\Gamma}}$.
\end{proof}

\subsection{Generalized cluster algebras}\label{gca back sec}
We now review the definition of a generalized cluster algebra given in \cite[Section~2.1]{ChekhovShapiro}, but modifying the definition to add a normalization condition (in the sense of normalized cluster algebras \cite[Definition~5.3]{ca1}), to work in the ``tall extended exchange matrix'' convention that matches our conventions in this paper, and to allow more flexibility in the coefficient ring.
For convenience and for the sake of easier comparison with \cite{ChekhovShapiro} and~\cite{ca4}, we forget temporarily that the symbols $d_i$ were used earlier as skew-symmetrizing constants.

Fix positive integers $d_1,\ldots,d_n$, fix a semifield $\PP$ with addition written as $\oplus$, and fix a field $\FF$ isomorphic to the field of rational functions in $n$ variables with coefficients in $\integers\PP$.
By definition, the set $\PP$ is an abelian group under multiplication, and an easy standard argument shows that this group is torsion free.
(If $p^e=1$ for $e>0$, then $p\cdot\bigoplus_{i=0}^{e-1}p^i=\bigoplus_{i=1}^ep^i=\bigoplus_{i=0}^{e-1}p^i$, so $p=1$.)
A \newword{normalized generalized seed} is a triple $(\x,\pp,B)$, where
\begin{itemize}
\item $\x$ is an $n$-tuple of algebraically independent elements of $\FF$, 
\item $\pp$ is an $n$-tuple $(p_1,\ldots,p_n)$ such that each $p_i$ is a $(d_i+1)$-tuple $(p_{i;0},\ldots,p_{i;d_i})$ of elements of $\PP$ satisfying the normalizing condition $p_{i;0} \oplus p_{i;d_i}=1$.
  This normalization requirement, which is absent from the definition in~\cite{ChekhovShapiro} and differs from the condition in~\cite{Tomoki}, means that all of the seeds are determined uniquely by the initial seed by way of the mutation process described below. 
\item $B$ is a skew-symmetrizable $n\times n$ integer matrix such that the $i\th$ column is divisible by $d_i$ for all $i$.   
\end{itemize}
\newword{Generalized seed mutation} in direction $k$ is the operation that produces a new seed $(\x',\pp',B')$ from $(\x,\pp,B)$ as follows.
\begin{itemize}
\item
$\x'=(x'_1,\ldots,x'_n)$ with 
\begin{equation}\label{x mut eq}
x'_j=
\begin{cases}
x_j&\text{if }j\neq k,\text{ or} \\
\displaystyle \frac1{x_k}\sum_{\ell=0}^{d_k}p_{k;\ell}\prod_{i=1}^nx_i^{[b_{ik}]_+-\ell\frac{b_{ik}}{d_k}}&\text{if }j=k.
\end{cases}\end{equation}
\item 
$\pp'=(p'_1,\ldots,p'_n)$ with 
\begin{equation}\label{p mut}
p'_{j;\ell}=
\begin{cases}
  p_{k;d_k-\ell} &\text{if }j=k, or \\
\displaystyle\frac{p_{j;\ell}\,p_{k;0}^{\frac{d_j-\ell}{d_j}[b_{kj}]_+}\,p_{k;d_k}^{\frac{\ell}{d_j}[-b_{kj}]_+}}{p_{j;0}\,p_{k;0}^{[b_{kj}]_+}\oplus p_{j;d_j}\,p_{k;d_k}^{[-b_{kj}]_+}} &\text{if }j\neq k.
\end{cases}\end{equation}
\item $B'$ is given by the usual matrix mutation $\mu_k(B)$.
\end{itemize}
In \cite{ChekhovShapiro}, the generalized cluster algebra associated to a generalized seed $(\x,\pp,B)$ is the $\integers\PP$-subalgebra of $\FF$ generated by all cluster variables in all seeds obtained from $(\x,\pp,B)$ by arbitrary sequences of mutations.  
However, we take a definition that is less restrictive, in the same way that \cite[Definition~2.3]{ca1} is less restrictive than \cite[Definition~1.2]{ca2}.
Our definition of the \newword{normalized generalized cluster algebra} replaces~$\integers\PP$ with any subring of $\integers\PP$ containing all of the elements $p_{i;j}$ that occur in all seeds.
The generalized cluster algebra is of \newword{finite type} if the set of seeds obtained from $(\x,\pp,B)$ by mutation is finite.
Generalized cluster algebras of finite type are classified by their exchange matrices \cite[Theorem~2.7]{ChekhovShapiro} in exactly the same way as cluster algebras~\cite{ca2}, and thus each generalized cluster algebra of finite type has a Cartan-Killing type.
(The transpose between the definition here and the definition in~\cite{ChekhovShapiro} affects the naming of types.)

In this paper, we only work directly with normalized generalized cluster algebras, but we will obtain some non-normalized generalized cluster algebras as specializations, so we mention the definition here.
Observe that \eqref{p mut} implies that for $j\neq k$,
\begin{equation}\label{nonnorm p mut}
\frac{p'_{j;\ell}}{p'_{j;0}}=\frac{p_{j;\ell}\,p_{k;d_k}^{\frac\ell{d_k}[-b_{kj}]_+}}{p_{j;0}\,p_{k;0}^{\frac\ell{d_k}[b_{kj}]_+}}\,,
\end{equation}
which agrees, up to the changes in conventions mentioned above, with \cite[(2.5)]{ChekhovShapiro}.
But \eqref{nonnorm p mut} does not determine $\pp'$ uniquely from $\pp$.
Nevertheless, taking $\PP$ to be any torsion-free abelian group (not necessarily specifying a semifield structure), written multiplicatively, one can start from an initial seed and do all possible sequences of mutations, making choices of the $\pp$ at each seed subject to \eqref{nonnorm p mut}.
Choosing the $\pp$ in this looser way but otherwise following the definition of a normalized generalized cluster algebra above, we obtain~a \newword{(not necessarily normalized) generalized cluster algebra}.
Given a homomorphism $\sigma$ from $\PP$ to another torsion-free abelian group $\PP'$, we can replace each coefficient $p_{j;\ell}$ by $\sigma(p_{j;\ell})$ to obtain a new pattern of seeds satisfying \eqref{nonnorm p mut}.
Thus we have the following proposition about generalized cluster algebras that are not necessarily normalized.

\begin{proposition}\label{still gca}
Suppose $\A$ is a generalized cluster algebra defined in terms of a torsion-free abelian group~$\PP$ and suppose $\sigma$ is a homomorphism from $\PP$ to another torsion-free abelian group.
The ring $\A'$ obtained from $\A$ by replacing each element $p\in\PP$ by $\sigma(p)$ in all expressions is a generalized cluster algebra.
\end{proposition}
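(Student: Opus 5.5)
The plan is to prove Proposition~\ref{still gca} by checking the definition directly. Since $\sigma$ is a homomorphism, applying it to every coefficient turns a pattern of seeds that satisfies \eqref{nonnorm p mut} into another pattern that satisfies \eqref{nonnorm p mut}.

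First I will fix the data of $\A$. There is a torsion-free abelian group $\PP$ and a field $\FF$ of rational functions in $n$ variables over $\integers\PP$. There is an initial seed $(\x,\pp,B)$, and for each seed reached by a mutation sequence a choice of coefficient tuples $\pp$ satisfying \eqref{nonnorm p mut}. Finally, $\A$ is the subring generated over a ring $R\subseteq\integers\PP$ containing all the $p_{i;j}$.

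Write $\PP'$ for the target group. The map $\sigma$ extends to a ring homomorphism $\integers\PP\to\integers\PP'$. This in turn extends to a homomorphism of polynomial rings $\integers\PP[x_1,\dots,x_n]\to\integers\PP'[x_1,\dots,x_n]$, and hence to a map on the relevant Laurent-type expressions. Here $\FF'$ is the rational function field over $\integers\PP'$, and the initial variables are sent to the same symbols.

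Define the new pattern by replacing each $p_{j;\ell}$ with $\sigma(p_{j;\ell})$, keeping the matrices $B$ and the integers $d_i$ unchanged. The key step is to check \eqref{nonnorm p mut} for the new pattern. Both sides of \eqref{nonnorm p mut} are products and quotients of group elements raised to integer exponents, so $\sigma$ carries the old identity to the new one. The exponents $\frac{\ell}{d_k}[\pm b_{kj}]_+$ are integers because $d_k$ divides column $k$ of $B$.

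One point needs care: the mutated cluster variables must be defined by \eqref{x mut eq} using the new coefficients. I will therefore define the new cluster variables recursively by the same exchange formulas. By induction on the length of the mutation sequence, the new variable at each seed is the image under the extended $\sigma$ of the old one, where the image is expressed as the same rational expression in the initial variables. The new variables are algebraically independent, because each new seed is obtained from the initial $\x$ by invertible birational exchange maps.

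The ring $\A'$ is then generated by these new variables over $\sigma(R)$, which contains all the $\sigma(p_{i;j})$. So $\A'$ satisfies the definition of a (not necessarily normalized) generalized cluster algebra.

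The only real obstacle is the meaning of ``replacing each $p$ by $\sigma(p)$ in all expressions'' when $\sigma$ is not injective, since the induced map on fraction fields need not be defined. I will handle this by taking the recursive construction, which never divides by an element that could vanish, as the definition of $\A'$. The exchange formula divides only by the cluster variable $x_k$, never by coefficient sums. Note also that torsion-freeness of $\PP'$ is needed only so that $\integers\PP'$ is a domain and $\FF'$ exists.
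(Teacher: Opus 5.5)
Your proposal is correct and follows the paper's own argument, which is the one-line observation before the proposition: a group homomorphism $\sigma$ applied to every coefficient turns a pattern of seeds satisfying \eqref{nonnorm p mut} into another such pattern. Your recursive construction of the new cluster variables in $\FF'$ just makes the phrase ``replacing each $p$ by $\sigma(p)$'' precise.

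One small correction concerns your integrality justification. The entry $b_{kj}$ lies in column $j$, not column $k$, so the divisibility you need is $d_j\mid b_{kj}$. Deriving \eqref{nonnorm p mut} from \eqref{p mut} actually gives exponents $\frac{\ell}{d_j}[\pm b_{kj}]_+$, and these are integers. With $\frac{\ell}{d_k}$ as printed, the exponents can be non-integral. In any case integrality is not essential, since a homomorphism of torsion-free groups carries any existing roots to roots.
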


We will use the proposition in the case where $\A$ is a \emph{normalized} generalized cluster algebra.
The generalized cluster algebra $\A'$ may fail to be normalized, because there may be no way to give $\PP'$ a semifield structure such that each $p_i$ in each seed is normalized and \eqref{p mut} holds for all mutations.

For the rest of the paper, the term ``generalized cluster algebra'' refers to a \emph{normalized} generalized cluster algebra.

\section{Main Results}\label{res sec}
When $B$ is acyclic and of affine type, Theorem~\ref{clus mon thm} allows us to construct~$\thet_\lambda$ from a cluster monomial for ${\lambda\in P\cap|\gFan(B)|}$.
In this sense, we consider $\thet_\lambda$ to be ``known'' when ${\lambda\in P\cap|\gFan(B)|}$.
The first main results of this paper determine the theta functions $\thet_\lambda$ for all remaining $\lambda\in P$ (those in the relative interior of the imaginary wall $\d_\infty$) in terms of cluster monomials.
The next main results determine new ``imaginary'' exchange relations among certain theta functions in the imaginary wall, show that the theta functions in the imaginary wall generate a subalgebra of the cluster algebra, and relate this imaginary subalgebra to a certain generalized cluster algebra.
We now describe these results in detail.

\subsection{Theta functions in the imaginary wall}\label{thet idents}
We will establish four identities.

The first of the four identities lets us write $\thet_{\nu_c(\delta)}$ in terms of cluster monomials.  
The vector $\nu_c(\delta)$ spans the imaginary ray, which is the only ray of $\F_{B^T}$ that is not in the $\g$-vector fan $\gFan(B)$.
Also, $\nu_c(\delta)$ is primitive in $P$ because~$\delta$ is primitive in $Q$ and because $\nu_c$ is a bijection from~$Q$ to~$P$.

\begin{theorem}\label{thet xi}
Suppose $\beta\in\SimplesT{c}$.
Then 
\[\thet_{\nu_c(\delta)}=\thet_{\nu_c(\beta)}\cdot\thet_{\nu_c(\delta-\beta)}-y^\beta\thet_{\nu_c(\delta-\beta-c^{-1}\beta)}-y^{c\beta}\thet_{\nu_c(\delta-\beta-c\beta)}.\]
\end{theorem}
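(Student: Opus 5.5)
We compute the product $\thet_{\nu_c(\beta)}\cdot\thet_{\nu_c(\delta-\beta)}$ through the structure constants of Proposition~\ref{struct}, and show that exactly three structure constants are nonzero: $a(\nu_c(\beta),\nu_c(\delta-\beta),\lambda)$ equals $1$ for $\lambda=\nu_c(\delta)$, $y^\beta$ for $\lambda=\nu_c(\delta-\beta-c^{-1}\beta)$, and $y^{c\beta}$ for $\lambda=\nu_c(\delta-\beta-c\beta)$. Rearranging then gives the identity. By Proposition~\ref{struct plus plus} we may assume $\tB$ has signed-nondegenerating coefficients (for instance, principal coefficients) and then specialize.

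First we locate the vectors. Note that $\beta$ and $\delta-\beta$ both lie in $\APTre{c}$. Indeed, $\delta-\beta$ is the root $\beta_{[1,k-1]}$ in the orbit of $\beta=\beta_{[0]}$, and when $\SimplesT{c}$ has several orbits it is instead the orbit sum minus $\beta$. The two roots have complementary supports in the cycle, so they are neither nested nor spaced. Their $\nu_c$-images are therefore rays in the relative boundary of $\d_\infty$ that do not span a common cone. The sum $\nu_c(\beta)+\nu_c(\delta-\beta)=\nu_c(\delta)$ holds because $\nu_c$ is linear on the star of $\delta$. Similarly, $\delta-\beta-c^{\pm1}\beta$ is the root whose support is the cycle with $\beta$ and its neighbour removed, so these are again ray vectors in $\partial\d_\infty$.

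Next we exhibit the contributing pairs of broken lines, taking the endpoint $\chi$ near each target $\lambda$. For $\lambda=\nu_c(\delta)$ there is the pair of straight lines, which gives coefficient $1$. For the other two targets, $\bl_1$ for $\nu_c(\beta)$ bends once. It crosses a wall $\d_\phi$ with $\phi\in\APTre{c}$, chosen so that the exponent picked up is $y^\beta$ in one case and $y^{c\beta}$ in the other, since $\hy^\beta=x^{\omega_c(\cdot,\beta)}y^\beta$. Proposition~\ref{crucial lemma} together with Proposition~\ref{nuc phip phi} should convert the exponent shift $\omega_c(\cdot,\beta)$ into $-\nu_c(\beta+c^{-1}\beta)$ or $-\nu_c(c\beta+\beta)$, and this matches the targets. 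Proposition~\ref{nuc phip phi} also gives the pairing $\br{\nu_c(\beta),\beta\ck}=-1$ and the neighbouring pairings $\pm1$ needed to check that the bend exponent is $1$ and that exactly one term is picked up.

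The main obstacle is to rule out every other $\lambda$ and every other pair. We plan to do this in two steps. In the first step, the dominance-region results (the tool from \cite{canonical} that uses \cite{affdomreg}) restrict $\lambda$ to a finite set: $\lambda$ must lie in $\nu_c(\delta)+\{-\nu_c((1+c^{-1})\psi)\}$ for $\psi\ge0$ bounded. In the second step we treat the remaining candidates with Proposition~\ref{mut pair}, applying a mutation sequence $\kk$ that is a mutation-symmetry (a power of the source-sink sequence realizing $c$). This moves $\beta$ and $\chi$ into a region where broken lines are easy to enumerate, for example where one of the vectors becomes a cluster variable in the initial seed's positive chamber and its theta function is a monomial. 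Lemma~\ref{hopefully} is the key input here: it shows that a broken line with endpoint near the imaginary wall cannot bend on tube walls in the open region, which forces the bends described above. We also use Lemma~\ref{z z'} to move the endpoint within a chamber as needed. Finally, the imaginary wall itself must contribute nothing extra. The plan for this is to choose $\chi\in\d_\infty^+$ off $\d_\infty$, so that the lines never cross $\d_\infty$ with positive pairing.
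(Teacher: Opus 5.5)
The three contributing pairs you exhibit are the right ones, but the exclusion step, which is the whole difficulty, does not go through with the tools you name.

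\textbf{The dominance-region step is unavailable.} Theorem~\ref{B cone prod}, and its affine form Theorem~\ref{B cone prod aff}, require all factors to lie in a common $B$-cone. As you note yourself, $\nu_c(\beta)$ and $\nu_c(\delta-\beta)$ do not. This is not a technicality. By Theorem~\ref{affine dom} the dominance region of $\nu_c(\delta)$ is just $\{\nu_c(\delta)\}$. So if that restriction applied, it would exclude exactly the two correction terms $y^\beta\thet_{\nu_c(\delta-\beta-c^{-1}\beta)}$ and $y^{c\beta}\thet_{\nu_c(\delta-\beta-c\beta)}$ that the theorem asserts. Without it, the only a priori constraint is $\lambda\in\nu_c(\delta)+BQ^{\ge0}$, which is an infinite set, and nothing in your outline bounds $\psi$.

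\textbf{The mutation step fails as described.} Powers of $\eta^{B^T}_{12\cdots n}$ map $\d_\infty$ to itself with finite order (Propositions~\ref{eta nice} and~\ref{c sym d inf}). Also $\d_\infty\cap D=\{0\}$, because $\d_\infty\subseteq\delta^\perp$ and $\delta$ has positive simple-root coordinates. Hence no such mutation symmetry turns $\thet_{\nu_c(\beta)}$ or $\thet_{\nu_c(\delta-\beta)}$ into a monomial.

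\textbf{The remaining ingredients do not do what you need.} Lemma~\ref{hopefully} concerns the open region cut out by a single maximal imaginary cone. It is useful only when both factors lie in such a cone, as in Theorem~\ref{boundary with delta}; here the bends you need are on the tube walls $\d_\beta$ and $\d_{c\beta}$ themselves. Finally, choosing $\chi\in\d_\infty^+$ does not by itself stop a broken line that has bent on a non-tube wall from crossing $\d_\infty$.

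\textbf{The missing idea} is to use the mutation symmetry to move the \emph{endpoint}, not the exponents. The argument runs as follows.
\begin{enumerate}
\item Theorem~\ref{expand d inf} (finite versus infinite $\eta^{B^T}_{12\cdots n}$-orbits) gives $\lambda\in P\cap\d_\infty$.
\item For a chi sequence with $h\gg0$, Theorem~\ref{mut pair affine periodic} reduces each structure constant to periodic pairs with endpoint $\chi_h$.
\item Theorem~\ref{simplification for boundary dinf}, built on Lemmas~\ref{scatter region}, \ref{per finite} and~\ref{no dinf}, shows such broken lines stay in $\d_\infty^+$ and bend only on walls normal to roots in $\APTre{c}$.
\item By Corollary~\ref{crucial cor} and Proposition~\ref{crucial prop}, each bend subtracts a nonnegative combination of the linearly independent vectors $\nu_c(\SimplesT{c}_o)$. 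The initial sum $\nu_c(\beta)+\nu_c(\delta-\beta)=\nu_c(\delta)$ has every coefficient equal to $1$, and $\lambda$ must stay in $\d_\infty$. So bends occur only on walls normal to single orbit elements $c^j\beta$.
\item Proposition~\ref{nuc phip phi} leaves only $\beta$ and $c\beta$ as walls the unbounded domains can bend on. A bend on either subtracts from the coefficient of $\nu_c(\beta)$, so at most one bend occurs in total.
\item Choose the chi sequence so that $\chi_h$ lies on the same side of $\beta^\perp$ (resp.\ $(c\beta)^\perp$) as $\nu_c(\delta-\beta)$. Then only the broken line for $\nu_c(\beta)$ can make that bend, and the coefficients are exactly $1$, $y^\beta$ and $y^{c\beta}$.
\end{enumerate}
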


\begin{remark}\label{2 by 2}
When $B$ is a $2\times2$ exchange matrix, $\SimplesT{c}$ is empty, so Theorem~\ref{thet xi} does not apply.
Simple computations (for example \cite[Propositions~3.16 and 3.18]{scatcomb}) establish the following values of $\thet_{\nu_c(\delta)}$ for half of the $2\times2$ cases.
The other cases can be obtained by transposing $B$ and swapping indices $1$ and $2$.
\begin{align*}
\begin{bsmallmatrix*}[r]0&2\\-2&0\end{bsmallmatrix*}
\qquad&\frac{x_2^2+y_1+y_1y_2x_1^2}{x_1x_2}  
=\frac{x_2}{x_1}(1+\hy_1+\hy_1\hy_2)
\\
\begin{bsmallmatrix*}[r]0&4\\-1&0\end{bsmallmatrix*}
\qquad&\frac{x_2^2+2x_2y_1+y_1^2+x_1^4y_1^2y_2}{x_1^2x_2} 
=\frac{x_2}{x_1^2}(1+2\hy_1+\hy_1^2+\hy_1^2\hy_2)
\\
\begin{bsmallmatrix*}[r]0&1\\-4&0\end{bsmallmatrix*}
\qquad&\frac{x_2^4+y_1+2y_1y_2x_1+y_1y_2^2x_1^2}{x_1x_2^2} 
=\frac{x_2^2}{x_1}(1+\hy_1+2\hy_1\hy_2+\hy_1\hy_2^2)
\end{align*}
\end{remark}

The second theta-function identity of the four lets us compute $\thet_{k\nu_c(\delta)}$ recursively for any integer $k\ge1$ as something like a Chebyshev polynomial in $\thet_{\nu_c(\delta)}$.

\begin{theorem}\label{delta cheby}
The theta functions of multiples of $\nu_c(\delta)$ satisfy 
\[\thet_{2\nu_c(\delta)}=(\thet_{\nu_c(\delta)})^2-2y^\delta\]
and, for $k\ge3$,  
\[\thet_{k\nu_c(\delta)}=\thet_{(k-1)\nu_c(\delta)}\cdot\thet_{\nu_c(\delta)}-y^\delta\thet_{(k-2)\nu_c(\delta)}.\]
\end{theorem}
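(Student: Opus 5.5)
The plan is to compute the structure constants $a(\nu_c(\delta),(k-1)\nu_c(\delta),\lambda)$ via Proposition~\ref{struct}, working first with an extension $\tB$ that has signed-nondegenerating coefficients (e.g.\ principal coefficients). Proposition~\ref{struct plus plus} then transfers the identity to arbitrary extensions. The target is that the only nonzero structure constants are
\begin{itemize}
\item $a=1$ at $\lambda=k\nu_c(\delta)$;
\item $a=y^\delta$ (or $2y^\delta$ when $k=2$) at $\lambda=(k-2)\nu_c(\delta)$;
\item nothing else.
\end{itemize}
For $k=2$ the term at $\lambda=0$ gives $\thet_0=1$, which accounts for the constant $2y^\delta$. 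Since $\thet_{m\nu_c(\delta)}=x^{m\nu_c(\delta)}F$ with $F\in\k[[\hy]]$, any $\lambda$ with $a\ne0$ must be of the form $k\nu_c(\delta)+\omega_c(\,\cdot\,,\beta)$ with $\beta\in Q^{\ge0}$. The coefficient at $\beta=0$ is $1$, from the pair of straight, unbent broken lines.

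First I would cut down the candidate $\lambda$'s using the dominance-region machinery from \cite{canonical}, together with the affine dominance-region computations of \cite{affdomreg}. This should show that a nonzero $a(p_1,p_2,\lambda)$ forces $\lambda$ into the dominance region of $p_1+p_2=k\nu_c(\delta)$. For points on the imaginary ray I expect that region to be contained in the imaginary ray itself, or at least in $\d_\infty$. Combining this with the shape $k\nu_c(\delta)+\omega_c(\,\cdot\,,\beta)$ and with Proposition~\ref{E delta in tubes} should leave only $\beta=m\delta$. Here $\omega_c(\,\cdot\,,\delta)=-2\nu_c(\delta)$ by Proposition~\ref{crucial lemma}, since $c^{-1}\delta=\delta$. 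So the surviving candidates are $\lambda=(k-2m)\nu_c(\delta)$ with coefficient a multiple of $y^{m\delta}$.

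Second, I would compute $a_\chi$ for $\chi$ near $\lambda=(k-2m)\nu_c(\delta)$. By Lemma~\ref{z z'} I may take $\chi$ generic in a convenient chamber near the imaginary ray, on the side $\d_\infty^+$ or $\d_\infty^-$. If $k-2m<0$, I would use the symmetry of the problem (or negativity of $\lambda$ relative to the dominance order) to rule these terms out. Mutation-symmetry arguments via Proposition~\ref{mut pair} are the tool I'd use to transport broken lines near the imaginary ray: a mutation sequence realizing $c$ preserves $B$ and fixes $\nu_c(\delta)$ up to the domain-of-definition issue. This should show that broken lines for multiples of $\nu_c(\delta)$ ending near the ray can only bend at the imaginary wall $\d_\infty$ itself; Lemma~\ref{hopefully} is what rules out bending on the walls $\d_\beta$ with $\beta\in\APTre{c}$. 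At $\d_\infty$, the broken line with slope $-m\nu_c(\delta)$ is parallel to the wall. That seems degenerate, so the bending pairs must instead be ones where one line crosses $\d_\infty$ at a shifted position. I would therefore compute the relevant terms $y^{\delta}$ picked up from the scattering function $f_{\d_\infty}$ of \cite[(2.1)]{affscat}, using only its lowest-order coefficient in $\hy^\delta$.

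The main obstacle is this second step: showing that for $m\ge2$ (or $m=1$ beyond the expected count) the contributions vanish, and that for $m=1$ exactly one pair contributes (two pairs when $k=2$). I would try to avoid the full form of $f_{\d_\infty}$. The idea is to pick $\chi$ on the side of $\d_\infty$ where one of the two broken lines is forced to be straight, so that $a_\chi$ is read off from the coefficient of $x^{(k-2)\nu_c(\delta)}$ in a single theta function. Then I would use Theorem~\ref{thet xi} to pin that coefficient down as $y^\delta$, by comparing the $\hy^\delta$-coefficient of $F_{\nu_c(\delta)}$.
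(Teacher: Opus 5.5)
Your first step works. The imaginary ray is an imaginary cone, so Theorem~\ref{B cone prod aff} restricts the candidates to $\lambda=(k-2m)\nu_c(\delta)$ with $0\le 2m\le k$. (The paper only needs Theorem~\ref{expand d inf} at this stage.) One small correction: $\omega_c(\,\cdot\,,\beta)=-2m\nu_c(\delta)$ does not force $\beta=m\delta$, because $B$ can have a kernel. The $y$-exponents therefore have to come from the broken lines themselves.

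The genuine gap is in your second step. For $\lambda\neq0$, and in particular for the key term $\lambda=(k-2)\nu_c(\delta)$ with $k\ge3$, the point $\lambda$ lies in the relative interior of $\d_\infty$. That set is disjoint from every maximal $\g$-vector cone, so infinitely many chambers accumulate at $\lambda$. There is no ``convenient chamber near the imaginary ray'' whose $a_\chi$ equals $a(p_1,p_2,\lambda)$, since Lemma~\ref{z z'} only gives constancy inside a single chamber. For any fixed $\chi$, broken lines bending on walls with normals in infinite $c$-orbits may contribute terms that disappear only in the limit.

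What you are missing is the paper's control of that limit. It takes a chi sequence $\chi_h=(\eta_{12\cdots n}^{B^T})^{hm}(\chi_0)$. For $h\gg0$, Propositions~\ref{mut pair} and~\ref{delta limit eta} give bijections between contributing pairs at $\chi_h$ and at $\chi_{h+1}$. This yields Theorem~\ref{mut pair affine periodic}: only periodic pairs survive in the limit.

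Your mechanism for producing $y^\delta$ is also wrong. Periodic broken lines never meet $\d_\infty$ (Lemma~\ref{no dinf}), so $f_{\d_\infty}$ plays no role. By Theorem~\ref{simplification for nuc delta}, there are exactly two periodic broken lines for $j\nu_c(\delta)$ ending at $\chi_h$:
\begin{itemize}
\item a straight one;
\item one with unbounded part in $\d_\infty^-$ and final monomial $x^{-j\nu_c(\delta)}y^{j\delta}$.
\end{itemize}
The second line exists because of a line that bends once on each coordinate wall $\alpha_i^\perp$, in the order given by $c$, picking up $\hy_i^{\br{\rho_i\ck,j\delta}}$ (Proposition~\ref{pairing with simple}). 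It is unique because applying powers of the mutation symmetry eventually makes the line straight.

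Given this classification, it is the condition $\lambda\in\d_\infty$, not the side of $\d_\infty$ containing $\chi$, that does the remaining work. It excludes the pair in which both lines bend, and when the two multiples differ it forces the line for the larger multiple to be straight. The result is coefficient $1$ at $k\nu_c(\delta)$ and $y^\delta$ at $(k-2)\nu_c(\delta)$, doubled when $k=2$.

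Finally, Theorem~\ref{thet xi} cannot supply the coefficient you want. It describes $\thet_{\nu_c(\delta)}$ with endpoint in the positive chamber, and monomial coefficients change under wall-crossing. So the $\hy^\delta$-coefficient of $F_{\nu_c(\delta)}$ is not the quantity you need. Comparing positive-chamber expansions of both sides instead would require already knowing $\thet_{k\nu_c(\delta)}$, which is circular.
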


In the case where $B$ is of affine type $\tilde A$ or $\tilde D$, Theorem~\ref{delta cheby} also follows from \cite[Proposition~4.2]{MSW2}, in light of \cite{MandelQin}.
Recall that the Chebyshev polynomials (of the second kind) are $T_k(x)$ for $k\in\set{0,1,\ldots}$, given by $T_0(x)=2$, $T_1(x)=x$, and $T_k(x)=xT_{k-1}(x)-T_{k-2}(x)$ for $k\ge2$.
Thus, Theorem~\ref{delta cheby} implies that, upon specializing all of the $y_i$ to $1$, each $\thet_{k\nu_c(\delta)}$ becomes a Chebyshev polynomial in the specialization of $\thet_{\nu_c(\delta)}$.

Theorem~\ref{delta cheby} amounts to a formula for expanding the product $\thet_{\nu_c(\delta)}\cdot\thet_{(k-1)\nu_c(\delta)}$ for any $k\ge2$.
More generally, the third theta-function identity of the four is the following formula for expanding the product $\thet_{k\nu_c(\delta)}\cdot\thet_{\ell\nu_c(\delta)}$ for any $k\ge1$ and $\ell\ge1$.

\begin{theorem}\label{imag general}
For any $k\ge 1$,
\[(\thet_{k\nu_c(\delta)})^2=\thet_{2k\nu_c(\delta)}+2y^{k\delta},\]
and for $k>\ell\ge1$,
\[\thet_{k\nu_c(\delta)}\cdot\thet_{\ell\nu_c(\delta)}=\thet_{(k+\ell)\nu_c(\delta)}+y^{\ell\delta}\thet_{(k-\ell)\nu_c(\delta)}.\]
\end{theorem}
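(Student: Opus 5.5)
Write $\xi=\nu_c(\delta)$. The plan is to derive Theorem~\ref{imag general} purely algebraically from Theorem~\ref{delta cheby}, taken as already established. Throughout we work in the commutative ring generated by the $\thet_{m\xi}$ and $y^\delta$. We set $\thet_{0\cdot\xi}=1$ by the convention $\thet_{\chi,0}=1$, and we define $T_0=2$ and $T_m=\thet_{m\xi}$ for $m\ge1$.

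\textbf{Step 1: a uniform recursion.} Theorem~\ref{delta cheby} says
\[T_{m}=T_{m-1}T_1-y^\delta T_{m-2}\qquad\text{for all }m\ge2.\]
For $m=2$ this is the identity $\thet_{2\xi}=\thet_\xi^2-2y^\delta$, which uses $T_0=2$ rather than $\thet_0=1$. This is exactly the Chebyshev-type recursion with parameter $q=y^\delta$. The standard closed form is obtained by adjoining roots $a,b$ of $t^2-T_1t+y^\delta$, so that $a+b=T_1$ and $ab=y^\delta$. A two-step induction then gives $T_m=a^m+b^m$ for all $m\ge0$, since the base cases $m=0,1$ hold and both sides satisfy the same recursion. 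This can be done in the polynomial extension ring $R[a]/(a^2-T_1a+y^\delta)$ with $b=T_1-a$. That extension is free over $R$, so identities proved there descend to $R$.

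\textbf{Step 2: the product formula.} For $k\ge\ell\ge1$ we compute
\[(a^k+b^k)(a^\ell+b^\ell)=a^{k+\ell}+b^{k+\ell}+(ab)^\ell\bigl(a^{k-\ell}+b^{k-\ell}\bigr),\]
which says $T_kT_\ell=T_{k+\ell}+y^{\ell\delta}T_{k-\ell}$. When $k>\ell$, the last factor is $T_{k-\ell}=\thet_{(k-\ell)\xi}$, which is the second formula. When $k=\ell$, it is $T_0=2$, which gives $(\thet_{k\xi})^2=\thet_{2k\xi}+2y^{k\delta}$.

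If one prefers to avoid the ring extension, there is an alternative route: induct on $\ell$ using $T_{\ell}=T_{\ell-1}T_1-y^\delta T_{\ell-2}$. One expands $T_kT_\ell=T_1(T_kT_{\ell-1})-y^\delta T_kT_{\ell-2}$, applies the inductive hypothesis, and regroups using the recursion twice. The cases $\ell-1=k-1$ or $\ell-2=0$ need separate handling because of the factor $2$.

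\textbf{Main obstacle.} The only real subtlety is the discrepancy between $\thet_0=1$ and the value $T_0=2$ forced by the recursion. It has to be tracked carefully at the boundary cases, namely $m=2$ in Step 1 and $k=\ell$ in Step 2. Everything else is routine.

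Note finally that these relations hold for any extension $\tB$, because Theorem~\ref{delta cheby} does and our manipulation is purely algebraic. Equivalently, one can prove them for nondegenerate coefficients and then apply Proposition~\ref{struct plus plus}.
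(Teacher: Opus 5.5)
Your algebra is correct: with $a+b=\thet_{\nu_c(\delta)}$, $ab=y^\delta$ and $T_0=2$, the recursion gives $T_m=a^m+b^m$, and the product identity follows. The paper itself notes that this ``routine induction'' is available.

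The gap is in what you feed into it. Theorem~\ref{delta cheby} is exactly the $\ell=1$ slice of the statement you are proving:
\begin{itemize}
\item its first identity is the case $k=\ell=1$ of $(\thet_{k\nu_c(\delta)})^2=\thet_{2k\nu_c(\delta)}+2y^{k\delta}$;
\item its recursion for $k\ge3$ is the case $\ell=1<k-1$ of the second identity.
\end{itemize}
In the paper, Theorem~\ref{delta cheby} has no independent proof. It is obtained as a special case of Theorem~\ref{imag general}. An outside route exists only in types $\tilde A$ and $\tilde D$, via \cite{MSW2} and \cite{MandelQin}. So your argument is circular. It shows that the case $\ell=1$ implies the general case, but the case $\ell=1$ carries all of the difficulty, and that difficulty is never addressed.

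What is missing is the structure-constant computation, which is no easier for $\ell=1$ than for general $\ell$. The paper's steps are these.
\begin{enumerate}
\item By Theorem~\ref{expand d inf}, only $\lambda\in P\cap\d_\infty$ can occur in the expansion of $\thet_{k\nu_c(\delta)}\cdot\thet_{\ell\nu_c(\delta)}$.
\item Fix such $\lambda$ and a chi sequence. Theorem~\ref{mut pair affine periodic} gives $a(k\nu_c(\delta),\ell\nu_c(\delta),\lambda)=\ap_{\chi_h}(k\nu_c(\delta),\ell\nu_c(\delta),\lambda)$ for $h\gg0$. So only periodic pairs of broken lines ending at $\chi_h$ count.
\item Theorem~\ref{simplification for nuc delta} says there are exactly two periodic broken lines for $m\nu_c(\delta)$ ending at $\chi_h$. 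Their final monomials are $x^{m\nu_c(\delta)}$ and $x^{-m\nu_c(\delta)}y^{m\delta}$.
\item The sum $\lambda_{\bl_1}+\lambda_{\bl_2}$ must lie in $\d_\infty$, which contains no negative multiple of $\nu_c(\delta)$.
\begin{itemize}
\item For $k>\ell$, this forces the broken line for $k\nu_c(\delta)$ to be the unbent one. The two surviving pairs give $\thet_{(k+\ell)\nu_c(\delta)}+y^{\ell\delta}\thet_{(k-\ell)\nu_c(\delta)}$.
\item For $k=\ell$, the doubly reflected pair is excluded. The mixed pair occurs in both orders, giving $\thet_{2k\nu_c(\delta)}+2y^{k\delta}$.
\end{itemize}
\end{enumerate}

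The real input is Theorem~\ref{simplification for nuc delta}. It supplies uniqueness of the two periodic broken lines, via Lemma~\ref{dom of def} and the mutation argument. It also supplies existence of the reflected one, which bends on the coordinate walls in the order given by $c$, using Proposition~\ref{pairing with simple}. Without that theorem or an equivalent argument, your proposal does not prove the statement.
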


One can obtain Theorem~\ref{imag general} from Theorem~\ref{delta cheby} by a routine induction, but instead, we will prove Theorem~\ref{imag general} directly using broken lines, and Theorem~\ref{delta cheby} follows as a special case.

The fourth theta-function identity lets us write the theta function $\thet_\lambda$ for any~$\lambda$ in the interior of $\d_\infty$ as the product of the theta function for a vector in the imaginary ray times a cluster monomial whose $\g$-vector is in the relative boundary of $\d_\infty$.

The \newword{$c$-cluster expansion} of $\phi\in V$ is the unique expression ${\phi=\sum_{\alpha \in \AP{c}}m_\alpha \alpha}$ such that $m_\alpha\ge0$ and such that $m_\alpha m_\beta=0$ whenever $\alpha$ and $\beta$ are distinct and not $c$-compatible \cite[Theorem~6.2]{affdenom}.
Thus for any $\phi\in V$ the set $\set{\alpha\in\AP{c}:m_\alpha\neq0}$ for its $c$-cluster expansion is contained in some $c$-cluster (possibly in the intersection of several $c$-clusters).

\begin{theorem}\label{expansion prod}
Suppose $\phi\in Q$ has $c$-cluster expansion $\phi=\sum_{\alpha \in \AP{c}}m_\alpha \alpha$.
Then 
\[\thet_{\nu_c(\phi)}=\thet_{m_\delta\nu_c(\delta)}\cdot\prod_{\alpha \in \APre{c}}(\thet_{\nu_c(\alpha)})^{m_\alpha }.\]
\end{theorem}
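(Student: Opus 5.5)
Since $\nu_c$ is linear on each cone of $\Fan_c(\RS)$ and the support of the $c$-cluster expansion lies in a single $c$-cluster, we have $\nu_c(\phi)=m_\delta\nu_c(\delta)+\sum_{\alpha\in\APre{c}}m_\alpha\nu_c(\alpha)$. Moreover $\nu_c(\phi)$ lies in the relative interior of the cone $C$ of $\F_{B^T}$ spanned by $\set{\nu_c(\alpha):m_\alpha\neq0}$. I will split into two cases.

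\textbf{Case $m_\delta=0$.} Here $C$ is a $\g$-vector cone. By Theorem~\ref{clus mon thm} (after applying $\Clear$, and more cleanly with principal coefficients, then transferring via Proposition~\ref{where prin plus} and Proposition~\ref{struct plus plus}), theta functions on a $\g$-vector cone are cluster monomials, and they multiply as monomials. Equivalently, using Proposition~\ref{struct}, the only pair of broken lines contributing to $a(p_1,p_2,\lambda)$ with $p_1,p_2$ in a common cone $C$ of the scattering fan is the pair of unbent straight lines, by the standard GHKK argument.

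\textbf{Case $m_\delta>0$.} This is the real content, and I plan to induct on the number of factors. It suffices to show that for $p_1,p_2$ in a common imaginary cone $C$, the only nonzero structure constant is $a(p_1,p_2,p_1+p_2)=1$. By Theorem~\ref{Theta facts}, the sum is finite. I will control the possible $\lambda$ using the dominance-region tool from \cite{canonical}: the structure constant $a(p_1,p_2,\lambda)$ vanishes unless $\lambda$ is in the dominance region of $p_1+p_2$, and by \cite{affdomreg} this region is a single point when $p_1+p_2$ lies in the relative interior of a cone of the fan. That would force $\lambda=p_1+p_2$.

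The pairs of broken lines ending near $\lambda$ must then be identified. I will take $\chi\to\lambda$ from the positive side $\d_\infty^+$, inside the open set of Lemma~\ref{hopefully} for a maximal imaginary cone containing $C$. By Lemma~\ref{hopefully}, no wall $\d_\beta$ with $\beta\in\APTre{c}$ meets that region near $\lambda$, and the imaginary wall itself is crossed only from its positive side. The straight lines therefore give coefficient $1$. Any bending would change the final exponent by a nonzero $\beta\in Q^+$, and $\lambda_{\bl_1}+\lambda_{\bl_2}=\lambda$ with $y$-degree zero forces no bends, since bent pairs have positive $y$-degree. The total $x$-exponent is fixed, and the monomial $y^{\beta}$ is recorded in $a$. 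A nonzero $\beta$ would then mean that some pair of broken lines gives $a(p_1,p_2,p_1+p_2)$ a positive-degree term, and I need to exclude that.

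The main obstacle is exactly this exclusion: ruling out bent pairs that end at $p_1+p_2$ with total $y^\beta\neq1$. For this I would use mutation symmetry, via Proposition~\ref{mut pair} with a source-sink sequence $\kk$ realizing $c$. That sequence preserves $B$ and maps $\d_\infty$ to itself, acting by $c$ on $\APT{c}$. It lets one move $\chi$ and $\lambda$ into a position where the GHKK positivity/convexity argument applies: a broken line ending in the interior of $C$ that bends must leave $C$ through a wall, and the sum of final exponents then lies outside $C$.

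The final step is to assemble the product over factors by induction.
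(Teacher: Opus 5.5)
\textbf{There is a genuine gap.} Your reduction rests on a false lemma. It is not true that for $p_1,p_2$ in a common imaginary cone the only nonzero structure constant is $a(p_1,p_2,p_1+p_2)=1$. Theorem~\ref{imag general} gives $(\thet_{\nu_c(\delta)})^2=\thet_{2\nu_c(\delta)}+2y^\delta$, so $a(\nu_c(\delta),\nu_c(\delta),0)=2y^\delta$. This is precisely why the statement has the single factor $\thet_{m_\delta\nu_c(\delta)}$ rather than $(\thet_{\nu_c(\delta)})^{m_\delta}$.

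The dominance-region claim you use to justify the lemma is also wrong. By Theorem~\ref{affine dom}, the dominance region of $\lambda$ in the relative interior of $\d_\infty$ is $\set{\lambda-2a\nu_c(\delta):0\le a\in\integers}\cap\d_\infty$. This is not a single point once the $\nu_c(\delta)$-coefficient of $\lambda$ is at least $2$.

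The correct reduction follows directly from your Case $m_\delta=0$. Write $q=\nu_c(\phi-m_\delta\delta)$. This vector lies in the relative boundary of $\d_\infty$, and $\thet_q=\prod_{\alpha}(\thet_{\nu_c(\alpha)})^{m_\alpha}$ by Theorem~\ref{clus mon thm}. What remains is to prove $\thet_q\cdot\thet_{k\nu_c(\delta)}=\thet_{q+k\nu_c(\delta)}$, which is Theorem~\ref{boundary with delta}. Even there, Theorem~\ref{B cone prod aff} only narrows $\lambda$ to the list $q+(k-2a)\nu_c(\delta)$ with $a\ge0$. The candidates with $a\ge1$ must be ruled out by an honest analysis of broken lines.

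That analysis is the step you leave vague, and the tool you propose for it does not work. There is no chamber containing $\lambda$, because imaginary cones lie inside the wall $\d_\infty$. So the GHKK convexity argument is unavailable. A broken line ending at $\chi\in\d_\infty^+$ near $\lambda$ can a priori bend on walls $\d_\beta$ for real roots $\beta$ in infinite $c$-orbits, and Lemma~\ref{hopefully} says nothing about those walls. Your observation that bent pairs carry positive $y$-degree does not exclude them. In fact bent broken lines genuinely contribute here: for $k\nu_c(\delta)$ there is one with final monomial $x^{-k\nu_c(\delta)}y^{k\delta}$.

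The paper's route through this uses the periodic-broken-line machinery:
\begin{enumerate}
\item Mutation symmetry by $\eta^{B^T}_{12\cdots n}$ is used, via Propositions~\ref{mut pair} and~\ref{delta limit eta}, to prove Theorem~\ref{mut pair affine periodic}. It says that for a chi sequence with $h\gg0$, only periodic pairs of broken lines contribute.
\item Theorem~\ref{simplification for nuc delta} then gives exactly two periodic broken lines for $k\nu_c(\delta)$.
\item Theorem~\ref{simplification for boundary dinf} shows that a periodic broken line for $q$ stays in $\d_\infty^+$ and bends only on walls normal to roots in $\APTre{c}$.
\item Only at this point does Lemma~\ref{hopefully} apply. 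It shows the final domain of the broken line for $q$ meets no such wall, so that broken line is straight.
\item Then $\lambda\in\d_\infty$ forces the unbent broken line for $k\nu_c(\delta)$. This gives $a(q,k\nu_c(\delta),q+k\nu_c(\delta))=1$ and no other terms.
\end{enumerate}
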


Here and elsewhere, $\thet_0$ is interpreted to be $1$.
When $\phi$ is such that ${m_\delta=0}$, so that $\phi$ is in the support of $\Fan_c^\re(\RS)$ and $\nu_c(\phi)$ is in the support of the $\g$-vector fan, Theorem~\ref{expansion prod} is an immediate consequence of the results of \cite{GHKK} quoted here as Theorem~\ref{clus mon thm} and the fact that $\nu_c$ is a bijection from $Q$ to $P$.
Thus in particular, for arbitrary $\phi$, we have $\thet_{\nu_c(\phi-m_\delta\delta)}=\prod_{\alpha \in \APre{c}}(\thet_{\nu_c(\alpha)})^{m_\alpha }$.
The part of Theorem~\ref{expansion prod} that is new is that $\thet_{\nu_c(\phi)}=\thet_{m_\delta\nu_c(\delta)}\cdot\thet_{\nu_c(\phi-m_\delta\delta)}$. 

Together, Theorems~\ref{thet xi}, \ref{delta cheby}, and~\ref{expansion prod} determine all of the theta functions that are \emph{not} known (i.e. are not associated to cluster monomials) in terms of the theta functions that \emph{are} known.
Specifically, Theorem~\ref{thet xi} lets us write $\thet_{\nu_c(\delta)}$ in terms of cluster variables for roots in $\SimplesT{c}_\fin$.
Then Theorem~\ref{delta cheby} lets us write $\thet_{k\nu_c(\delta)}$ for $k>1$ in terms of $\thet_{\nu_c(\delta)}$.
Finally, Theorem~\ref{expansion prod} lets us write $\thet_\lambda$ as some $\thet_{k\nu_c(\delta)}$ times a known theta function whenever~$\lambda$ is in $\d_\infty$.

Theorem~\ref{expansion prod} also combines with previously known results to complete the description of denominator vectors of theta functions in acyclic affine type.

\begin{corollary}\label{nu denom}
For $\phi\in Q$, the denominator vector of $\thet_{\nu_c(\phi)}$ is $\phi$.
\end{corollary}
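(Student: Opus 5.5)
The plan is to reduce the statement to three inputs: the factorization of Theorem~\ref{expansion prod}, the known denominator vectors of cluster variables, and a separate computation for the imaginary factors $\thet_{k\nu_c(\delta)}$. The glue is \emph{positivity}. By \cite{GHKK}, each theta function is a Laurent polynomial in the $x_i$ with nonnegative coefficients, taking principal coefficients, or any $\tB$ whose added rows have no negative entries. The denominator vector only involves the $x$-exponents. So I will work in that setting and then pass to arbitrary $\tB$ via Proposition~\ref{where prin plus}, checking that the substitution of coefficients does not change the $x$-exponents appearing.

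For a Laurent polynomial $f$ with positive coefficients, the $i$-th entry $d_i(f)$ of the denominator vector is the maximum, over monomials of $f$, of minus the exponent of $x_i$. For such positive $f,g$ there is no cancellation, which gives two rules:
\[
d_i(fg)=d_i(f)+d_i(g), \qquad d_i(f+g)=\max\bigl(d_i(f),d_i(g)\bigr).
\]
These two rules are the engine of the argument.

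\emph{Step 1 (real part).} For $\alpha\in\APre{c}$, the function $\thet_{\nu_c(\alpha)}$ is a cluster variable by Theorem~\ref{clus mon thm}. Its denominator vector is $\alpha$ by \cite[Theorem~1.1]{affdenom}, where the affine almost-positive-roots model is built precisely so that $\nu_c$ carries denominator vectors to $\g$-vectors. The case $\alpha=-\alpha_i$ gives $x_i$, with denominator vector $-\alpha_i$.

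\emph{Step 2 (the imaginary ray, $k=1$).} Fix a coordinate $i$. The sum of any $c$-orbit in $\SimplesT{c}$ is $\delta$, and $\delta$ has all simple-root coordinates positive. Hence there is some $\beta\in\SimplesT{c}$ whose $i$-th coordinate is positive. Rewrite Theorem~\ref{thet xi} as
\[
\thet_{\nu_c(\beta)}\,\thet_{\nu_c(\delta-\beta)}=\thet_{\nu_c(\delta)}+y^\beta\thet_{\nu_c(\delta-\beta-c^{-1}\beta)}+y^{c\beta}\thet_{\nu_c(\delta-\beta-c\beta)},
\]
in which every term on both sides is positive.
\begin{itemize}
\item The left side is a product of cluster monomials, so by Step 1 and the product rule its $i$-th denominator entry is $\delta_i$.
\item The last two terms on the right have $i$-th entry at most $\delta_i-\beta_i<\delta_i$, by Step 1 and additivity, since their arguments are again $\nu_c$ of real-cluster combinations.
\end{itemize}
The max rule then forces $d_i(\thet_{\nu_c(\delta)})=\delta_i$.

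\emph{Step 3 (multiples of $\nu_c(\delta)$).} I induct on $k$. Theorem~\ref{imag general} with $\ell=1$ expresses $\thet_{(k-1)\nu_c(\delta)}\cdot\thet_{\nu_c(\delta)}$ as $\thet_{k\nu_c(\delta)}$ plus a positive multiple of $\thet_{(k-2)\nu_c(\delta)}$. When $k=2$, that extra term is the constant $2y^\delta$ instead.
\begin{itemize}
\item By induction and the product rule, the left side has $i$-th entry $k\delta_i$.
\item The extra term has $i$-th entry $(k-2)\delta_i<k\delta_i$.
\end{itemize}
So the max rule gives $d(\thet_{k\nu_c(\delta)})=k\delta$.

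\emph{Step 4 (conclusion).} Write $\phi$ via its $c$-cluster expansion and apply Theorem~\ref{expansion prod}, which expresses $\thet_{\nu_c(\phi)}$ as a product of positive Laurent polynomials. The product rule, together with Steps 1 and 3, gives denominator vector $m_\delta\delta+\sum m_\alpha\alpha=\phi$.

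I expect the main obstacle to be justifying the absence of cancellation. This needs positivity of the theta functions as Laurent polynomials, and it needs the transfer to an arbitrary extension $\tB$, where $\Clear$ and negative frozen exponents could in principle matter. What I would try first is to observe that replacing $y$ by $y'$ multiplies each $x$-monomial coefficient by a Laurent monomial in the $u_i$. Such a replacement cannot create cancellation among monomials with distinct $x$-exponents, so the maxima above are unchanged.
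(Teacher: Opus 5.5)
Your overall route is the paper's: the real case comes from the known denominator-vector result for cluster variables, and Theorems~\ref{thet xi}, \ref{imag general} (equivalently~\ref{delta cheby}) and~\ref{expansion prod} extend it to all of $Q$.

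\textbf{The gap is in Step~2}, which silently assumes $\SimplesT{c}\neq\emptyset$. When $B$ is $2\times2$, the set $\SimplesT{c}$ is empty (Remark~\ref{2 by 2}). So there is no $\beta$ with $\beta_i>0$ to choose, and Theorem~\ref{thet xi} gives no relation at all. Your induction in Step~3 then has no base case.

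The paper closes this case with the explicit formulas in Remark~\ref{2 by 2}. For example, $\thet_{\nu_c(\delta)}=\frac{x_2^2+y_1+y_1y_2x_1^2}{x_1x_2}$ has denominator vector $(1,1)$. The other two displayed theta functions have denominator vectors $(2,1)$ and $(1,2)$, which are exactly $\delta$ for the corresponding Cartan matrices. The remaining rank-$2$ cases follow by transposing and swapping indices. With this base case added, your Steps~3 and~4 go through verbatim in rank~$2$.

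Separately, the positivity you identify as the main obstacle is not needed. The rule $d_i(fg)=d_i(f)+d_i(g)$ holds for nonzero Laurent polynomials over any integral domain (here $\k[u^{\pm1}]$), because the lowest-$x_i$-degree parts multiply to a nonzero product. In Steps~2 and~3 you always isolate the unknown theta function as a product whose $i$-th entry equals the target value, minus terms whose $i$-th entries are strictly smaller. Those terms cannot cancel the lowest-degree part of the product. So the argument works directly for an arbitrary extension $\tB$, with no need to pass through extensions whose added rows are nonnegative.
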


For background on denominator vectors, see \cite[Section~7]{ca4}.
The case of Corollary~\ref{nu denom} where $\phi$ is in the support of $\Fan_c^\re(\RS)$ was \cite[Conjecture~3.21]{afframe}, proved as \cite[Proposition~9]{Rupel}.
(See \cite[Section~9.2]{affdenom}.)
The result extends to all vectors in $Q$ by Theorems~\ref{thet xi}, \ref{delta cheby}, and~\ref{expansion prod} (and the formulas in Remark~\ref{2 by 2}).

\subsection{Imaginary exchange relations}\label{imag ex sec}  
Two distinct roots $\gamma$ and $\gamma'$ in $\AP{c}$ are \mbox{\newword{$c$-exchangeable}} if there exist $c$-clusters $C$ and $C'$ with $\gamma\in C$ and $\gamma'\in C'$ such that $C\setminus\{\gamma\}=C'\setminus\{\gamma'\}$.
The two roots are \newword{$c$-real-exchangeable} if $C$ and $C'$ can be taken to be \emph{real} $c$-clusters.
If $\gamma$ and $\gamma'$ are $c$-real-exchangeable, then the corresponding cluster variables are related by an exchange relation in the usual sense.
(See \cite[Definition~2.4]{ca4}.)
The exchange relation essentially writes $\thet_{\nu_c(\gamma)}\cdot\thet_{\nu_c(\gamma')}$ as a sum of two cluster monomials, with coefficients.  

Here, we prove \newword{imaginary exchange relations} for roots $\gamma$ and $\gamma'$ in $\AP{c}$ that are $c$-exchangeable but not $c$-real-exchangeable.
Such a relation writes ${\thet_{\nu_c(\gamma)}\cdot\thet_{\nu_c(\gamma')}}$ as a linear combination of \emph{three} theta functions.  
The following precise characterization of pairs that are $c$-exchangeable but not $c$-real-exchangeable is part of \mbox{\cite[Theorem~7.2]{affdenom}}, interpreted in light of \cite[Theorem~4.6]{affdenom}, particularly \cite[(4.6)]{affdenom}.

\begin{theorem}\label{exchangeable}   
For $\gamma,\gamma'\in\APre{c}$, the following are equivalent:
\begin{enumerate}[label=\rm(\roman*), ref=(\roman*)]
\item
$\gamma$ and $\gamma'$ are $c$-exchangeable but not $c$-real-exchangeable.
\item There exist $\beta,\beta'\in\SimplesT{c}$, distinct but in the same $c$-orbit, such that $\gamma=\delta-\beta$ and $\gamma'=\delta-\beta'$.
\end{enumerate}
\end{theorem}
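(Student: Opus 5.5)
The plan is to work entirely inside the affine almost-positive-roots model of~\cite{affdenom}, using Proposition~\ref{tube compat} together with the description of $c$-clusters, real and imaginary, recalled in Section~\ref{agaf sec}. Two distinct roots are $c$-exchangeable exactly when they label the two ends of an interior codimension-$1$ cone of $\Fan_c(\RS)$. The pair fails to be $c$-real-exchangeable exactly when every such codimension-$1$ cone separating a cluster containing $\gamma$ from one containing $\gamma'$ has at least one of the two adjacent maximal cones imaginary. Since $\gamma,\gamma'$ are real, such a codimension-$1$ cone is either an imaginary $(n-1)$-cone lying inside $\d_\infty$, or a real $(n-1)$-cone where one side is imaginary. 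The second case is impossible, because imaginary maximal cones have only $n-1$ rays and are not full-dimensional. So both $C\cup\set{\delta}$ and $C'\cup\set{\delta}$ must be imaginary $c$-clusters whose sets of real roots differ only by $\gamma\leftrightarrow\gamma'$, and every exchange of $\gamma$ producing a real cluster must fail to produce $\gamma'$.

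First, for (ii)$\Rightarrow$(i), I use that $\delta-\beta=\beta_{[1]}+\cdots+\beta_{[k-1]}$ (indexing so that $\beta=\beta_{[0]}$) is the maximal-support element of $\APTre{c;o}$. Its support is the whole cycle except $\beta$. Take the imaginary cluster consisting of $\delta$, a maximal compatible set in each other orbit, and in orbit $o$ a maximal nested/spaced family $J_o$ with maximal root $\delta-\beta$. Exchanging the maximal root within the cyclohedron combinatorics should produce the maximal root $\delta-\beta'$, where $\beta'$ is the element of $\SuppT(\delta-\beta)$ not covered by the next smaller roots. Choosing $J_o$ appropriately gives any $\beta'\neq\beta$ in the orbit, which shows $c$-exchangeability.

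To see that the pair is not real-exchangeable, I argue by compatibility. In any real cluster containing $\gamma$, removing $\gamma$ leaves $n-1$ roots, and $\gamma'$ would have to be compatible with all of them. I would then show that the only roots compatible with both $\delta-\beta$ and $\delta-\beta'$ lie in $\APT{c}$, so the common face lies in the star of $\delta$ and both completions containing $\gamma,\gamma'$ through it are imaginary. For this I would lean on \cite[Theorem~7.2]{affdenom} and its description of real exchangeable pairs.

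Second, for (i)$\Rightarrow$(ii), the reduction above forces $\gamma,\gamma'\in\APTre{c}$, and they must be in the same orbit $o$, since exchanges inside a product of cyclohedra stay in one factor. They are exchanged inside $J_o$. If $\gamma$ is not the maximal root of $J_o$, I would show that its exchange also occurs in a real cluster. Concretely, I would replace $\delta$ by a suitable non-tube root compatible with $J\setminus\set{\gamma}$ and with both $\gamma$ and $\gamma'$, using the fan structure near a point of $\d_\infty$ where the real $\g$-vector fan approaches. This contradicts non-real-exchangeability, so both $\gamma$ and $\gamma'$ are maximal roots, hence of the form $\delta-\beta$ and $\delta-\beta'$ with $\beta\ne\beta'$.

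I expect this last step to be the main obstacle: showing that non-maximal tube exchanges are realized by real clusters, which requires knowing real clusters adjacent to the imaginary wall. I would try to extract this from \cite[(4.6)]{affdenom} rather than reprove it.
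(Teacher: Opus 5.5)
Your outline is correct and is essentially the paper's route. The paper gives no independent argument: it reads the statement off \cite[Theorem~7.2]{affdenom} together with \cite[(4.6)]{affdenom}. You unpack the elementary parts (both witnessing clusters must be imaginary, $\gamma,\gamma'$ lie in one orbit, and exchanging a maximal root $\delta-\beta$ yields some $\delta-\beta'$). You defer to those same results exactly the two substantive facts: that no root outside $\APT{c}$ is compatible with both $\delta-\beta$ and $\delta-\beta'$, and that exchanges of non-maximal tube roots are realized by real clusters.

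Two harmless slips. The reduction is cleanest as $|C|=|C'|$: real clusters have $n$ elements and imaginary ones $n-1$, so the common face of two imaginary clusters has $n-2$ elements and does not span an $(n-1)$-cone. And in the non-maximal case, $\delta$ must be replaced by two non-tube roots, not one, to reach a real cluster.
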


We will prove the following theorem on imaginary exchange relations.
\begin{theorem}\label{im exch}
Suppose $\beta,\beta'\in\SimplesT{c}$ are distinct but in the same $c$-orbit.
Let $\ell$ and $m$ be the smallest positive integers such that $\beta'=c^\ell\beta$ and $\beta=c^m\beta'$.
Write $\phi=\sum_{i=1}^{\ell-1}c^i\beta$ and $\phi'=\sum_{i=1}^{m-1}c^i\beta'$.
Then
\begin{align*}
\thet_{\nu_c(\delta-\beta)}\cdot\thet_{\nu_c(\delta-\beta')}
&=\thet_{\nu_c(\delta+\phi+\phi')}+y^{\phi'+\beta}\thet_{2\nu_c(\phi)}+y^{\phi+\beta'}\thet_{2\nu_c(\phi')}\\
&=\thet_{\nu_c(\delta)}\thet_{\nu_c(\phi)}\thet_{\nu_c(\phi')}+y^{\phi'+\beta}(\thet_{\nu_c(\phi)})^2+y^{\phi+\beta'}(\thet_{\nu_c(\phi')})^2.
\end{align*}
\end{theorem}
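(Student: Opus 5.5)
The plan is to prove the first equality by a structure-constant computation (Proposition~\ref{struct}), and to deduce the second from Theorem~\ref{expansion prod}.

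\textbf{Reduction.} By Proposition~\ref{struct plus plus}, it is enough to work with an extension $\tB$ having signed-nondegenerating coefficients, for instance principal coefficients. We then compute $a(p_1,p_2,\lambda)$ for
\[
p_1=\nu_c(\delta-\beta),\qquad p_2=\nu_c(\delta-\beta'),
\]
and every $\lambda\in P$.

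\textbf{Step 1: restrict the candidates for $\lambda$.} Use the dominance-region tool from \cite{canonical} together with the characterization of dominance regions in \cite{affdomreg}. The goal is to show that $a(p_1,p_2,\lambda)\neq0$ forces $\lambda$ into a small finite list inside $\d_\infty$. This list should consist of $\lambda=p_1+p_2$ together with vectors of the form $p_1+p_2+\omega_c(\,\cdot\,,\gamma)$.

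\textbf{Step 2: identify the three terms.} Check that the $c$-cluster expansion of $(\delta-\beta)+(\delta-\beta')$ is $\delta+(\delta-\beta-\beta')$. Then, up to identifying $\delta-\beta-\beta'$ with $\phi+\phi'$, we get
\[
p_1+p_2=\nu_c(\delta+\phi+\phi').
\]
The other two candidates come from shifting by $\omega_c(\,\cdot\,,\phi'+\beta)$ and by $\omega_c(\,\cdot\,,\phi+\beta')$. Use Proposition~\ref{crucial lemma} to rewrite these shifts as values of $-\nu_c((1+c^{-1})\,\cdot\,)$. Linearity of $\nu_c$ on $\d_\infty$, combined with $(1+c^{-1})(\phi'+\beta)$ telescoping along the orbit, should then identify the two candidates as $2\nu_c(\phi)$ and $2\nu_c(\phi')$.

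\textbf{Step 3: compute each coefficient.} Take $\chi$ close to $\lambda$ and enumerate the pairs of broken lines ending at $\chi$.
\begin{itemize}
\item For $\lambda=p_1+p_2$, the only pair should be the two unbent lines, giving coefficient $1$.
\item For $\lambda=2\nu_c(\phi)$, the line for $p_2$ should bend once across a wall $\d_\gamma$ with $\gamma\in\APTre{c}$. It picks up $\hy^{\phi'+\beta}$, while the other line stays straight, giving coefficient $y^{\phi'+\beta}$.
\item The case $\lambda=2\nu_c(\phi')$ is symmetric and gives $y^{\phi+\beta'}$.
\end{itemize}
To rule out further pairs, I would use Lemma~\ref{hopefully}: it shows that no tube walls meet the positive side near a maximal imaginary cone. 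I would also use Proposition~\ref{mut pair} with a mutation-symmetry: by the $c$-action, a sequence of source mutations preserves $B$ and moves $\beta$ and $\beta'$ around the orbit. This lets me reduce to configurations where $\chi$ and $\lambda$ lie in a common domain of definition and the relevant walls are visible.

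\textbf{Step 4: the second equality.} Theorem~\ref{expansion prod} gives
\[
\thet_{\nu_c(\delta+\phi+\phi')}=\thet_{\nu_c(\delta)}\thet_{\nu_c(\phi)}\thet_{\nu_c(\phi')},
\]
since $\phi$ and $\phi'$ are spaced or nested with each other and compatible with $\delta$. Similarly $\thet_{2\nu_c(\phi)}=(\thet_{\nu_c(\phi)})^2$, because $\phi$ is real, and likewise for $\phi'$.

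\textbf{Main obstacle.} I expect the hardest part to be the exhaustive exclusion of other broken-line pairs in Step~3, especially pairs that bend across the imaginary wall with its complicated scattering term. My first attempt would be to mutate, via Lemma~\ref{mut broken line}, so that $\chi$ sits near $\d_\infty^+$ where Lemma~\ref{hopefully} applies, and then to bound the possible bends using the $\delta$-pairing from Proposition~\ref{E delta in tubes}.
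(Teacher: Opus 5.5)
Your Steps~2 and~4 are fine: the three values of $\lambda$ are identified correctly, and the second equality is indeed an application of Theorem~\ref{expansion prod}. However, Step~1 is wrong, and Step~3, which is the whole difficulty, is left open.

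The dominance-region restriction (Theorem~\ref{B cone prod}, or its affine form Theorem~\ref{B cone prod aff}) requires all factors to lie in a common cone of $\F_{B^T}$, and here they do not. The roots $\delta-\beta$ and $\delta-\beta'$ are exchangeable (Theorem~\ref{exchangeable}), hence not $c$-compatible. So the rays spanned by $p_1=\nu_c(\delta-\beta)$ and $p_2=\nu_c(\delta-\beta')$ lie in no common cone of the simplicial fan $\F_{B^T}$. This is not a technicality. If the tool applied, Theorem~\ref{affine dom} would force every $\lambda$ into $\{p_1+p_2-2a\nu_c(\delta):0\le a\in\integers\}\cap\d_\infty$. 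That set contains neither $2\nu_c(\phi)$ nor $2\nu_c(\phi')$, which contradicts the statement you are proving. The only a priori restriction available is Theorem~\ref{expand d inf} ($\lambda\in P\cap\d_\infty$), and it gives no finite list. The three candidates have to come out of the broken-line analysis itself.

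For Step~3, directly enumerating broken lines for $\chi$ near $\lambda\in\d_\infty$ is not tractable. Walls normal to roots in infinite $c$-orbits accumulate at $\d_\infty$, lines may cross $\d_\infty$ itself, and the corresponding pairs need not vanish for any fixed $\chi$, only in the limit. The tools you name do not close this:
\begin{itemize}
\item Lemma~\ref{hopefully} only forbids tube walls inside the region cut out by a single maximal imaginary cone. Here the relevant lines must bend on tube walls: the answer itself comes from bends on the walls normal to $\phi'+\beta$ and $\phi+\beta'$.
\item A $c$-symmetry moving $\beta,\beta'$ around the orbit changes $p_1$ and $p_2$, so it relates different instances of the theorem rather than excluding pairs in one instance.
\end{itemize}

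The missing idea is the periodicity reduction. Take a chi sequence $\chi_h=\bigl(\eta^{B^T}_{12\cdots n}\bigr)^{hm}(\chi_0)$, where the $m$-th power fixes $\d_\infty$ pointwise. Theorem~\ref{mut pair affine periodic} then says $a(p_1,p_2,\lambda)$ is the contribution of \emph{periodic} pairs at $\chi_h$ for $h\gg0$. Theorem~\ref{simplification for boundary dinf}, together with Lemma~\ref{no dinf}, puts every periodic line for $p_1$ or $p_2$ inside $\d_\infty^+$, bending only on walls normal to roots in $\APTre{c}$.

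Only then does the problem become finite and combinatorial:
\begin{itemize}
\item By Proposition~\ref{crucial prop}, a bend on the wall normal to $\beta_{[i,j]}$ subtracts $\kappa_{[i-1,j-1]}+\kappa_{[i,j]}$. Since $p_1+p_2=2\kappa_{[1,k]}-\kappa_{[0]}-\kappa_{[\ell]}$ and $\lambda\in\d_\infty$, this bounds the total bending.
\item Proposition~\ref{nuc phip phi} decides which walls each line can reach.
\item Starting the chi sequence from the imaginary cone spanned by $\nu_c(\delta)$, $p_2$, $\nu_c(\phi)$, $\nu_c(\phi')$ leaves exactly one pair for each of the three values of $\lambda$.
\end{itemize}
Which line bends for $\lambda=2\nu_c(\phi)$, or whether both do, depends on this choice of chi sequence. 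With the paper's choice it is the line for $p_1$, not $p_2$.
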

The equality of the two lines on the right side is an easy application of Theorem~\ref{expansion prod}.
As usual, a sum $\sum_{i=1}^0$ is interpreted to mean zero.

\subsection{The imaginary subalgebra and tube subalgebras}\label{imag sec}
As in Section~\ref{clus sec}, let $\k[u^{\pm1}]$ be the ring of Laurent polynomials in the tropical variables $u_1,\ldots,u_m$.
We define the \newword{cluster algebra} $\A(\tB)$, consistent with \cite[Definitions~2.11--2.12]{ca4}, to be the $\k[u^{\pm1}]$-subalgebra, generated by all cluster variables, of the field of rational functions in $x_1,\ldots,x_n$ with coefficients in $\k[u^{\pm1}]$.
In light of Theorem~\ref{clus mon thm}, $\A(\tB)$ contains the theta functions associated to vectors in $P\cap|\gFan(B)|$,  
and thus by Theorems~\ref{thet xi}, \ref{delta cheby}, and~\ref{expansion prod}, $\A(\tB)$ contains all theta functions.
Since $B$ is acyclic, when $\tB$ has linearly independent columns, $\A(\tB)$ coincides with the GHKK canonical algebra.  
(See \cite[Proposition~1.8]{ca3}, \cite[Corollary~1.19]{ca3}, and \cite[Proposition~0.7]{GHKK}.)

Let $\k[y^{\pm\beta}]_{\beta\in\SimplesT{c}}$ be the subring of $\k[u^{\pm1}]$ generated by $\sett{y^{\pm\beta}:\beta\in\SimplesT{c}}$.
Define the \newword{imaginary subalgebra} of $\A(\tB)$ to be the set $\I(\tB)$ of finite linear combinations of the theta functions $\set{\thet_\lambda:\lambda\in\d_\infty}$ with coefficients in $\k[y^{\pm\beta}]_{\beta\in\SimplesT{c}}$.
The term ``imaginary subalgebra'' is justified by the following theorem.

\switchmargin

\begin{theorem}\label{subalgebra} 
If $\tB$ is acyclic of affine type, then $\I(\tB)$ is a $(\k[y^{\pm\beta}]_{\beta\in\SimplesT{c}})$-subalgebra of $\A(\tB)$.
It is generated, as an algebra over $\k[y^{\pm\beta}]_{\beta\in\SimplesT{c}}$, by $\set{\thet_{\nu_c(\gamma)}:\gamma\in\APTre{c}}$.
\end{theorem}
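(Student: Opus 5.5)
We will prove the two assertions of Theorem~\ref{subalgebra} in the order: generation first, then closure. Every $\lambda\in P\cap\d_\infty$ is $\nu_c(\phi)$ for a unique $\phi\in Q$ whose $c$-cluster expansion uses only $\delta$ and roots of $\APTre{c}$. Indeed, $\nu_c$ restricts to a linear isomorphism from the star of $\delta$ in $\Fan_c(\RS)$ onto $\d_\infty$. It is also a bijection $Q\to P$. By Proposition~\ref{tube compat}, the real roots compatible with $\delta$ are exactly those in $\APTre{c}$. Theorem~\ref{expansion prod} then gives
\[\thet_\lambda=\thet_{m_\delta\nu_c(\delta)}\prod_{\alpha\in\APTre{c}}(\thet_{\nu_c(\alpha)})^{m_\alpha}.\]
Theorem~\ref{delta cheby} writes $\thet_{m\nu_c(\delta)}$ as a polynomial in $\thet_{\nu_c(\delta)}$ with coefficients in $\k[y^{\pm\delta}]$. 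Since $\delta$ is the sum of any $c$-orbit in $\SimplesT{c}$, these coefficients lie in $\k[y^{\pm\beta}]_{\beta\in\SimplesT{c}}$. Theorem~\ref{thet xi} then expresses $\thet_{\nu_c(\delta)}$ as a polynomial in theta functions $\thet_{\nu_c(\gamma)}$ with $\gamma\in\{\beta,\delta-\beta,\delta-\beta-c^{\pm1}\beta\}$, with coefficients $y^\beta,y^{c\beta}$. These $\gamma$ lie in $\APTre{c}$ (or are $0$, giving $\thet_0=1$); this is a small check using the description of $\APTre{c;o}$ as $\beta_{[i,j]}$. The one subtlety is that $\delta-\beta$ lies in $\APTre{c;o}$ when $\beta$ lies in orbit $o$. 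If $|\SimplesT{c}_o|$ is small, some of these terms degenerate, and we must verify that they vanish or reduce to $1$. Hence every $\thet_\lambda$ with $\lambda\in\d_\infty$ lies in the $\k[y^{\pm\beta}]$-algebra $R$ generated by $\{\thet_{\nu_c(\gamma)}:\gamma\in\APTre{c}\}$, so $\I(\tB)\subseteq R$.

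For the reverse inclusion $R\subseteq\I(\tB)$, which gives both closure under multiplication and generation, it suffices to show that the product of two theta functions $\thet_{p_1}\thet_{p_2}$ with $p_1,p_2\in\d_\infty$ is a $\k[y^{\pm\beta}]$-combination of $\thet_\lambda$ with $\lambda\in\d_\infty$. By Proposition~\ref{struct plus plus} we may work with signed-nondegenerating (for example principal) coefficients and then specialize. We use Proposition~\ref{struct} and Theorem~\ref{Theta facts}, so the sum is finite with polynomial coefficients $a(p_1,p_2,\lambda)$, and we need two facts.
\begin{enumerate}[label=\rm(\alph*)]
\item If $a(p_1,p_2,\lambda)\neq0$ then $\lambda\in\d_\infty$.
\item Each monomial $y^{\beta_{\bl_1}+\beta_{\bl_2}}$ occurring has exponent in the span of $\SimplesT{c}$.
\end{enumerate}

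For (b), note that $\omega_c(\cdot,\beta)$ gives the $x$-degree shift of $\hy^\beta$. The $x$-degree of the final monomial is $\lambda_{\bl_1}+\lambda_{\bl_2}=\lambda$, and the $y$-exponent is $\beta_{\bl_1}+\beta_{\bl_2}$. This gives $p_1+p_2+\omega_c(\cdot,\beta_{\bl_1}+\beta_{\bl_2})=\lambda$. Everything lies in $\d_\infty\subseteq\delta^\perp$, a hyperplane spanned by $\nu_c(\SimplesT{c})$. Combining this with Proposition~\ref{crucial lemma} and Proposition~\ref{E delta in tubes} should force $\beta_{\bl_1}+\beta_{\bl_2}$ into the span of $\APT{c}$, hence of $\SimplesT{c}$. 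A cleaner alternative: the exponents are nonnegative combinations of normals of walls crossed. We will try to show via Lemma~\ref{hopefully} that the relevant broken lines only bend at walls $\d_\beta$ with $\beta\in\APT{c}$.

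For (a), which we expect to be the main obstacle, we choose $\chi$ generic approaching $\lambda$ from $\d_\infty^+$ near a maximal imaginary cone $C$. We use Lemma~\ref{hopefully} and a dominance-region argument from \cite{canonical,affdomreg}: $a(p_1,p_2,\lambda)\neq0$ forces $\lambda$ into the dominance region of $p_1+p_2$, and for points of $\d_\infty$ this region should lie inside $\d_\infty$. If that characterization is not available in the needed form, we will fall back on mutation symmetries (Proposition~\ref{mut pair}). The idea is to apply the Coxeter-element mutation sequence, which fixes $\d_\infty$ and acts on it linearly, to transport a broken-line pair ending outside $\d_\infty$ to a pair that is forbidden by Lemma~\ref{hopefully}.
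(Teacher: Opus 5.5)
Your generation half, which writes every $\thet_\lambda$ with $\lambda\in\d_\infty$ as a polynomial in the $\thet_{\nu_c(\gamma)}$ with $\gamma\in\APTre{c}$ via Theorems~\ref{expansion prod}, \ref{delta cheby} and~\ref{thet xi}, is exactly the paper's argument. The gap is your step~(a), which is the heart of the closure assertion. It is precisely Theorem~\ref{expand d inf}, from which the paper obtains the first assertion directly, and neither of your routes proves it.

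The dominance-region route rests on a false implication. Theorems~\ref{B cone prod} and~\ref{B cone prod aff} confine $\lambda$ to the dominance region of $p_1+p_2$ only when $p_1$ and $p_2$ lie in a common $B$-cone. Without that hypothesis the conclusion fails, as Theorem~\ref{thet xi} shows:
\begin{itemize}
\item The roots $\beta$ and $\delta-\beta$ are not $c$-compatible, so $\nu_c(\beta)$ and $\nu_c(\delta-\beta)$ are not in a common cone.
\item Their sum $\nu_c(\delta)$ lies in the relative interior of $\d_\infty$, and by Theorem~\ref{affine dom} its dominance region is just $\set{\nu_c(\delta)}$.
\item Yet the expansion of $\thet_{\nu_c(\beta)}\thet_{\nu_c(\delta-\beta)}$ contains $y^\beta\thet_{\nu_c(\delta-\beta-c^{-1}\beta)}$. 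Theorem~\ref{im exch} gives more such examples.
\end{itemize}
So the fact that dominance regions of points of $\d_\infty$ stay inside $\d_\infty$ does not control products of theta functions from different imaginary cones.

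Your fallback names the right ingredient, the mutation symmetry $12\cdots n$ fixing $\d_\infty$, but not a working mechanism. Lemma~\ref{hopefully} only says that walls normal to roots in $\APTre{c}$ avoid one open region of $\d_\infty^+$ next to a maximal imaginary cone. Nothing in it forbids a pair of broken lines whose total exponent $\lambda$ lies outside $\d_\infty$.

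The missing idea is the finite-orbit principle, Theorem~\ref{finite orbit}. For a mutation symmetry $\kk$ of $B$, a monomial in theta functions indexed by vectors in finite $\eta^{B^T}_\kk$-orbits expands only in theta functions indexed by vectors in finite orbits. Roughly, the symmetry fixes $p_1$ and $p_2$ and transports structure constants up to $y$-monomials. A single $\lambda$ with an infinite orbit and nonzero coefficient would then force infinitely many nonzero terms in a finite expansion. The contradiction comes from this, not from Lemma~\ref{hopefully}. Proposition~\ref{c sym d inf} says the finite $\eta^{B^T}_{12\cdots n}$-orbits are exactly the points of $\d_\infty$. Combined with Theorem~\ref{finite orbit}, this gives~(a) for signed-nondegenerating coefficients, and Proposition~\ref{struct plus plus} removes that hypothesis.

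For~(b), once (a) is known, your identity $\lambda=p_1+p_2+\omega_c(\,\cdot\,,\beta_{\bl_1}+\beta_{\bl_2})$ places the exponent in $Q$ intersected with the real span of $\SimplesT{c}$. You would still need this intersection to equal the integer span of $\SimplesT{c}$. Your ``cleaner alternative'' is not available as stated: the paper establishes bending only on walls normal to $\APTre{c}$ just for periodic broken lines for vectors on the relative boundary of $\d_\infty$ (Theorem~\ref{simplification for boundary dinf}).
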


For each $c$-orbit $\SimplesT{c}_o$ in $\SimplesT{c}$, let $\k[y^{\pm\beta}]_{\beta\in\SimplesT{c}_o}$ be the subring of $\k[u^{\pm1}]$ generated by the set $\sett{(y^\beta)^{\pm1}:\beta\in\SimplesT{c}_o}$.
Define the \newword{tube subalgebra} $\T_o(\tB)$ to be the set of finite $(\k[y^{\pm\beta}]_{\beta\in\SimplesT{c}_o})$-linear combinations of the theta functions~$\thet_\lambda$ such that $\lambda$ is in the nonnegative integer span of $\sett{\nu_c(\beta):\beta\in\SimplesT{c}_o}$.
The term ``tube subalgebra'' refers to the following theorem and to the known correspondence between $c$-orbits in $\SimplesT{c}$ and the representation-theoretic ``tubes'' associated to the data $B$.

\begin{theorem}\label{tube subalgebra}
If $\tB$ is acyclic of affine type, then for any $c$-orbit $\SimplesT{c}_o$ in $\SimplesT{c}$, $\T_o(\tB)$ is a $(\k[y^{\pm\beta}]_{\beta\in\SimplesT{c}_o})$-subalgebra of $\A(\tB)$.
It is generated, as an algebra over $\k[y^{\pm\beta}]_{\beta\in\SimplesT{c}_o}$, by $\set{\thet_{\nu_c(\gamma)}:\gamma\in\APTre{c;o}}$.
\end{theorem}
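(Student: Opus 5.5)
The plan is to show that products of theta functions indexed by the cone $\sum_{\beta\in\SimplesT{c}_o}\reals_{\ge0}\nu_c(\beta)$ expand only into theta functions indexed by that same cone, with coefficients in $\k[y^{\pm\beta}]_{\beta\in\SimplesT{c}_o}$. This gives closure under multiplication. The generation statement then follows from Theorems~\ref{thet xi}, \ref{delta cheby} and~\ref{expansion prod}.

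First I identify the lattice points of this cone. The nonnegative span of $\nu_c(\SimplesT{c}_o)$ is the image under the linear map $\nu_c$ (linear on the star of $\delta$) of the nonnegative span of $\SimplesT{c}_o$. Each point $\phi$ of the latter has a $c$-cluster expansion involving only $\delta$ (which is the sum of the orbit $\SimplesT{c}_o$) and roots of $\APTre{c;o}$. This uses Proposition~\ref{tube compat}: roots in other orbits are spaced, and compatible sets in the star of $\delta$ decompose by orbit. By Theorem~\ref{expansion prod}, $\thet_{\nu_c(\phi)}=\thet_{m_\delta\nu_c(\delta)}\prod_{\alpha\in\APTre{c;o}}\thet_{\nu_c(\alpha)}^{m_\alpha}$.

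Second, I apply Theorem~\ref{delta cheby} to write $\thet_{m_\delta\nu_c(\delta)}$ as a polynomial in $\thet_{\nu_c(\delta)}$ with coefficients powers of $y^\delta=\prod_{\beta\in\SimplesT{c}_o}y^\beta$, which lies in $\k[y^{\pm\beta}]_{\beta\in\SimplesT{c}_o}$. I then apply Theorem~\ref{thet xi} with $\beta\in\SimplesT{c}_o$. The vectors $\delta-\beta$, $\delta-\beta-c^{\pm1}\beta$ are sums of proper arcs of the orbit cycle, hence lie in $\APTre{c;o}$ or are $0$; when $|\SimplesT{c}_o|$ is small these degenerate cases need separate checking. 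So $\thet_{\nu_c(\delta)}$ is a polynomial in $\set{\thet_{\nu_c(\gamma)}:\gamma\in\APTre{c;o}}$ with coefficients $y^\beta,y^{c\beta}$. Hence every spanning element of $\T_o(\tB)$ lies in the $\k[y^{\pm\beta}]_{\beta\in\SimplesT{c}_o}$-algebra generated by these theta functions.

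Third, the main obstacle is the converse: that the span is closed under products, i.e.\ that a product of the generators $\thet_{\nu_c(\gamma)}$, $\gamma\in\APTre{c;o}$, is a $\k[y^{\pm\beta}]_{\beta\in\SimplesT{c}_o}$-combination of theta functions in the tube cone. I would prove this first for the imaginary subalgebra (Theorem~\ref{subalgebra}), via structure constants. For $p_1,p_2\in\d_\infty$, I would show $a(p_1,p_2,\lambda)\neq0$ forces $\lambda\in\d_\infty$. The route is to use the dominance-region characterization from~\cite{affdomreg} and the results of~\cite{canonical} confining $\lambda$ to the dominance region of $p_1+p_2$, which should lie in $\d_\infty$ for $p_1+p_2\in\d_\infty$. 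I would also show that $\beta_{\bl_1}+\beta_{\bl_2}$ is then forced into the span of $\SimplesT{c}$: walls crossed should only be $\d_\beta$ with $\beta\in\APT{c}$, using Lemma~\ref{hopefully} and Proposition~\ref{E omega delta in tubes}.

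Then I refine to a single orbit. The pairing $\br{\nu_c(\phi'),\phi\ck}$ vanishes across orbits (Proposition~\ref{nuc phip phi}), and roots in different orbits are orthogonal. So the monomial $y^{\beta_{\bl_1}+\beta_{\bl_2}}$ and the endpoint $\lambda$ decompose orbitwise. Since $\lambda-(p_1+p_2)$ equals $\omega_c(\cdot,\beta_{\bl_1}+\beta_{\bl_2})$, and $p_1+p_2$ lies in the tube-$o$ cone, I will argue by weight bookkeeping, pairing $\lambda$ against the $\phi\ck$ of other orbits, that $\lambda$ also lies in the tube-$o$ cone and the exponents lie in the span of $\SimplesT{c}_o$. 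Proposition~\ref{crucial lemma} helps convert $\omega_c(\cdot,\beta)$ for $\beta$ in the orbit into $-\nu_c((1+c^{-1})\beta)$, which stays in the orbit's span. Finally, the coefficients in $\k[y]$ restricted to monomials $y^\beta$, $\beta$ in the span of $\SimplesT{c}_o$, lie in $\k[y^{\pm\beta}]_{\beta\in\SimplesT{c}_o}$. Proposition~\ref{struct plus plus} transports the relation to arbitrary $\tB$.
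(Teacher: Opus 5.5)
Your generation argument (the $c$-cluster expansion with Theorem~\ref{expansion prod}, then Theorems~\ref{delta cheby} and~\ref{thet xi}) is correct and matches the paper. The gap is in the closure step, which is the real content of the theorem.

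First, your route to $\lambda\in\d_\infty$ fails. Theorem~\ref{B cone prod} (hence Theorem~\ref{B cone prod aff}) confines $\lambda$ to the dominance region of $p_1+p_2$ only when $p_1,p_2$ lie in a common $B$-cone, and otherwise the confinement is false. In Theorem~\ref{im exch}, $p_1+p_2=\nu_c(\delta+\phi+\phi')$ lies in the relative interior of $\d_\infty$, so by Theorem~\ref{affine dom} its dominance region is $\set{p_1+p_2}$. Yet $\thet_{2\nu_c(\phi)}$ appears, with coefficient $y^{\phi'+\beta}$. The statement you want is Theorem~\ref{expand d inf}, which is proved instead via the mutation symmetry $12\cdots n$ (Theorem~\ref{finite orbit}, Proposition~\ref{c sym d inf}); you should cite that.

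Second, and more seriously, the orbit refinement has no working mechanism. It is not true that only walls normal to roots of $\APT{c}$ are crossed. The second periodic broken line for $k\nu_c(\delta)$ in Theorem~\ref{simplification for nuc delta} runs from $\d_\infty^-$ to $\d_\infty^+$, so by Lemma~\ref{scatter region} it must bend on walls normal to roots in infinite $c$-orbits. Nonperiodic broken lines ending near $\lambda$ are not controlled at all. Lemma~\ref{hopefully} only says that tube walls avoid a certain open region, so it cannot supply this.

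Nor can bookkeeping on the final weight separate orbits. For $\phi$ in another orbit, $\br{\lambda,\phi\ck}=\omega_c(\phi\ck,\beta_{\bl_1}+\beta_{\bl_2})$, and nothing you state forces this to vanish. Take $p_1=\nu_c(\beta)$ and $p_2=\nu_c(\delta-\beta)$ with $\beta\in\SimplesT{c}_o$. One hypothetical bend on the wall normal to $\beta''\in\SimplesT{c}_{o'}$, $o'\neq o$, gives $\lambda=\nu_c(\delta-\beta''-c^{-1}\beta'')\in\d_\infty$ with coefficient $y^{\beta''}$. This $\lambda$ lies outside the tube-$o$ cone when $|\SimplesT{c}_{o'}|\ge3$, and the condition $\lambda\in\d_\infty$ does not exclude it.

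What excludes it is a bend-by-bend argument. While the current exponent lies in the span of $\nu_c(\SimplesT{c}_o)$, the line is parallel to every wall normal to a root of $\APTre{c;o'}$ (Proposition~\ref{nuc phip phi}). Bends on walls normal to roots of $\APTre{c;o}$ keep the exponent in that span (Proposition~\ref{crucial prop}). This argument needs the relevant broken lines to bend only on walls normal to $\APTre{c}$. The paper has that only for periodic broken lines ending at $\chi_h$ with $h\gg0$ (Theorems~\ref{mut pair affine periodic} and~\ref{simplification for boundary dinf}), and only for vectors in the relative boundary of $\d_\infty$.

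So the paper proceeds as follows:
\begin{enumerate}
\item It writes $\thet_{p_i}=\thet_{m_i\nu_c(\delta)}\thet_{q_i}$ with $q_i$ in the relative boundary (Theorem~\ref{boundary with delta}).
\item It runs the bend-by-bend argument on $\thet_{q_1}\thet_{q_2}$.
\item It handles $\thet_{m_1\nu_c(\delta)}\thet_{m_2\nu_c(\delta)}$ by Theorem~\ref{imag general} and recombines with Theorem~\ref{boundary with delta}.
\item It gets nonnegativity of the resulting indices from Theorem~\ref{expand d inf}.
\end{enumerate}
This also shows that the $y$-exponents are sums of roots of $\APTre{c;o}$ and multiples of $\delta$. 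Hence the coefficients genuinely lie in $\k[y^{\pm\beta}]_{\beta\in\SimplesT{c}_o}$, not merely with exponents in the real span of $\SimplesT{c}_o$.
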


\begin{remark}\label{which algebra?}
Theorems~\ref{subalgebra} and~\ref{tube subalgebra} remain true when $\A(\tB)$ is replaced with any smaller algebra that contains the theta functions and the ring $\k[y^{\pm1}]$ of Laurent polynomials in $y_1,\ldots,y_n$.
The smallest choice is the small canonical algebra of \mbox{\cite[Section~3.1]{canonical}}, which is the $\k[y^{\pm1}]$-algebra generated by all theta functions.
\end{remark}

\subsection{Imaginary/tube subalgebras as generalized cluster algebras}\label{gca sec}
We now realize tube subalgebras and imaginary subalgebras as (normalized) generalized cluster algebras (in the sense of Section~\ref{gca back sec}), when $\tB$ has nondegenerate coefficients.
For arbitrary coefficients, we will show that tube subalgebras and imaginary subalgebras are specializations of (normalized) generalized cluster algebras.
Thus by Proposition~\ref{still gca}, for arbitrary coefficients, tube subalgebras and imaginary subalgebras are not-necessarily-normalized generalized cluster algebras (again in the sense of Section~\ref{gca back sec}).
For the rest of the section, we use the phrase ``generalized cluster algebra'' to mean a \emph{normalized} generalized cluster algebra.

We begin by defining a generalized cluster algebra related to each $\T_o(\tB)$.
Let~$\PP_o$ be the topical semifield with tropical variables $\set{z_\beta:\beta\in\SimplesT{c}_o}\cup\set{z_*}$, so that the elements of $\PP_o$ are Laurent monomials in these tropical variables, the multiplication is the usual multiplication of Laurent monomials, and the addition~$\oplus$ is componentwise minimum of exponent vectors.  
Given a vector $\phi={\beta+c\beta+\cdots+c^j\beta}\in\APTre{c;o}$, we write $z^\phi$ to mean $z_\beta\cdot z_{c\beta}\cdots z_{c^j\beta}$.

\switchmargin

Throughout the section, we will use the combinatorics of compatible real roots in $\APTre{c}$, as reviewed in Section~\ref{agaf sec}.  
Let $J_o$ be a maximal set of pairwise compatible real roots in $\APTre{c;o}$.
Then $|J_o|=|\SimplesT{c}_o|-1$.
We define a generalized seed $(\x_{J_o},\pp_{J_o},B_{J_o})$ of rank~$|J_o|$ using $J_o$ as indexing set.
First, suppose $\gamma$ is the maximal root in~$J_o$, pictured using a solid outline in the top row of Figure~\ref{def fig}.
Let $\beta$ be the unique element of~$\SimplesT{c}_o\setminus\SuppT(\gamma)$, so that $\gamma=\delta-\beta$.
Let $\beta'$ be the unique element $\SuppT(\gamma)$ that is not in the support of any other root in $J_o$.
Define~$\phi$ and~$\phi'$ in terms of $\beta$ and~$\beta'$ as in Theorem~\ref{im exch}, pictured by dashed outlines in the top row of Figure~\ref{def fig}.
Then $\phi$ and $\phi'$ are (if they are not zero) next smaller roots from $\gamma$ in $J_o$.
The column of~$B$ indexed by $\gamma$ and the coefficients indexed by~$\gamma$ are also shown in the top row of the figure.
If $\phi$ and/or $\phi'$ is zero, then $B$ has no rows indexed by $\phi$ and/or $\phi'$.

\definecolor{darkgreen}{rgb}{0,0.4,0}
\definecolor{darkblue}{rgb}{0,0,0.6}
\definecolor{lightblue}{rgb}{0,0.7,1}

\begin{figure}
\begin{tabular}{|ccc|}\hline&&\\[-8pt]
\begin{minipage}[m]{110pt}
\scalebox{0.95}{\includegraphics{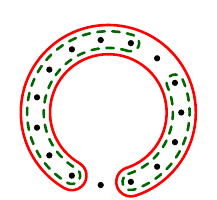}
\begin{picture}(0,0)(52,-42)
\put(-9,-26){$\beta$}
\put(6,12){$\beta'$}
\put(-5,49){\textcolor{red}{$\gamma$}}
\put(-55,0){\textcolor{darkgreen}{$\phi$}}
\put(37,-23){\textcolor{darkgreen}{$\phi'$}}
\end{picture}}
\end{minipage}
&
\begin{minipage}[m]{135pt}
$b_{\psi\gamma}=\begin{cases}
2&\text{if }\psi=\phi\\
-2&\text{if }\psi=\phi'\\
0&\text{otherwise}
\end{cases}$\\
\end{minipage}
& 
\begin{minipage}[m]{60pt}
$\begin{aligned}
d_{\gamma}&=2\\
p_{\gamma;0}&=z^{\phi'+\beta}\\
p_{\gamma;1}&=z_*\\
p_{\gamma;2}&=z^{\phi+\beta'}
\end{aligned}$
\end{minipage}\\[44pt]\hline&&\\[-8pt]
\begin{minipage}[m]{110pt}
\scalebox{0.95}{\includegraphics{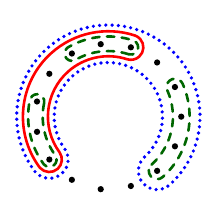}
\begin{picture}(0,0)(52,-40)
\put(-25,7){$\beta$}
\put(6,8){$\beta'$}
\put(-53,16){\textcolor{red}{$\gamma$}}
\put(15,45){\textcolor{blue}{$\phi$}}
\put(-58,-15){\textcolor{darkgreen}{$\phi'$}}
\put(-13,48){\textcolor{darkgreen}{$\phi''$}}
\put(39,-21){\textcolor{darkgreen}{$\phi'''$}}
\end{picture}}
\end{minipage}
&
\begin{minipage}[m]{135pt}
$b_{\psi\gamma}=\begin{cases}
1&\text{if }\psi\in\set{\phi',\phi'''}\\
-1&\text{if }\psi\in\set{\phi,\phi''}\\
0&\text{otherwise}
\end{cases}$\\
\end{minipage}
& 
\begin{minipage}[m]{60pt}
$\begin{aligned}
d_{\gamma}&=1\\
p_{\gamma;0}&=z^{\phi''+\beta'}\\
p_{\gamma;1}&=1
\end{aligned}$
\end{minipage}\\[44pt]\hline&&\\[-8pt]
\begin{minipage}[m]{110pt}
\scalebox{0.95}{\includegraphics{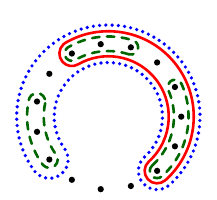}
\begin{picture}(0,0)(52,-40)
\put(-25,7){$\beta$}
\put(6,8){$\beta'$}
\put(35,25){\textcolor{red}{$\gamma$}}
\put(15,45){\textcolor{blue}{$\phi$}}
\put(-58,-15){\textcolor{darkgreen}{$\phi'$}}
\put(-13,48){\textcolor{darkgreen}{$\phi''$}}
\put(39,-21){\textcolor{darkgreen}{$\phi'''$}}
\end{picture}}
\end{minipage}
&
\begin{minipage}[m]{135pt}
$b_{\psi\gamma}=\begin{cases}
-1&\text{if }\psi\in\set{\phi',\phi'''}\\
1&\text{if }\psi\in\set{\phi,\phi''}\\
0&\text{otherwise}
\end{cases}$\\
\end{minipage}
& 
\begin{minipage}[m]{60pt}
$\begin{aligned}
d_{\gamma}&=1\\
p_{\gamma;0}&=1\\
p_{\gamma;1}&=z^{\phi''+\beta'}
\end{aligned}$
\end{minipage}\\[44pt]\hline
\end{tabular}
\caption{Defining the generalized seed}
\label{def fig}
\end{figure}

Now, suppose $|\SuppT(\gamma)|<|J_o|$ and let $\phi$ be the next larger root from $\gamma$ in $J_o$.
Let $\set{\beta,\beta'}$ be the set containing the unique element of $\SuppT(\phi)$ that is not in the support of any other root in~$J_o$ not containing $\SuppT(\phi)$ and the unique element of $\SuppT(\gamma)$ that is not in the support of any other root in~$J_o$ not containing $\SuppT(\gamma)$, and name them so that $\beta'$ is clockwise of $\beta$ in $\SuppT(\phi)$.
Deleting $\beta$ and $\beta'$ from $\SuppT(\phi)$ cuts $\SuppT(\phi)$ into three pieces that we name $\phi'$, $\phi''$, and $\phi'''$ in clockwise order.
Possibly some or all of $\phi'$, $\phi''$, and~$\phi'''$ are zero, but each one that is not zero is in~$J_o$.
There are two cases, illustrated in the second and third rows of Figure~\ref{def fig}.
In the pictures, $\gamma$ is shown by a solid outline, $\phi$ is shown by a dotted outline, and $\phi'$, $\phi''$, and $\phi'''$ are shown with dashed outlines.

It is apparent from the definition that $\pp_{J_o}$ satisfies the normalizing condition $p_{\gamma;0}\oplus p_{\gamma;d_\gamma}$ for all $\gamma\in J_o$.   
One can also easily check that if one scales the column of~$B_{J_o}$ indexed by $\gamma$ by a factor of $\frac12$, the resulting matrix is skew-symmetric.
Thus~$B_{J_o}$ is skew-symmetrizable, so $(\x_{J_o},\pp_{J_o},B_{J_o})$ is a generalized seed.

We define $\A(\x_{J_o},\pp_{J_o},B_{J_o})$ to be the generalized cluster algebra determined by $(\x_{J_o},\pp_{J_o},B_{J_o})$, with coefficient ring $\k[z_*,\,z_\beta^{\pm1}]_{\beta\in\SimplesT{c}_o}$ (polynomials that are Laurent in $\set{z_\beta:\beta\in\SimplesT{c}_o}$ and ordinary in $z_*$).
We will prove the following theorem. 

\begin{theorem}\label{gen clus alg prop}
Let $J_o$ be a maximal set of pairwise compatible real roots in $\APTre{c;o}$.
\begin{enumerate}[label=\bf\arabic*., ref=\arabic*] 
\item \label{gca prop bij}
The cluster variables of $\A(\x_{J_o},\pp_{J_o},B_{J_o})$ can be indexed as $x_\gamma$ for $\gamma\in\APTre{c;o}$ such that the map $J_o'\mapsto(\x_{J_o'},\pp_{J_o'},B_{J_o'})$ is bijection from the maximal sets~$J_o'$ of pairwise compatible roots in~$\APTre{c;o}$ to the seeds of $\A(\x_{J_o},\pp_{J_o},B_{J_o})$.
\item \label{gca prop cyclo}
The cluster complex of $\A(\x_{J_o},\pp_{J_o},B_{J_o})$ is isomorphic to the complex of maximal sets of pairwise compatible roots in $\APTre{c;o}$, which is isomorphic to the boundary complex of the simplicial cyclohedron for a $(|J_o|+1)$-cycle.
\item \label{gca prop C}
$\A(\x_{J_o},\pp_{J_o},B_{J_o})$ is of finite type $C_{|J_o|}$.
\end{enumerate}
\end{theorem}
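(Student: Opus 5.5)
The plan is to prove all three parts at once by checking that the rule $J_o\mapsto(\x_{J_o},\pp_{J_o},B_{J_o})$ is compatible with mutation. Concretely, we show that if $J_o'$ is obtained from $J_o$ by exchanging $\gamma$ (Section~\ref{agaf sec}), then $(\x_{J_o'},\pp_{J_o'},B_{J_o'})$ is the generalized seed mutation of $(\x_{J_o},\pp_{J_o},B_{J_o})$ in direction~$\gamma$. Here the new cluster variable is named $x_{\gamma'}$, and all other cluster variables are kept.

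Granting this, the seeds reachable from $(\x_{J_o},\pp_{J_o},B_{J_o})$ are exactly the $(\x_{J'_o},\pp_{J'_o},B_{J'_o})$. There are two reasons. First, the exchange graph on maximal compatible sets in $\APTre{c;o}$ is connected: it is the flip graph of the cyclohedron. Second, the seeds with index sets $J_o$ are defined purely combinatorially from $J_o$, so mutation cannot leave this family.

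For part~\ref{gca prop bij} we also need the map to be injective. We would get this from the fact that distinct $J_o'$ give distinct sets of cluster variables. This is a formal consequence once we know the exchange pattern is that of a finite-type generalized cluster algebra with the stated complex. That in turn follows from the classification \cite[Theorem~2.7]{ChekhovShapiro} once we identify the exchange matrix type.

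For part~\ref{gca prop C}, we check that $B_{J_o}$, for a suitable convenient $J_o$, is mutation equivalent to a type-$C_{|J_o|}$ (transposed) matrix. The convenient choice is the nested chain $\gamma_1\subset\gamma_2\subset\cdots\subset\gamma_{|J_o|}$, with $\gamma_i=\beta_{[0,i-1]}$. By Figure~\ref{def fig}, the column of the maximal root has entries $\pm2$, while the remaining columns form a path with entries $\pm1$. This gives the $C$ Cartan pattern, with $d_\gamma=2$ at the long/short end. Since the number of seeds is then finite and the exchange graph equals the cyclohedron flip graph, part~\ref{gca prop cyclo} follows: the cyclohedron is the type-$C$ (equivalently type-$B$) generalized associahedron, and compatibility in $\APTre{c;o}$ is Proposition~\ref{tube compat}.

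The main work is the local verification. We split into the three cases of Figure~\ref{def fig} according to the role of $\gamma$: whether it is maximal, and whether the exchange rotates $\gamma$ clockwise or counterclockwise inside its next larger root. In each case we must check three things:
\begin{enumerate}[label=\rm(\alph*)]
\item the matrix $\mu_\gamma(B_{J_o})$ agrees with $B_{J_o'}$, which is a finite case check on how next-larger and next-smaller roots change under a flip;
\item the coefficients \eqref{p mut} in the tropical semifield $\PP_o$ reproduce the prescribed $z$-monomials, including $p'_{\gamma;\ell}=p_{\gamma;d_\gamma-\ell}$, which swaps $\phi\leftrightarrow\phi'$ when the maximal root is flipped;
\item the effect on neighbouring columns $j\ne\gamma$.
\end{enumerate}

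I expect step~(b) to be the hardest bookkeeping. The denominators $p_{j;0}p_{k;0}^{[b_{kj}]_+}\oplus p_{j;d_j}p_{k;d_k}^{[-b_{kj}]_+}$ are componentwise minima of monomials in the $z_\beta$. We must show these minima are always $1$ or a single predictable monomial. The approach is to use disjointness and nesting of supports: the exponent vectors of $z^{\phi''+\beta'}$-type monomials for different roots are supported on disjoint arcs of the cycle. To organize this, I would first handle the maximal-root column using the pattern of Theorem~\ref{im exch}, then reduce the non-maximal columns to the ordinary type-$A$ (polygon-like) computation on the arc $\SuppT(\phi)$.
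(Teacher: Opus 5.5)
Your local verification in (a)--(c) is the paper's Lemma~\ref{J mut}, with Lemmas~\ref{unchanging exch} and~\ref{unchanging mut} handling the columns away from $\gamma$. Your nested chain for Part~\ref{gca prop C} is also the paper's choice.

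The gap is in passing from the local check to Part~\ref{gca prop bij}. The local check only shows that the pair $(\pp,B)$ reached along a mutation path depends only on the endpoint $J_o'$ of the corresponding path in the flip graph. The cluster variables, however, are elements of $\FF$ produced by \eqref{x mut eq}. Naming the new one $x_{\gamma'}$ does not show that two different mutation paths arriving at the same root $\gamma'$ produce the same element of $\FF$. That is a periodicity statement, like the pentagon identity in type $A_2$, and it does not follow from matrix and coefficient bookkeeping. Nor does it show that distinct roots produce distinct elements. So $J_o'\mapsto(\x_{J_o'},\pp_{J_o'},B_{J_o'})$ is not yet well defined, let alone injective. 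The paper flags exactly this when it says Lemma~\ref{x mut} ``validates the identification of $x'_\gamma$ with $x_{\gamma'}$.''

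Citing \cite[Theorem~2.7]{ChekhovShapiro} does not repair this. That result classifies finite type by the exchange matrix, so at best it gives finitely many seeds. It does not identify the exchange graph, as a quotient of the $|J_o|$-regular tree, with the flip graph of maximal compatible sets in $\APTre{c;o}$. Your argument is also circular. You deduce injectivity from ``the stated complex,'' but you deduce the stated complex from ``the exchange graph equals the cyclohedron flip graph,'' which already assumes injectivity.

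The missing idea is the theta-function realization. The paper takes $\tB$ with nondegenerate coefficients and uses $\mapchar_o^{J_o}$, which sends $z_\beta\mapsto y^\beta$, $z_*\mapsto\thet_{\nu_c(\delta)}$, and $x_\gamma\mapsto\thet_{\nu_c(\gamma)}$ for $\gamma\in J_o$.
\begin{enumerate}
\item These images are algebraically independent: the $y^\beta$ by nondegeneracy, and the theta functions because their $\g$-vectors $\nu_c(J_o\cup\set{\delta})$ span an imaginary cone. Hence $\mapchar_o^{J_o}$ is injective on the ambient field.
\item The generalized exchange relation at $\gamma$ maps to the imaginary exchange relation of Theorem~\ref{im exch} when $\gamma$ is maximal, and to Proposition~\ref{real exch} otherwise (Lemma~\ref{x mut}).
\item By induction on the length of a mutation path, the cluster variable in position $\gamma$ of any seed reached along a path ending at $J_o'$ maps to $\thet_{\nu_c(\gamma)}$.
\item Injectivity of $\mapchar_o^{J_o}$ then gives path independence. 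Distinct $\gamma$ give distinct theta functions, hence distinct cluster variables and distinct seeds.
\end{enumerate}
Parts~\ref{gca prop cyclo} and~\ref{gca prop C} then follow as you describe. Without this input, or a genuinely external substitute, your proposal does not establish Part~\ref{gca prop bij}. Such a substitute would be, for instance, a periodicity theorem for generalized cluster patterns with this normalization, matched explicitly to the nested/spaced combinatorics on the cycle.
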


Let $\mapchar_o$ be the set map from $\set{z_\beta:\beta\in\SimplesT{c}_o}\cup\set{z_*}\cup\set{x_\gamma:\gamma\in J_o}$ to the tube subalgebra $\T_o(\tB)$ that sends $z_\beta$ to $y^\beta$ for each $\beta\in\SimplesT{c}_o$, sends $z_*$ to $\thet_{\nu_c(\delta)}$, and sends the cluster variable $x_\gamma$ to the theta function $\thet_{\nu_c(\gamma)}$ for each $\gamma\in J_o$.

\begin{theorem}\label{gen clus alg}  
Let $J_o$ be a maximal set of pairwise compatible real roots in $\APTre{c;o}$.
\begin{enumerate}[label=\bf\arabic*., ref=\arabic*] 
\item \label{gca hom}
There is a unique extension of the set map $\mapchar_o$ to a ring homomorphism ${\mapchar_o:\A(\x_{J_o},\pp_{J_o},B_{J_o})\to\T_o(\tB)}$, and this homomorphism is surjective.
\item \label{gca no J}
Indexing the cluster variables as $x_\gamma$ for $\gamma\in\APTre{c;o}$, the homomorphism $\mapchar_o$ is independent of the choice of $J_o$ used to define it.
\item \label{gca bij}
The homomorphism $\mapchar_o$ restricts to a bijection $x_\gamma\mapsto\thet_{\nu_c(\gamma)}$ from cluster variables in $\A(\x_{J_o},\pp_{J_o},B_{J_o})$ to theta functions $\thet_{\nu_c(\gamma)}$ such that $\gamma\in\APTre{c;o}$.
\item \label{gca nondegen}
If $\tB$ has nondegenerate coefficients, then $\mapchar_o$ is an isomorphism.
\item \label{gca spec}
The tube subalgebra $\T_o(\tB)$ is isomorphic to the ring that is obtained from $\A(\x_{J_o},\pp_{J_o},B_{J_o})$ by specializing each $z_\beta$ to the element $y^\beta$.
\end{enumerate}
\end{theorem}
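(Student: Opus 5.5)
We prove Theorem~\ref{gen clus alg} by comparing exchange relations. The plan is to match every generalized exchange relation of $\A(\x_{J_o},\pp_{J_o},B_{J_o})$, read through $\mapchar_o$, with a theta-function identity in $\T_o(\tB)$. Those identities are either ordinary (real) exchange relations among cluster variables or the imaginary exchange relations of Theorem~\ref{im exch}.

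First, fix $J_o$ and a root $\gamma\in J_o$, and write out the generalized exchange relation \eqref{x mut eq} at $\gamma$ using Figure~\ref{def fig}.
\begin{itemize}
\item If $\gamma$ is the maximal root, $d_\gamma=2$. The relation reads $x_\gamma x'_\gamma=z^{\phi'+\beta}x_\phi^2+z_*x_\phi x_{\phi'}+z^{\phi+\beta'}x_{\phi'}^2$, and $x'_\gamma$ should correspond to $\delta-\beta'$. Applying $\mapchar_o$, this is exactly the second line of Theorem~\ref{im exch}, since $\gamma=\delta-\beta$ and $z_*\mapsto\thet_{\nu_c(\delta)}$.
\item Otherwise $d_\gamma=1$, and the relation is a binomial exchange. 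The two roots $\gamma,\gamma'$ are then $c$-real-exchangeable (Theorem~\ref{exchangeable}). So the image should be the ordinary exchange relation \eqref{exch rel} between the cluster variables $\thet_{\nu_c(\gamma)}$ and $\thet_{\nu_c(\gamma')}$ (Theorem~\ref{clus mon thm}). We need to identify its two monomials as $\thet_{\nu_c(\phi)}\thet_{\nu_c(\phi'')}$ and $y^{\phi''+\beta'}\thet_{\nu_c(\phi')}\thet_{\nu_c(\phi''')}$. I would obtain this as a structure-constant computation: mutate, via Proposition~\ref{mut pair} and Theorem~\ref{2 muts}, to a seed where the exchange is at an initial variable. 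Alternatively, use Theorem~\ref{expansion prod} together with the fact that a product of two exchangeable cluster variables has exactly two theta terms, with coefficients fixed by $\g$-vector and $y$-degree bookkeeping via Proposition~\ref{nuc phip phi}.
\end{itemize}

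Second, check that the new seed $(\x',\pp',B')$ produced by \eqref{p mut} and matrix mutation equals $(\x_{J_o'},\pp_{J_o'},B_{J_o'})$, where $J_o'$ is obtained by exchanging $\gamma$. This is a purely combinatorial case check on cycle pictures, covering how the maximal root, next larger roots and the pieces $\phi',\phi'',\phi'''$ change. Tropical $\oplus$ (minimum) must reproduce the normalized coefficients. Combining this with the first step, the map $\gamma\mapsto x_\gamma$ is compatible with all mutations. That gives parts~1 and~3 of Theorem~\ref{gen clus alg prop} through the known cyclohedral structure of $\APTre{c;o}$ (Proposition~\ref{tube compat}). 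Finite type $C$ then follows from the Chekhov--Shapiro classification, after identifying some $B_{J_o}$ with a type-$C$ matrix (transposed conventions).

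Third, for Theorem~\ref{gen clus alg}: the generalized cluster algebra is generated by its cluster variables and coefficients. Every cluster variable is a Laurent polynomial in the initial seed (generalized Laurent phenomenon). The set map extends to a homomorphism from the Laurent ring in $\x_{J_o}$ over the coefficient ring, provided each $\thet_{\nu_c(\gamma)}$, $\gamma\in J_o$, is invertible in a suitable fraction field. Because the exchange relations are preserved, cluster variables go to $\thet_{\nu_c(\gamma)}$, giving uniqueness, independence from $J_o$ (part~2), and the bijection (part~3). Surjectivity follows from Theorem~\ref{tube subalgebra}, whose generators are exactly these images and $y^{\pm\beta}$. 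For part~4 (injectivity under nondegenerate coefficients), I would use that the $\thet_\lambda$ with $\lambda$ in the integer span of $\nu_c(\SimplesT{c}_o)$ are linearly independent. A spanning set of the generalized cluster algebra is given by generalized cluster monomials times powers of $z_*$. Via Theorems~\ref{expansion prod} and~\ref{imag general}, products $z_*^k\cdot(\text{cluster monomial})$ map to triangular combinations of distinct theta functions. Nondegeneracy ensures $z_\beta\mapsto y^\beta$ introduces no relations. Part~5 follows by the same argument with $z_\beta$ specialized, using Proposition~\ref{struct plus plus}.

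The main obstacle I expect is the $d_\gamma=1$ case of the first step together with the coefficient matching of the second step: pinning the exact $y$-monomial in each real exchange relation and checking it against \eqref{p mut} across all configurations. Injectivity in part~4, which rests on linear independence of theta functions and triangularity, is the secondary difficulty.
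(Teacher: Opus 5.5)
Your architecture is the paper's. The degree-$2$ exchange at the maximal root is matched with Theorem~\ref{im exch}, the binomial exchanges with real exchange relations, and the tropical case check is Lemma~\ref{J mut}. Surjectivity comes from Theorem~\ref{tube subalgebra}, and general $\tB$ is handled through a nondegenerate extension. The genuine gap is in part~4 (injectivity). It propagates to the well-definedness of the labels $x_\gamma$, on which parts~2--3 and Theorem~\ref{gen clus alg prop} depend.

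\textbf{The injectivity gap.} You prove injectivity by writing every element of $\A(\x_{J_o},\pp_{J_o},B_{J_o})$ as a $\k[z_\beta^{\pm1}]$-combination of $z_*^k$ times generalized cluster monomials, then invoking triangularity and linear independence of theta functions. That spanning statement is not available a priori. It says that products of incompatible cluster variables can be rewritten inside the generalized cluster algebra, which is not a standard fact. In the paper's logic it is a \emph{consequence} of the isomorphism, pulled back from the fact that $\T_o(\tB)$ is spanned by $\thet_{k\nu_c(\delta)}$ times cluster monomials.

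It also presupposes that cluster monomials are indexed by compatible subsets of $\APTre{c;o}$. That is, the cluster variable produced by mutating along a path from $J_o$ to $J_o'$ must not depend on the path. Your step~2 does not give this. Lemma~\ref{J mut} only controls the $(\pp,B)$-data along a path. The exchange-relation matching only shows that the \emph{images} under $\mapchar_o$ are path-independent. Lifting that back to the cluster variables needs injectivity, so the argument is circular.

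\textbf{How the paper avoids it.} When $\tB$ has nondegenerate coefficients, the $y^\beta$ for $\beta\in\SimplesT{c}_o$ together with the $\thet_{\nu_c(\gamma)}$ for $\gamma\in J_o\cup\set{\delta}$ form an algebraically independent set. The key input is that the $\g$-vectors $\nu_c(J_o\cup\set{\delta})$ are linearly independent, because they span an imaginary cone. Hence $\mapchar_o^{J_o}$ is already injective on the ambient rational function field. This injectivity makes the labeling $x_\gamma$ path-independent. It yields Theorem~\ref{gen clus alg prop}.1 and parts~2--4 with no spanning set.

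Likewise, part~5 should not be obtained by rerunning your injectivity argument after specializing, since in general $\mapchar_o$ is not injective. The paper factors $\mapchar_o$ as the isomorphism $\mapchar'_o$ for a nondegenerate extension $\tB'$, followed by the specialization $\T_o(\tB')\to\T_o(\tB)$ of Proposition~\ref{where prin plus}.

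\textbf{A smaller point.} For the binomial exchanges the paper does not derive the relation; it imports it as Proposition~\ref{real exch} from \cite{affdomreg}. Your ``bookkeeping'' alternative would not suffice by itself. The $\g$-vector $\nu_c(\gamma)+\nu_c(\gamma')=\nu_c(\phi)+\nu_c(\phi'')$ pins down the $y$-free term. But the second monomial and its coefficient $y^{\phi''+\beta'}$ are governed by the $c$-vector of the exchange, which $\g$-vector considerations do not determine.
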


Theorem~\ref{gen clus alg} combines with Theorem~\ref{gen clus alg prop} to say that the tube subalgebra~$\T_o(\tB)$ is a generalized cluster algebra of finite type $C_{|J_o|}$ when $\tB$ has nondegenerate coefficients.
For arbitrary extensions~$\tB$, the tube subalgebra $\T_o(\tB)$ is obtained from a generalized cluster algebra of type $C_{|J_o|}$ by specializing the coefficient ring.

\begin{remark}\label{theta thinks it's principal}
The generalized cluster algebra $\A(\x_{J_o},\pp_{J_o},B_{J_o})$ can be considered to have principal coefficients, in some sense that we will not try to make precise.
However, $\A(\tB)$ need not have principal coefficients, and therefore theta functions and cluster variables differ as explained in Theorem~\ref{clus mon thm}.
Theorem~\ref{gen clus alg} still works because, in any case, the theta functions exchange as if they are principal coefficients cluster variables.
(See Theorem~\ref{im exch} and Proposition~\ref{real exch}.)
\end{remark}

\begin{remark}\label{not tomoki}
Although Theorem~\ref{gen clus alg}.\ref{gca nondegen} realizes a generalized cluster algebra as a subring of a cluster algebra, it is not an instance of the recent constructions described in \cite{AkagiNakanishi,RamosWhiting}.
In particular, their construction would embed the tube subalgebra~$\T_o(\tB)$ into a cluster algebra with clusters of size $|J_o|+1$.
\end{remark}

Now, choose a maximal set $J_o$ of pairwise compatible real roots in $\APTre{c;o}$ for each $c$-orbit in $\SimplesT{c}$, so that $J=\bigcup_oJ_o$ is a maximal set of pairwise compatible real roots in $\APTre{c}$.
We will define a generalized cluster algebra denoted $\A(\x_J,\pp_J,B_J)$.

The initial cluster is $\x_J=(x_\gamma:\gamma\in J)$.
The coefficient semifield $\PP$ is the tropical semifield with tropical variables $\set{z_\beta:\beta\in\SimplesT{c}}\cup\set{z_*}$ and the initial coefficients $p_\gamma$ are defined exactly as before, viewing each $\gamma\in J$ as an element of some $J_o$.
We emphasize that the tropical variable $z_*$ that appears in some coefficients $p_\gamma$ is the same, independent of which $J_o$ has $\gamma\in J_o$.
The initial exchange matrix is block diagonal and its diagonal blocks are the $B_{J_o}$ as above.
We define $\A(\x_J,\pp_J,B_J)$ to be the generalized cluster algebra determined by $(\x_J,\pp_J,B_J)$ with coefficient ring $\k[z_*,\,z_\beta^{\pm1}]_{\beta\in\SimplesT{c}}$ (polynomials that are Laurent in $\set{z_\beta:\beta\in\SimplesT{c}}$ and ordinary in $z_*$).

By Theorem~\ref{gen clus alg prop} and because $B_J$ is block diagonal, the cluster variables of $\A(\x_J,\pp_J,B_J)$ are indexed as $x_\gamma$ for ${\gamma\in\APTre{c}}$, seeds are indexed by maximal sets of pairwise compatible roots in $\APTre{c}$, the cluster complex is a product of simplicial cyclohedra, and $\A(\x_J,\pp_J,B_J)$ is a generalized cluster algebra of finite type $\prod_oC_{|J_o|}$.
Furthermore, $\A(\x_J,\pp_J,B_J)$ is the tensor product, indexed by the $c$-orbits in $\SimplesT{c}$, 
\[\A(\x_J,\pp_J,B_J)\cong{\bigotimes}_{\k[z_*]}\A(\x_{J_o},\pp_{J_o},B_{J_o}).\]

Let $\mapchar$ be the set map from $\sett{y^\beta:\beta\in\SimplesT{c}}\cup\sett{z_*}\cup\sett{x_\gamma:\gamma\in J}$
to the imaginary subalgebra $\I(\tB)$ that sends $z_\beta$ to $y^\beta$ for each $\beta\in\SimplesT{c}$, sends $z_*$ to $\thet_{\nu_c(\delta)}$, and sends the cluster variable $x_\gamma$ to the theta function $\thet_{\nu_c(\gamma)}$ for each $\gamma\in J$.

\begin{theorem}\label{gen clus alg almost} 
Let $J$ be a maximal set of pairwise compatible real roots in $\APTre{c}$.
\begin{enumerate}[label=\bf\arabic*., ref=\arabic*] 
\item \label{gca almost hom}
There is a unique extension of the set map $\mapchar$ to a ring homomorphism $\mapchar:\A(\x_J,\pp_J,B_J)\to\I(\tB)$, and this homomorphism is surjective.
\item \label{gca almost no J}
Indexing the cluster variables as $x_\gamma$ for $\gamma\in\APTre{c}$, the homomorphism $\mapchar$ is independent of the choice of $J$ used to define it.
\item \label{gca almost bij}
The homomorphism $\mapchar$ restricts to a bijection $x_\gamma\mapsto\thet_{\nu_c(\gamma)}$ from cluster variables in $\A(\x_J,\pp_J,B_J)$ to theta functions $\thet_{\nu_c(\gamma)}$ such that $\gamma\in\APTre{c}$.
\item \label{gca almost ker}
If $\tB$ has nondegenerate coefficients, then the kernel of $\mapchar$ is generated by $\sett{\,\prod_{\beta\in\SimplesT{c}_o}z_\beta-\prod_{\beta\in\SimplesT{c}_{o'}}z_\beta:o\neq o'}$.
\item \label{gca almost spec}
The imaginary subalgebra $\I(\tB)$ is isomorphic to the ring obtained from $\A(\x_J,\pp_J,B_J)$ by specializing each $z_\beta$ to the element $y^\beta$ of the tropical semifield.
\end{enumerate}
\end{theorem}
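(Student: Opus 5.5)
The plan is to deduce Theorem~\ref{gen clus alg almost} from Theorem~\ref{gen clus alg}, applied once for each $c$-orbit, together with the tensor product decomposition $\A(\x_J,\pp_J,B_J)\cong\bigotimes_{\k[z_*]}\A(\x_{J_o},\pp_{J_o},B_{J_o})$.

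For parts \ref{gca almost hom}--\ref{gca almost bij}, each factor $\A(\x_{J_o},\pp_{J_o},B_{J_o})$ comes with the homomorphism $\mapchar_o$ into $\T_o(\tB)\subseteq\I(\tB)\subseteq\A(\tB)$. All of these homomorphisms send $z_*$ to the same element $\thet_{\nu_c(\delta)}$, and $\A(\tB)$ is commutative. Hence the universal property of the tensor product over $\k[z_*]$ gives a ring homomorphism $\mapchar$ that extends the set map. It is unique because $\A(\x_J,\pp_J,B_J)$ is generated by the $z$'s, $z_*$ and the cluster variables, and each cluster variable is a rational expression in the initial data, which determines its image. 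Independence of $J$ and the bijection on cluster variables follow factorwise from Theorem~\ref{gen clus alg}.\ref{gca no J} and~\ref{gca bij}. Injectivity across different orbits holds because the $\APTre{c;o}$ are disjoint and distinct $\nu_c(\gamma)$ give distinct theta functions.

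Surjectivity comes from Theorem~\ref{subalgebra}. Since $\I(\tB)$ is generated over $\k[y^{\pm\beta}]_{\beta\in\SimplesT{c}}$ by the $\thet_{\nu_c(\gamma)}$ with $\gamma\in\APTre{c}$, and these are images of cluster variables, the image of $\mapchar$ contains all generators. It also contains the coefficients $y^{\pm\beta}$, because the $z_\beta$ are invertible in the coefficient ring.

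For part \ref{gca almost ker}, the relations $\prod_{\beta\in\SimplesT{c}_o}z_\beta-\prod_{\beta\in\SimplesT{c}_{o'}}z_\beta$ lie in the kernel, since both products map to $y^\delta$ (each orbit sums to $\delta$). The reverse inclusion is the main obstacle, and I would handle it with a basis argument.
\begin{itemize}
\item By Theorem~\ref{gen clus alg}.\ref{gca nondegen}, each factor is isomorphic to $\T_o(\tB)$. Using Theorem~\ref{expansion prod} together with the cluster-monomial bases of the finite-type factors, $\A(\x_J,\pp_J,B_J)$ is free over $\k[z_*,z_\beta^{\pm1}]$ on products of cluster monomials, one from each factor.
\item In the quotient by the stated ideal, these products map to $\thet_{\nu_c(\lambda)}$ times powers of $\thet_{\nu_c(\delta)}$. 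Using Theorem~\ref{imag general}, these can be rewritten in terms of the theta functions $\thet_\lambda$ for $\lambda\in\d_\infty$.
\item Those theta functions are linearly independent over the $y$'s when coefficients are nondegenerate, so they form a basis of $\I(\tB)$.
\item Under nondegenerate coefficients, the only relations among the $y^\beta$ are those forced by $\sum_{\beta\in\SimplesT{c}_o}\beta=\delta$. A rank count of the lattice spanned by $\SimplesT{c}$ should then confirm that the stated relations generate the whole kernel.
\end{itemize}

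For part \ref{gca almost spec}, apply Theorem~\ref{gen clus alg}.\ref{gca spec} factorwise and take the tensor product over $\k[z_*]$. After specializing $z_\beta\mapsto y^\beta$, the coefficient rings of the factors are glued along the common element $y^\delta$, exactly as in $\I(\tB)$.
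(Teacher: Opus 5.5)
Your arguments for Assertions~\ref{gca almost hom}--\ref{gca almost bij} follow the paper's. Your basis argument for Assertion~\ref{gca almost ker} is a sound and more explicit alternative to the paper's appeal to the algebraic independence of $\set{\thet_{\nu_c(\gamma)}:\gamma\in J\cup\set\delta}$ together with the relations among the $y^\beta$. In your last bullet you need saturation, not just rank, of the lattice generated by differences of orbit indicators; this holds because every rational relation among $\SimplesT{c}$ is constant on each orbit.

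The gap is in Assertion~\ref{gca almost spec}. Theorem~\ref{gen clus alg}.\ref{gca spec} describes each $\T_o(\tB)$ in isolation and says nothing about relations between different tubes inside $\I(\tB)$. The multiplication map $\bigotimes_{\k[z_*]}\T_o(\tB)\to\I(\tB)$ is surjective, and controlling its kernel is the entire content of the assertion. Your description of that kernel, gluing only along $y^\delta$, is right when the $y^\beta$ satisfy no relations beyond $\prod_{\beta\in\SimplesT{c}_o}y^\beta=y^\delta$. But that is the nondegenerate case, already handled by Assertion~\ref{gca almost ker}. For a degenerate extension, specializing $z_\beta\mapsto y^\beta$ imposes every relation among the $y^\beta$ in $\k[u^{\pm1}]$. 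This includes cross-orbit relations such as $y^\beta=y^{\beta'}$ with $\beta,\beta'$ in different orbits, which need not follow from the orbit relations. To identify the result with $\I(\tB)$, you would need two further facts, and neither follows from the factorwise statements:
\begin{itemize}
\item theta functions from different tubes remain linearly independent over $\k[y^{\pm\beta}]_{\beta\in\SimplesT{c}}$ for this $\tB$;
\item specialization commutes with the tensor decomposition of $\A(\x_J,\pp_J,B_J)$.
\end{itemize}

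The paper instead derives Assertion~\ref{gca almost spec} from Assertion~\ref{gca almost ker}. Choose an extension $\tB'$ with nondegenerate coefficients, with corresponding homomorphism $\mapchar'$. By Proposition~\ref{where prin plus}, $\mapchar$ is $\mapchar'$ followed by the substitution $y'\mapsto y$ from $\I(\tB')$ to $\I(\tB)$. The kernel of $\mapchar'$ is generated by the orbit-product binomials, and it is contained in the kernel of the specialization $z_\beta\mapsto y^\beta$, since every orbit product specializes to $y^\delta$. So both $\mapchar$ and the specialization factor through $\A(\x_J,\pp_J,B_J)/\ker\mapchar'\cong\I(\tB')$, and their kernels are compared there. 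Since you already have a proof of Assertion~\ref{gca almost ker}, route Assertion~\ref{gca almost spec} through it in this way rather than through the tube-by-tube result.
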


We conclude this section with some remarks on special cases.  
First, suppose~$\tB$ has nondegenerate coefficients (for example, suppose~$\tB$ has principal coefficients).
Then each tube subalgebra is a generalized cluster algebra of finite type $C$ by Theorem~\ref{gen clus alg}.\ref{gca nondegen}.
When $\SimplesT{c}$ consists of more than one $c$-orbit, the imaginary subalgebra is a quotient of a generalized cluster algebra of type $\prod_oC_{|J_o|}$.

Second, suppose $\tB$ is coefficient-free (meaning that $\tB=B$).
Theorem~\ref{gen clus alg}.\ref{gca spec} says that each tube subalgebra is isomorphic to a generalized cluster algebra with every tropical variable except $z_*$ set to~$1$ and Theorem~\ref{gen clus alg almost}.\ref{gca almost spec} says the same about the imaginary algebra.
But setting tropical variables to $1$ in the tropical semifield commutes with the tropical addition, so the result of this  specialization is a new tropical semifield with only one tropical variable $z_*$, and the specialization of the generalized cluster algebra is a new generalized cluster algebra.
Thus we have the following immediate corollary of Theorem~\ref{gen clus alg}.\ref{gca spec} (and Theorem~\ref{gen clus alg almost}.\ref{gca almost spec}).

\begin{corollary}\label{cf cor}
Suppose $\tB=B$.
\begin{enumerate}[label=\bf\arabic*., ref=\arabic*] 
\item
If $J_o$ is a maximal set of pairwise compatible real roots in $\APTre{c;o}$, then $\T_o(\tB)$ is isomorphic to the generalized cluster algebra obtained from $\A(\x_{J_o},\pp_{J_o},B_{J_o})$ by specializing $z_\beta$ to $1$ for each $\beta\in\SimplesT{c}_o$.
\item
If $J$ is a maximal set of pairwise compatible real roots in $\APTre{c}$, then $\I(\tB)$ is isomorphic to the generalized cluster algebra obtained from $\A(\x_J,\pp_J,B_J)$ by specializing $z_\beta$ to $1$ for each $\beta\in\SimplesT{c}$.
\end{enumerate}
\end{corollary}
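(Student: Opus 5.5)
Corollary~\ref{cf cor} follows directly from Theorem~\ref{gen clus alg}.\ref{gca spec} and Theorem~\ref{gen clus alg almost}.\ref{gca almost spec}, together with one check: that the specialized object is itself a normalized generalized cluster algebra. The plan is to first compute what the specialization $z_\beta\mapsto y^\beta$ becomes when $\tB=B$. There are no tropical variables $u_i$ in this case, so every $y_j$ is the empty monomial $1$. Hence $y^\beta=1$ for every $\beta\in\SimplesT{c}$. Thus Theorem~\ref{gen clus alg}.\ref{gca spec} identifies $\T_o(\tB)$ with the ring obtained from $\A(\x_{J_o},\pp_{J_o},B_{J_o})$ by setting each $z_\beta$ to $1$. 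Likewise, Theorem~\ref{gen clus alg almost}.\ref{gca almost spec} identifies $\I(\tB)$ with the ring obtained from $\A(\x_J,\pp_J,B_J)$ by the same substitution.

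Next I will show that this specialized ring is a normalized generalized cluster algebra. Let $\PP'$ be the tropical semifield in the single variable $z_*$. Let $\sigma:\PP\to\PP'$ send $z_\beta\mapsto 1$ and $z_*\mapsto z_*$.

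The key observation is that $\sigma$ is a semifield homomorphism. It respects multiplication trivially. For addition, tropical $\oplus$ is the componentwise minimum of exponent vectors, and $\sigma$ simply forgets the $z_\beta$-coordinates. Projection to one coordinate commutes with componentwise minimum, so $\sigma$ respects $\oplus$ as well.

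Consequently, applying $\sigma$ to every coefficient $p_{\gamma;\ell}$ in every seed produces a family satisfying \eqref{p mut} over $\PP'$, because \eqref{p mut} uses only products, quotients and $\oplus$. The exchange relations \eqref{x mut eq} are likewise transported to relations over $\PP'$. The normalizing condition $p_{\gamma;0}\oplus p_{\gamma;d_\gamma}=1$ is preserved, since $\sigma(1)=1$.

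It follows that the specialized seeds are exactly the seeds of the normalized generalized cluster algebra whose initial seed is $(\x_{J_o},\sigma(\pp_{J_o}),B_{J_o})$. The same holds for $J$. The coefficient ring $\k[z_*,z_\beta^{\pm1}]$ specializes to $\k[z_*]$, which contains all the $\sigma(p_{\gamma;\ell})$, so the ring in question is precisely the generalized cluster algebra generated over $\k[z_*]$.

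The only point needing care is identifying the specialized ring with the generalized cluster algebra over $\PP'$. One must confirm that the image of $\A(\x_{J_o},\pp_{J_o},B_{J_o})$ under $z_\beta\mapsto1$ is the ring generated over $\k[z_*]$ by the specialized cluster variables. This holds because $\sigma$ maps the generators (the cluster variables and coefficients) to the corresponding generators. The corollary follows.
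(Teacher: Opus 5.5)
Your proposal is correct and follows the same route as the paper. With $\tB=B$ every $y^\beta$ equals $1$, so Theorems~\ref{gen clus alg}.\ref{gca spec} and~\ref{gen clus alg almost}.\ref{gca almost spec} reduce the claim to the fact that setting the $z_\beta$ to $1$ commutes with tropical addition, which yields a normalized generalized cluster algebra over the tropical semifield in $z_*$ alone; your semifield homomorphism $\sigma$, with its preservation of \eqref{p mut} and of the normalizing condition, just spells out what the paper states in one sentence.
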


\section{Tools}\label{tools sec}
In principle, most of the results described in Section~\ref{real exchangeable} are simple applications of Proposition~\ref{struct} to expand a product of theta functions by finding all pairs of broken lines that contribute to the expansion formula.
In practice, finding the appropriate broken lines is usually not difficult, but ruling out additional pairs is hard.
In this section, we gather and develop some tools to rule out pairs of broken lines.

\subsection{Mutation symmetry and the Coxeter element}\label{sym cox sec}
Our first tool is the following theorem that greatly reduces the possible vectors $\lambda$ such that $a(p_1,p_2,\lambda)$ can be nonzero in the equation \eqref{struct eq} of Proposition~\ref{struct}.
Every main result of this paper is a theta function computation that fits the hypotheses of this theorem.
Except for Theorem~\ref{B cone prod aff}, below, which strengthens this theorem in a special case, all of the further tools we develop need this theorem as a starting point.
In the theorem, $\k[y]$ is the ring of polynomials in $y_1,\ldots,y_n$, as usual.

\begin{theorem}\label{expand d inf}  
Suppose $B$ is acyclic of affine type and let $\tB$ be an extension of~$B$. 
Suppose $\v$ is a monomial in a finite set of theta functions $\thet_\lambda$, each having ${\lambda\in\d_\infty}$.
Then $\v$ is a finite $\k[y]$-linear combination of theta functions $\thet_\kappa$, each having ${\kappa\in\d_\infty}$.
\end{theorem}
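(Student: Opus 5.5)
Since $\Theta=P$ in affine type, Proposition~\ref{struct} together with Theorem~\ref{Theta facts} (and Proposition~\ref{struct plus plus} to pass from an extension with signed-nondegenerating coefficients, such as principal coefficients, to an arbitrary $\tB$) already gives that $\v$ is a finite $\k[y]$-linear combination of theta functions $\thet_\kappa$. Write $\v$ as a product $\thet_{\lambda_1}\cdots\thet_{\lambda_r}$ and induct on $r$. The task is then to show that every $\kappa$ with nonzero coefficient lies in $\d_\infty$. Since $\d_\infty$ is a convex cone, the sum $\lambda_1+\cdots+\lambda_r$ lies in $\d_\infty$. So for the induction it suffices to treat a product $\thet_{p_1}\thet_{p_2}$ with $p_1,p_2\in\d_\infty$, and to show that $a(p_1,p_2,\kappa)\neq0$ forces $\kappa\in\d_\infty$.

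The plan is to use mutation-symmetry, as advertised in the introduction. Because $B$ is acyclic with Coxeter element $c$, the sequence of mutations $\kk=1,2,\ldots,n$ (sources first) returns $\tB$'s top part to $B$. Up to the coefficients, the corresponding mutation map $\eta_\kk^{B^T}$ acts on $V^*$ as a piecewise-linear version of $c$ (or $c^{-1}$), transported by $\nu_c$. The first step is to check that on $\d_\infty$ and on a neighborhood of it on the appropriate sides, $\eta_\kk^{B^T}$ is linear and fixes $\d_\infty$ setwise: it permutes the rays $\nu_c(\beta)$, $\beta\in\SimplesT{c}$, according to the $c$-orbits and fixes $\nu_c(\delta)$. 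For this I would use Propositions~\ref{E delta in tubes}, \ref{pairing with simple} and~\ref{crucial lemma}. In particular, Proposition~\ref{pairing with simple} gives the signs of the successive coordinates that determine which linear branch each mutation applies. Iterating $N$ times, with $N$ a common multiple of the orbit sizes, gives a mutation-symmetry $\eta$ that fixes $\d_\infty$ pointwise. Away from $\d_\infty$, however, $\eta$ pushes points steadily toward one side: real $g$-vectors move along $c$-orbits, with the $\delta$-coordinate drifting.

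The second step is to apply Proposition~\ref{mut pair}. Suppose $\kappa\notin\d_\infty$. Choose $\chi$ near $\kappa$ in the same domain of definition of $\eta^m$ for a suitable power $m$. Broken-line pairs contributing to $a_\chi(p_1,p_2,\kappa)$ then correspond bijectively to pairs contributing to $a_{\eta^m\chi}(p_1,p_2,\eta^m\kappa)$, because $p_i$ are fixed. The $y$-exponents change only by a controlled monomial, by Theorem~\ref{2 muts}. Hence $a(p_1,p_2,\kappa)\neq0$ would imply $a(p_1,p_2,\eta^m\kappa)\neq0$ for all $m\ge0$ (and similarly for negative $m$ on the other side). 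Since $\kappa\notin\d_\infty$, the points $\eta^m\kappa$ are infinitely many distinct lattice points. This contradicts the finiteness in Theorem~\ref{Theta facts}.

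The main obstacle is the domain-of-definition hypothesis. We need $\kappa$ (and $\chi$ near it) to lie in the same linearity domain as the relevant data for all iterates. We also need to confirm that the map really moves every $\kappa\notin\d_\infty$ to infinitely many distinct points, rather than cycling; a real $g$-vector could a priori lie in a finite orbit. I would handle this using the affine structure. Real roots in $\AP{c}\setminus\APT{c}$ lie in infinite $c$-orbits, and $\nu_c$ intertwines $c$ with $\eta$ on the $g$-vector fan, via the doubled Cambrian fan description from \cite{afframe}. The points of $|\gFan(B)|$ outside $\d_\infty$ are exactly those whose cones contain some root outside $\APT{c}$, by Proposition~\ref{tube compat}, and these escape under iteration. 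If handling the domain of definition directly proves too delicate, my fallback is to use Lemma~\ref{z z'} to move $\chi$ within a maximal $g$-vector cone. I would also use a dominance-region bound: the dominance region of $p_1+p_2$, from \cite{affdomreg}, should constrain $\kappa$ to a bounded neighborhood of $\d_\infty$, leaving only finitely many cases to exclude by the symmetry argument.
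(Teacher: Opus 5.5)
Your proposal is correct and follows the paper's strategy. The paper uses the same mutation symmetry $12\cdots n$, identifies the points with finite $\eta_{12\cdots n}^{B^T}$-orbits as exactly the points of $\d_\infty$ (Proposition~\ref{c sym d inf}, derived from $\eta_{12\cdots n}^{B^T}\circ\nu_c=\nu_c\circ\tau_c$ --- note that the intertwining is with $\tau_c$, not $c$, away from $\d_\infty$ --- together with the fact that $\tau_c$-orbits of roots are finite exactly in the star of $\delta$), and removes the coefficient hypothesis with Proposition~\ref{struct plus plus}.

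The only difference is in the vanishing step. You rebuild it from Proposition~\ref{mut pair} and the finiteness in Theorem~\ref{Theta facts}, whereas the paper simply cites Theorem~\ref{finite orbit}, which packages exactly that transport-plus-finiteness argument. In your version, the domain-of-definition worry disappears once $\chi$ is taken in the interior of a maximal $\g$-vector cone containing $\kappa$: mutation maps are linear on $B^T$-cones, and Lemma~\ref{z z'} makes $a_\chi$ constant there. The dominance-region fallback is unneeded.
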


We will prove Theorem~\ref{expand d inf} using a general result from \cite{canonical} that needs the hypothesis of signed-nondegenerating coefficients, but we can remove that hypothesis in Theorem~\ref{expand d inf} using Proposition~\ref{struct plus plus}.
Given an exchange matrix $B$, a \newword{mutation symmetry} of~$B$ is a sequence $\kk$ of indices such that $\mu_\kk(B)=B$.
Recall the definition of~$\Theta$ from Section~\ref{scat sec}.
The following is \cite[Theorem~5.1]{canonical}, translated to this transposed setting.
The sum in the theorem is finite because $N\subseteq\Theta$.

\begin{theorem}\label{finite orbit}
Suppose $\tB$ has signed-nondegenerating coefficients and suppose~$\kk$ is a mutation symmetry of~$B$.
Let $\v$ be a monomial in a finite set $\set{\thet_\nu:\nu\in N}$ of theta functions with $N\subset\Theta$, expressed as $\v=\sum_{\lambda\in P}\sum_{\beta\in Q}c_{\lambda,\beta}\,y^\beta\thet_\lambda$ in the theta basis.
If each $\nu\in N$ is in a finite $\eta^{B^T}_\kk$-orbit but $\lambda$ is in an infinite $\eta^{B^T}_\kk$-orbit, then $c_{\lambda,\beta}=0$ for all~$\beta\in Q$.
\end{theorem}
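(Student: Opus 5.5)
The plan is to exploit the fact that mutation along a mutation symmetry $\kk$ transports the theta basis of $\tB$ to the theta basis of $\mu_\kk(\tB)$, whose exchange matrix is again $B$. Then I combine this with finiteness of theta expansions from Theorem~\ref{Theta facts}.

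First I iterate Theorem~\ref{2 muts} along $\kk=k_q,\ldots,k_1$; signed-nondegenerating coefficients are preserved at every step, so the theorem applies each time. This gives, for every $\lambda\in P$,
\[
\thet^{\mu_\kk(\tB)}_{\eta^{B^T}_\kk(\lambda)}=\thet^{\tB}_\lambda\cdot y^{\gamma(\lambda)},
\]
where $y^{\gamma(\lambda)}$ is a Laurent monomial in $y_1,\ldots,y_n$. Here the primed variables are related to the unprimed ones by the composite of the substitutions \eqref{exch rel}--\eqref{hy mut}. The $y'$ are obtained from the $y$ by an invertible monomial change of variables.

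Since $\mu_\kk(B)=B$, the matrices $\tB$ and $\mu_\kk(\tB)$ extend the same exchange matrix, and both have nondegenerate coefficients. By Proposition~\ref{where prin plus}, $\thet^{\mu_\kk(\tB)}_\kappa$ is obtained from $\thet^{\tB}_\kappa$ by the formal replacement $x\mapsto x'$ and $y\mapsto y'$. The map sending unprimed symbols to primed symbols extends to a ring isomorphism $\Psi$ onto the algebra generated by the theta functions of $\mu_\kk(\tB)$, because all relations are given by the same structure constants with $y$ renamed as $y'$ (Proposition~\ref{struct plus plus}). Composing, I get a bijection on theta functions, $\thet_\lambda\mapsto\thet_{\eta^{B^T}_\kk(\lambda)}$, that is multiplicative up to $y$-monomials and preserves the $\k[y^{\pm1}]$-module structure, up to an automorphism of $\k[y^{\pm1}]$ given by an invertible monomial map.

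Next I apply this to the expansion $\v=\prod\thet_\nu=\sum c_{\lambda,\beta}y^\beta\thet_\lambda$. Transporting the identity gives an expansion of $\prod\thet_{\eta_\kk(\nu)}$, up to a monomial factor, in which the coefficient of $\thet_{\eta_\kk(\lambda)}$ is nonzero exactly when some $c_{\lambda,\beta}\neq0$. This step uses uniqueness of expansions in the theta basis over $\k[y^{\pm1}]$, which holds by Proposition~\ref{struct} and linear independence of theta functions.

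Now choose $m\ge1$ to be a common period of all $\nu\in N$ under $\eta^{B^T}_\kk$; this exists because $N$ is finite and each orbit is finite. Replacing $\kk$ by its $m$-fold repetition makes the monomial $\v$ invariant up to a $y$-monomial factor. Hence if $c_{\lambda,\beta}\ne0$ for some $\beta$, then $\thet_{\eta^{jm}_\kk(\lambda)}$ appears with nonzero coefficient in the expansion of $\v$ for every $j\ge0$. Because $\lambda$ has an infinite orbit, these are infinitely many distinct weights. But $N\subseteq\Theta$ and $\Theta$ is closed under products, so Theorem~\ref{Theta facts} (applied inductively to the factors of $\v$) says the expansion has only finitely many nonzero terms, which is a contradiction.

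The main obstacle is making the identification of primed and unprimed theta functions rigorous when $\mu_\kk(\tB)\ne\tB$ in the coefficient rows. One has to check that the renaming $\Psi$ is compatible with the birational identification given by the exchange relations, so that the transported expansion is really the theta expansion of the same element. I would handle this by working with an ambient extension having nondegenerate coefficients, and tracking monomials via Lemma~\ref{mut broken line} applied to pairs of broken lines, as in Proposition~\ref{mut pair}.
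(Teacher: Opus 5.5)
This theorem is not proved in the paper: it is quoted from \cite[Theorem~5.1]{canonical}, so there is no in-text argument to compare against. Your argument is correct. It is the natural derivation from the tools the paper quotes:
\begin{enumerate}
\item Iterate Theorem~\ref{2 muts} along $\kk$.
\item Use $\mu_\kk(B)=B$ and Proposition~\ref{where prin plus} to identify the theta functions of $\mu_\kk(\tB)$ with those of $\tB$ after renaming.
\item Replace $\kk$ by a power that fixes $N$ pointwise. This is legitimate because, as $\mu_\kk(B)=B$, the mutation map for the $m$-fold repetition of $\kk$ is $(\eta^{B^T}_\kk)^m$.
\item Conclude that the finite support of the expansion of $\v$ is stable under $(\eta^{B^T}_\kk)^m$, which contradicts the finiteness given by Theorem~\ref{Theta facts}.
\end{enumerate}

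The ``main obstacle'' you identify is not a real one, and you do not need any broken-line bookkeeping. Under nondegenerate coefficients the following hold:
\begin{itemize}
\item the $y_j$ are algebraically independent;
\item the $y'_j$ are an invertible monomial change of the $y_j$;
\item $\k(y)(x')=\k(y)(x)$.
\end{itemize}
So the renaming $x_i\mapsto x'_i$, $y_j\mapsto y'_j$ is simply a field automorphism $\Psi$ of $\k(y)(x)$. This, not the agreement of structure constants that you cite, is why $\Psi$ is well defined. Applying $\Psi$ or $\Psi^{-1}$ to a true identity gives a true identity. Compatibility with the exchange relations is exactly the content of Theorem~\ref{2 muts}, so nothing further needs checking.

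The one input you should justify explicitly is uniqueness of theta expansions over $\k[y^{\pm1}]$, since Proposition~\ref{struct} gives existence only. Uniqueness follows from pointedness. Write $y^\beta\thet_\lambda$ as $x^\lambda y^\beta$ plus monomials $x^{\lambda+B\alpha}y^{\beta+\alpha}$ with $\alpha\in Q^+$. In a vanishing finite combination, pick a term with nonzero coefficient and $\beta$ of minimal height. It contributes the monomial $x^\lambda y^\beta$, and no other term can cancel it.
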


In order to use Theorem~\ref{finite orbit} in the proof of Theorem~\ref{expand d inf}, we find a mutation symmetry $\kk$ of~$B$ with the property that points in $\d_\infty$ are in finite $\eta_\kk^{B^T}$-orbits and points not in $\d_\infty$ are in infinite $\eta_\kk^{B^T}$-orbits.
To that end, we first review the definition of a permutation $\tau_c$ of $\AP{c}$ that is closely related to $c$.
The details of the definition of $\tau_c$ will be less important for the present paper than the facts we quote about the $\tau_c$-orbits in $\AP{c}$.
For each simple reflection~$s$, let $\sigma_s:\AP{c}\to\AP{c}$ be
\[\sigma_s(\alpha)=
\begin{cases}
\alpha & \text{if }\alpha\in-\Simples\setminus\set{-\alpha_s}\\
s(\alpha) & \text{otherwise}.
\end{cases}\]
Here, as usual, $s$ acts on roots by reflection.
Define $\tau_c=\sigma_{s_1}\cdots\sigma_{s_n}$, where, as before, $c=s_1\cdots s_n$ is the Coxeter element determined by $B$.
The following is part of \cite[Proposition~7.33]{affscat}.
(Part of Proposition~\ref{eta nice}.\ref{dinf dom of def} below is not stated in \cite[Proposition~7.33]{affscat}, but is established inside the proof of \cite[Proposition~7.33]{affscat}.)  
The action of $c$ in the proposition is the usual dual action of $s_1\cdots s_n$ on $V^*$.

\begin{proposition}\label{eta nice} 
Suppose $B$ is an acyclic exchange matrix and the Coxeter element associated to $B$ is $c=s_1\cdots s_n$.
Then the piecewise linear map $\eta^{B^T}_{12\cdots n}$
\begin{enumerate}[label=\bf\arabic*., ref=\arabic*] 
\item \label{nu tau}   
has $\eta^{B^T}_{12\cdots n}\circ\nu_c=\nu_c\circ\tau_c$ as maps on $\AP{c}$, 
\item\label{eta aut mut}
is an automorphism of $\F_{B^T}$, and
\item\label{eta is c}
fixes $\d_\infty$ as a set and has finite order on $\d_\infty$.
\item\label{dinf dom of def}
contains all of $\d_\infty$ in one domain of definition, and agrees with $c$ on that domain.
\end{enumerate}
\end{proposition}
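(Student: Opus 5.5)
The plan is to prove the four parts in the order \ref{eta aut mut}, \ref{nu tau}, \ref{dinf dom of def}, \ref{eta is c}, since the last two rest on the first two.

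\textbf{Mutation symmetry and part~\ref{eta aut mut}.} Write $c=s_1\cdots s_n$ with the indexing chosen so that $b_{ij}>0$ implies $i<j$. Then index $1$ is a source of $B$. After mutating at $1$, index $2$ is a source of $\mu_1(B)$, and so on. Mutating at a source only reverses the signs of the entries in its row and column. Doing this for every index once reverses every entry twice, so $\mu_{12\cdots n}(B)=B$ and $12\cdots n$ is a mutation symmetry. Part~\ref{eta aut mut} should then follow from the general fact that $\eta_k^{B^T}$ is an isomorphism of fans $\F_{B^T}\to\F_{\mu_k(B)^T}$. This is immediate from the definition of $B^T$-equivalence, because a sign condition on $\eta_{\kk}^{\mu_k(B)^T}(\eta_k^{B^T}(v))$ is a sign condition on $\eta_{\kk k}^{B^T}(v)$. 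Composing these isomorphisms along the mutation symmetry gives an automorphism of $\F_{B^T}$.

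\textbf{Part~\ref{nu tau}.} I would prove this one step at a time. The claim to establish is the conjugation identity
\[\eta_1^{B^T}\circ\nu_c=\nu_{s_1cs_1}\circ\sigma_{s_1}\quad\text{on }\AP{c}.\]
This is the source-mutation compatibility of the almost positive roots model in \cite{afframe,affdenom}. I would check it on each root $\alpha$ in two cases: when $\br{\nu_c(\alpha),\alpha_1\ck}\ge0$ and when it is $<0$. I would use $\nu_c(\alpha)=-E_c(\,\cdot\,,\alpha)$ for positive $\alpha$, and a direct computation for $\alpha=\pm\alpha_1$. Note that $\mu_1(B)$ corresponds to the Coxeter element $s_1cs_1=s_2\cdots s_ns_1$. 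Iterating this identity $n$ times brings us back to $c$ and to $\sigma_{s_1}\cdots\sigma_{s_n}=\tau_c$, which gives~\ref{nu tau}.

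\textbf{Part~\ref{dinf dom of def} (the main obstacle).} Here we must show that, for every $x\in\d_\infty$ and every $k$, the intermediate image $\eta_{k-1\cdots1}^{B^T}(x)$ lies weakly on one fixed side of $\alpha_k^\perp$.
\begin{itemize}
\item For $x=\nu_c(\delta)$, I would show that the intermediate image equals $\nu_c(\delta)+\omega_c(\,\cdot\,,\sum_{i<k}\br{\rho_i\ck,\delta}\alpha_i)$. This follows by induction, since each step applies $x\mapsto x+\omega$-type corrections while staying on the negative side. Proposition~\ref{pairing with simple} then gives pairing $-\br{\rho_k\ck,\delta}<0$ with $\alpha_k\ck$, so $\nu_c(\delta)$ sits strictly on the negative side at every step.
\item To extend this to all of $\d_\infty$, I would use that $\d_\infty$ is the union of the imaginary cones. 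By part~\ref{eta aut mut} at each intermediate stage, each step maps cones of the (intermediate) mutation fan to cones, linearly on each cone. Each imaginary cone contains $\nu_c(\delta)$ as an extreme ray, so the cone is never cut by $\alpha_k^\perp$, and its image must stay weakly on the negative side. This is the delicate point: we must rule out an imaginary cone meeting $\alpha_k^\perp$ in its interior, and I expect to need that $\alpha_k^\perp$ is a union of cones of the intermediate fan.
\end{itemize}
Once this is settled, on that domain each step is the linear map dual to $s_k$. Their composite is the dual action of $s_1\cdots s_n=c$.

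\textbf{Part~\ref{eta is c}.} The cone $\d_\infty$ is the nonnegative span of $\nu_c(\SimplesT{c})$. By part~\ref{nu tau}, $\eta$ sends $\nu_c(\beta)$ to $\nu_c(\tau_c\beta)$, and on $\APT{c}$ the map $\tau_c$ agrees with $c$, which permutes $\SimplesT{c}$ in finite orbits. Since $\eta$ is linear on $\d_\infty$ by part~\ref{dinf dom of def}, it permutes the extreme rays of $\d_\infty$, hence fixes $\d_\infty$ as a set. It has finite order on $\d_\infty$, dividing the least common multiple of the orbit sizes.
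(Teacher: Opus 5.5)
The paper does not prove this proposition; it quotes it from \cite[Proposition~7.33]{affscat}, with Assertion~4 extracted from that proof. Your plan (mutation symmetry, stepwise fan isomorphisms, the conjugation identity $\eta_k\circ\nu_c=\nu_{s_kcs_k}\circ\sigma_{s_k}$, and tracking $\nu_c(\delta)$ through the $n$ single mutations) is a sensible reconstruction, and parts~2 and~3 are fine. However, the conclusions you state for parts~1 and~4 do not hold for the map you actually analyze.

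\textbf{The orientation error.} In this paper $\mu_{k_q\cdots k_1}=\mu_{k_q}\circ\cdots\circ\mu_{k_1}$. So $\eta^{B^T}_{12\cdots n}=\eta_1\circ\eta_2\circ\cdots\circ\eta_n$ (intermediate matrices suppressed), which mutates at $n$ \emph{first}. You mutate at $1$ first, as your intermediate maps $\eta^{B^T}_{k-1\cdots1}$ show. You are therefore studying $\eta_n\circ\cdots\circ\eta_1$, the inverse map. For that map your own ingredients give different conclusions.
\begin{itemize}
\item Iterating $\eta_1\circ\nu_c=\nu_{s_1cs_1}\circ\sigma_{s_1}$ yields $\eta_n\circ\cdots\circ\eta_1\circ\nu_c=\nu_c\circ\sigma_{s_n}\circ\cdots\circ\sigma_{s_1}=\nu_c\circ\tau_c^{-1}$. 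Here each $\sigma_s$ is an involution, and $\tau_c=\sigma_{s_1}\circ\cdots\circ\sigma_{s_n}$ is the composite that agrees with $c$ on $\APT{c}$, as you use in part~3.
\item On your domain, the steps act as the duals of $s_1$, then $s_2$, \dots, then $s_n$. Their composite is the dual action of $s_n\cdots s_1=c^{-1}$, not of $c$.
\end{itemize}
This is not cosmetic. The orientation is what sends forward orbits into $\d_\infty^+$ in Proposition~\ref{delta limit eta}, whereas your map sends them into $\d_\infty^-$.

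\textbf{Two ways to repair it.}
\begin{itemize}
\item Run everything from the final letter. Column $n$ of $B$ is nonnegative, so $\eta^{B^T}_n$ is the dual of $s_n$ on $\set{x:\br{x,\alpha_n\ck}\ge0}$, and every positive root other than $\alpha_n$ has $\nu_c$-image there. The analogous check gives $\eta_n\circ\nu_c=\nu_{s_ncs_n}\circ\sigma_{s_n}$, and iterating downward produces $\tau_c$.
\item Keep your order and invert at the end. We have $\eta_{k+1}\circ\cdots\circ\eta_n=\eta_k\circ\cdots\circ\eta_1\circ(\eta_n\circ\cdots\circ\eta_1)^{-1}$, and your map preserves $\d_\infty$. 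So in the correct order, the image of $\d_\infty$ just before the mutation at $k$ is $\eta_k$ applied to your intermediate image. It therefore lies weakly on the \emph{positive} side of $\alpha_k^\perp$; at that stage the image of $\nu_c(\delta)$ pairs to $+\br{\rho_k\ck,\delta}$ with $\alpha_k\ck$. The composite on this domain is $c$.
\end{itemize}

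\textbf{Smaller points.}
\begin{itemize}
\item Your formula $\nu_c(\delta)+\omega_c(\,\cdot\,,\sum_{i<k}\br{\rho_i\ck,\delta}\alpha_i)$ for the intermediate image is correct only in the coordinates $j\ge k$; the already-mutated coordinates have been negated. For the Kronecker matrix with $b_{12}=2$, $\eta^{B^T}_1(\nu_c(\delta))=\rho_1-\rho_2$, not $-\rho_1-\rho_2$. Only the $k$th coordinate matters, so Proposition~\ref{pairing with simple} still gives the sign you need.
\item Your ``delicate point'' is immediate from the definitions. $\mu_\kk(B)^T$-equivalence includes the empty sequence, so every cone of every mutation fan lies in a closed half-space bounded by each $\alpha_k^\perp$. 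With the stepwise fan isomorphisms, each intermediate image of an imaginary cone lies weakly on the side where the image of $\nu_c(\delta)$ lies strictly.
\item The positive-root case of part~1 rests on the identity $E_{s_1cs_1}(x,y)=E_c(s_1x,s_1y)$ for $s_1$ initial, together with $\nu_{c'}(-\alpha_j)=\rho_j$. You should state and verify these rather than leave them implicit.
\end{itemize}
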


The concatenation of \cite[Proposition~3.12(5)]{affdenom} and \cite[Proposition~5.6]{affdenom} says that the $\tau_c$-orbit of root in $\AP{c}$ is finite if and only if the root spans a ray in the star of~$\delta$.
Since $\nu_c$ induces an isomorphism of complete fans from $\Fan_c(\RS)$ to the mutation fan $\F_{B^T}$ by \cite[Theorem~2.9]{affscat}, since the rays of $\Fan_c(\RS)$ are spanned by the roots in $\AP{c}$ by definition, and since $\nu_c$ maps the star of $\delta$ to $\d_\infty$, Proposition~\ref{eta nice}.\ref{nu tau} implies the following proposition, which allows us to complete the proof of Theorem~\ref{expand d inf}.

\begin{proposition}\label{c sym d inf}
Suppose $B$ is an acyclic exchange matrix of affine type.
A point in $V^*$ has a finite $\eta_{12\cdots n}^{B^T}$-orbit if and only if that point is in $\d_\infty$.
\end{proposition}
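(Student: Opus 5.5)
The claim to prove is Proposition~\ref{c sym d inf}: a point of $V^*$ has a finite $\eta_{12\cdots n}^{B^T}$-orbit if and only if it lies in $\d_\infty$.

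The forward direction on $\d_\infty$ is immediate from Proposition~\ref{eta nice}.\ref{eta is c}. That result says $\eta=\eta^{B^T}_{12\cdots n}$ fixes $\d_\infty$ as a set and has finite order there, so every point of $\d_\infty$ has a finite orbit.

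For the converse, take $x\notin\d_\infty$. Since $\nu_c$ induces an isomorphism of complete fans $\Fan_c(\RS)\to\F_{B^T}$, and $\d_\infty$ is the image of the star of $\delta$, the point $x$ lies in the relative interior of a cone $\nu_c(\mathrm{cone}(C))$. Here $C$ is a set of pairwise $c$-compatible roots in $\AP{c}$ and $C$ is not contained in any cone of the star of $\delta$. In particular $C$ contains some root $\gamma$ that does not span a ray of the star of $\delta$ (otherwise, since $C\cup\set\delta$ would be compatible, the cone would lie in the star). By \cite[Proposition~3.12(5)]{affdenom} and \cite[Proposition~5.6]{affdenom}, $\gamma$ has an infinite $\tau_c$-orbit.

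Proposition~\ref{eta nice}.\ref{eta aut mut} says $\eta$ is an automorphism of $\F_{B^T}$, hence permutes cones and their relative interiors. By Proposition~\ref{eta nice}.\ref{nu tau}, $\eta$ sends the ray spanned by $\nu_c(\alpha)$ to the ray spanned by $\nu_c(\tau_c\alpha)$, for $\alpha\in\AP{c}$. So $\eta^k(x)$ lies in the relative interior of the cone spanned by $\nu_c(\tau_c^k C)$.

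Suppose the orbit of $x$ were finite, say $\eta^k(x)=x$ with $k\ge1$. Relative interiors of distinct cones of a fan are disjoint, so $\tau_c^k C=C$ as sets. Then $\tau_c^k$ permutes the finite set $C$, so some power of $\tau_c$ fixes $\gamma$. That contradicts $\gamma$ having an infinite $\tau_c$-orbit. Hence the orbit of $x$ is infinite.

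The only delicate point is the claim that a cone not in the star of $\delta$ has an extreme ray outside the star's rays. This follows from the flag (clique) structure of the compatibility complex, since $\Fan_c(\RS)$ is defined via pairwise compatibility. If all roots of $C$ were compatible with $\delta$, then $C\cup\set\delta$ would be a compatible set, and $C$ would span a cone in the star. I expect this to be the main step needing care, but it is straightforward.
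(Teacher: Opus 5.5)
Your proof is correct and follows essentially the same route as the paper, which obtains the proposition from the $\tau_c$-orbit characterization in \cite{affdenom}, the fan isomorphism $\nu_c$, and Proposition~\ref{eta nice}.\ref{nu tau}. You have simply written out the cone-by-cone details the paper leaves implicit, and you use Proposition~\ref{eta nice}.\ref{eta is c} directly for the easy direction (points of $\d_\infty$ have finite orbits).
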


\begin{proof}[Proof of Theorem~\ref{expand d inf}]
One can easily check that the sequence $12\cdots n$ is a mutation-symmetry of $B$.
Specifically, one verifies that each mutation step has the effect of negating one row and column of $B$ while leaving every other entry unchanged, and that the row/column affected is different at each step.

The theorem is about an exchange matrix of affine acyclic type, which is a well behaved case, in the sense that $\Theta$ is the entire lattice~$P$.
(In fact, \cite[Proposition~0.14]{GHKK} says that acyclicity of~$B$ is enough to imply that $\Theta=P$.)
Thus if we adopt the additional hypothesis of signed-nondegenerating coefficients, we obtain the conclusion of Theorem~\ref{expand d inf} from Theorem~\ref{finite orbit}.
However, we can remove that additional hypothesis in light of Proposition~\ref{struct plus plus}.
\end{proof}

We mention some further useful facts about the action of $\eta_{12\cdots n}^{B^T}$ and~$c$.

\begin{lemma}\label{delta perp}
The hyperplane $\delta^\perp\subset V^*$ is the linear span of all vectors in $V^*$ that are in finite $c$-orbits.
\end{lemma}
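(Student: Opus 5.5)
We prove the two inclusions separately, using the dual action of $c$ on $V^*$ (the contragredient of the action of $c$ on $V$). Two facts about the affine Coxeter element do the work: $c\delta=\delta$, and the $c$-orbit of $\delta$'s complement is governed by the finite part. Write $U\subseteq V^*$ for the set of vectors with finite $c$-orbits.

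\emph{Step 1: $U\subseteq\delta^\perp$.} Let $x\in V^*$ have a finite $c$-orbit. The element $c$ acts on $V$ by an affine-type transformation: for every real root $\beta$ we have $c^k\beta=\beta'+kM(\beta)\delta$ up to a bounded error. More precisely, on the quotient $V/\reals\delta$, $c$ induces a Coxeter element of the finite Weyl group acting on the finite root system, which has finite order $h$. Hence $c^h v=v+f(v)\delta$ for some linear functional $f$ on $V$. It is nonzero, since $c$ has infinite order, and it vanishes on $\delta$.

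Pairing $x$ with $c^{-kh}v$ gives
\[
\br{c^{kh}x,v}=\br{x,c^{-kh}v}=\br{x,v}-kf(v)\br{x,\delta}.
\]
Here we use $c^{-h}v=v-f(v)\delta$, which follows because $c$ fixes $\delta$. If $\br{x,\delta}\ne0$, choose $v$ with $f(v)\ne0$; then these values are unbounded in $k$, so the orbit of $x$ is infinite. Thus every $x\in U$ lies in $\delta^\perp$, and so does the linear span of $U$.

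\emph{Step 2: $\delta^\perp\subseteq\operatorname{span}U$.} The most economical route uses the paper's tools. By Proposition~\ref{eta nice}.\ref{dinf dom of def}, the map $\eta^{B^T}_{12\cdots n}$ agrees with $c$ on all of $\d_\infty$. By Proposition~\ref{c sym d inf}, every point of $\d_\infty$ has a finite orbit, so $\d_\infty\subseteq U$. Since $\d_\infty$ is an $(n-1)$-dimensional cone, and it lies in $\delta^\perp$ by the description of $\d_\infty$ given after~\eqref{d beta}, its linear span is all of $\delta^\perp$.

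Alternatively, one can argue directly: $c^h$ acts on $\delta^\perp$ as the identity. Indeed, for $x\in\delta^\perp$,
\[
\br{c^hx,v}=\br{x,c^{-h}v}=\br{x,v}-f(v)\br{x,\delta}=\br{x,v}.
\]
So every vector of $\delta^\perp$ has a finite orbit, which gives this inclusion immediately.

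\emph{Main obstacle.} The key input is the claim that $c^h v-v\in\reals\delta$ with $f\ne0$. I would justify it from the standard structure of affine Weyl groups. Since $c$ fixes $\delta$, it induces an element $\bar c$ of the finite Weyl group $W_\fin$ acting on $V/\reals\delta$. Taking $h$ to be the order of $\bar c$, we get $c^h$ acting trivially on $V/\reals\delta$. Hence $c^h$ is a translation-type element, of the form $v\mapsto v+f(v)\delta$. It is not the identity because Coxeter elements of infinite Coxeter groups have infinite order (Howlett), and a linear $f$ vanishing on $\delta$ accounts for the translation. With both inclusions, $\delta^\perp$ equals the set of vectors with finite $c$-orbits, and hence equals its span.
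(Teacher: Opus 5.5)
Your proposal is correct, and it reaches the result partly by a different route from the paper.

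\textbf{Where you agree with the paper.} Your first route for Step~2 is exactly the paper's argument for $\delta^\perp\subseteq\operatorname{span}U$. Propositions~\ref{eta nice} and~\ref{c sym d inf} place the $(n-1)$-dimensional cone $\d_\infty\subset\delta^\perp$ inside $U$.

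\textbf{Where you differ: Step~1.} The paper never writes down a power of $c$. It observes three things:
\begin{itemize}
\item $U$ is a linear subspace, because $c$ is linear and so a common power of $c$ fixes any finitely many vectors with finite orbits.
\item $U$ contains the hyperplane $\delta^\perp$.
\item $U$ cannot be all of $V^*$, since then some power of $c$ would be the identity and $c$ would be diagonalizable, which it is not.
\end{itemize}
You replace this dimension count with the explicit formula $c^hv=v+f(v)\delta$ on $V$, equivalently $c^hx=x-\br{x,\delta}f$ on $V^*$. This comes from the finiteness of the image of $W$ acting on $V/\reals\delta$.

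\textbf{What your route buys.}
\begin{itemize}
\item It shows directly that a vector off $\delta^\perp$ drifts linearly under powers of $c$. This is the mechanism behind Lemma~\ref{dom of def} and Proposition~\ref{delta limit eta}.\ref{pos a}.
\item It gives the uniform bound that all finite orbit sizes divide $h$.
\item Through your alternative Step~2, it makes the lemma independent of the scattering-diagram and mutation-fan results.
\end{itemize}
The cost is that you must invoke the structure of the affine Weyl group and the infinite order of $c$ on $V$. The paper's non-diagonalizability is the same underlying fact.

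\textbf{One small inaccuracy.} The image $\bar c$ of $c$ acting on $V/\reals\delta$ is generally \emph{not} a Coxeter element of $W_{\mathrm{fin}}$. For example, in type $\tilde A_1$ it is trivial, since $c\alpha_1=\alpha_1+2\delta$. You only use that $\bar c$ has finite order, which you state correctly in your final paragraph.
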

\begin{proof}
Propositions~\ref{eta nice} and~\ref{c sym d inf} imply that every point in the codimension-$1$ cone $\d_\infty\subset\delta^\perp$ is in a finite $c$-orbit, so the same is true for every point in $\delta^\perp$.
Since $c$ does not have a basis of eigenvectors, the same can't be true for all of $V^*$.
\end{proof}

\begin{lemma}\label{dom of def}
Suppose $x\in V^*$ and let $m$ be the least common multiple of the sizes of finite $c$-orbits in $V^*$.
\begin{enumerate}[label=\bf\arabic*., ref=\arabic*] 
\item \label{dom of def +}
If $\br{x,\delta}>0$, then for $a\in\reals$ large enough, $x+a\nu_c(\delta)$ is in the same domain of definition of $\bigl(\eta_{12\cdots n}^{B^T}\bigr)^i$ as $\d_\infty$ for all $i\ge0$.
On this domain, $\bigl(\eta_{12\cdots n}^{B^T}\bigr)^i=c^i$.
Also, $\bigl(\eta_{12\cdots n}^{B^T}\bigr)^i(x+a\nu_c(\delta))=\bigl(\eta_{12\cdots n}^{B^T}\bigr)^i(x)+a\nu_c(\delta)$ and there exists $a'\ge0$ with $\bigl(\eta_{12\cdots n}^{B^T}\bigr)^{i+m}(x+a\nu_c(\delta))=\bigl(\eta_{12\cdots n}^{B^T}\bigr)^i(x+a\nu_c(\delta))+a'\nu_c(\delta)$ for all $i\ge0$.
\item \label{dom of def -}
If $\br{x,\delta}<0$, then the same is true with $i\le0$ replacing $i\ge0$ and $i-m$ replacing $i+m$.
\end{enumerate}
\end{lemma}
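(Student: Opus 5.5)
We write $\eta=\eta_{12\cdots n}^{B^T}$ and start from a global description of $\eta$ on the half-space $\sett{x:\br{x,\delta}>0}$. Proposition~\ref{eta nice}.\ref{dinf dom of def} puts all of $\d_\infty$ in one domain of definition of $\eta$, on which $\eta$ agrees with the linear map $c$. Since $\delta$ is fixed by $c$ and $\nu_c(\delta)\in\d_\infty$, the dual action gives $c\,\nu_c(\delta)=\nu_c(\delta)$. By Proposition~\ref{E delta in tubes} we have $\br{\nu_c(\delta),\phi}=0$ for every $\phi\in\APT{c}$. Proposition~\ref{E omega delta in tubes} shows that $\br{\nu_c(\delta),\phi}\neq0$ for every other $\phi\in\AP{c}$, and the sign should be negative for the relevant positive real roots, consistent with $\d_\infty^+$ being defined by strict inequalities $\br{x,\beta}<0$.

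The key step is to show that adding a large multiple of $\nu_c(\delta)$ to $x$ pushes it into the interior of a region where every step of the mutation sequence acts by the same linear map as on $\d_\infty$. At step $k$ of $12\cdots n$, the branch is chosen by the sign of $\br{\eta_{k-1}\cdots\eta_1(\cdot),\alpha_k\ck}$. On $\d_\infty$, where the map is linear, the intermediate image of $\nu_c(\delta)$ is $\nu_c(\delta)+\omega_c(\cdot,\sum_{i<k}\br{\rho_i\ck,\delta}\alpha_i)$. Proposition~\ref{pairing with simple} computes its pairing with $\alpha_k\ck$ as $-\br{\rho_k\ck,\delta}<0$, which is strictly negative. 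The intermediate images are linear in $x$ on that domain, so for $a$ large the term $a\nu_c(\delta)$ dominates and fixes every sign. This gives the domain-of-definition statement for $i=1$, together with $\eta(x+a\nu_c(\delta))=cx+a\nu_c(\delta)$.

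The main obstacle is iterating this for all $i\ge0$ with a single $a$, since the $c^ix$ need not be bounded. Here I would use Lemma~\ref{delta perp}: decompose $x=t\,w+x_0$ with $x_0\in\delta^\perp$ and $t=\br{x,\delta}>0$ times a fixed $w$ having $\br{w,\delta}=1$. Because $c$ preserves $\delta$, we have $cw-w\in\delta^\perp$. The component $x_0$ lies in the span of finite-orbit vectors and has a periodic orbit of period dividing $m$. The drift $c^iw-w$ should accumulate only along $\nu_c(\delta)$, up to periodic terms, with a nonnegative rate, because $c$ acts on $\delta^\perp$ unipotently modulo finite order, the non-semisimple part being a Jordan block whose image is the $c$-fixed line spanned by $\nu_c(\delta)$. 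So $c^{m}x=x+a'\nu_c(\delta)$ with $a'$ a multiple of $t$. I would obtain $a'\ge0$ by checking the sign on a single test point, such as one in $\d_\infty^+$, which the positive side should map forward. Since $a'\ge0$, the orbit $c^ix+a\nu_c(\delta)$ runs through finitely many residues mod $m$, each only shifted further in the $+\nu_c(\delta)$ direction. The finitely many sign conditions for $i<m$ therefore control all $i$, and the identity $\eta^i=c^i$ follows by induction.

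Part~\ref{dom of def -} follows by the symmetric argument with $\eta^{-1}$, which reverses the sequence and corresponds to $c^{-1}$. For $\br{x,\delta}<0$ the drift $a'$ has the opposite sign under forward iteration, so one iterates backward instead.
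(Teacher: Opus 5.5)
Your outline matches the paper's proof. $c$ has finite order on $\delta^\perp$ by Lemma~\ref{delta perp}, so $c^m$ moves each $x$ only along $\nu_c(\delta)$, by an amount proportional to $\br{x,\delta}$. One then takes $a$ large enough for the finitely many $i<m$, and uses that the domain $D$ of $\eta=\eta_{12\cdots n}^{B^T}$ containing $\d_\infty$ is a convex cone with $\nu_c(\delta)$ in its interior.

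The one place you argue differently is that interiority. The paper takes it from the proof of \cite[Proposition~7.33]{affscat}; you compute it with Proposition~\ref{pairing with simple}. As written, your computation is for the wrong map. With the convention $\mu_\kk=\mu_{k_q}\circ\cdots\circ\mu_{k_1}$ for $\kk=k_q,\ldots,k_1$, the map $\eta_{12\cdots n}^{B^T}$ mutates at $n$ first; this is the order for which Proposition~\ref{eta nice}.\ref{nu tau} holds. Mutating at $1$ first gives $\eta^{-1}$.

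The repair is short. When mutating at $j$, a not-yet-mutated coordinate $k<j$ changes by $b_{kj}[\ell_j]_+$. So if the current $j$th coordinate was positive at each earlier step $j>k$, then after mutating at $n,\ldots,k+1$ the $k$th coordinate of the image of $\nu_c(\delta)$ is $\br{\nu_c(\delta)+\omega_c(\,\cdot\,,\sum_{i>k}\br{\rho_i\ck,\delta}\alpha_i),\alpha_k\ck}$. Since $2\nu_c(\delta)=-\omega_c(\,\cdot\,,\delta)$ and $\omega_c(\alpha_k\ck,\alpha_k)=0$, this is minus the quantity in Proposition~\ref{pairing with simple}, namely $+\br{\rho_k\ck,\delta}>0$. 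Downward induction from $k=n$ then gives strict signs at every step. Equivalently, your computation puts $\nu_c(\delta)$ in the interior of the domain of $\eta^{-1}$ containing $\d_\infty$, which is $c(D)$, and $c$ fixes $\nu_c(\delta)$. Also, the vector you call the intermediate image is not literally that image, since already-mutated coordinates get negated. Only its $k$th coordinate is right, which is all you use.

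The other step that needs a real argument is the sign $a'\ge0$. This is the crux of the lemma: with the opposite sign, the conclusion would hold for $i\le0$ instead. ``The positive side should map forward'' is not yet an argument. Your Jordan-block description is also off: $c$ has finite order on $\delta^\perp$; it is not ``unipotent modulo finite order'' there. The paper builds the sign in by positing $v$ with $\br{v,\delta}>0$ and $cv=v+\nu_c(\delta)$. To make your test-point idea rigorous:
\emph{Factor through $\delta$.} $c^m-1$ vanishes on $\delta^\perp$, so it factors through $\br{\,\cdot\,,\delta}$ and is determined by its value on a single $y$ with $\br{y,\delta}\ne0$.
\emph{Supply the test point.} Proposition~\ref{delta limit eta}, parts~\ref{in dinf+-}--\ref{pos a}, applied to any $z\notin\d_\infty$, gives $y=\eta^h(z)$ with $\br{y,\delta}>0$ and $c^my=y+a\nu_c(\delta)$ for some $a>0$. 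This settles both the direction and the sign of the drift at once.

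Likewise, $c\nu_c(\delta)=\nu_c(\delta)$ does not follow from $c\delta=\delta$. Take it instead from Proposition~\ref{eta nice}.\ref{nu tau} and~\ref{dinf dom of def}, using $\tau_c(\delta)=\delta$.

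Finally, what your argument actually yields is $\eta^i(x+a\nu_c(\delta))=c^ix+a\nu_c(\delta)$, and this is the correct form. The version with $\eta^i(x)$ in the statement, which the paper's proof asserts by the same one-line reasoning, needs $x$ itself to lie in $D$. For example, with $b_{12}=2=-b_{21}$ and $x=10\rho_1-5\rho_2$, one gets $\eta(x)=-10\rho_1+25\rho_2$ but $cx=5\rho_2$. Only the domain statement and the translation by $a'\nu_c(\delta)$ are used later.
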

\begin{proof}
We argue Assertion~\ref{dom of def +}.
The argument for Assertion~\ref{dom of def -} is essentially the same.
Proposition~\ref{eta nice} says that all of $\d_\infty$ is in the same domain of definition of $\eta_{12\cdots n}^{B^T}$, which agrees with $c$ on that domain.
The argument for Proposition~\ref{eta nice} in \cite[Proposition~7.33]{affscat} shows that $\nu_c(\delta)$ is in the interior of this domain of definition.
(In each of the $n$ steps of the map, the image of $\d_\infty$ is on one side of the boundary of domains of linearity, not contained in the boundary.)
The case $i=1$ of Assertion~\ref{dom of def +} follows.

Lemma~\ref{delta perp} says that every vector in $\delta^\perp$ is in a finite $c$-orbit.
Choose $a$ large enough so that $x+a\nu_c(\delta)$ is in the same domain of definition of $\eta_{12\cdots n}^{B^T}$ as $\d_\infty$.
Since $\nu_c(\delta)$ is fixed by $c$, we have $\eta_{12\cdots n}^{B^T}(x+a\nu_c(\delta))=\eta_{12\cdots n}^{B^T}(x)+a\nu_c(\delta)$.
Make $a$ larger if necessary so that $\eta_{12\cdots n}^{B^T}(x+a\nu_c(\delta))$ is also in that domain of definition.
Continue until $a$ is large enough so that $\bigl(\eta_{12\cdots n}^{B^T}\bigr)^i(x+a\nu_c(\delta))$ is in that domain of definition for $i=0,1,\ldots,m-1$.

There is also a generalized $1$-eigenvector $\v$ for $c$ with $\br{\v,\delta}>0$, defined by the property that $c\v=\v+\nu_c(\delta)$.
Write $x+a\nu_c(\delta)$ as a vector in $\delta^\perp$ plus $b\v$ for some $b>0$.
Since $\bigl(\eta_{12\cdots n}^{B^T}\bigr)^m$ agrees with $c^m$ on $(x+a\nu_c(\delta))$, we see that $\bigl(\eta_{12\cdots n}^{B^T}\bigr)^m(x+a\nu_c(\delta))=(x+a\nu_c(\delta))+mb\nu_c(\delta)$.
Since the domain of definition of $\eta_{12\cdots n}^{B^T}$ containing~$\d_\infty$ is a convex cone with $\nu_c(\delta)$ in its relative interior, ${(x+a\nu_c(\delta))+mb\nu_c(\delta)}$ is also in that domain.
Continuing in this way, we see that $\bigl(\eta_{12\cdots n}^{B^T}\bigr)^i(x+a\nu_c(\delta))$ is in that domain of definition for $i\ge0$.
\end{proof}

\subsection{Dominance regions and $B$-cones}\label{dom sec} 
Suppose $\lambda\in V^*$.
The \newword{(integral) dominance region} of $\lambda$ with respect to $B$ is the intersection, over all sequences $\kk$ of indices in $\set{1,\ldots,n}$, of the sets $\bigl(\eta_{\kk}^{B^T}\bigr)^{-1}\sett{\eta_\kk^{B^T}(\lambda)+\mu_\kk(B)\alpha:\alpha\in Q^{\ge0}}$.
Here, each $\alpha$ is interpreted as a column vector of simple-root coordinates and the matrix product $\mu_\kk(B)\alpha$ is interpreted as the fundamental-weight coordinates of a vector in $V^*$.
The idea behind the dominance region goes back to work of Fan Qin \cite{FanQin} and was developed further by Rupel and Stella~\cite{RSDom}.
We will quote a result of \cite{canonical} that, in some cases, relates structure constants for theta functions with dominance regions.
We will also quote a result of \cite{affdomreg} that computes the dominance region of a point in the imaginary wall.
The combination of these two results is a critical tool for the proofs in this paper.

The following theorem was proved as \cite[Theorem~5.4]{canonical}.
A stronger but more complicated statement is \cite[Theorem~5.7]{canonical}.

\begin{theorem}\label{B cone prod} 
Suppose that $\tB$ has signed-nondegenerating coefficients and that $\lambda_1,\ldots,\lambda_\ell$ are all contained in the same $B$-cone.
Write $\lambda=a_1\lambda_1+\cdots+a_\ell \lambda_\ell$ for nonnegative integers $a_1,\ldots,a_\ell$.
Then there exist constants $c_{\kappa,\beta}\in\k$ such that $\thet_{\lambda_1}^{a_1}\cdots\thet_{\lambda_\ell}^{a_\ell}=\thet_\lambda+\sum_\kappa\sum_\beta c_{\kappa,\beta}y^\beta\thet_\kappa$, summing over $\kappa$  in the integral dominance region of $\lambda$ with respect to $B$  and $\beta\in Q^+$ such that $\kappa=\lambda+B\beta$.  
\end{theorem}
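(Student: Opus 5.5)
The plan is to expand the product using broken lines, in the form of an $\ell$-fold version of Proposition~\ref{struct}. Then we read off the exponent of each contributing tuple of broken lines, and transport this exponent information through every mutation sequence using Lemma~\ref{mut broken line} and Proposition~\ref{mut pair}.

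\textbf{Step 1 (expansion).} Iterating Proposition~\ref{struct} (or rather its proof in \cite{GHKK}), write $\thet_{\lambda_1}^{a_1}\cdots\thet_{\lambda_\ell}^{a_\ell}=\sum_\kappa a(\kappa)\thet_\kappa$. Here $a(\kappa)$ is the limit, as $\chi\to\kappa$, of the sum over tuples of broken lines (with $a_i$ broken lines for $\lambda_i$, all ending at $\chi$) whose final exponents sum to $\kappa$.
\begin{itemize}
\item Every bend multiplies the monomial by a term of some $f_\d$, which is a power of $\hy^\beta=x^{\omega_c(\cdot,\beta)}y^\beta$ with $\beta\in Q^+$. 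So each tuple has final monomial $c\,x^{\lambda+B\beta}y^\beta$ with $\beta\in Q^{\ge0}$, and $\beta=0$ exactly when no line bends.
\item The unique unbent tuple contributes $x^\lambda$ when $\chi$ is near $\lambda$. Since the coefficients are signed-nondegenerate, the $y$-exponent records $\beta$. Hence the coefficient of $y^0\thet_\lambda$ is exactly $1$, and all other terms have the form $c_{\kappa,\beta}y^\beta\thet_\kappa$ with $\beta\in Q^+$ and $\kappa=\lambda+B\beta$.
\end{itemize}

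\textbf{Step 2 (dominance).} It remains to show that each such $\kappa$ lies in $\bigl(\eta_\kk^{B^T}\bigr)^{-1}\sett{\eta_\kk^{B^T}(\lambda)+\mu_\kk(B)\alpha:\alpha\in Q^{\ge0}}$ for every sequence $\kk$.
\begin{itemize}
\item Since $\lambda_1,\ldots,\lambda_\ell$ lie in one $B$-cone, that is, one cone of the mutation fan, they lie in a common domain of definition of every $\eta_\kk^{B^T}$. On that domain $\eta_\kk^{B^T}$ is linear, so $\eta_\kk^{B^T}(\lambda)=\sum a_i\eta_\kk^{B^T}(\lambda_i)$.
\item Take $\chi$ close to $\kappa$, so that $\chi$ and $\kappa$ share a domain of definition of $\eta_\kk^{B^T}$. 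Applying Lemma~\ref{mut broken line} one step at a time, in the $\ell$-fold form of Proposition~\ref{mut pair}, the mutated tuple contributes to the corresponding structure constant for $\mu_\kk(\tB)$. That structure constant has input directions $\eta_\kk^{B^T}(\lambda_i)$ and output $\eta_\kk^{B^T}(\kappa)$.
\item Applying the exponent observation of Step~1 to $\mu_\kk(\tB)$ then gives $\eta_\kk^{B^T}(\kappa)=\eta_\kk^{B^T}(\lambda)+\mu_\kk(B)\alpha$ with $\alpha\in Q^{\ge0}$, as required.
\end{itemize}

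\textbf{Main obstacle.} The delicate part is Step~2, in two respects.
\begin{itemize}
\item Extending Proposition~\ref{mut pair} from pairs to $\ell$-tuples while keeping $\chi$ and $\kappa$ in the same domain of definition. I would handle this by perturbing $\chi$ within the interior of a cone of the fan containing $\kappa$ and using Lemma~\ref{z z'}-type invariance of $a_\chi$.
\item Controlling the extra $y'_k$-factors that Lemma~\ref{mut broken line} introduces, so that they do not disturb the $x$-exponent bookkeeping. These factors depend only on the initial direction, so they affect $y$-exponents only and never the weight $\eta_\kk^{B^T}(\kappa)$.
\end{itemize}
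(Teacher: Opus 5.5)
The paper does not prove this statement; it quotes it from \cite[Theorem~5.4]{canonical}, so there is no in-paper argument to compare yours against. Your argument is correct in substance, and it uses exactly the tools the paper imports from that source.
\begin{itemize}
\item Bends only multiply by powers of $\hy^\beta$ with $\beta\in Q^+$. So in every seed, the total final exponent of a contributing tuple lies in (sum of initial exponents) $+\,\mu_\kk(B)Q^{\ge0}$.
\item Lemma~\ref{mut broken line}, applied to each broken line of the tuple separately, transports a tuple for $\tB$ with endpoint $\chi$ to a tuple for $\mu_\kk(\tB)$ with endpoint $\eta_\kk^{B^T}(\chi)$.
\item Linearity of $\eta_\kk^{B^T}$ on a $B^T$-cone identifies $\sum_i a_i\eta_\kk^{B^T}(\lambda_i)$ with $\eta_\kk^{B^T}(\lambda)$.
\end{itemize}
You do not need an $\ell$-fold version of Proposition~\ref{mut pair}. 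The final $x'$-exponent of each mutated line is the linear piece of $\eta_\kk^{B^T}$ at $\chi$ applied to $\lambda_{\bl_j}$. So the mutated exponents sum to $\eta_\kk^{B^T}(\kappa)$ as soon as $\chi$ and $\kappa$ share a domain of definition, and that is all you use. Your second worry is also a non-issue, though not for the reason you give. The $y'_k$-factors in the substitution for $x_k$ depend on which side of $\alpha_k^\perp$ each domain lies, not only on the initial direction. What matters is that they are pure $y'$-monomials and never affect $x'$-exponents.

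The step that needs repair is the choice of endpoint. Taking $\chi$ merely close to $\kappa$ does not put them in a common domain of definition when $\kappa$ lies on a boundary between domains. Your proposed fix is to perturb $\chi$ inside a cone of the fan containing $\kappa$ and invoke Lemma~\ref{z z'}. That fails in exactly the case this paper needs, Theorem~\ref{B cone prod aff}:
\begin{itemize}
\item If $\kappa$ is in the relative interior of $\d_\infty$, every cone of $\F_{B^T}$ containing $\kappa$ is an imaginary cone of dimension at most $n-1$ inside the wall $\d_\infty$. So no admissible endpoint lies in it.
\item Lemma~\ref{z z'} only compares endpoints within a single maximal $\g$-vector cone, which is not what you need here.
\end{itemize}
The correct repair is to choose the approach separately for each $\kk$. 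This suffices because the dominance region is an intersection over $\kk$. The domains of definition of $\eta_\kk^{B^T}$ are finitely many closed polyhedral cones whose full-dimensional members cover $V^*$. Pick one containing $\kappa$ and let $\chi\to\kappa$ through its interior, avoiding the countably many hyperplanes $\beta^\perp$. The coefficient of $\thet_\kappa$ does not depend on the approaching sequence; this is the first assertion of Proposition~\ref{struct} in its $\ell$-fold form, or simply uniqueness of expansions in the pointed theta functions. So a nonzero coefficient still yields a contributing tuple at such a $\chi$, and the rest of your Step~2 goes through unchanged.
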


The following theorem is a version of \cite[Theorem~4.58]{affdomreg}. 
We insert the hypotheses that $B$ is acyclic and that $\tB=B$.
The theorem in \cite{affdomreg} also has ${\lambda+aB\delta^B}$ where we have written $\lambda-2a\nu_c(\delta)$.
The statement here is correct, because the vector~$\delta^B$ from \cite{affdomreg} coincides with our $\delta$ when $B$ is acyclic, and because \cite[Lemma~4.15]{affdomreg} says that $\nu_c(\delta)=-\frac12B\delta$.

\begin{theorem}\label{affine dom}
Suppose $B$ is an acyclic exchange matrix of affine type.
If $\lambda\in P$ is in the relative interior of the imaginary wall~$\d_\infty$, then the integral dominance region of $\lambda$ is $\bigl\{\lambda-2a\nu_c(\delta):0\le a\in\integers\bigr\}\cap\d_\infty$.
\end{theorem}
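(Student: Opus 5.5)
The plan is to deduce the statement directly from \cite[Theorem~4.58]{affdomreg}, so the work is translating that theorem into the conventions of this paper rather than recomputing any dominance region. First I will check that the two notions of integral dominance region agree. The definition in Section~\ref{dom sec} uses only $B$ (through the mutation maps $\eta_\kk^{B^T}$ and the matrices $\mu_\kk(B)$), not the extension $\tB$. So restricting to $\tB=B$ loses nothing, and I only need to confirm that the transposition and root/weight conventions of \cite{affdomreg} match the ones here. I expect this to be routine: the dominance region is defined there by the same intersection over mutation sequences of preimages of translated cones $\eta_\kk(\lambda)+\mu_\kk(B)Q^{\ge0}$.

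Second, I will identify the imaginary wall. In \cite{affdomreg} the relevant cone is described for general (not necessarily acyclic) $B$ of affine type, in terms of a vector $\delta^B$. I will check that when $B$ is acyclic, $\delta^B$ coincides with the imaginary root $\delta$ of the root system $\RS$ associated to $A$, and that the cone there is the same as the wall $\d_\infty=\d_\delta$ of Section~\ref{scatfan aff sec}. For the latter I will compare its defining inequalities with the description of $\d_\infty$ obtained from \cite[Corollary~4.9]{afframe}.

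Third, I will rewrite the generator of the region. The result of \cite{affdomreg} describes the region as $\set{\lambda+aB\delta^B:0\le a\in\integers}\cap\d_\infty$. Using $\delta^B=\delta$ and \cite[Lemma~4.15]{affdomreg}, which gives $\nu_c(\delta)=-\frac12B\delta$, I get $\lambda+aB\delta=\lambda-2a\nu_c(\delta)$, which is the form stated here.

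I expect the main obstacle to be the second step: making sure that the acyclic specialization of $\delta^B$ and of the wall in \cite{affdomreg} really coincide with $\delta$ and $\d_\infty$ under the present sign and transpose conventions. Once that is settled, the statement is a restatement of \cite[Theorem~4.58]{affdomreg}.
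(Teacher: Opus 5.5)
Your proposal is correct and matches the paper's argument: the statement is \cite[Theorem~4.58]{affdomreg} specialized to acyclic $B$ with $\tB=B$, using that $\delta^B=\delta$ in the acyclic case and \cite[Lemma~4.15]{affdomreg} ($\nu_c(\delta)=-\frac12B\delta$) to rewrite $\lambda+aB\delta^B$ as $\lambda-2a\nu_c(\delta)$. Your extra check that the cone in \cite{affdomreg} coincides with $\d_\infty$ under the present conventions is a step the paper leaves implicit, and it is harmless to include.
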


We combine these two theorems as follows, appealing to Proposition~\ref{struct plus plus} to remove the hypothesis of signed-nondegenerating coefficients.

\begin{theorem}\label{B cone prod aff}
Suppose $B$ is an acyclic exchange matrix of affine type and~$\tB$ is an extension of~$B$.
Suppose $C$ is an \emph{imaginary} cone in~$\F_{B^T}$, suppose $\lambda_i\in C$ for $i=1,\ldots,\ell$, suppose $\lambda=a_1\lambda_1+\cdots+a_\ell \lambda_\ell$ with $a_1,\ldots,a_\ell\ge0$ and suppose $\v=\thet_{\lambda_1}^{a_1}\cdots\thet_{\lambda_\ell}^{a_\ell}$.
Then $\v$ is a $\k[y]$-linear combination of theta functions~$\thet_\kappa$, with each $\kappa$ in $\bigl\{\lambda-2a\nu_c(\delta):0\le a\in\integers\bigr\}\cap\d_\infty=\bigl\{\lambda-2a\nu_c(\delta):0\le a\in\integers\bigr\}\cap C$.
\end{theorem}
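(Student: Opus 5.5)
The plan is to derive the statement directly from Theorem~\ref{B cone prod} and Theorem~\ref{affine dom}. I would first assume that $\tB$ has signed-nondegenerating coefficients, and then remove that assumption at the end using Proposition~\ref{struct plus plus}.

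First, the hypotheses of Theorem~\ref{B cone prod} hold. The imaginary cone $C$ is a cone of $\F_{B^T}$, so it lies in a single $B^T$-cone, which is the notion of ``$B$-cone'' used in \cite{canonical} after transposing conventions. Only those $\lambda_i$ with $a_i>0$ matter, and they all lie in $C$. Theorem~\ref{B cone prod} therefore writes $\v$ as
\[\thet_\lambda+\sum c_{\kappa,\beta}\,y^\beta\thet_\kappa,\]
where $\beta\in Q^+$ and each $\kappa$ lies in the integral dominance region of $\lambda$. In particular the coefficients lie in $\k[y]$.

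Next I split according to where $\lambda$ sits. If $\lambda$ is in the relative interior of $\d_\infty$, Theorem~\ref{affine dom} identifies the dominance region as $\{\lambda-2a\nu_c(\delta):0\le a\in\integers\}\cap\d_\infty$, which is what we want. If $\lambda$ is not in the relative interior of $\d_\infty$, then $\lambda$ lies in a face $F$ of $C$ on the relative boundary of $\d_\infty$. Because $\lambda$ is a nonnegative combination of the $\lambda_i$ with $a_i>0$, all of those $\lambda_i$ also lie in $F$. The cone $F$ does not contain $\nu_c(\delta)$, so it is a $\g$-vector cone.

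In that case the $\thet_{\lambda_i}$ are, up to the $\Clear$ factor of Theorem~\ref{clus mon thm}, compatible cluster monomials. Their product is then $\thet_\lambda$ itself. The cleanest way to see this is again Theorem~\ref{B cone prod}: for $\lambda$ in the $\g$-vector fan the integral dominance region of $\lambda$ is $\{\lambda\}$, since $\g$-vector cones are exactly where the $B^T$-cones are cluster cones. The alternative is \cite{GHKK}, which says that theta functions in a common $\g$-vector cone multiply to the theta function of the sum. Either way only $\kappa=\lambda$ (that is, $a=0$) occurs, and this is included in the claimed set. The case $\lambda=0$ is trivial. I expect this boundary case to be the main point needing care, because Theorem~\ref{affine dom} is stated only for the relative interior.

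Then I check the equality of the two sets in the statement. The inclusion $\supseteq$ holds because $C\subseteq\d_\infty$. For $\subseteq$, recall from the discussion before Lemma~\ref{hopefully} that $C$ is cut out inside $\d_\infty$ by inequalities $\br{x,\phi}\le0$ with $\phi\in\pm\APTre{c}$. Proposition~\ref{E delta in tubes} gives $\br{\nu_c(\delta),\phi}=0$ for each such $\phi$. Hence $\br{\lambda-2a\nu_c(\delta),\phi}=\br{\lambda,\phi}\le0$, so if $\lambda-2a\nu_c(\delta)\in\d_\infty$ then it lies in $C$.

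Finally, I drop the coefficient hypothesis. Choose an extension with signed-nondegenerating coefficients, for example principal coefficients. Since $\Theta=P$ in acyclic affine type, Proposition~\ref{struct plus plus} applies, iterated over the factors of the monomial. Substituting $y'$ for $y$ turns the expansion above into a valid expansion for an arbitrary $\tB'$. The substitution keeps the coefficients in $\k[y']$ and leaves the indexing vectors $\kappa$ unchanged, which completes the proof.
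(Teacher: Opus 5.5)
Your proof is correct and follows the paper's route. You apply Theorem~\ref{B cone prod} to get an expansion indexed by the integral dominance region, identify that region using Theorem~\ref{affine dom}, and then remove the coefficient hypothesis with Proposition~\ref{struct plus plus}.

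Two of your additions fill in details the paper passes over quickly. First, you treat separately the case where $\lambda$ lies in the relative boundary of $\d_\infty$, where Theorem~\ref{affine dom} does not literally apply. There, the GHKK multiplicativity of theta functions in a common $\g$-vector cone is the sound justification, rather than your unproved remark that the dominance region is $\{\lambda\}$. Second, your inequality argument for the equality of the two sets is valid.
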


In the theorem, the fact that each $\kappa$ is in $C$ follows from the fact that each~$\kappa$ is in $\bigl\{\lambda-2a\nu_c(\delta):0\le a\in\integers\bigr\}\cap\d_\infty$, because $C$ has the imaginary ray as an extreme ray and because~$\d_\infty$ is the union of the cones in the star of the imaginary ray.

The theorem has stronger hypotheses than Theorem~\ref{expand d inf}, because it requires that the monomial be a product of theta functions within the same imaginary cone, rather than only requiring that they all be in $\d_\infty$.
It also has a significantly stronger conclusion than Theorem~\ref{expand d inf}, because it restricts the indices $\kappa$ of theta functions to a finite set of points in a line segment in the same imaginary cone. 

\subsection{Periodic broken lines}\label{per sec}
For the remainder of Section~\ref{tools sec}, $B$ is acyclic of affine type and~$m$ is the least common multiple of the sizes of finite $c$-orbits in $V^*$.
We will not state these hypotheses explicitly in results, but the appearance of the imaginary wall $\d_\infty$ in the statements of results is a reminder that $B$ is acyclic of affine type.
In light of Propositions~\ref{eta nice} and~\ref{c sym d inf}, $\bigl(\eta_{12\cdots n}^{B^T}\bigr)^m$ fixes~$\d_\infty$ pointwise.

Furthermore, since we use mutation of broken lines, we assume for the remainder of Section~\ref{tools sec} that $\tB$ has signed-nondegenerating coefficients.
However, because of Proposition~\ref{struct plus plus}, the tools we build here will eventually lead to proofs of the theorems stated in Section~\ref{res sec}, most of which place no restriction on $\tB$.

Also, we will use the conditions $h\gg0$ and $h\ll0$ with a specific meaning.
A sentence like ``If $h\gg0$, then [assertion]'' means that there exists $H\in\integers$ such that the statement holds for all $h>H$.
A similar sentence with $h\ll0$ means that there exists $H\in\integers$ such that the statement holds for all $h<H$.

Choose $\lambda\in P\cap\d_\infty$ and an imaginary cone $C$ of~$\F_{B^T}$ with $\lambda\in C\subseteq\d_\infty$.
Since~$C$ is imaginary, one of its extreme rays might be spanned by $\nu_c(\delta)$, but all of its other extreme rays (or all of its extreme rays) are in the $\g$-vector fan.
Choose a full-dimensional cone $C_0$ of $\F_{B^T}$ (i.e. maximal cone of the $\g$-vector fan) that contains all of the extreme rays of $C$ that are in the $\g$-vector fan.
Take $\chi_0$ in the interior of $C_0$ and set $\chi_h=\bigl(\eta_{12\cdots n}^{B^T}\bigr)^{hm}(\chi_0)$ for $h\in\integers$.
We call $(\chi_h:h\in\integers)$ a \newword{chi sequence for~$\lambda$}.
In what follows, when we quantify $\lambda\in P\cap\d_\infty$, we will assume that ${(\chi_h:h\in\integers)}$ is a chi sequence for~$\lambda$.

Suppose $p\in P\cap\d_\infty$ and $h\in\integers$.
Let $\bl$ be a broken line for $p$ with endpoint~$\chi_h$.
If $h\ge0$, define $\bl^{(\ell)}$ to be $\bigl(\eta_{12\cdots n}^{B^T}\bigr)^{\ell m}(\bl)$.
If $h<0$, define $\bl^{(\ell)}$ to be $\bigl(\eta_{12\cdots n}^{B^T}\bigr)^{-\ell m}(\bl)$.
From each $\bl^{(\ell)}$, we read a monomial $\const_{\bl^{(\ell)}}x^{\lambda_{\bl^{(\ell)}}}y^{\beta_{\bl^{(\ell)}}}$ as explained in Section~\ref{scat sec}.
We say that $\bl$ is \newword{periodic} if the sequence $(\const_{\bl^{(\ell)}}x^{\lambda_{\bl^{(\ell)}}}y^{\beta_{\bl^{(\ell)}}}:\ell=0,1,\ldots)$ is periodic.
Equivalently, since mutation maps on broken lines don't change the coefficients of the corresponding monomials, the sequence $(x^{\lambda_{\bl^{(\ell)}}}y^{\beta_{\bl^{(\ell)}}}:\ell=0,1,\ldots)$ is periodic.
We say that a pair $(\bl_1,\bl_2)$ is periodic if $\bl_1$ and $\bl_2$ are both periodic.

Given $\lambda,p_1,p_2\in P\cap\d_\infty$, define  
${\ap_{\chi_h}(p_1,p_2,\lambda)=\sum_{(\bl_1,\bl_2)}\const_{\bl_1}\const_{\bl_2} y^{\beta_{\bl_1}+\beta_{\bl_2}}}$,
summing over periodic pairs $(\bl_1,\bl_2)$ of broken lines for $p_1$ and $p_2$, with $\lambda_{\bl_1}+\lambda_{\bl_2}=\lambda$, both having endpoint $\chi_h$.
Thus $\ap_{\chi_h}(p_1,p_2,\lambda)$ is obtained from $a_{\chi_h}(p_1,p_2,\lambda)$ by the deleting terms from pairs $(\bl_1,\bl_2)$ such that $\bl_1$ or~$\bl_2$ is not periodic.

\begin{theorem}\label{mut pair affine periodic} 
Suppose $\tB$ has signed-nondegenerating coefficients and $\lambda,p_1,p_2\in P\cap\d_\infty$.   
If $h\gg0$ or $h\ll0$, then $a(p_1,p_2,\lambda)=\ap_{\chi_h}(p_1,p_2,\lambda)$.
\end{theorem}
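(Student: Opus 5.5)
We first show that $a(p_1,p_2,\lambda)=a_{\chi_h}(p_1,p_2,\lambda)$ for every $h$, and then show that for $|h|$ large every pair contributing to $a_{\chi_h}$ is periodic. For the first claim, $\lambda\in\d_\infty$, so by Proposition~\ref{eta nice}.\ref{dinf dom of def} and Lemma~\ref{dom of def}, $\lambda$ lies in the domain of definition of $\bigl(\eta_{12\cdots n}^{B^T}\bigr)^{hm}$ that contains $\d_\infty$, and this map fixes $\lambda$, $p_1$, $p_2$ (its $m$-th power fixes $\d_\infty$ pointwise). The point $\chi_0$ lies in a maximal $\g$-vector cone $C_0$ containing the $\g$-vector rays of $C\ni\lambda$. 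Hence $\chi_0$ can be moved toward $\lambda$, within the interior of $C_0$ by Lemma~\ref{z z'}, to a point near $\lambda$ in the same domain of definition. Proposition~\ref{mut pair} then identifies contributing pairs at $\chi_0$ with contributing pairs at $\chi_h$, with the same monomials and coefficients because $\eta$ fixes $\lambda,p_1,p_2$ and $\tB$ is mutation-periodic up to coefficients. The subtle point is to check that the $y$-monomials agree. Since $\kk=(12\cdots n)^{hm}$ is a mutation symmetry of $B$ and the $y$-substitutions are determined by the signs $\sgn(y_k)$, one has to argue that $\ap$ is computed in a fixed ring. I would use Theorem~\ref{2 muts} with $\br{\lambda,\alpha_k\ck}$-type corrections cancelling, because $\lambda=\lambda_{\bl_1}+\lambda_{\bl_2}$ and $p_1,p_2$ all lie in one linear domain. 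Taking $\chi\to\lambda$ then gives $a(p_1,p_2,\lambda)=a_{\chi_0}(p_1,p_2,\lambda)=a_{\chi_h}(p_1,p_2,\lambda)$.

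Next, restrict to $h\gg0$; the case $h\ll0$ is symmetric, using Lemma~\ref{dom of def}.\ref{dom of def -}. The finiteness from Theorem~\ref{Theta facts} gives finitely many contributing pairs $(\bl_1,\bl_2)$ at $\chi_0$. Transport them by $\bigl(\eta_{12\cdots n}^{B^T}\bigr)^{\ell m}$, which by Lemma~\ref{mut broken line} is a bijection on broken lines for $p_i$ (fixed) with endpoints moving $\chi_0\to\chi_\ell$. So the multiset of monomials of contributing pairs at $\chi_h$ is the image of a fixed finite set under the $y$-substitution associated to $\ell m$ steps. For each individual broken line $\bl$, consider the sequence $(\bl^{(\ell)})$. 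Each final exponent $\lambda_{\bl^{(\ell)}}$ lies in a finite set, because $\lambda_{\bl_1}+\lambda_{\bl_2}=\lambda$ and, by Theorem~\ref{B cone prod aff}, only finitely many exponents are possible. The $y$-exponents are then determined via $\lambda_{\bl}=p+B\beta_{\bl}$ up to the kernel of $B$, which is spanned by $\delta$, where the coefficient on $y^\delta$ is controlled by total degree. A sequence in a finite set that evolves by a fixed deterministic rule (the map on monomials induced by one period of mutation) is eventually periodic. Hence after finitely many periods every broken line that will ever contribute becomes periodic, which yields $a_{\chi_h}=\ap_{\chi_h}$ for $h\gg0$.

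The main obstacle is justifying that the monomials $x^{\lambda_{\bl^{(\ell)}}}y^{\beta_{\bl^{(\ell)}}}$ range over a finite set, in particular bounding the $\delta$-component of $\beta$, and that the evolution rule is a function of the monomial alone. For the latter I would use the explicit substitution before Lemma~\ref{mut broken line}: the rule depends only on which side of $\alpha_k^\perp$ the final domain lies. Lemma~\ref{dom of def} guarantees that for $h\gg0$ the endpoint and the relevant final segments stay in the domain containing $\d_\infty$, where the map is linear ($c^m$). On that domain the substitution is the fixed linear map induced by $c^m$ on $P\oplus Q$, which fixes $\delta$ and has finite order on $\delta^\perp$ by Lemma~\ref{delta perp}. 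This gives both finiteness and periodicity.
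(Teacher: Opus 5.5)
Your plan relies on two intermediate claims, and neither is established: that $a(p_1,p_2,\lambda)=a_{\chi_h}(p_1,p_2,\lambda)$ for every $h$, and that for $|h|$ large every pair contributing to $a_{\chi_h}(p_1,p_2,\lambda)$ is periodic.

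The first argument fails at the start. Whenever $\lambda$ lies in the relative interior of $\d_\infty$, it lies outside the support of the $\g$-vector fan. So $\lambda\notin C_0$, and you cannot slide $\chi_0$ toward $\lambda$ inside the interior of $C_0$. The limit defining $a(p_1,p_2,\lambda)$ must pass through infinitely many maximal $\g$-vector cones, and Lemma~\ref{z z'} does not reduce it to $a_{\chi_0}$. There are two further problems. Proposition~\ref{mut pair} needs $\chi$ and $\lambda$ in a common domain of definition of the mutation map, and Proposition~\ref{delta limit eta}.\ref{c agree} supplies this only for $\chi_h$ with $|h|$ large. That is why one can only compare $\chi_h$ with $\chi_{h+1}$, not $\chi_0$ with $\chi_h$. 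Also, mutating a broken line preserves its coefficient but \emph{not} its $y$-exponent $\beta_\bl$. The sequence $\beta_{\bl^{(\ell)}}$ genuinely changes, which is the whole content of periodicity, so no correction from Theorem~\ref{2 muts} makes the monomials agree.

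The second claim rests on the assertion that the final exponents $\lambda_{\bl^{(\ell)}}$ range over a finite set, and this is false. For $h\gg0$ the map $\bigl(\eta_{12\cdots n}^{B^T}\bigr)^m$ acts on final exponents by $c^m$. This has finite order on $\delta^\perp$ (Lemma~\ref{delta perp}), but it translates any vector off $\delta^\perp$ by a nonzero multiple of $\nu_c(\delta)$. So a broken line with $\lambda_\bl\notin\delta^\perp$ has unbounded exponents, and Lemma~\ref{per finite} shows such lines are never periodic. Its partner drifts the opposite way, so the sum stays equal to $\lambda$. Three of your supporting steps also do not hold:
\begin{itemize}
\item Theorem~\ref{B cone prod aff} constrains the indices $\kappa$ in a product expansion, not the exponents of individual broken lines.
\item That theorem also needs $p_1,p_2$ in a single imaginary cone, which is not assumed here.
\item $\ker B$ is not spanned by $\delta$, since $\omega_c(\,\cdot\,,\delta)=-2\nu_c(\delta)\neq0$.
\end{itemize}

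The missing idea is that nonperiodic pairs need not disappear at any finite $h$; they disappear only in a limit. The paper's argument has three steps.
\begin{itemize}
\item It shows $a(p_1,p_2,\lambda)=\lim_{h\to\pm\infty}a_{\chi_h}(p_1,p_2,\lambda)$. The cones $C_h=\bigl(\eta_{12\cdots n}^{B^T}\bigr)^{hm}(C_0)$ converge to an imaginary cone containing $C$ (Proposition~\ref{delta limit eta}.\ref{eta lim}), so one can pick $\lambda_h\in C_h$ with $\lambda_h\to\lambda$ and apply Lemma~\ref{z z'}.
\item For $h\gg0$, $\bigl(\eta_{12\cdots n}^{B^T}\bigr)^m$ is a bijection from pairs contributing at $\chi_h$ to pairs contributing at $\chi_{h+1}$ (Proposition~\ref{mut pair affine}). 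Consequently the periodic part is constant along an arithmetic progression of values of $h$.
\item By Lemma~\ref{per char}, each of the finitely many nonperiodic pairs has $\beta_{\bl_1^{(\ell)}}+\beta_{\bl_2^{(\ell)}}$ taking each value only finitely often. Hence every fixed monomial $y^\beta$ eventually leaves the nonperiodic part, and that part tends to $0$ as a formal power series.
\end{itemize}
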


We now prepare to prove Theorem~\ref{mut pair affine periodic}.
The following proposition, which refers to the sets $\d_\infty^+$ and $\d_\infty^-$ defined in Section~\ref{scatfan aff sec}, is \cite[Proposition~4.17]{affdomreg}.
In particular, Proposition~\ref{delta limit eta}.\ref{in dinf+-} is the concatenation of Assertions~1 and ~3 of \cite[Proposition~4.17]{affdomreg} and Proposition~\ref{delta limit eta}.\ref{c agree} is Assertion~2 of \cite[Proposition~4.17]{affdomreg} plus the observation in Proposition~\ref{eta nice}.\ref{dinf dom of def} that all of $\d_\infty$ is contained in one domain of definition of~$\eta^{B^T}_{12\cdots n}$.

\begin{proposition}\label{delta limit eta}  
Suppose $x\in V^*\setminus\d_\infty$.
\begin{enumerate}[label=\bf\arabic*., ref=\arabic*] 
\item \label{in dinf+-}
If $h\gg0$ then $\bigl(\eta_{12\cdots n}^{B^T}\bigr)^h(x)\in\d_\infty^+$ and if $h\ll0$, then $\bigl(\eta_{12\cdots n}^{B^T}\bigr)^h(x)\in\d_\infty^-$.
\item \label{c agree}  
If $h\gg0$ or $h\ll0$, then $\bigl(\eta_{12\cdots n}^{B^T}\bigr)^h(x)$ and $\d_\infty$ are in the same domain of definition of $\eta_{12\cdots n}^{B^T}$ and $\eta_{12\cdots n}^{B^T}$ agrees with $c$ on $\bigl(\eta_{12\cdots n}^{B^T}\bigr)^h(x)$.
\item \label{pos a}
There exists positive real $a$ such that $\bigl(\eta_{12\cdots n}^{B^T}\bigr)^{h+m}(x)=\bigl(\eta_{12\cdots n}^{B^T}\bigr)^h(x)+a\nu_c(\delta)$ when $h\gg0$ and $\bigl(\eta_{12\cdots n}^{B^T}\bigr)^{h-m}(x)=\bigl(\eta_{12\cdots n}^{B^T}\bigr)^h(x)+a\nu_c(\delta)$ when $h\ll0$.
\item \label{eta lim}
$\displaystyle\lim_{h\to\infty}\frac{\bigl(\eta_{12\cdots n}^{B_0^T}\bigr)^h(x)}{\bigl|\bigl(\eta_{12\cdots n}^{B_0^T}\bigr)^h(x)\bigr|}=\lim_{h\to-\infty}\frac{\bigl(\eta_{12\cdots n}^{B^T}\bigr)^h(x)}{\bigl|\bigl(\eta_{12\cdots n}^{B^T}\bigr)^h(x)\bigr|}=\frac{\nu_c(\delta)}{|\nu_c(\delta)|}$.
\end{enumerate}
\end{proposition}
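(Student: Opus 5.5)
The plan is to transport the question to the root side using the isomorphism $\nu_c$ and Proposition~\ref{eta nice}.\ref{nu tau}, and then to exploit the fact that, far out, the dynamics of $\tau_c$ on $\AP{c}$ are just the linear action of~$c$. Since $x\notin\d_\infty$, it lies in some cone of the $\g$-vector fan. That cone is $\nu_c$ of a cone of $\Fan_c^\re(\RS)$ spanned by roots $\alpha\in\APre{c}$. Write $x=\sum_\alpha t_\alpha\nu_c(\alpha)$ with $t_\alpha\ge0$, and let $\alpha_0$ be a root with $t_{\alpha_0}>0$ that is not in $\APTre{c}$; such a root exists because $x\notin\d_\infty$. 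By Proposition~\ref{eta nice}.\ref{eta aut mut}, $\eta^{B^T}_{12\cdots n}$ is an automorphism of $\F_{B^T}$ that is linear on each cone. Hence $\bigl(\eta_{12\cdots n}^{B^T}\bigr)^h(x)=\sum_\alpha t_\alpha\nu_c(\tau_c^h\alpha)$, and everything reduces to understanding $\tau_c^h\alpha$ for individual roots.

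\textbf{Step 1 (orbits of roots).} For a real root $\alpha\in\APre{c}$, $\tau_c$ agrees with $c$ once the orbit avoids negative simple roots. Roots outside the tubes have infinite $\tau_c$-orbits, and modulo $\delta$ the finite root system forces the $c$-action to be periodic. So for $h\gg0$ the orbit consists of positive roots with
\[
\tau_c^{h+m}\alpha=\tau_c^h\alpha+k_\alpha\delta,\qquad k_\alpha>0
\]
for $\alpha\notin\APTre{c}$, while $k_\alpha=0$ for $\alpha\in\APT{c}$. The same holds for $h\ll0$ with $-m$ replacing $m$. This is the affine ``translation by $\delta$'' picture, and I would derive it from the description of real roots as $\beta_0+k\delta$ together with the fact that $c^m$ acts on $V$ as $\phi\mapsto\phi+\ell(\phi)\delta$ for a linear functional $\ell$ vanishing exactly on the span of the tubes. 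The latter follows by dualizing Lemma~\ref{delta perp} and the generalized eigenvector $\v$ from the proof of Lemma~\ref{dom of def}.

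\textbf{Step 2 (linearity and assertion 3).} For positive roots, $\nu_c(\beta)=-E_c(\,\cdot\,,\beta)$ is linear. Therefore, once all the $\tau_c^h\alpha$ are positive, we get
\[
\bigl(\eta_{12\cdots n}^{B^T}\bigr)^{h+m}(x)=\bigl(\eta_{12\cdots n}^{B^T}\bigr)^h(x)+a\,\nu_c(\delta),\qquad a=\sum_\alpha t_\alpha k_\alpha>0,
\]
which is assertion~3. Iterating shows that the $\nu_c(\delta)$-component grows linearly in $h$, while the remaining terms stay in a finite set modulo this translation. This gives the limit in assertion~4.

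\textbf{Step 3 (assertions 2 and 1).} The proof of Lemma~\ref{dom of def} shows that $\nu_c(\delta)$ lies in the interior of the domain of definition of $\eta^{B^T}_{12\cdots n}$ containing $\d_\infty$, and on that domain the map is $c$. Adding a large multiple of $\nu_c(\delta)$ therefore lands us in this domain, which gives assertion~2.

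For assertion~1, note that $\br{\cdot,\delta}$ is invariant under $c$. In the translated regime, $\bigl(\eta_{12\cdots n}^{B^T}\bigr)^h(x)$ is a nonnegative combination of the vectors $-E_c(\,\cdot\,,\beta)$ for positive roots $\beta=\tau_c^h\alpha$. I would compute its sign by pairing with $\delta$ and using $E_c-E_{c^{-1}}=\omega_c$ together with Proposition~\ref{crucial lemma}: this should identify $\br{\nu_c(\beta),\delta}$, up to sign, with $\omega_c(\beta,\delta)$, which is constant along $c$-orbits. The strict inequalities $\br{\,\cdot\,,\beta}<0$ for $\beta\in\RSfin$ with $\omega_c(\beta,\delta)>0$ should then follow from the large $\nu_c(\delta)$-component, since $\nu_c(\delta)$ lies in the relative interior of $\d_\infty$.

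\textbf{Main obstacle.} I expect the hardest part to be the sign claim in assertion~1: showing that $\br{x_h,\delta}$ is strictly positive for $h\gg0$ and strictly negative for $h\ll0$. This requires knowing that forward $\tau_c$-orbits of non-tube roots eventually lie on the correct side, which is the same information as $k_\alpha>0$ in Step~1. I would first try to verify it on the positive roots in $\AP{c}$ via the formula $\omega_c(\beta,\delta)=\br{\nu_c(\beta),\delta}$-type identities, and fall back on checking the behavior at the unique point where the orbit passes from negative simple roots to positive roots.
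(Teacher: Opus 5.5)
Your reduction to root orbits is sound for Assertions~2--4 and for the inequalities $\br{\,\cdot\,,\beta}<0$ in Assertion~1. The ingredients are: writing $x=\sum_\alpha t_\alpha\nu_c(\alpha)$; the fact that $\tau_c$ agrees with $c$ on the positive tails of the orbits; linearity of $\nu_c(\beta)=-E_c(\,\cdot\,,\beta)$ on positive roots (including $\delta$); and the fact that $\nu_c(\delta)$ is interior to the domain of definition containing $\d_\infty$. This is a self-contained route, whereas the paper simply imports the proposition from \cite[Proposition~4.17]{affdomreg}.

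The genuine gap is the one you flag. Write $\eta$ for $\eta^{B^T}_{12\cdots n}$; the sign of $\br{\eta^h(x),\delta}$ in Assertion~1 is not established. Your remark that it is ``the same information as $k_\alpha>0$'' is not correct. The positivity $k_\alpha>0$ is in fact easy: the forward tail consists of positive roots, and $\ell\neq0$ off the tubes. But it is a statement about $c^m$ acting on $V$. What you need is the sign, on that tail, of $\br{\nu_c(\beta),\delta}=-E_c(\delta,\beta)=\tfrac12\omega_c(\beta,\delta)$; this identity follows from $E_c+E_{c^{-1}}=K$ and $K(\delta,\,\cdot\,)=0$. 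Both $\ell$ and $\tfrac12\omega_c(\,\cdot\,,\delta)$ are linear functionals on $V$ with the same kernel, the span of the tubes. So $\ell=\kappa\cdot\tfrac12\omega_c(\,\cdot\,,\delta)$ for some nonzero constant $\kappa$. That gives nonvanishing and a common sign for all non-tube terms. However, nothing in your argument fixes the sign of $\kappa$, which is exactly the orientation of the Jordan block of $c$. Without it you cannot tell $\d_\infty^+$ from $\d_\infty^-$.

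The missing ingredient is already available. The proof of Lemma~\ref{dom of def}, which you use for the interior claim, supplies $\v\in V^*$ with $c\v=\v+\nu_c(\delta)$ and $\br{\v,\delta}>0$. Write $y\in V^*$ as an element of $\delta^\perp$ plus $\frac{\br{y,\delta}}{\br{\v,\delta}}\v$. Lemma~\ref{delta perp} and the definition of $m$ then give $c^{\pm m}y=y\pm\frac{m\br{y,\delta}}{\br{\v,\delta}}\nu_c(\delta)$. By your Assertion~2, $\eta^{h+m}(x)=c^m\eta^h(x)$ for $h\gg0$ and $\eta^{h-m}(x)=c^{-m}\eta^h(x)$ for $h\ll0$. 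Comparing with your Assertion~3 yields $\br{\eta^h(x),\delta}=a\br{\v,\delta}/m>0$ for $h\gg0$ and $\br{\eta^h(x),\delta}=-a\br{\v,\delta}/m<0$ for $h\ll0$, which completes Assertion~1.

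Your fallback also works without $\v$, but it has to be carried out. The forward tail of an infinite $\tau_c$-orbit begins at $\tau_c(-\alpha_i)=s_1\cdots s_{i-1}\alpha_i$ for some $i$. One checks that $E_{c^{-1}}(\alpha_j\ck,s_1\cdots s_{i-1}\alpha_i)$ is $1$ for $j=i$ and $0$ otherwise. Hence $\br{\nu_c(s_1\cdots s_{i-1}\alpha_i),\delta}=E_{c^{-1}}(\delta,s_1\cdots s_{i-1}\alpha_i)=\br{\rho_i,\delta}>0$. You would then still need to verify that this pairing is constant along the tail, and that every infinite orbit actually meets a negative simple root.
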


Proposition~\ref{delta limit eta} will be useful several places in the paper. 
First of all, Proposition~\ref{delta limit eta}.\ref{c agree} allows us to prove the following crucial lemma.

\begin{lemma}\label{per char}
For $\tB$ with signed-nondegenerating coefficients, suppose $\lambda,p_1,p_2\in P\cap\d_\infty$ and $h\gg0$ or $h\ll0$.
Given broken lines $(\bl_1,\bl_2)$, both having endpoint $\chi_h$, with $\lambda_{\bl_1}+\lambda_{\bl_2}=\lambda$, the following are equivalent.
\begin{enumerate}[label=\rm(\roman*), ref=(\roman*)]
\item \label{both per} 
The pair $(\bl_1,\bl_2)$ is periodic.
\item \label{both per beta} 
$(\beta_{\bl_1^{(\ell)}}:\ell=0,1,\ldots)$ is periodic and $(\beta_{\bl_2^{(\ell)}}:\ell=0,1,\ldots)$ is periodic.
\item \label{is per} 
$(\beta_{\bl_1^{(\ell)}}+\beta_{\bl_2^{(\ell)}}:\ell=0,1,\ldots)$ is periodic.
\item \label{is ess}
$(\beta_{\bl_1^{(\ell)}}+\beta_{\bl_2^{(\ell)}}:\ell=0,1,\ldots)$ has finitely many distinct entries.  
\item \label{no inf} 
Some entry appears infinitely many times in $(\beta_{\bl_1^{(\ell)}}+\beta_{\bl_2^{(\ell)}}:\ell=0,1,\ldots)$.
\end{enumerate}
\end{lemma}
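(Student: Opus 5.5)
The plan is to understand how the monomial labels transform under $\bigl(\eta_{12\cdots n}^{B^T}\bigr)^m$ when $h\gg0$ or $h\ll0$, and then run the easy implications in a cycle \ref{both per}$\Rightarrow$\ref{both per beta}$\Rightarrow$\ref{is per}$\Rightarrow$\ref{is ess}$\Rightarrow$\ref{no inf}, closing the cycle with \ref{no inf}$\Rightarrow$\ref{both per}.

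The first four implications are essentially formal. If $\bl_i$ is periodic then the sequence of monomials $x^{\lambda_{\bl_i^{(\ell)}}}y^{\beta_{\bl_i^{(\ell)}}}$ is periodic, so the $\beta$-parts are periodic. Sums of two periodic sequences are periodic, with period the lcm. Periodic sequences take finitely many values. Finitely many values in an infinite sequence force some value to repeat infinitely often.

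The key computation comes first. The final domain of each $\bl_i$ ends at $\chi_h$. The endpoint $\chi_{h+\ell}=\bigl(\eta_{12\cdots n}^{B^T}\bigr)^{\ell m}(\chi_h)$ lies, by Proposition~\ref{delta limit eta}.\ref{c agree}, in the domain of definition containing $\d_\infty$ for all $\ell\ge0$ once $h\gg0$. The same holds for the final segment of the broken line near its endpoint, and for $p_i\in\d_\infty$ by Proposition~\ref{eta nice}.\ref{dinf dom of def}. On that domain the mutation map is the linear map $c$, and $c^m$ fixes $\d_\infty$ pointwise.

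Using the explicit substitution rule before Lemma~\ref{mut broken line}, iterated through $12\cdots n$, I would show the following. The final exponent $\lambda_{\bl_i^{(\ell+1)}}$ is obtained from $\lambda_{\bl_i^{(\ell)}}$ by the linear map dual to $c^m$ on this domain. The $\beta$-part transforms by an affine rule $\beta\mapsto c^{m}$-type linear map plus a correction depending only on $\lambda_{\bl_i^{(\ell)}}$ and $p_i$. The correction is the multiplication by $(y'_k)^{[\cdots]_+}$ terms, and it depends only on the unbounded-domain label $x^{p_i}$, hence is constant in $\ell$ since $p_i$ is fixed.

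Since $\lambda_{\bl_i}=-\bl_i'$ on the last segment, and by the relation $\hy^\beta=x^{\omega_c(\cdot,\beta)}y^\beta$, I expect $\lambda_{\bl_i^{(\ell)}}=p_i+\omega_c(\,\cdot\,,\beta_{\bl_i^{(\ell)}})$. This holds because the broken line starts with $x^{p_i}$ and only multiplies by powers of $\hy$. So the whole monomial is determined by $\beta_{\bl_i^{(\ell)}}$, which gives \ref{both per beta}$\Rightarrow$\ref{both per}. It also shows the $\beta$-sequence evolves by a fixed deterministic map $T_i$ (affine in $\beta$) that is the same for all $\ell$.

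For \ref{no inf}$\Rightarrow$\ref{both per}, the constraint $\lambda_{\bl_1}+\lambda_{\bl_2}=\lambda$ is preserved: $c^m$ fixes $\lambda\in\d_\infty$, and on the relevant domain the map is linear. This gives $\omega_c(\,\cdot\,,\beta_{\bl_1^{(\ell)}}+\beta_{\bl_2^{(\ell)}})=\lambda-p_1-p_2$ for every $\ell$.

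The main obstacle is passing from repetition of the sum to periodicity of each summand. Here I would use positivity: each $\beta_{\bl_i^{(\ell)}}\in Q^{\ge0}$, because for $h\gg0$ all labels are in the positive cone in the relevant seed, with signed-nondegenerating coefficients. If the sum $\beta_{\bl_1^{(\ell)}}+\beta_{\bl_2^{(\ell)}}$ equals a fixed value $\gamma$ for infinitely many $\ell$, then both summands lie in the finite set $\{\beta\in Q^{\ge0}:\beta\le\gamma\}$ for those $\ell$. By pigeonhole, some pair $(\beta_{\bl_1^{(\ell)}},\beta_{\bl_2^{(\ell)}})$ recurs at two indices $\ell<\ell'$. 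Since each sequence is generated by a fixed deterministic map $T_i$, recurrence of the state implies eventual periodicity from $\ell$ on. To get full periodicity from $\ell=0$, I would use that $T_i$ is injective, being invertible since mutation maps are bijections on broken lines by Lemma~\ref{mut broken line}. Running backwards then gives periodicity from $\ell=0$, which yields \ref{both per beta} and hence \ref{both per}.

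The delicate point is justifying that $T_i$ is a single fixed map for all $\ell\ge0$, not only for large $\ell$. This needs the choice $h\gg0$ (respectively $h\ll0$, with $i\le0$ as in Lemma~\ref{dom of def}) so that every relevant segment stays in the $c$-domain for all subsequent iterations. I would get this from Lemma~\ref{dom of def} and Proposition~\ref{delta limit eta}.
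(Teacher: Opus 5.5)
Your proposal is correct and is essentially the paper's proof. Positivity of the $\beta$'s plus pigeonhole gives $j<k$ at which both $\beta$-vectors coincide, hence, via $\lambda_{\bl_i^{(\ell)}}=p_i+\omega_c(\,\cdot\,,\beta_{\bl_i^{(\ell)}})$, both final monomials coincide. Because for $h\gg0$ (or $h\ll0$) every $\chi_{h+\ell}$ stays in the domain of definition containing $\d_\infty$, the final monomial evolves under $\bigl(\eta_{12\cdots n}^{B^T}\bigr)^{\pm m}$ by a fixed rule, which you run forwards and backwards; the paper applies $\bigl(\eta_{12\cdots n}^{B^T}\bigr)^{(\ell-j)m}$ with possibly negative exponent where you invoke injectivity.

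Two justifications need repair, though the conclusions stand.
\begin{enumerate}
\item $\beta_{\bl^{(\ell)}}\in Q^{\ge0}$ holds simply because every bend multiplies by a power of some $\hy^\beta$ with $\beta\in Q^+$; no appeal to $h\gg0$ or to signed-nondegenerating coefficients is needed.
\item Your claim that the correction ``depends only on $p_i$'' is not the right reason. The update rule also involves the signs $\sgn(y_k)$ in the intermediate seeds, and these change with $\ell$, since the mutation symmetry preserves $B$ but not $\tB$. However, substituting $\br{\lambda_L,\alpha_k\ck}=\br{p,\alpha_k\ck}+(B'\beta_L)_k$, with $p$ and $B'$ the current unbounded label and exchange matrix, shows that the new $\alpha_k$-coordinate of $\beta$ is independent of that sign. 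So the map really is the same for every $\ell$.
\end{enumerate}
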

\begin{proof}
The implications $\ref{both per}\implies\ref{both per beta}\implies\ref{is per}\implies\ref{is ess}\implies\ref{no inf}$ are trivial.
Suppose some vector~$\v$ appears infinitely many times in $(\beta_{\bl_1^{(\ell)}}+\beta_{\bl_2^{(\ell)}}:\ell=0,1,\ldots)$.
Since each $\beta_{\bl_1^{(j)}}$ and~$\beta_{\bl_2^{(j)}}$ is a nonnegative linear combination of simple roots, the vector $\v$ can be obtained in only finitely many ways as a sum of $\beta_{\bl_1^{(j)}}$ and $\beta_{\bl_2^{(j)}}$.  
Therefore, we can choose $k>j\ge0$ such that not only $\beta_{\bl_1^{(j)}}+\beta_{\bl_2^{(j)}}=\beta_{\bl_1^{(k)}}+\beta_{\bl_2^{(k)}}$ but also $\beta_{\bl_1^{(j)}}=\beta_{\bl_1^{(k)}}$ and $\beta_{\bl_2^{(j)}}=\beta_{\bl_2^{(k)}}$.
Furthermore, by the definition of broken lines, $\lambda_{\bl_1^{(\ell)}}$ depends only on $p_1$ and $\beta_{\bl_1^{(\ell)}}$, so $\lambda_{\bl_1^{(j)}}=\lambda_{\bl_1^{(k)}}$ and $\lambda_{\bl_2^{(j)}}=\lambda_{\bl_2^{(k)}}$.

If $h\gg0$, then we can assume by Proposition~\ref{delta limit eta}.\ref{c agree} that $\chi_h$ is in the same domain of definition of $\eta_{12\cdots n}^{B^T}$ as $\d_\infty$ and that $\eta_{12\cdots n}^{B^T}$ acts on $\chi_h$ by the Coxeter element~$c$.
For the same reason, the same is true for $\chi_{h+\ell}$ for all $\ell\ge0$.
The map $\bigl(\eta_{12\cdots n}^{B^T}\bigr)^m$ on broken lines thus acts on the $\lambda_{\bl_1^{(\ell)}}$ by~$c^m$.
In particular, knowing $\lambda_{\bl_1^{(j)}}$ for one $j\ge0$ determines~$\lambda_{\bl_1^{(\ell)}}$ for all $\ell\ge0$, and indeed, the definition of $m$ implies that $\lambda_{\bl_1^{(\ell)}}=\lambda_{\bl_1^{(0)}}$ for all $\ell\ge0$.

Proposition~\ref{c sym d inf} implies that $\bigl(\eta_{12\cdots n}^{B^T}\bigr)^m(p_1)=p_1$.
Thus, by inspection of the definition of $\eta_{12\cdots n}^{B^T}$ as a map on curves, we see that knowing $\lambda_{\bl_1^{(j)}}$ and $\beta_{\bl_1^{(j)}}$ for one $j\ge0$ also determines~$\beta_{\bl_1^{(\ell)}}$ for all $\ell\ge0$.
For any $\ell\ge0$, the vector $\beta_{\bl_1^{(\ell+k-j)}}$ is obtained by applying $\bigl(\eta_{12\cdots n}^{B^T}\bigr)^{(\ell-j)m}$ to $\bl_1^{(k)}$ and reading off the monomial associated to the resulting curve.
But since $\beta_{\bl_1^{(j)}}=\beta_{\bl_1^{(k)}}$ and $\lambda_{\bl_1^{(j)}}=\lambda_{\bl_1^{(k)}}$, the same vector is obtained by applying $\bigl(\eta_{12\cdots n}^{B^T}\bigr)^{(\ell-j)m}$ to $\bl_1^{(j)}$ and reading the monomial.
The resulting curve is $\bl_1^{(\ell)}$, and we conclude that $\beta_{\bl_1^{(\ell+k-j)}}=\beta_{\bl_1^{(\ell)}}$.

We have shown that $\bl_1$ is periodic in the case where $h\gg0$. 
The proof in the case $h\ll0$ is similar, and the proof that $\bl_2$ is periodic is the same.
\end{proof}

Theorem~\ref{Theta facts} says that $a(p_1,p_2,\lambda)$ is a polynomial.
In fact, each $a_{\chi_h}(p_1,p_2,\lambda)$ is a polynomial as we now explain.
Since $\d_\infty$ is a wall in $\Scat^T(B)$, every point that is not contained in any wall of $\Scat^T(B)$ is in some cone of the $\g$-vector fan.
Since $\Theta=P$ in affine type, \cite[Proposition~7.1]{GHKK} says that there are only finitely broken lines for $p_1$ with endpoint $\chi_h$, and similarly for $p_2$.

We now establish that the polynomials $a_{\chi_h}(p_1,p_2,\lambda)$ limit to $a(p_1,p_2,\lambda)$ in the sense of formal power series as $h\to\infty$ or $h\to-\infty$.
Furthermore, we prove that as we approach the limit, there is a one-to-one correspondence between the terms of one polynomial in the sequence and the terms of the next polynomial.
These very strong constraints on $a(p_1,p_2,\lambda)$  are at the heart of the proof of Theorem~\ref{mut pair affine periodic}.

\begin{proposition}\label{mut pair affine}
For $\tB$ with signed-nondegenerating coefficients, let $\lambda,p_1,p_2\in P\cap\d_\infty$.
\begin{enumerate}[label=\bf\arabic*., ref=\arabic*] 
\item \label{mut pair d+}
If $h\gg0$, then the map $\bigl(\eta_{12\cdots n}^{B^T}\bigr)^m$ takes pairs contributing to $a_{\chi_h}(p_1,p_2,\lambda)$ bijectively to pairs contributing to $a_{\chi_{h+1}}(p_1,p_2,\lambda)$, with inverse $\bigl(\eta_{12\cdots n}^{B^T}\bigr)^{-m}$.
\item \label{mut pair d-}
If $h\ll0$, then the map $\bigl(\eta_{12\cdots n}^{B^T}\bigr)^{-m}$ takes pairs contributing to $a_{\chi_h}(p_1,p_2,\lambda)$ bijectively to pairs contributing to $a_{\chi_{h-1}}(p_1,p_2,\lambda)$, with inverse $\bigl(\eta_{12\cdots n}^{B^T}\bigr)^m$.
\item \label{nicer limit}
$\displaystyle a(p_1,p_2,\lambda)=\lim_{h\to\infty}a_{\chi_h}(p_1,p_2,\lambda)=\lim_{h\to-\infty}a_{\chi_h}(p_1,p_2,\lambda)$.
\item \label{mut pair d+ periodic}
If $h\gg0$, then the bijection of Assertion~\ref{mut pair d+} restricts to a bijection from periodic pairs contributing to $a_{\chi_h}(p_1,p_2,\lambda)$ to periodic pairs contributing to $a_{\chi_{h+1}}(p_1,p_2,\lambda)$.
\item \label{mut pair d- periodic}
If $h\ll0$, then the bijection of Assertion~\ref{mut pair d-} restricts to a bijection from periodic pairs contributing to $a_{\chi_h}(p_1,p_2,\lambda)$ to periodic pairs contributing to $a_{\chi_{h-1}}(p_1,p_2,\lambda)$.
\end{enumerate}
\end{proposition}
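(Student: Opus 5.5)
The idea is to apply Proposition~\ref{mut pair} with the mutation sequence $\kk$ equal to $m$ repetitions of $12\cdots n$. We need to check its hypothesis: that $\lambda$ and $\chi_h$ lie in the same domain of definition of $\bigl(\eta_{12\cdots n}^{B^T}\bigr)^m$.

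Since $\lambda\in\d_\infty$, Proposition~\ref{eta nice}.\ref{dinf dom of def} places $\lambda$ in the domain of definition containing $\d_\infty$. Moreover $\bigl(\eta_{12\cdots n}^{B^T}\bigr)^m$ fixes $\d_\infty$ pointwise, so it fixes $\lambda$, $p_1$ and $p_2$. The point $\chi_0$ lies in the interior of a maximal $\g$-vector cone, hence $\chi_0\notin\d_\infty$. For $h\gg0$, Proposition~\ref{delta limit eta}.\ref{c agree} shows that $\bigl(\eta_{12\cdots n}^{B^T}\bigr)^{j}(\chi_h)$ lies in the same domain of definition of $\eta_{12\cdots n}^{B^T}$ as $\d_\infty$ for every $j=0,\ldots,m-1$. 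Composing, $\chi_h$ and $\lambda$ are in the same domain of definition of $\bigl(\eta_{12\cdots n}^{B^T}\bigr)^m$.

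The same is needed at every step of the iterated map, and it holds because $\lambda$ is fixed and remains in that domain. Also $\chi_h$ lies in the interior of a maximal $\g$-vector cone (the image of $C_0$ under an automorphism of $\F_{B^T}$), so it lies on no wall. Proposition~\ref{mut pair} then gives the following: a pair contributes to $a_{\chi_h}(p_1,p_2,\lambda)$ if and only if its image contributes to $a_{\chi_{h+1}}(p_1,p_2,\lambda)$. Lemma~\ref{mut broken line} shows the map on broken lines is invertible with inverse $\bigl(\eta_{12\cdots n}^{B^T}\bigr)^{-m}$, which gives Assertion~\ref{mut pair d+}. Assertion~\ref{mut pair d-} is symmetric, using the $h\ll0$ part of Proposition~\ref{delta limit eta} and the inverse maps.

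For Assertion~\ref{nicer limit}, Proposition~\ref{delta limit eta}.\ref{pos a} and \ref{eta lim} show that $\chi_h=\chi_{h_0}+a(h-h_0)\nu_c(\delta)$ for large $h$, with $a>0$. So $\chi_h/|\chi_h|\to\nu_c(\delta)/|\nu_c(\delta)|$, but $\chi_h$ is not converging to $\lambda$. This is the main obstacle. To handle it, replace $\chi_h$ by $\chi_h'=\lambda+\ep_h\chi_h$ with $\ep_h\to0$. I would argue that $\chi'_h$ lies in the same maximal $\g$-vector cone as $\chi_h$ once $h$ is large, so that Lemma~\ref{z z'} gives $a_{\chi'_h}=a_{\chi_h}$. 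The justification is that the cone $\mathrm{cone}(C_0)$ contains the $\g$-vector rays of $C$; after applying $\bigl(\eta_{12\cdots n}^{B^T}\bigr)^{hm}$, which fixes $C$, the image cone still contains those rays. A point $\chi_h$ near the imaginary ray plus a small perturbation off $\lambda\in C$ should then stay in that cone.

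I would check this carefully using that the images of $C_0$ are adjacent to $C$ on the $\d_\infty^+$ side. If that fails, I would fall back on the consistency and limit statement in Proposition~\ref{struct} together with the fact that the polynomials $a_{\chi_h}$ are constant for $h\gg0$ by the bijection. Since $\chi'_h\to\lambda$, we get $a(p_1,p_2,\lambda)=\lim a_{\chi_h}$, and similarly for $h\to-\infty$.

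Finally, Assertions~\ref{mut pair d+ periodic} and \ref{mut pair d- periodic} are immediate. Periodicity of $\bl$ concerns the sequence $\bl^{(\ell)}$, and the image under the bijection has sequence $\bl^{(\ell+1)}$, a shift. Hence periodic pairs map to periodic pairs, and conversely via the inverse. Lemma~\ref{per char} can be used to confirm this equivalence at the level of $\beta$-sequences.
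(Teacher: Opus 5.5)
Your arguments for Assertions~\ref{mut pair d+}, \ref{mut pair d-}, \ref{mut pair d+ periodic} and~\ref{mut pair d- periodic} are fine and match the paper. For the first two you combine Proposition~\ref{mut pair} with Proposition~\ref{delta limit eta}.\ref{c agree}; for the last two you use the shift $\bl^{(\ell)}\mapsto\bl^{(\ell+1)}$.

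The gap is in Assertion~\ref{nicer limit}. The points $\chi'_h=\lambda+\ep_h\chi_h$ are in general \emph{not} in the cone $C_h=\bigl(\eta_{12\cdots n}^{B^T}\bigr)^{hm}(C_0)$ that contains $\chi_h$, so Lemma~\ref{z z'} does not apply.

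Here is why. Since $\chi_{h+1}=\chi_h+a\nu_c(\delta)$ for $h\gg0$, the requirement $\chi'_h\to\lambda$ forces $h\ep_h\to0$. Now suppose $\lambda$ lies in the relative interior of $\d_\infty$, which is the case needed for all the main theorems. Then $\lambda\notin C_h$, because $C_h$ is a $\g$-vector cone. A correction of size $o(1)$ in the direction of $\chi_h$ cannot move $\lambda$ into $C_h$.

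The Kronecker case ($n=2$, $b_{12}=-b_{21}=2$) makes this explicit. There $C_h$ is spanned by consecutive $\g$-vectors $g$ and $g+\nu_c(\delta)$. Also $\chi_h=ug+v(g+\nu_c(\delta))$ with $u,v>0$ independent of $h$, because $\bigl(\eta_{12\cdots n}^{B^T}\bigr)^{m}$ acts linearly on these cones and carries generators to generators. For $\lambda=t\nu_c(\delta)$ with $t>0$,
\[
\lambda+\ep\chi_h=(\ep u-t)\,g+(\ep v+t)\,\bigl(g+\nu_c(\delta)\bigr).
\]
This lies in $C_h$ only if $\ep\ge t/u$, and then $\ep\chi_h\not\to0$. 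Your construction does work when $\lambda$ is in the relative boundary of $\d_\infty$, since then $\lambda\in C_h$.

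Your fallback does not close the gap either. The bijection of Assertion~\ref{mut pair d+} preserves the scalars $\const_\bl$ but in general not the $y$-exponents; that is why Theorem~\ref{mut pair affine periodic} has to separate out periodic pairs. So $a_{\chi_h}$ need not be constant in $h$. In any case, Proposition~\ref{struct} only concerns sequences converging to $\lambda$, so you still need a bridge from $\chi_h$ to such a sequence.

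The missing idea is to let the approximating points move with the cones rather than along $\chi_h$. The rays of $C_h$ lying in $\d_\infty$, which include the $\g$-vector rays of $C$, are fixed. By Proposition~\ref{delta limit eta}.\ref{pos a}, every other ray generator satisfies $s^{(h)}=s^{(h_0)}+a_s(h-h_0)\nu_c(\delta)$ with $a_s>0$. Hence the $C_h$ converge to an imaginary cone containing $C$.

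Write $\lambda=\mu+t\nu_c(\delta)$, where $\mu$ is a nonnegative combination of the $\g$-vector rays of $C$ and $t\ge0$. Fix one moving ray $s$ of $C_0$, which exists because $C_0\not\subseteq\delta^\perp$. The points
\[
\lambda_h=\mu+\frac{t}{a_s(h-h_0)}\,s^{(h)}+h^{-2}\chi_h
\]
lie in the interior of $C_h$ and converge to $\lambda$. Then
\[
a(p_1,p_2,\lambda)=\lim_h a_{\lambda_h}(p_1,p_2,\lambda)=\lim_h a_{\chi_h}(p_1,p_2,\lambda)
\]
by Proposition~\ref{struct} and Lemma~\ref{z z'}, and the same works as $h\to-\infty$. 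This is the paper's argument.
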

\begin{proof}
In light of Proposition~\ref{c sym d inf} and the definition of $m$, Assertions~\ref{mut pair d+} and~\ref{mut pair d-} are obtained by combining Propositions~\ref{mut pair} and~\ref{delta limit eta}.\ref{c agree}.
Assertions~\ref{mut pair d+ periodic} and~\ref{mut pair d- periodic} then follow by the definition of periodic broken lines. 
For $C$ and $C_0$ as in the definition of a chi sequence, define $C_h=\bigl(\eta_{12\cdots n}^{B^T}\bigr)^{hm}(C_0)$ for ${h\in\integers}$.
Proposition~\ref{delta limit eta}.\ref{eta lim} implies that the $C_h$ approach an imaginary cone $C'$ that contains~$C$ as $h\to\pm\infty$.  
Thus we can choose~$\lambda_h$ in the interior of each~$C_h$ in such a way that ${\lim_{h\to\infty}\lambda_h=\lim_{h\to-\infty}\lambda_h=\lambda}$.
By definition, $a(p_1,p_2,\lambda)=\lim_{h\to\infty}a_{\lambda_h}(p_1,p_2,\lambda)$.
By Lemma~\ref{z z'}, this limit is $\lim_{h\to\infty}a_{\chi_h}(p_1,p_2,\lambda)$.
The same is true taking the limit as $h\to-\infty$.
We have proved Assertion~\ref{nicer limit}.
\end{proof}

\begin{proof}[Proof of Theorem~\ref{mut pair affine periodic}]
We argue for $h\gg0$.
The proof for $h\ll0$ is essentially the same.
Appealing to Lemma~\ref{per char}, let $k$ be the least common multiple of the periods of sequences $(\beta_{\bl_1^{(\ell)}}+\beta_{\bl_2^{(\ell)}}:\ell=0,1,\ldots)$ for periodic pairs $(\bl_1,\bl_2)$ contributing to $a(p_1,p_2,\lambda)$.
Lemma~\ref{mut pair affine}.\ref{mut pair d+ periodic} implies that ${(\ap_{\chi_{h+k\ell m}}(p_1,p_2,\lambda):\ell=0,1,\ldots)}$ is constant.
Proposition~\ref{mut pair affine}.\ref{nicer limit} implies that $a(p_1,p_2,\lambda)=\lim_{\ell\to\infty}a_{\chi_{h+k\ell m}}(p_1,p_2,\lambda)$.  
Writing temporarily $\anp_{\chi_j}(p_1,p_2,\lambda)$ for $a_{\chi_j}(p_1,p_2,\lambda)-\ap_{\chi_j}(p_1,p_2,\lambda)$ for all $j$, we see that $a(p_1,p_2,\lambda)=\ap_{\chi_h}(p_1,p_2,\lambda)+\lim_{\ell\to\infty}\anp_{\chi_{h+k\ell m}}(p_1,p_2,\lambda)$.
For each pair $(\bl_1,\bl_2)$ for $\anp(p_1,p_2,\lambda)$, Lemma~\ref{per char} says that no entry of ${(\beta_{\bl_1^{(\ell)}}+\beta_{\bl_2^{(\ell)}}:\ell=0,1,\ldots)}$ appears infinitely many times.
Thus no entry of ${(\beta_{\bl_1^{(k\ell)}}+\beta_{\bl_2^{(k\ell)}}:\ell=0,1,\ldots)}$ appears infinitely many times.  
But each $a_{\chi_{h+k\ell m}}(p_1,p_2,\lambda)$ is a polynomial, so $\anp_{\chi_{h+k\ell m}}(p_1,p_2,\lambda)$ has finitely many terms.
We conclude that $\lim_{\ell\to\infty}\anp_{\chi_{h+k\ell m}}(p_1,p_2,\lambda)=0$.
\end{proof}

We also point out two more useful facts.

\begin{lemma}\label{per finite}    
Suppose $\tB$ has signed-nondegenerating coefficients.
If ${p\in P\cap\d_\infty}$, $h\gg0$ or $h\ll0$, and $\bl$ is a periodic broken line for $p$ with endpoint~$\chi_h$, then~${\lambda_\bl\in\delta^\perp}$.
\end{lemma}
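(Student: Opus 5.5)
Take $h\gg0$; the case $h\ll0$ is symmetric. The plan is to show that $\lambda_\bl$ is fixed by $c^m$, and then use Lemma~\ref{delta perp}. That lemma says the finite $c$-orbits span exactly $\delta^\perp$; since $c$ is unipotent up to finite order on $V^*$ modulo $\delta^\perp$, this will force $\lambda_\bl\in\delta^\perp$.

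The first step uses the setting of Lemma~\ref{per char}. By Proposition~\ref{delta limit eta}.\ref{c agree}, for $h\gg0$ each $\chi_{h+\ell}$ lies in the same domain of definition of $\eta_{12\cdots n}^{B^T}$ as $\d_\infty$, and there the map acts by $c$. The final domain of linearity of $\bl$ ends at $\chi_h$. Shrinking that final domain to a small segment ending at $\chi_h$, it stays inside this domain of definition for all iterates, because the domains are open near $\chi_h$ (here $\chi_h$ is generic, in the interior of a maximal $\g$-vector cone). So the mutation map on labeled curves sends the last slope $\lambda_\bl$ to $c^m\lambda_\bl$ under $\bigl(\eta_{12\cdots n}^{B^T}\bigr)^m$. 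Hence $\lambda_{\bl^{(\ell)}}=c^{\ell m}\lambda_\bl$, since the slope of the last domain is read from the final monomial as $-\bl'$.

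Periodicity of $\bl$ says the monomial sequence $x^{\lambda_{\bl^{(\ell)}}}y^{\beta_{\bl^{(\ell)}}}$ is periodic. In particular $\{c^{\ell m}\lambda_\bl:\ell\ge0\}$ is a finite set, so $\lambda_\bl$ lies in a finite $c$-orbit.

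It remains to show that a vector in a finite $c$-orbit lies in $\delta^\perp$. For this, write $V^*=\delta^\perp\oplus\reals\v$ with $\v$ the generalized eigenvector from the proof of Lemma~\ref{dom of def}, which satisfies $c\v=\v+\nu_c(\delta)$. Write $\lambda_\bl=w+b\v$ with $w\in\delta^\perp$. Every vector of $\delta^\perp$ has a finite $c$-orbit (Lemma~\ref{delta perp}), so $c^{m}w=w$ by the choice of $m$. Since $\nu_c(\delta)$ is fixed by $c$, we get $c^{\ell m}\lambda_\bl=\lambda_\bl+\ell m b\,\nu_c(\delta)$. A finite orbit therefore forces $b=0$, i.e.\ $\lambda_\bl\in\delta^\perp$.

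The main obstacle is the first step: justifying that the mutation map really acts on the final slope by the linear map $c^m$. This needs the endpoint and a neighborhood of the final segment to stay in the common domain of definition for all iterates, which I expect to get from Proposition~\ref{delta limit eta}.\ref{c agree} together with Lemma~\ref{dom of def}.
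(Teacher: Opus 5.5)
Your proposal is correct and follows essentially the same route as the paper: for $h\gg0$, Proposition~\ref{delta limit eta}.\ref{c agree} makes $\bigl(\eta_{12\cdots n}^{B^T}\bigr)^m$ act on the final slope by $c^m$, periodicity then puts $\lambda_\bl$ in a finite $c$-orbit, and Lemma~\ref{delta perp} gives $\lambda_\bl\in\delta^\perp$. Your decomposition $\lambda_\bl=w+b\v$ using the generalized eigenvector $\v$ only spells out the direction of Lemma~\ref{delta perp} that the paper cites directly.
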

\begin{proof}
We prove the case where $h\gg0$.  
The case where $h\ll0$ is similar.
As in the proof of Lemma~\ref{per char}, we can assume by Proposition~\ref{delta limit eta}.\ref{c agree} that $\chi_h$ is in the same domain of definition of $\eta_{12\cdots n}^{B^T}$ as $\d_\infty$, so that $\eta_{12\cdots n}^{B^T}$ acts on $\chi_h$ by the Coxeter element~$c$ and the map $\bigl(\eta_{12\cdots n}^{B^T}\bigr)^m$ on broken lines acts on the $\lambda_{\bl^{(\ell)}}$ by~$c^m$.
Periodicity of $\bl$ thus implies that $\lambda_\bl$ is in a finite $c$-orbit, so $\lambda_\bl\in\delta^\perp$ by Lemma~\ref{delta perp}.
\end{proof}

\begin{lemma}\label{no dinf}
Suppose $\tB$ has signed-nondegenerating coefficients and $p\in P\cap\d_\infty$.
If $\bl$ is a periodic broken line for $p$ with endpoint~$\chi_h$, then~$\bl$ is disjoint from $\d_\infty$.
\end{lemma}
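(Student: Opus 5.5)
We argue by contradiction, using the structure of periodic broken lines under $\bigl(\eta_{12\cdots n}^{B^T}\bigr)^m$. Suppose $\bl$ is a periodic broken line for $p$ with endpoint $\chi_h$ that meets $\d_\infty$.

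The first step is to see what crossing $\d_\infty$ costs. The endpoint $\chi_h$ lies in a maximal $\g$-vector cone, hence off $\d_\infty$. The initial direction is $-p\in\delta^\perp$, so the unbounded part of $\bl$ is parallel to $\delta^\perp$. A genericity argument, using condition \ref{brok generic} of the definition of broken lines, then shows that a curve which meets $\d_\infty$ must cross it transversally at some point. There it bends, or passes straight through, with a term of the scattering function $f_{\d_\infty}$. That function is a power series in $\hy^\delta$.

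The main plan is to track $\br{\lambda_L,\delta}$ along $\bl$. On the unbounded domain this is $\br{p,\delta}=0$. Bending at a wall $\d_\beta$ changes $\lambda_L$ by a multiple of $\omega_c(\cdot,\beta)$, and $\br{\omega_c(\cdot,\beta),\delta}=\omega_c(\delta,\beta)\cdot(\pm)$. Since $\delta$ is in the radical of the symmetrized form but not of $\omega_c$, this quantity is nonzero for roots outside the tubes.

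Lemma~\ref{per finite} says the final slope $\lambda_\bl$ lies in $\delta^\perp$. Applying the same lemma to each $\bl^{(\ell)}$, which is again periodic, the final slopes of all iterates stay in $\delta^\perp$. The goal is to show that a nontrivial bend at $\d_\infty$ picks up $y^{k\delta}$ with $k\ge1$, together with a change of slope that cannot be undone periodically. The bend at $\d_\infty$ contributes $\beta$-part $k\delta$, and $\delta$ is fixed by $c$.

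The key step is then to follow $\bl^{(\ell)}$ as $\ell$ grows. By Proposition~\ref{delta limit eta}, $\bigl(\eta_{12\cdots n}^{B^T}\bigr)^{\ell m}$ acts near $\d_\infty$ by $c^{\ell m}$, and points off $\d_\infty$ are pushed by positive multiples of $\nu_c(\delta)$ (Proposition~\ref{delta limit eta}.\ref{pos a}). A bend at a point of $\d_\infty$ is carried to a bend at a point of $\d_\infty$, since $\d_\infty$ is fixed pointwise. Mutation of broken lines preserves the pieces of $\bl$ crossing $\d_\infty$, while the pieces on the positive side $\d_\infty^+$ get translated.

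I expect periodicity to force the segments through $\d_\infty$ to be trapped in a bounded region. Meanwhile the bends elsewhere, whose walls are mutated into $\d_\infty^\pm$, would accumulate $y$-exponents that grow linearly in $\ell$. This contradicts periodicity via Lemma~\ref{per char}, specifically criterion \ref{is ess}.

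Concretely, I would try to show the following. Suppose $\bl$ meets $\d_\infty$ and its last domain lies in a $\g$-cone $C_h$ on one side. Then before crossing, $\bl$ passes through the region $\sett{x\in\d_\infty^+:\br{x,\phi}<0\,\forall\phi\in\Gamma}$ near $\d_\infty$. Lemma~\ref{hopefully} ensures there are no tube walls $\d_\beta$ there. The only walls available are then non-tube walls, and each bend across them changes $\br{\lambda_L,\delta}$ by a nonzero amount.

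Combined with $\br{\lambda_\bl,\delta}=0$ and $\br{p,\delta}=0$, one then has to compare signs. Just after the crossing of $\d_\infty$, the bend direction must point back toward $\chi_h$, so $\br{\lambda_L,\delta}$ has a fixed sign on all subsequent domains. That sign is inconsistent with ending in $\delta^\perp$, because each further bend pushes $\br{\lambda_L,\delta}$ in the same direction, by Proposition~\ref{rescue}.

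The main obstacle is this sign bookkeeping. One must show that after touching $\d_\infty$ every subsequent bend moves $\br{\lambda_L,\delta}$ monotonically away from $0$. I would attempt this using the description \eqref{d beta} of walls together with Proposition~\ref{rescue}.
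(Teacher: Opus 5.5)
Your proposal has a genuine gap, at exactly the step you flag as the main obstacle, and that step cannot be completed as you plan it. Lemma~\ref{scatter region} controls the sign of the change in $\br{\lambda_L,\delta}$ only for bends at points of $\d_\infty^+$ or $\d_\infty^-$. A broken line can leave these open cones through a facet $\br{x,\beta}=0$ (with $\beta\in\RSfin$, $\omega_c(\beta,\delta)>0$). After that, bends on non-tube walls can move $\br{\lambda_L,\delta}$ in either direction. Proposition~\ref{rescue} gives no control there, since it only says on which side of $\phi^\perp$ the wall $\d_\infty$ lies.

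In fact no monotonicity argument of this kind can work unless it uses something specific to crossing $\delta^\perp$ \emph{inside} $\d_\infty$. Consider the periodic broken line of Theorem~\ref{simplification for nuc delta} with final monomial $x^{-p}y^{k\delta}$. It starts in $\d_\infty^-$ and ends at $\chi_h\in\d_\infty^+$. Along it, $\br{\lambda_L,\delta}$ leaves $0$, the line crosses $\delta^\perp$, and $\br{\lambda_L,\delta}$ returns to $0$. The only $\d_\infty$-specific input you use is Lemma~\ref{hopefully}, which concerns tube walls. By Lemma~\ref{scatter region}.\ref{bend fin}, bends on tube walls do not change $\br{\lambda_L,\delta}$ at all, so this input cannot make the difference. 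Separately, Lemma~\ref{per finite} needs $h\gg0$ or $h\ll0$, whereas the statement is for all $h$. That is repairable by passing to $\bl^{(\ell)}$, but you should say so.

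The missing idea is one you wrote down and then abandoned. Let $q\in\d_\infty$ be a crossing point; it is fixed by $\bigl(\eta_{12\cdots n}^{B^T}\bigr)^{\pm m}$. Take a point $r$ on the same domain of linearity lying in $\d_\infty^+$. By Proposition~\ref{delta limit eta}.\ref{pos a}, the iterates carry $r$ off to infinity in the direction of $\nu_c(\delta)$.

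So the domain $L^{(\ell)}$ of $\bl^{(\ell)}$ through $q$ is stretched without bound, and $\lambda_{L^{(\ell)}}$ takes infinitely many values. Since $\lambda_L=p+\omega_c(\,\cdot\,,\beta_L)$ is determined by $p$ and $\beta_L$, the heights of the $\beta_{L^{(\ell)}}$ are unbounded. The $y$-exponents only increase along a broken line, so the heights of the $\beta_{\bl^{(\ell)}}$ are unbounded too, contradicting periodicity.

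This is the paper's argument, and it needs no sign bookkeeping and no analysis of bends elsewhere. The segment through $\d_\infty$ is not "trapped" while other bends accumulate exponents; rather, that very segment is stretched, because one of its points is fixed while another is translated.
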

\begin{proof}
Suppose for the sake of contradiction that $\bl$ passes through $\d_\infty$ at a point~$q$.
By the definition of a broken line, $q$ is in the relative interior of $\d_\infty$, so there is a point $r$ in $\d_\infty^+$ on a domain of $\bl$ containing~$q$.
As a curve,~$\bl^{(\ell)}$ is $\bigl(\eta^{B^T}_{12\cdots n}\bigr)^{\pm\ell m}(\bl)$ for all $\ell\ge0$, with the sign of the $\pm$ determined by whether $h\ge0$ or $h<0$.
The action of $\bigl(\eta^{B^T}_{12\cdots n}\bigr)^{\pm m}$ fixes $q$.
Proposition~\ref{delta limit eta}.\ref{pos a} says that for $\ell\gg0$, additional powers of $\bigl(\eta^{B^T}_{12\cdots n}\bigr)^{\pm m}$ take $r$ without bound in the direction of $\nu_c(\delta)$.
Therefore, as $\ell\to\infty$, the derivative of the domain $L^{(\ell)}$ of $\bl^{(\ell)}$ containing~$q$ and~$r$ approaches larger and larger multiples of $\pm\nu_c(\delta)$.
Thus $\lambda_{L^{(\ell)}}$ attains infinitely many values as $\ell\to\infty$.
Since by the definition of a broken line, $\lambda_{\bl^{(\ell)}}$ depends only on $p$ and $\beta_{\bl^{(\ell)}}$, $\beta_{L^{(\ell)}}$ also attains infinitely many values. 
In particular, there is no upper bound on the sum of the simple-root coordinates of the vectors $\beta_{L^{(\ell)}}$, and therefore there is also no upper bound on the sum of the simple-root coordinates of the vectors $\beta_{\bl^{(\ell)}}$.
We see that $\beta_{\bl^{(\ell)}}$ takes infinitely many values as $\ell\to\infty$, contradicting the fact that~$\bl$ is periodic.
\end{proof}


\subsection{Mutating to one side of the imaginary wall}\label{mut imag sec} 
In this section, we prove several more specific theorems about periodic broken lines.
The basic tool in these proofs is to apply powers of $\eta_{12\cdots n}^{B^T}$ in order to move important parts of a broken line into $\d_\infty^+$ or $\d_\infty^-$ (the ``positive side'' or ``negative side'' of $\d_\infty$), defined in Section~\ref{scatfan aff sec}.
That is useful because of the following lemma.

\begin{lemma}\label{scatter region}
Suppose a broken line bends at a point $q$ on a wall with positive normal vector~$\beta$ and picks up a constant times a monomial in $y$ times a Laurent monomial~$x^\lambda$.
\begin{enumerate}[label=\bf\arabic*., ref=\arabic*] 
\item \label{bend fin}
If $\beta$ is in a finite $c$-orbit, then $\br{\lambda,\delta}=0$.
\item \label{bend inf +}
If $\beta$ is in an infinite $c$-orbit and $q\in\d_\infty^+$, then $\br{\lambda,\delta}<0$.
\item \label{bend inf -}
If $\beta$ is in an infinite $c$-orbit and $q\in\d_\infty^-$, then $\br{\lambda,\delta}>0$.
\end{enumerate}
\end{lemma}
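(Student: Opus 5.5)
The plan is to identify exactly what the bend contributes and then compute its pairing with $\delta$. When a broken line bends at $q$ on a wall in $\beta^\perp$, condition~\ref{brok change slope} says it picks up a term of $f^{\br{\lambda_{L_1},\beta\ck}}$, where $f$ is a power series in $\hy^\beta$. A term of this series is a constant times $(\hy^\beta)^k$ for some $k\ge1$, since a nontrivial bend picks up a nonconstant term. Recall that $\hy^\beta=x^{\omega_c(\,\cdot\,,\beta)}y^\beta$. So the Laurent monomial picked up is $x^\lambda$ with $\lambda=k\,\omega_c(\,\cdot\,,\beta)$, and $k>0$.

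The key step is therefore to compute the sign of $\br{\omega_c(\,\cdot\,,\beta),\delta}=\omega_c(\delta,\beta)$, up to the positive factors relating $\delta$ to coroot coordinates. By skew-symmetry this equals $-\omega_c(\beta,\delta)$.

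Assertion~\ref{bend fin}: $\beta$ is in a finite $c$-orbit. The root $\delta$ is $c$-fixed and $\omega_c(\beta,\delta)=\omega_c(c\beta,c\delta)$ should hold, since $\omega_c$ is $c$-invariant. Alternatively, using Proposition~\ref{crucial lemma}, $\omega_c(\,\cdot\,,\beta)$ can be expressed through $\nu_c((1+c^{-1})\beta)$. I would argue that $\omega_c(\beta,\delta)$ vanishes on vectors of $V$ in finite $c$-orbits. The cleanest route is to view $\omega_c(\cdot,\delta)$ as $\pm 2\nu_c(\delta)$ via $\nu_c(\delta)=-\tfrac12B\delta$, and then use $\br{\nu_c(\delta),\phi}=0$ for $\phi\in\APT{c}$ (Proposition~\ref{E delta in tubes}). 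To pass from $\APT{c}$ to arbitrary roots in finite $c$-orbits, I would use the fact that the roots in finite $c$-orbits are exactly the positive real roots in the span of $\APT{c}$, i.e.\ those orthogonal to $\nu_c(\delta)$ by Proposition~\ref{E omega delta in tubes}, together with the fact that $\nu_c(\delta)$ is $c$-fixed. In short, $\omega_c(\beta,\delta)$ is a positive multiple of $\br{\nu_c(\delta),\beta}$ up to sign. It is constant along $c$-orbits and has average value zero over the orbit, by the same generalized eigenvector argument as in Lemma~\ref{delta perp}, so it vanishes on finite orbits.

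Assertions~\ref{bend inf +} and~\ref{bend inf -}: $\beta$ is in an infinite $c$-orbit. Then $\br{\nu_c(\delta),\beta}\ne0$, and its sign must be determined from the location of $q$. Since $q$ lies on the wall $\d_\beta\subseteq\beta^\perp$, I would write $\beta=\beta_0+j\delta$ with $\beta_0\in\RSfin$ (up to positive scaling). Then $0=\br{q,\beta}=\br{q,\beta_0}+j\br{q,\delta}$.

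If $q\in\d_\infty^+$, then $\br{q,\delta}>0$. The defining inequalities of $\d_\infty^+$ give $\br{q,\beta_0}<0$ whenever $\omega_c(\beta_0,\delta)>0$. The goal is to show $\omega_c(\beta,\delta)=\omega_c(\beta_0,\delta)>0$, which forces $\br{\lambda,\delta}<0$.

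Suppose instead that $\omega_c(\beta_0,\delta)<0$, so that $\omega_c(-\beta_0,\delta)>0$. Then $\br{q,-\beta_0}<0$, i.e.\ $\br{q,\beta_0}>0$, and hence $j<0$. But a positive root with $\omega_c(\beta_0,\delta)<0$ and $j<0$ should not have its wall meet $\d_\infty^+$. Here the expected obstacle is sign bookkeeping: showing that walls $\d_\beta$ with the wrong sign do not reach $\d_\infty^+$. I would try to use the explicit form~\eqref{d beta} together with Proposition~\ref{rescue}, or the convergence $\eta^h(x)\to\nu_c(\delta)$ of Proposition~\ref{delta limit eta}.

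The negative case is symmetric, with $\br{q,\delta}<0$ reversing all signs.
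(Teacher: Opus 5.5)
Your identification of the picked-up monomial ($\lambda=k\,\omega_c(\,\cdot\,,\beta)$ with $k\ge1$, so $\br{\lambda,\delta}=k\,\omega_c(\delta,\beta)$ exactly) is right. Assertion 1 also goes through by the route you call cleanest. Every wall of $\Scat^T(\tB)$ is normal to a positive root of $\AP{c}$, so a normal vector in a finite $c$-orbit lies in $\APT{c}$. Then $\nu_c(\delta)=-\frac12\omega_c(\,\cdot\,,\delta)$ together with Propositions~\ref{E delta in tubes} and~\ref{E omega delta in tubes} gives $\omega_c(\delta,\beta)=0$ in the finite case and $\omega_c(\delta,\beta)\neq0$ in the infinite case. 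No detour through roots outside $\AP{c}$ is needed. The paper instead quotes from \cite{affdomreg} that $x\in V$ is in a finite $c$-orbit if and only if $\omega_c(\delta,x)=0$.

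The gap is in Assertions 2 and 3. You correctly reach $\br{q,\beta_0}>0$ and hence $j<0$. But you then treat this as an obstacle needing \eqref{d beta}, Proposition~\ref{rescue}, or Proposition~\ref{delta limit eta}, and it is not clear those tools would give the conclusion. The missing observation is elementary: $\beta$ is a \emph{positive} root, so in $\beta\propto\beta_0+j\delta$ one has $j\ge0$ (and $j\ge1$ if $\beta_0$ is negative). Thus $j<0$ is an immediate contradiction, and $\omega_c(\beta_0,\delta)\ge0$. It is nonzero because the orbit is infinite. Since $\omega_c(\delta,\delta)=0$, $\omega_c(\beta,\delta)$ is a positive multiple of $\omega_c(\beta_0,\delta)$, giving $\br{\lambda,\delta}=-k\,\omega_c(\beta,\delta)<0$.

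The case $q\in\d_\infty^-$ is not literally ``all signs reversed.'' There you assume $\omega_c(\beta_0,\delta)>0$ and get $\br{q,\beta_0}<0$ directly from the definition of $\d_\infty^-$. Then $\br{q,\beta}=\br{q,\beta_0}+j\br{q,\delta}<0$, again using $j\ge0$ together with $\br{q,\delta}<0$, which contradicts $q$ lying on the wall. This positivity step is the paper's argument. Without it, your proposal establishes neither sign.
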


\begin{proof}
Throughout, we use \cite[Proposition~4.10]{affdomreg}, which says that $x\in V$ is in a finite $c$-orbit if and only if $\omega_c(\delta,x)=0$.
The term $\bl$ picks up at $q$ is a constant times $(\hy^\beta)^a=(y^\beta x^\xi)^a$, for some $a\ge0$, where $\xi=\omega_c(\,\cdot\,,\beta)\in V^*$.
Thus, $\lambda=a\xi$ and $\br{\lambda,\delta}=a\omega_c(\delta,\beta)$.
Assertion~\ref{bend fin} follows.
The root~$\beta$ is a positive scalar multiple of $\beta_0+k\delta$ for some $\beta_0\in\RSfin$ and $k\in\integers$.
Because $\omega_c$ is skew-symmetric, $\omega_c(\beta,\delta)$ is a positive scalar multiple of $\omega_c(\beta_0,\delta)$.
Since $\beta$ is positive, either $\beta_0$ is positive and $k\ge0$ or $\beta_0$ is negative and $k\ge1$.
Because $q$ is on the wall, $\br{q,\beta}=\br{q,\beta_0+k\delta}=0$.

Suppose~$q$ is in $\d_\infty^+$.
In particular, $\br{q,\delta}>0$.
If $\omega_c(-\beta_0,\delta)>0$, then ${\br{q,-\beta_0}<0}$ by the definition of $\d_\infty^+$.
Therefore $\br{q,\beta_0+k\delta}>0$, and this contradiction shows that $\omega_c(-\beta_0,\delta)\le0$, so that $\omega_c(\delta,\beta_0)\le0$ and thus $\omega_c(\delta,\beta)\le0$.
Since $\beta$ is in an infinite $c$-orbit, $\omega_c(\delta,\beta)<0$, so $\br{\lambda,\delta}<0$.

If $q\in\d_\infty^-$ and $\omega_c(\beta_0,\delta)>0$, then $\br{q,\delta}<0$ and $\br{q,\beta_0}<0$, $\br{q,\beta_0+k\delta}<0$.
By this contradiction, $\omega_c(\beta_0,\delta)\le0$, so $\omega_c(\delta,\beta_0)\ge0$ and $\omega_c(\delta,\beta)>0$, so $\br{\lambda,\delta}>0$.
\end{proof}

\begin{theorem}\label{simplification for boundary dinf}
Suppose $\tB$ has signed-nondegenerating coefficients and $p\in P$ is in the relative boundary of $\d_\infty$.
If $h\gg0$ or $h\ll0$ and $\bl$ is a periodic broken line for $p$ with endpoint~$\chi_h$, then
\begin{enumerate}[label=\bf\arabic*., ref=\arabic*] 
\item \label{unb}
There is a maximal $\g$-vector cone containing $\bl((-\infty,t])$ in its interior for some~${t\in\reals}$.
\item \label{scat fin}
$\bl$ is contained in a finite union of maximal $\g$-vector cones.
\item \label{gamma in dinf}
All of $\bl$ is contained in $\d_\infty^+$ if $h\gg0$ or in $\d_\infty^-$ if $h\ll0$.
\item \label{scat only fin}
  $\bl$ only bends on walls whose normal vectors are in $\APTre{c}$.  
\end{enumerate}
\end{theorem}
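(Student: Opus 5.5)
The plan is to prove Assertion~\ref{gamma in dinf} first, in the case $h\gg0$; the case $h\ll0$ is symmetric, using $\d_\infty^-$ and Assertion~\ref{bend inf -} of Lemma~\ref{scatter region}. The other three assertions are then deduced from it.

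\textbf{Reduction to finitely many broken lines.} By Proposition~\ref{mut pair affine} (applied with $p_1=p$ and a trivial second factor, or directly from Lemma~\ref{mut broken line}), for $h\gg0$ the map $\bigl(\eta_{12\cdots n}^{B^T}\bigr)^m$ gives a bijection between broken lines for $p$ ending at $\chi_h$ and those ending at $\chi_{h+1}$. This bijection preserves periodicity. Since there are only finitely many broken lines for $p$ with a given endpoint (\cite[Proposition~7.1]{GHKK}, as $\Theta=P$), it suffices to fix $h_0\gg0$ and a single periodic $\bl$ ending at $\chi_{h_0}$. We must then show that $\bl^{(\ell)}$ lies in $\d_\infty^+$ for all $\ell\gg0$.

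\textbf{Proof of Assertion~\ref{gamma in dinf}.} Decompose $\bl$ into its compact part $\bl([t_0,0])$, which contains all bends, and its unbounded tail $\set{x+sp:s\ge0}$ with $x=\bl(t_0)$. By Lemma~\ref{no dinf}, $\bl$ is disjoint from $\d_\infty$.
\begin{itemize}
\item \emph{Compact part.} Proposition~\ref{delta limit eta} moves each point off $\d_\infty$ into the open cone $\d_\infty^+$ under high powers of $\eta_{12\cdots n}^{B^T}$. A compactness argument, using continuity of the piecewise-linear maps and the uniform translation by multiples of $\nu_c(\delta)$ in Proposition~\ref{delta limit eta}.\ref{pos a} and Lemma~\ref{dom of def}, shows that for $\ell\gg0$ the image of the whole compact part lies in $\d_\infty^+$.
\item \emph{Tail.} For $\ell\gg0$ the images act by $c^{\ell m}$, and $c^m$ fixes $p$ because $p\in\d_\infty$. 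So the image of the tail is $\bigl(\eta_{12\cdots n}^{B^T}\bigr)^{\ell m}(x)+sp$. The point $p\in\d_\infty$ lies in the closure of the convex open cone $\d_\infty^+$, so this ray stays in $\d_\infty^+$ once its base point does.
\end{itemize}
Making this compactness/uniformity step rigorous, so that "eventually in $\d_\infty^+$" holds simultaneously for every point of the curve, is the step I expect to be the main obstacle. The first thing I would try is to choose $\ell$ so that every breakpoint and $x$ lie in the common domain of definition containing $\d_\infty$. On that domain the map is linear ($=c^m$), so convex hulls of points already in $\d_\infty^+$ stay in $\d_\infty^+$.

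\textbf{Assertion~\ref{scat only fin}.} Track $f(L)=\br{\lambda_L,\delta}$ along the domains of $\bl$.
\begin{itemize}
\item Initially $f=\br{p,\delta}=0$, since $p\in\delta^\perp$.
\item Finally $f=\br{\lambda_\bl,\delta}=0$ by Lemma~\ref{per finite}.
\item Each bend adds $\br{\lambda,\delta}$ for the picked-up exponent $\lambda$. Since all bends occur in $\d_\infty^+$, Lemma~\ref{scatter region} says this increment is $0$ on walls with normal in a finite $c$-orbit and strictly negative otherwise.
\end{itemize}
Since the total change is zero, no bend occurs on a wall with normal in an infinite $c$-orbit. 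The walls with normal in a finite $c$-orbit are those with normals in $\APT{c}$. The wall $\d_\delta=\d_\infty$ is excluded by Lemma~\ref{no dinf}, leaving normals in $\APTre{c}$.

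\textbf{Assertions~\ref{unb} and~\ref{scat fin}.} The point $p$ lies in the relative boundary of $\d_\infty$, so it is off the imaginary ray and in $|\gFan(B)|$. Near such a point the fan $\F_{B^T}$ is locally finite. The tail $x+sp$ has direction $p$ and, by genericity~\ref{brok generic}, avoids all walls for large $s$. So it eventually lies in the interior of a single maximal $\g$-vector cone containing the ray of $p$, which gives Assertion~\ref{unb}. The compact part is disjoint from $\d_\infty$, and every point off $\d_\infty$ lies in $|\gFan(B)|$, where the $\g$-vector fan is locally finite away from the imaginary ray. Hence the compact part meets only finitely many maximal $\g$-vector cones, and together with Assertion~\ref{unb} this gives Assertion~\ref{scat fin}.
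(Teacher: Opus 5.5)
Your argument for Assertion~\ref{scat only fin} is essentially the paper's. The rest of the proposal has a genuine gap, at exactly the step you flag, and it comes from proving Assertion~\ref{gamma in dinf} first.

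Proposition~\ref{delta limit eta} is pointwise, and your suggested fix does not give uniformity. The map $\bigl(\eta_{12\cdots n}^{B^T}\bigr)^{\ell m}$ is only piecewise linear along the segments of $\bl$. So $\bl^{(\ell)}$ is not the polygon through the images of the breakpoints of $\bl$: new vertices appear wherever a segment crosses a boundary of a domain of linearity at some intermediate step, and these vertices depend on $\ell$. Controlling finitely many fixed points therefore says nothing about the rest of the curve. For the same reason, your claim that the images act by $c^{\ell m}$ on the whole unbounded tail does not follow from a pointwise statement.

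The missing idea is to use the fan, which is why the paper proves Assertions~\ref{unb} and~\ref{scat fin} first. Every maximal $\g$-vector cone is a $B^T$-cone, so every mutation map, in particular every $\bigl(\eta_{12\cdots n}^{B^T}\bigr)^{\ell m}$, is linear on it. Once $\bl$ is known to lie in finitely many such cones, the paper applies Proposition~\ref{delta limit eta}.\ref{in dinf+-} to the finitely many rays of those cones. Rays lying in $\d_\infty$ are fixed by $\bigl(\eta_{12\cdots n}^{B^T}\bigr)^m$, so for large $h$ each image cone lies in $\d_\infty^+\cup\d_\infty$, and Lemma~\ref{no dinf} removes $\d_\infty$. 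Your convexity idea also works once you subdivide $\bl$ at the finitely many points where it passes between cones. Likewise, linearity on the single cone that contains the tail (and hence contains $p$) is what justifies your formula $\bigl(\eta_{12\cdots n}^{B^T}\bigr)^{\ell m}(x)+sp$.

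Your arguments for Assertions~\ref{unb} and~\ref{scat fin} rest on local finiteness claims that are false. Neither $\gFan(B)$ nor $\F_{B^T}$ is locally finite at any nonzero point of $\d_\infty$:
\begin{itemize}
\item Suppose a small ball around a point $q$ in the relative interior of $\d_\infty$ met only finitely many $\g$-vector cones.
\item Their union would be closed and would contain the dense set of points of the ball lying off $\d_\infty$, hence would contain $q$.
\item This contradicts $q\notin|\gFan(B)|$.
\item Every nonzero point of the relative boundary of $\d_\infty$ is a limit of such points $q$, so local finiteness fails at $p$ as well.
\end{itemize}
So neither ``locally finite away from the imaginary ray'' nor ``near such a point $\F_{B^T}$ is locally finite'' is available.

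The paper gets Assertion~\ref{unb} from the tail itself. Let $C$ be the cone with $p$ in its relative interior. The tail has direction $p$, so it eventually leaves every cone not containing $C$. By genericity it is not contained in any wall containing $C$, so it ends in the interior of a single maximal $\g$-vector cone containing $C$. For Assertion~\ref{scat fin} the paper uses a compactness and covering argument on $\bl([t,0])$, which avoids $\d_\infty$ by Lemma~\ref{no dinf}. To run your version instead, you would need local finiteness on $V^*\setminus\d_\infty$ (the complement of the closed cone), and you would have to prove it.
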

\begin{proof}
Let $C$ be the cone of the mutation fan containing $p$ in its relative interior.
Since $p$ is in the relative boundary of $\d_\infty$, $C$ is a cone in the $\g$-vector fan.
Following the unbounded domain of $\bl$ in the direction of decreasing parameter, we eventually leave every cone of $\F_{B^T}$ except cones that contain $C$.
Since $\bl$ does not pass through the relative boundary of any wall of $\Scat^T(B)$ and does not pass through the intersection of any two walls of $\Scat^T(B)$ and since $\chi_h$ is not in any wall, no part of the unbounded domain of $\bl$ is contained in any of the walls that contain~$C$.
The complement in $V^*$ of the walls of $\Scat^T(B)$ is the union of the interiors of the maximal $\g$-vector cones, so there is a $t$ such that $\bl((-\infty,t])$ is contained in one of the maximal $\g$-vector cones that contain~$C$.
This is Assertion~\ref{unb}.

The image $\bl([t,0])$ is a compact subset of $V^*\setminus\d_\infty$.
The definition of a broken line ensures that each nonempty intersection of $\bl([t,0])$ with a maximal $\g$-vector cone is a union (because we haven't ruled out the possibility that a broken line croses a $\g$-vector cone more than once) of line segments connecting the relative interiors of two codimension\nobreakdash-$1$ faces of the cone, or connecting the relative interior of a codimension-$1$ face of the cone to an endpoint of $\bl([t,0])$.
For each such line segment $\sigma$, choose $\ep>0$ and define an open set as the union of open $\ep$-balls, one for each point in~$\sigma$.  
We can choose each of these $\ep$ such that each segment $\sigma$ contains a smaller segment~$\sigma'$ (not degenerated to a point) that is not in the open sets for the neighboring segments (or the one neighboring segment, when~$\sigma$ is at an endpoint of $\bl([t,0])$).
When two segments intersect, we may need to decrease some of the choices of $\epsilon$ to make sure that each segment~$\sigma$ contains a point not in any of the open sets for other segments.
Since $\bl$ has only finitely many domains of linearity, there are only finitely many intersections between different domains of linearity and thus only finitely many instances where two of the line segments intersect.  
Thus there are only finitely many of the $\ep$ that need to be made smaller.
By Lemma~\ref{no dinf}, these open sets constitute an open cover of $\bl([t,0])$.
There is one open set containing each segment, and each segment contains at least one point not in any of the other open sets.
Since $\bl([t,0])$ is compact, there are only finitely many segments and thus finitely many $\g$-vector cones intersected by $\bl([t,0])$.
Also, $\bl((-\infty,t))$ intersects only one $\g$-vector cone, and we have proved Assertion~\ref{scat fin}.

In light of Assertion~\ref{scat fin}, we can apply Proposition~\ref{delta limit eta}.\ref{in dinf+-} to a nonzero vector in each ray of each maximal $\g$-vector cone that intersects $\bl$ (except those rays that are in the relative boundary of $\d_\infty$) and conclude that, for large enough~$h$, the entire broken line is in $\d_\infty^+\cup\d_\infty$.
Then Lemma~\ref{no dinf} says that the entire broken line is in~$\d_\infty^+$.
This is Assertion~\ref{gamma in dinf} for $h\gg0$.
The assertion for $h\ll0$ is proved similarly.

By hypothesis, $p\in\d_\infty$, and by the definition of a broken line, the unbounded domain of linearity of $\bl$ is labeled by $x^p$.
By Assertion~\ref{gamma in dinf}, for $h\gg0$, all of $\bl$ is in $\d_\infty^+$, so Lemma~\ref{scatter region} says that if $\bl$ ever bends on a wall with normal vector in an infinite orbit, then $\br{\lambda_\bl,\delta}<0$.
This is ruled out by Lemma~\ref{per finite}, and we conclude that~$\bl$ does not bend on walls whose normal vectors are in infinite $c$-orbits.
Since~$\APT{c}$ is the set of roots of $\AP{c}$ that are in finite $c$-orbits, and by Lemma~\ref{no dinf}, we see that~$\bl$ only scatters on walls that are in $\APTre{c}$.
This is Assertion~\ref{scat only fin} for $h\gg0$ and the assertion for $h\ll0$ is proved similarly.
\end{proof}

\begin{theorem}\label{simplification for nuc delta} 
Suppose $\tB$ has signed-nondegenerating coefficients and ${p=k\nu_c(\delta)}$ and $h\gg0$.
Then there are exactly two periodic broken lines for $p$ with endpoint~$\chi_h$.
One broken line has only one domain of linearity and is contained in $\d_\infty^+$.
The final monomial for the other broken line $\bl$ is $x^{-p}y^{k\delta}$, and there exists $t\in(-\infty,0)$ such that $\bl((-\infty,t])\subseteq\d_\infty^{-}$.
If instead $h\ll0$, then the same is true, only switching $\d_\infty^+$ and $\d_\infty^-$.
\end{theorem}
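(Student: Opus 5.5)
I would prove the case $h\gg0$; the case $h\ll0$ is symmetric, with $\d_\infty^+$ and $\d_\infty^-$ exchanged and $\bigl(\eta_{12\cdots n}^{B^T}\bigr)^{-m}$ replacing $\bigl(\eta_{12\cdots n}^{B^T}\bigr)^m$. The starting point is Lemma~\ref{per finite}: any periodic broken line $\bl$ for $p=k\nu_c(\delta)$ has $\lambda_\bl\in\delta^\perp$.

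\textbf{Step 1: the two broken lines exist and are periodic.} The first is the straight line $\bl(t)=\chi_h-tp$ with constant monomial $x^p$. For $h\gg0$, Proposition~\ref{delta limit eta} puts $\chi_h$ in $\d_\infty^+$. Moving backward along the line adds positive multiples of $\nu_c(\delta)$. Since $\d_\infty^+$ is an open cone stable under adding $\nu_c(\delta)$, the whole line stays in $\d_\infty^+$. I must also show it meets no wall. By Lemma~\ref{hopefully} and Proposition~\ref{rescue}, walls meeting $\d_\infty^+$ near the direction $\nu_c(\delta)$ are controlled, and $\chi_h$ lies in a maximal $\g$-vector cone. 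So I would argue that the ray $\chi_h+\reals_{\ge0}\nu_c(\delta)$ lies in one maximal $\g$-vector cone: the cones $C_h$ converge to an imaginary cone containing $\nu_c(\delta)$. Periodicity is immediate, since $\bigl(\eta_{12\cdots n}^{B^T}\bigr)^m$ acts by $c^m$ there, fixes $\nu_c(\delta)$, and the monomial never changes.

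For the second line, I would start with a line coming in from $\d_\infty^-$ in direction $-p$ far away. It crosses $\d_\infty$ once, which is disallowed for a periodic line by Lemma~\ref{no dinf}. So the construction must be different: the line starts in $\d_\infty^-$ and bends on walls with normals in infinite $c$-orbits. By Lemma~\ref{scatter region}, each such bend adds something with positive $\delta$-pairing to $\lambda$. Starting from $\br{p,\delta}$, which is $0$ since $p\in\delta^\perp$, this gives final exponent $-p$ and $\beta_\bl=k\delta$.

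\textbf{Step 2: uniqueness.} Let $\bl$ be periodic. If $\bl$ never bends, it is the first line. Otherwise, write $\lambda_\bl=p+\omega_c(\cdot,\beta_\bl)$ with $\beta_\bl\in Q^+$, and use $\omega_c(\cdot,\delta)=-2\nu_c(\delta)$ (since $\nu_c(\delta)=-\tfrac12 B\delta$). By Lemma~\ref{no dinf}, $\bl$ avoids $\d_\infty$. I would split into two cases according to where the unbounded domain lies.

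If the unbounded domain is eventually in $\d_\infty^+$, I would argue, as in Theorem~\ref{simplification for boundary dinf}, that all bends are on walls with normals in $\APTre{c}$. A bend on such a wall with $\br{\lambda_L,\beta\ck}>0$ needs $\br{p,\beta\ck}>0$ at the first bend. But $\br{\nu_c(\delta),\phi}=0$ for $\phi\in\APT{c}$ by Proposition~\ref{E delta in tubes}, and each bend adds vectors $\nu_c(\cdot)$-type pairing zero inductively. So no bend occurs, and $\bl$ is the straight line.

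If the unbounded domain starts in $\d_\infty^-$, then $\bl$ must cross to $\d_\infty^+$ (where $\chi_h$ lies) without touching $\d_\infty$. The change of $\br{\lambda,\delta}$ is controlled by Lemma~\ref{scatter region}, and the mutation-periodicity argument of Lemma~\ref{no dinf} (applied to the final domain) forces $\lambda_\bl$ to be fixed by $c^m$. Combining this with the bound from Theorem~\ref{B cone prod aff}, that exponents lie on $p-2a\nu_c(\delta)$, I aim to show $\lambda_\bl=-p$ and $\beta_\bl=k\delta$. I would then show the coefficient equals $1$ and the line is unique, by comparing with the known structure constant $2y^{k\delta}$ expected in Theorem~\ref{imag general}. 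That comparison is circular, so I would instead use the explicit $2\times2$-like scattering of the imaginary wall and count directly.

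\textbf{Main obstacle.} The hardest step is this last uniqueness and count in the second case. It requires understanding every bend in $\d_\infty^-$ and in $\d_\infty^+$ and showing that the net effect is exactly one reflection $p\mapsto -p$ with monomial $y^{k\delta}$. I would first try to reduce to the rank-two situation via the limiting imaginary cone, using Lemma~\ref{hopefully} to rule out $\APTre{c}$-walls in the relevant open region.
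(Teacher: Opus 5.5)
The half of your argument where the unbounded domain lies in $\d_\infty^+$ is essentially the paper's. For $h\gg0$ the whole line lies in $\d_\infty^+$, so it can bend only on $\APTre{c}$-walls, and these are parallel to $p$ by Proposition~\ref{E delta in tubes}, so it never bends. Two side remarks on Step~1:
\begin{itemize}
\item The straight line needs no wall-avoidance argument, because a broken line may cross walls without bending.
\item Your claim that $\chi_h+\reals_{\ge0}\nu_c(\delta)$ lies in one maximal $\g$-vector cone is false in general. Already for the Kronecker matrix, such a line crosses infinitely many $\g$-vector rays accumulating at the imaginary ray.
\end{itemize}

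The genuine gap is the second broken line, and it affects existence, uniqueness and the identification of its monomial.

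\textbf{Existence.} Step~1 does not construct the second line, and the mechanism you describe cannot work. By Lemma~\ref{scatter region}, bends inside $\d_\infty^-$ only raise $\br{\lambda_L,\delta}$ from $\br{p,\delta}=0$. So if all bends were there, the final exponent could not be $-p\in\delta^\perp$. Moreover, to travel from $\br{x,\delta}<0$ to $\chi_h\in\d_\infty^+$ the curve needs $\br{\lambda_L,\delta}<0$ on some domain. Hence it must bend somewhere outside $\d_\infty^+\cup\d_\infty^-$, where that lemma says nothing.

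The missing idea is the paper's explicit construction:
\begin{itemize}
\item Start with $x^{k\nu_c(\delta)}$ and bend once on each coordinate wall $\alpha_1^\perp,\dots,\alpha_n^\perp$, in the order of $c=s_1\cdots s_n$.
\item By Proposition~\ref{pairing with simple}, just before the $i$th bend the exponent pairs with $\alpha_i\ck$ to $-\br{\rho_i\ck,k\delta}$. So you may pick up $\hy_i^{\br{\rho_i\ck,k\delta}}$.
\item After $n$ bends the monomial is $y^{k\delta}x^{k\nu_c(\delta)+\omega_c(\cdot,k\delta)}=y^{k\delta}x^{-k\nu_c(\delta)}$, since $\nu_c(\delta)=-\frac12\omega_c(\cdot,\delta)$.
\item Translating along the $\rho_i$ makes this an honest bi-infinite broken line. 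Truncating it (reversing if needed) and using Lemma~\ref{dom of def} gives a periodic line.
\end{itemize}
This construction only controls the endpoint up to a choice of $\chi_0$. The paper therefore shows that existence for one $\chi_0$ suffices. For that choice, Theorem~\ref{mut pair affine periodic} gives $a(p,p,0)=2y^{k\delta}$. Since $a(p,p,0)$ does not depend on the chi sequence (every $\chi_0$ gives a chi sequence for $0$), every other choice must also admit such a line. This is the non-circular use of the structure constant you were looking for.

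\textbf{Uniqueness.} Your plan for the $\d_\infty^-$ case (Theorem~\ref{B cone prod aff} plus a reduction to rank two) is not an argument. Theorem~\ref{B cone prod aff} restricts which $\thet_\kappa$ occur in a product, not the final exponent of a single broken line. The paper instead reduces to the case you already handled:
\begin{itemize}
\item Since $p$ is fixed by $\bigl(\eta_{12\cdots n}^{B^T}\bigr)^{m}$, Lemma~\ref{mut broken line} lets you apply $\bigl(\eta_{12\cdots n}^{B^T}\bigr)^{jm}$ with $j\ll0$. This carries $\bl$ bijectively to a broken line with endpoint $\chi_{h+j}$, where $h+j\ll0$.
\item The $h\ll0$ version of your own $\d_\infty^+$ argument, with $\d_\infty^-$ in its place, shows the mutated line has a single domain of linearity.
\item Two such lines therefore become the same straight line after mutation, and so were equal to begin with.
\end{itemize}
Combined with the construction above, this is what pins the final monomial down as $x^{-p}y^{k\delta}$.
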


\begin{proof} 
We prove this for $h\gg0$.
The proof for $h\ll0$ is essentially the same.

Suppose $\bl$ is a periodic broken line for $p$ with endpoint $\chi_h$.
The unbounded region of $\bl$ has derivative $-k\nu_c(\delta)$, so as $t\to-\infty$, $\bl$ is parallel to the imaginary ray and thus $\bl((-\infty,t])$ is on one side or the other of $\d_\infty$.
Thus for some $t$, we have $\bl((-\infty,t])\subseteq\d_\infty^+$ or $\bl((-\infty,t])\subseteq\d_\infty^-$.

For a different $\bl$, in the proof of Theorem~\ref{simplification for boundary dinf}, we argued that $\bl([t,0])$ intersects a finite number of maximal $\g$-vector cones.
The same proof works here for any $t\in(-\infty,0)$.
Lemma~\ref{dom of def} says that there is a choice of $t$ such that, for all $j\ge0$, the entire ray $\bl((-\infty,t])$ is in the same domain of linearity of $\bigl(\eta_{12\cdots n}^{B^T}\bigr)^j$ as $\d_\infty$.
Now, if $\bl((-\infty,t])\subseteq\d_\infty^+$, then arguing as in the proof of Proposition~\ref{simplification for boundary dinf}.\ref{gamma in dinf}, we see that for large enough~$h$, the entire broken line is in $\d_\infty^+$.
As in the proof of Proposition~\ref{simplification for boundary dinf}.\ref{scat only fin}, we see that~$\bl$ only bends on walls with normal vectors in~$\APTre{c}$.
However, the infinite domain of linearity of $\bl$ is parallel to all such walls by Lemma~\ref{E delta in tubes}, and we conclude that $\bl$ never bends.
The uniqueness of a periodic broken line for $p$ with endpoint~$\chi_h$ and $\bl((-\infty,t])\subseteq\d_\infty^+$ follows immediately.
The existence is also immediate.

If $\bl((-\infty,t])\subseteq\d_\infty^-$, then the ($h\ll0$ version of the) paragraph above shows that (fixing $h$), for $j\ll0$, the broken line $\bigl(\eta_{12\cdots n}^{B^T}\bigr)^{j m}(\bl)$ with endpoint $\chi_{h+j}$ has one domain of linearity.
If there are two periodic broken lines $\bl$ and $\bl'$ for $p$ with endpoint $\chi_h$ and unbounded domain contained in $\d_\infty^-$, then taking $j$ large enough for both, we see that $\bigl(\eta_{12\cdots n}^{B^T}\bigr)^{j m}(\bl)$ and $\bigl(\eta_{12\cdots n}^{B^T}\bigr)^{j m}(\bl')$ both have one domain of linearity.
Thus $\bigl(\eta_{12\cdots n}^{B^T}\bigr)^{j m}(\bl)=\bigl(\eta_{12\cdots n}^{B^T}\bigr)^{j m}(\bl')$, and so $\bl=\bl'$.
We have established the uniqueness of a periodic broken line for $p$ with endpoint $\chi_h$ and $\bl((-\infty,t])\subseteq\d_\infty^-$.
We complete the proof by proving the existence of a periodic broken line for $p$ with endpoint $\chi_h$ and $\bl((-\infty,t])\subseteq\d_\infty^-$ whose associated monomial is $x^{-p}y^{k\delta}$.

It will be enough to prove the existence for one choice of $\chi_0$, and it will then be true for all choices.
To see why, assume the existence is known for one choice, so that the conclusions of this theorem are known for that choice.
That $\chi_0$ is contained in some full-dimensional cone $C_0$ of $\F_{B^T}$.
The $\chi_h$ are a chi sequence for the vector~$0$.
Since the conclusions of this theorem are known for this choice, we can compute the structure constant $a(p,p,0)$ by Theorem~\ref{mut pair affine periodic}.
The two broken lines for $p$ have monomials $x^p$ and $x^{-p}y^{k\delta}$, so to choose two broken lines for $p$ such that the product of their monomials in $x$ variables is $1$, we must choose two \emph{different} broken lines, in either of the two orders, giving a structure constant $a(p,p,0)=2y^{k\delta}$.
Now, suppose that for some other choice of $\chi_0$, either no broken line exists for $p$ with $\bl((-\infty,t])\subseteq\d_\infty^-$ or the unique broken line for $p$ with $\bl((-\infty,t])\subseteq\d_\infty^-$ is associated to some monomial other than $x^{-p}y^{k\delta}$.
If no broken line exists, then $a(p,p,0)=0$, and if a broken line exists but is associated to some other monomial, then $a(p,p,0)$ is something other than $2y^{k\delta}$.
By this contradiction, we conclude that for any choice of $\chi_0$, there exists a broken line for $p$ with endpoint $\chi_h$ and $\bl((-\infty,t])\subseteq\d_\infty^-$ whose associated monomial in the $y$ variables is $y^{k\delta}$.

We prove the existence by exhibiting a ``bi-infinite'' broken line, satisfying the definition of a broken line, but with two infinite domains of linearity, one labeled~$x^p$ and the other $x^{-p}y^{k\delta}$.
Such a broken line is ``reversible'', in the sense that there is another bi-infinite broken line with the same curve parametrized backwards and different labels, swapping the labels on the two infinite domains.
Given such a curve, we can cut the infinite domain labeled $x^{-p}y^{k\delta}$ so that by Lemma~\ref{dom of def}, the action of $\bigl(\eta_{12\cdots n}^{B^T}\bigr)^{\pm m}$ near the endpoint is translation in the direction of $\nu_c(\delta)$.
We thus obtain a periodic broken line~$\bl$.
The endpoint of $\bl$ and the infinite domain labeled $x^p$ are on opposite sides of $\delta^\perp$.
(Otherwise, by the $h\gg0$ or $h\ll0$ case of what we already proved, the monomial associated to~$\bl$ is $x^p$, not $x^{-p}y^{k\delta}$.)
The construction may give a broken line $\bl$ with endpoint in $\d_\infty^-$ and $\bl((-\infty,t])\subseteq\d_\infty^+$, whereas we want the endpoint to be in~$\d_\infty^+$ and $\bl((-\infty,t])\subseteq\d_\infty^-$.
In that case, we can obtain the desired~$\bl$ by going back to the bi-infinite broken line and reversing~it.

As a preliminary step to constructing the bi-infinite broken line, we forget about the curve, but instead only show that there is a sequence of bends that gives the right monomials.
We bend at each of the coordinate walls in the order given by $c$.
If we start with monomial $x^{k\nu_c(\delta)}$, then $\br{k\nu_c(\delta),\alpha_1\ck}=-\br{\rho\ck_1,k\delta}$ by Proposition~\ref{pairing with simple}.
That means that we can bend on $\alpha_1^\perp$ and pick up a factor of $\hy_1^{\br{\rho\ck_1,k\delta}}$ so that the new monomial is $y_1^{\br{\rho\ck_1,k\delta}}\cdot x^{k\nu_c(\delta)+\omega_c(\,\cdot\,,\br{\rho\ck_1,k\delta}\alpha_1)}$.
By Proposition~\ref{pairing with simple}, we can then bend on $\alpha_2^\perp$ and pick up a factor of $\hy_2^{\br{\rho\ck_2,k\delta}}$, so now we have $y_1^{\br{\rho\ck_1,k\delta}}\cdot y_2^{\br{\rho\ck_2,k\delta}}\cdot x^{k\nu_c(\delta)+\omega_c(\,\cdot\,,\br{\rho\ck_1,k\delta}\alpha_1+\br{\rho\ck_2,k\delta}\alpha_2)}$.
Continuing in this manner, after we bend on $\alpha_n^\perp$, we have $y^{k\delta}\cdot x^{k\nu_c(\delta)+\omega_c(\,\cdot\,,k\delta)}$.
Since $\nu_c(\delta)=-\frac12\omega_c(\,\cdot\,,\delta)$, that's $y^{k\delta}\cdot x^{-k\nu_c(\delta)}$.

Now, to make the bi-infinite broken line, start with any straight line labeled with monomial $x^{k\nu_c(\delta)}$, coming from the direction of $\nu_c(\delta)$.
The considerations of the previous paragraph shows in particular that, at each step, after bending at~$\alpha_k^\perp$ (or similarly, before bending at all), the broken line is not parallel to the next hyperplane where it must bend.
But it might be moving away from the hyperplane where it must bend.
We can fix that be translating.
Specifically, if, after bending at $\alpha_{k-1}^\perp$, the broken line is headed away from $\alpha_k^\perp$, we translate the whole broken line along the span of $\rho_k$ to fix that.
Since $\rho_k$ is contained in $\alpha_i^\perp$ for all $i\neq k$, this translation does not change any of the earlier intersections with hyperplanes (except to translate the locations of the intersections within the hyperplanes).
In the end, we have a broken line that starts from infinity from the direction of $\nu_c(\delta)$ and ends by going out to infinity towards $\nu_c(\delta)$.
(By adjusting the translations, we can avoid intersections forbidden by the definition of a broken line and ensure that neither infinite domain of linearity is in $\delta^\perp$.)
\end{proof}

\section{Proofs}\label{proofs sec}
We now apply the tools of Section~\ref{tools sec} to prove Theorems~\ref{thet xi}, \ref{imag general}, \ref{expansion prod}, \ref{im exch} \ref{subalgebra}, \ref{tube subalgebra}, \ref{gen clus alg}, and \ref{gen clus alg almost}.
(Recall that Theorem~\ref{delta cheby} is a special case of Theorem~\ref{imag general}.)

For many of the theorems, the proof amounts to expanding a product $\thet_{p_1}\cdot\thet_{p_2}$ of theta functions with $p_1,p_2\in P\cap\d_\infty$ as a finite $\k[y]$-linear combination of theta functions.
In all of those proofs, even though the theorem has no hypotheses on $\tB$, we use, without comment, tools from Sections~\ref{per sec}--\ref{mut imag sec} that require $\tB$ to have signed-nondegenerating coefficients.
Thus we are tacitly first proving the theorems in the case of signed-nondegenerating coefficients and then implicitly using Proposition~\ref{struct plus plus} and the fact that every $B$ has an extension with signed-nondegenerating coefficients to conclude that the result holds for arbitrary extensions $\tB$ of~$B$.

In each such proof, we have access to certain facts:
Theorem~\ref{expand d inf} says that $\thet_{p_1}\cdot\thet_{p_2}$ expands as a finite $\k[y]$-linear combination of theta functions indexed by vectors $\lambda\in P\cap\d_\infty$.  
In that connection, it is important to recall from Section~\ref{aff back sec} that~$\d_\infty$ is the nonnegative linear span of the vectors $\sett{\nu_c(\beta):\beta\in\SimplesT{c}}$.
In some proofs, Theorem~\ref{B cone prod aff} applies to give even stronger conditions on~$\lambda$.  

Given $\lambda\in P\cap\d_\infty$, we will always assume a chi sequence for $\lambda$.
Theorem~\ref{mut pair affine periodic} says that for large enough $h$ or small enough $h$, the structure constant $a(p_1,p_2,\lambda)$ is the sum of the contributions from periodic pairs $(\bl_1,\bl_2)$ of broken lines for $p_1$ and $p_2$ respectively, with $\lambda_{\bl_1}+\lambda_{\bl_2}=\lambda$, both having endpoint $\chi_h$.
Furthermore, for each such pair, Lemma~\ref{per finite} says that $\lambda_{\bl_1},\lambda_{\bl_2}\in\delta^\perp$ and Lemma~\ref{no dinf} says that~$\bl_1$ and $\bl_2$ do not pass through $\d_\infty$.

We also use Theorem~\ref{simplification for boundary dinf} and/or~\ref{simplification for nuc delta}.
When Theorem~\ref{simplification for boundary dinf} applies, a periodic broken line only bends on walls normal to roots $\phi\in\APTre{c}$. 
Such a wall has scattering term $1+\hy^\phi$.
If the broken line bends on a wall normal to $\phi\in\APTre{c}$, it picks up a monomial of the form $\const x^{k\lambda}y^{k\phi}$ where $\lambda=\omega_c(\,\cdot\,,\phi)\in V^*$.

Recall the notation $\beta_{[i,j]}$ from Section~\ref{agaf sec} and write $\kappa_{[i]}$ for $\nu_c(\beta_{[i]})$ and $\kappa_{[i,j]}$ for $\nu_c(\beta_{[i,j]})$.
We will use Proposition~\ref{nuc phip phi} repeatedly to determine whether a broken line can bend on a wall normal to $\beta_{[i,j]}$.
To determine how monomials change when a broken line bends, we will use the following proposition, which is an immediate consequence of Proposition~\ref{crucial lemma}.

\begin{proposition}\label{crucial prop}
$\displaystyle\hy^{\beta_{[i,j]}}=y^{\beta_{[i,j]}}x^{-\kappa_{[i-1,j-1]}-\kappa_{[i,j]}}$.
\end{proposition}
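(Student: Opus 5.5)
This is a direct substitution into Proposition~\ref{crucial lemma}. I would take $\phi=\beta_{[i,j]}$ there. By definition $\hy^{\phi}=x^{\omega_c(\,\cdot\,,\phi)}y^{\phi}$, where $\omega_c(\,\cdot\,,\phi)\in V^*$ is the exponent vector in fundamental-weight coordinates, as recalled in Section~\ref{basic sec}. So it suffices to show
\[\omega_c(\,\cdot\,,\beta_{[i,j]})=-\kappa_{[i-1,j-1]}-\kappa_{[i,j]}.\]

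First I compute $(1+c^{-1})\beta_{[i,j]}$. Since $\beta_{[k+1]}=c\beta_{[k]}$ cyclically, we have $c^{-1}\beta_{[k]}=\beta_{[k-1]}$. Hence
\[c^{-1}\beta_{[i,j]}=\beta_{[i-1]}+\cdots+\beta_{[j-1]}=\beta_{[i-1,j-1]},\]
and therefore $(1+c^{-1})\beta_{[i,j]}=\beta_{[i,j]}+\beta_{[i-1,j-1]}$.

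Both summands lie in $\APTre{c;o}$, so they are positive roots, and their sum has nonnegative simple root coordinates. This is exactly the hypothesis of Proposition~\ref{crucial lemma}, which then gives
\[\omega_c(\,\cdot\,,\beta_{[i,j]})=-\nu_c\bigl(\beta_{[i,j]}+\beta_{[i-1,j-1]}\bigr).\]

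It remains to split $\nu_c$ over this sum, and this is the only point that needs any care. The map $\nu_c$ is piecewise linear, but Section~\ref{nuc sec} records that on positive roots $\nu_c(\beta)=-E_c(\,\cdot\,,\beta)$. Since $E_c$ is bilinear, this expression is additive. Moreover, the proof of Proposition~\ref{crucial lemma} itself works with the matrix identity $B\phi=E_c(1+c^{-1})\phi$, so $\nu_c$ acts linearly through $E_c$ on these vectors. Alternatively, both roots lie in the star of $\delta$, where $\nu_c$ is linear. Either way,
\[\nu_c\bigl(\beta_{[i,j]}+\beta_{[i-1,j-1]}\bigr)=\kappa_{[i,j]}+\kappa_{[i-1,j-1]},\]
which yields the stated formula.
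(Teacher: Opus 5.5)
Your proposal is correct and follows the paper's route exactly: the paper calls this an immediate consequence of Proposition~\ref{crucial lemma} applied to $\phi=\beta_{[i,j]}$, with $c^{-1}\beta_{[i,j]}=\beta_{[i-1,j-1]}$, after which $\nu_c$ is split over the sum. Of your three justifications for that splitting step, the first is not enough on its own, because $\nu_c(\beta)=-E_c(\,\cdot\,,\beta)$ is quoted only for roots and $\beta_{[i,j]}+\beta_{[i-1,j-1]}$ is generally not a root. Either of the other two closes the step: bypass $\nu_c$ via $B\phi=E_c(1+c^{-1})\phi$ and bilinearity of $E_c$, or use linearity of $\nu_c$ on the star of $\delta$.
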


Since the linear span of $\SimplesT{c}$ is $\delta^\perp$ and in light of Proposition~\ref{E delta in tubes}, we have the following immediate corollary of Proposition~\ref{crucial prop}.

\begin{corollary}\label{crucial cor} 
Suppose a broken line bends on a wall whose positive normal vector is in the nonnegative span of a $c$-orbit $\SimplesT{c}_o$ and suppose the monomial in the $x$ before the bend is $x^\lambda$ with $\lambda\in\delta^\perp$.
Then the bend contributes $x^\kappa$ to the monomial in the $x$, where $\kappa\in\delta^\perp$ is a \emph{nonpositive} linear combination of the vectors $\sett{\nu_c(\beta):\beta\in\SimplesT{c}_o}$.
\end{corollary}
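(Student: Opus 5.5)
The plan is to derive Corollary~\ref{crucial cor} directly from Proposition~\ref{crucial prop}, checking separately the support of the exponent and the sign of its coefficients.

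The positive normal vector of the wall is a root $\phi$ in the nonnegative span of $\SimplesT{c}_o$. The only walls where this situation arises (via Theorem~\ref{simplification for boundary dinf}.\ref{scat only fin}) are normal to roots of $\APTre{c}$, so we may take $\phi=\beta_{[i,j]}$ in the notation of Section~\ref{agaf sec}. We will also note the degenerate case $\phi=\delta$ and explain that it is excluded because periodic broken lines avoid $\d_\infty$ (Lemma~\ref{no dinf}).

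First step: the wall $\d_\phi$ has scattering term $1+\hy^\phi$. By condition~\ref{brok change slope} in the definition of broken lines, the bend multiplies the monomial by a term of $(1+\hy^\phi)^{\br{\lambda,\phi\ck}}$, that is, by a constant times $(\hy^\phi)^a$ with $a\ge0$. By Proposition~\ref{crucial prop}, the resulting contribution to the $x$-monomial is $x^{-a(\kappa_{[i-1,j-1]}+\kappa_{[i,j]})}$. Thus the exponent is
\[\kappa=-a\bigl(\nu_c(\beta_{[i-1]})+\cdots+\nu_c(\beta_{[j-1]})+\nu_c(\beta_{[i]})+\cdots+\nu_c(\beta_{[j]})\bigr).\]
This expands by linearity of $\nu_c$ on the star of $\delta$ (Section~\ref{nuc sec}), since each $\beta_{[\cdot,\cdot]}$ spans a ray of that star. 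Every $\beta_{[k]}$ appearing lies in $\SimplesT{c}_o$, because $c$ preserves the orbit. Hence $\kappa$ is a nonpositive integer combination of $\sett{\nu_c(\beta):\beta\in\SimplesT{c}_o}$.

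Second step: show $\kappa\in\delta^\perp$. Each $\nu_c(\beta)$ with $\beta\in\SimplesT{c}\subseteq\APT{c}$ satisfies $\br{\nu_c(\beta),\delta}=0$ by Proposition~\ref{E delta in tubes}, so $\kappa\in\delta^\perp$. The hypothesis $\lambda\in\delta^\perp$ ensures the new exponent $\lambda+\kappa$ is still in $\delta^\perp$, which lets the statement be iterated along a broken line.

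Main obstacle: this is only bookkeeping — confirming that the exponent $a$ is nonnegative, which follows from the sign convention $\br{\lambda_{L_1},\beta\ck}>0$ in the definition. The harder part, already handled earlier, is verifying linearity of $\nu_c$ on the relevant vectors.
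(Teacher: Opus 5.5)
Your proposal is correct and is essentially the paper's argument: the corollary is read off from Proposition~\ref{crucial prop} (a bend on the wall normal to $\beta_{[i,j]}$ picks up a constant times $(\hy^{\beta_{[i,j]}})^a$ with $a\ge0$, whose $x$-part is $x^{-a(\kappa_{[i-1,j-1]}+\kappa_{[i,j]})}$), combined with linearity of $\nu_c$ on the star of $\delta$ and Proposition~\ref{E delta in tubes}. One repair is needed in your reduction step, because the corollary concerns arbitrary broken lines while Theorem~\ref{simplification for boundary dinf} and Lemma~\ref{no dinf} apply only to periodic ones: argue instead that $c$ permutes $\SimplesT{c}_o$, so every positive root of $\AP{c}$ in its nonnegative span lies in $\APT{c}$ and is therefore some $\beta_{[i,j]}$ or $\delta$, and that the wall $\d_\infty$ is excluded because $\lambda\in\delta^\perp$ gives $\br{\lambda,\delta\ck}=0$, so the broken line cannot bend there.
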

Since $\d_\infty$ is the \emph{nonnegative} linear span of the vectors $\sett{\nu_c(\beta):\beta\in\SimplesT{c}}$, bends as in Corollary~\ref{crucial cor} tend to move the monomials outside of $\d_\infty$ (but inside~$\delta^\perp$).

\subsection{Proof of Theorem~\ref{thet xi}}
Suppose $\beta\in\SimplesT{c}$.
We need to show that 
\[\thet_{\nu_c(\beta)}\cdot\thet_{\nu_c(\delta-\beta)}=\thet_{\nu_c(\delta)}+y^\beta\thet_{\nu_c(\delta-\beta-c^{-1}\beta)}+y^{c\beta}\thet_{\nu_c(\delta-\beta-c\beta)}.\]
Since $\beta,\delta-\beta\in\APT{c}$, also $\nu_c(\beta),\nu_c(\delta-\beta)\in\d_\infty$.
By Theorem~\ref{expand d inf}, we need only consider structure constants $a(\nu_c(\beta),\nu_c(\delta-\beta),\lambda)$ for $\lambda\in P\cap\d_\infty$.

Given a chi sequence for some $\lambda\in P\cap\d_\infty$ and given $h\gg0$, Theorem~\ref{mut pair affine periodic} says that $a(\nu_c(\beta),\nu_c(\delta-\beta),\lambda)=\ap_{\chi_h}(\nu_c(\beta),\nu_c(\delta-\beta),\lambda)$.
For $\lambda\in P\cap\d_\infty$, suppose $(\bl_\beta,\bl_{\delta-\beta})$ is a periodic pair of broken lines for $\nu_c(\beta)$ and $\nu_c(\delta-\beta)$ respectively, with $\lambda_{\bl_\beta}+\lambda_{\bl_{\delta-\beta}}=\lambda$, both having endpoint~$\chi_h$.
Since $\nu_c(\beta)$ and $\nu_c(\delta-\beta)$ both span rays of the mutation fan contained in $\d_\infty$ and since $\d_\infty$ is the union of the cones of the star of the ray spanned by $\nu_c(\delta)$, the vectors $\nu_c(\beta)$ and $\nu_c(\delta-\beta)$ are in the relative boundary of~$\d_\infty$, so Theorem~\ref{simplification for boundary dinf} applies to both $\bl_\beta$ and $\bl_{\delta-\beta}$.
Thus since $h\gg0$, both $\bl_\beta$ and $\bl_{\delta-\beta}$ are completely contained in $\d_\infty^+$ and each bends only on walls whose normal vectors are in finite $c$-orbits. 

We write $\lambda^{(0)}_{\bl_\beta}$ for the exponent vector on the variables $x$ in the monomial labeling the unbounded domain of linearity of $\bl_\beta$, then $\lambda^{(1)}_{\bl_\beta}$ for the corresponding vector for the next domain of linearity, etc.
If $\lambda$ bends $a$ times, then $\lambda_{\bl_\beta}=\lambda^{(a)}_{\bl_\beta}$.
Similarly, we write $\lambda^{(0)}_{\bl_{\delta-\beta}}$, $\lambda^{(1)}_{\bl_{\delta-\beta}}$, and so forth.

Suppose $\beta$ is in a $c$-orbit $\SimplesT{c}_o$ of size $k$ and choose $\beta_{[0]}=\beta$ in this orbit, so that $\delta-\beta=\beta_{[1,k-1]}$.
Since $\delta=\beta_{[1,k]}$, also $\nu_c(\delta)=\kappa_{[1,k]}$.
Thus we want to show that 
\[\thet_{\kappa_{[0]}}\cdot\thet_{\kappa_{[1,k-1]}}=\thet_{\kappa_{[1,k]}}+y^{\beta_{[0]}}\thet_{\kappa_{[1,k-2]}}+y^{\beta_{[1]}}\thet_{\kappa_{[2,k-1]}}.\]
We have $\lambda^{(0)}_{\bl_\beta}=\kappa_{[0]}$ and $\lambda^{(0)}_{\bl_{\delta-\beta}}=\kappa_{[1,k-1]}$, so that $\lambda^{(0)}_{\bl_\beta}+\lambda^{(0)}_{\bl_{\delta-\beta}}=\kappa_{[1,k]}$.
Writing $\lambda_{\bl_\beta}+\lambda_{\bl_{\delta-\beta}}$ as a linear combination with nonnegative coefficients of the vectors $\nu_c(\SimplesT{c}_o)$, which are linearly independent, Corollary~\ref{crucial cor} says that any bend only decrease the coefficients of this combination.
No entry can be decreased by more than one because $\lambda_{\bl_\beta}+\lambda_{\bl_{\delta-\beta}}=\lambda\in\d_\infty$.
In particular, we see from Proposition~\ref{crucial prop} that the broken lines can only bend at walls with normal vectors~$\beta_{[j]}$.

By Proposition~\ref{nuc phip phi}, the first bend of $\bl_\beta$ can be on the wall normal to $\beta_{[j]}$ if and only if $j=0$ or $j=1$, because otherwise, the infinite domain of $\bl_\beta$ is parallel to the wall.
Suppose $\bl_\beta$ first bends on the wall normal to $\beta_{[0]}$.
Then $\lambda^{(1)}_{\bl_\beta}=-\kappa_{[-1]}$.
Arguing as above, we see that a subsequent bend must happen on the wall normal to $\beta_{[-1]}$ or $\beta_{[0]}$.
However, in either case, the coefficient of $\kappa_{[-1]}$ is decreased for a second time, and thus this bend is not allowed.
On the other hand, suppose $\bl_\beta$ first bends on the wall normal to $\beta_{[1]}$, so that $\lambda^{(1)}_{\bl_\beta}=-\kappa_{[1]}$. 
Any subsequent bend would have to be in the wall normal to $\beta_{[1]}$ or~$\beta_{[2]}$, but again in either case, the coefficient of $\kappa_{[1]}$ would be decreased for a second time, so only one bend can happen.
We see that there are three possible broken lines $\bl_\beta$: 
One that doesn't bend and has monomial $x^{\kappa_{[0]}}$, one that bends once and has monomial $y^{\beta_{[0]}}x^{-\kappa_{[-1]}}$, and one that bends once and has monomial $y^{\beta_{[1]}}x^{-\kappa_{[1]}}$.

Proposition~\ref{E delta in tubes} implies that $\bl_{\delta-\beta}$ can bend on the same walls as $\bl_\beta$, namely the walls with normal vectors $\beta_{[0]}$ or $\beta_{[1]}$.
If $\bl_{\delta-\beta}$ first bends on the wall normal to $\beta_{[0]}$, then $\lambda^{(1)}_{\delta-\beta}=\kappa_{[1,k-1]}-\kappa_{[-1,0]}=\kappa_{[1,k-2]}-\kappa_{[0]}$.
If $\bl_{\delta-\beta}$ first bends on the wall normal to $\beta_{[1]}$, then $\lambda^{(1)}_{\delta-\beta}=\kappa_{[1,k-1]}-\kappa_{[0,1]}=\kappa_{[2,k-1]}-\kappa_{[0]}$.
Now Proposition~\ref{E delta in tubes} implies that the possibilities for further bending are just as in the case of $\bl_\beta$, and we see that no subsequent bending is possible, for the same reasons.
There are again three possible broken lines $\bl_{\delta-\beta}$:
One that doesn't bend and has monomial $x^{\kappa_{[1,k-1]}}$, one that bends once and has monomial $y^{\beta_{[0]}}x^{\kappa_{[1,k-2]}-\kappa_{[0]}}$, and one that bends once and has monomial $y^{\beta_{[1]}}x^{\kappa_{[2,k-1]}-\kappa_{[0]}}$.

Each bend of either broken line decreases the the coefficient of $\kappa_{[0]}$.
Thus either $\bl_\beta$ or $\bl_{\delta-\beta}$ (or neither) bends on a wall normal to $\beta_{[0]}$ or $\beta_{[1]}$, only one of them can bend, and only once.
If there is no bend, then $\lambda$ is $\nu_c(\delta)$ and the monomial contributed by the pair is $1$.

If there is a bend on $\beta_{[0]}$, then $\lambda$ is $\kappa_{[1,k-2]}$ and the contribution is $y^{\beta_{[0]}}$.
There are two possibilities:  Either $\bl_\beta$ bends or $\bl_{\delta-\beta}$ bends.
We will show that, for a particular choice of chi sequence for $\kappa_{[1,k-2]}$, exactly one of these two possibilities occurs.
The choice that determines a chi sequence is the choice of a maximal cone~$C$ of~$\F_{B^T}$ with $\kappa_{[1,k-2]}\in C\subseteq\d_\infty$.
Since $\br{\kappa_{[1,k-2]},\beta_{[0]}\ck}=\br{\nu_c(\beta_{[1,k-2]}),\beta_{[0]}\ck}=0$ and $\br{\kappa_{[1,k-1]},\beta_{[0]}\ck}=\br{\nu_c(\delta-\beta),\beta_{[0]}\ck}=1$, we can choose $C$ to be on the same side $\beta_{[0]}^\perp$ as $\nu_c(\delta-\beta)$.
Thus $\chi_h$ is also on the same side of $\beta_{[0]}^\perp$ as $\nu_c(\delta-\beta)$ for $h\gg0$.
We see that $\bl_{\delta-\beta}$ can't bend on the wall normal to $\beta_{[0]}$ and still reach~$\chi_h$.
On the other hand, $\br{\kappa_{[0]},\beta_{[0]}\ck}=\br{\nu_c(\beta),\beta_{[0]}\ck}=-1$, so $\nu_c(\beta)$ is on the opposite side of $\beta_{[0]}^\perp$ from $\chi_h$, so $\bl_{\beta}$ can bend and still reach $\chi_h$.

If there is a bend on $\beta_{[1]}$, then $\lambda$ is $\kappa_{[2,k-1]}$ and the contribution is $y^{\beta_{[1]}}$.
Since $\br{\kappa_{[2,k-1]},\beta_{[1]}\ck}=\br{\nu_c(\beta_{[2,k-1]}),\beta_{[1]}\ck}=0$ and $\br{\kappa_{[1,k-1]},\beta_{[1]}\ck}=\br{\nu_c(\delta-\beta),\beta_{[1]}\ck}=-1$, we can choose $C$ so that $\chi_h$ is on the same side of $\beta_{[1]}^\perp$ as $\nu_c(\delta-\beta)$ for $h\gg0$.
Once again, $\bl_{\delta-\beta}$ can't bend on the wall normal to $\beta_{[1]}$, but since $\br{\kappa_{[0]},\beta_{[1]}\ck}=\br{\nu_c(\beta),\beta_{[1]}\ck}=1$, $\bl_{\beta}$ can bend on that wall.
\qed

\subsection{Proof of Theorems~\ref{delta cheby} and~\ref{imag general}}
Theorem~\ref{delta cheby} is a special case of Theorem~\ref{imag general}, although stated differently.
We already proved a small part of Theorem~\ref{imag general} as part of the proof of Theorem~\ref{simplification for nuc delta}, but we give the whole proof here.
Theorem~\ref{B cone prod aff} applies in this situation but is not needed because Theorem~\ref{simplification for nuc delta} is so specific.

We begin with the product $(\thet_{k\nu_c(\delta)})^2$.
By Theorem~\ref{expand d inf}, we need only consider structure constants $a(k\nu_c(\delta),k\nu_c(\delta),\lambda)$ for $\lambda\in P\cap\d_\infty$.
Given a chi sequence for some $\lambda\in P\cap\d_\infty$ and given $h\gg0$, Theorem~\ref{mut pair affine periodic} says that $a(k\nu_c(\delta),k\nu_c(\delta),\lambda)=\ap_{\chi_h}(k\nu_c(\delta),k\nu_c(\delta),\lambda)$.
Thus we want to find all periodic pairs $(\bl_1,\bl_2)$ of broken lines for $k\nu_c(\delta)$ and $k\nu_c(\delta)$ respectively, with $\lambda_{\bl_1}+\lambda_{\bl_2}=\lambda$, both having endpoint $\chi_h$.
Theorem~\ref{simplification for nuc delta} says that there are exactly two periodic broken lines for $k\nu_c(\delta)$ with endpoint $\chi_h$, one with monomial~$x^{k\nu_c(\delta)}$ and the other with monomial $x^{-k\nu_c(\delta)}y^{k\delta}$.
Because $\lambda_{\bl_1}+\lambda_{\bl_2}=\lambda\in\d_\infty$, we rule out the pair for which both monomials are $x^{-k\nu_c(\delta)}y^{k\delta}$.
We see that $(\thet_{k\nu_c(\delta)})^2=\thet_{2k\nu_c(\delta)}+2y^{k\delta}$, as desired.

The statement for $k>\ell\ge1$ is proved similarly.
In this case, there are two periodic broken lines for $k\nu_c(\delta)$ with endpoint $\chi_h$, one with monomial~$x^{k\nu_c(\delta)}$ and the other with monomial $x^{-k\nu_c(\delta)}y^{k\delta}$.
There are also two periodic broken lines for $\ell\nu_c(\delta)$ with endpoint $\chi_h$, one with monomial~$x^{\ell\nu_c(\delta)}$ and the other with monomial $x^{-\ell\nu_c(\delta)}y^{k\delta}$.
Because $\lambda_{\bl_1}+\lambda_{\bl_2}=\lambda\in\d_\infty$, we must take $\bl_1$ to be the broken line for $k\nu_c(\delta)$ with monomial $x^{k\nu_c(\delta)}$, and then $\bl_2$ can be either broken line for $\ell\nu_c(\delta)$.
Thus $\thet_{k\nu_c(\delta)}\cdot\thet_{\ell\nu_c(\delta)}=\thet_{(k+\ell)\nu_c(\delta)}+y^{\ell\delta}\thet_{(k-\ell)\nu_c(\delta)}$, as desired.
\qed

\begin{remark}\label{scatcomb ref}
We thank an anonymous referee to the paper \cite{scatcomb} for pointing out the proof of Theorem~\ref{delta cheby} in rank $2$ and suggesting that a similar proof should work for acyclic exchange matrices of affine type in general.
The basic idea of the proof does indeed generalize beyond rank $2$, and this is the proof given above, but the proof is significantly more complicated in rank $\ge3$.
The proof is simplified to a manageable level of complexity by the results of \cite{affscat,canonical} that we have quoted.
\end{remark}

\subsection{Proof of Theorem~\ref{expansion prod}}
As explained after the statement of Theorem~\ref{expansion prod}, Theorem~\ref{clus mon thm} reduces the proof of Theorem~\ref{expansion prod} to proving a fact about products of certain pairs of theta functions for vectors in the imaginary wall.
We now state and prove that fact to complete the proof of  Theorem~\ref{expansion prod}.

\begin{theorem}\label{boundary with delta}
Suppose $p$ is a vector in the boundary of the imaginary wall.
Then $\thet_p\cdot\thet_{k\nu_c(\delta)}=\thet_{p+k\nu_c(\delta)}$ for any $k\ge0$.
\end{theorem}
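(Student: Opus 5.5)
The case $k=0$ is trivial, so assume $k\ge1$. I will expand $\thet_p\cdot\thet_{k\nu_c(\delta)}$ with the structure constants of Proposition~\ref{struct}, working first under signed-nondegenerating coefficients and then passing to arbitrary $\tB$ by Proposition~\ref{struct plus plus}.

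\textbf{Step 1: restrict the possible $\lambda$.} Since $p$ is in the boundary of $\d_\infty$, it lies in some imaginary cone $C$. The vector $\nu_c(\delta)$ spans an extreme ray of $C$. Theorem~\ref{B cone prod aff} then says that every $\lambda$ with $a(p,k\nu_c(\delta),\lambda)\neq0$ has the form $\lambda=p+k\nu_c(\delta)-2a\nu_c(\delta)$ with $a\ge0$, and lies in $C$. It also says the coefficient of $\thet_{p+k\nu_c(\delta)}$ is $1$. So it remains to show that $a(p,k\nu_c(\delta),\lambda)=0$ for these $\lambda$ with $a\ge1$.

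\textbf{Step 2: enumerate periodic pairs.} Fix such a $\lambda$, a chi sequence for it, and $h\gg0$. By Theorem~\ref{mut pair affine periodic} it suffices to count periodic pairs $(\bl_1,\bl_2)$ ending at $\chi_h$, with $\bl_1$ for $p$ and $\bl_2$ for $k\nu_c(\delta)$.
\begin{itemize}
\item By Theorem~\ref{simplification for nuc delta}, $\bl_2$ has final monomial either $x^{k\nu_c(\delta)}$ or $x^{-k\nu_c(\delta)}y^{k\delta}$.
\item By Theorem~\ref{simplification for boundary dinf}, $\bl_1$ lies in $\d_\infty^+$ and bends only on walls normal to roots of $\APTre{c}$.
\item By Corollary~\ref{crucial cor}, each bend of $\bl_1$ changes its exponent by a nonpositive combination of the vectors $\nu_c(\beta)$ with $\beta\in\SimplesT{c}$, with nonzero total.
\end{itemize}

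Suppose $\lambda_{\bl_2}=k\nu_c(\delta)$. Then $\lambda_{\bl_1}=p-2a\nu_c(\delta)$. Because $\nu_c(\delta)$ is a positive combination (a full orbit sum) of these vectors, this forces $\bl_1$ to lose a full $2a$ multiple of $\nu_c(\delta)$.

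Suppose instead $\lambda_{\bl_2}=-k\nu_c(\delta)$. Then $\lambda_{\bl_1}=p+(2k-2a)\nu_c(\delta)$. Bends only decrease the coordinates in the basis $\nu_c(\SimplesT{c})$, so we need $2k-2a\le0$.

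\textbf{Step 3: rule out the surviving cases.} I expect this to be the main obstacle. In both cases $\bl_1$ must bend enough to subtract a whole multiple of $\nu_c(\delta)$ from $p$. This is most naturally handled via the sidedness argument used in the proof of Theorem~\ref{thet xi}, but here it must work for every $p$ in the boundary.

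The plan is to use the freedom in choosing the maximal cone $C_0$ in the chi sequence. Take $C_0$ containing the $\g$-vector rays of the cone $C$ that contains $p$, chosen on the same side of each relevant hyperplane $\phi^\perp$ ($\phi\in\APTre{c}$) as $p$. By Proposition~\ref{nuc phip phi}, $p$ pairs with $\phi\ck$ in a way that prevents $\bl_1$ from ever crossing a wall $\d_\phi$ toward $\chi_h$ while bending. Concretely, the unbounded domain of $\bl_1$ is parallel to $-p$, and $\chi_h$ together with $p$ lie on the same closed side of every $\phi^\perp$ bounding $C$. Lemma~\ref{hopefully} then shows that the walls $\d_\phi$ with $\phi\in\APTre{c}$ do not meet the region the straight line from direction $p$ to $\chi_h$ traverses. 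Hence $\bl_1$ cannot bend at all, so its final monomial is $x^p$.

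With that, only the pair $(x^p,\,x^{k\nu_c(\delta)})$ contributes, which gives $\thet_p\cdot\thet_{k\nu_c(\delta)}=\thet_{p+k\nu_c(\delta)}$.
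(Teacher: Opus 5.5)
Your Steps~1 and~2 follow the paper's route, and Lemma~\ref{hopefully} is the right decisive tool, but Step~3 as written does not prove what it claims.

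At best you show that the straight path arriving at $\chi_h$ from direction $p$ avoids every wall $\d_\phi$ with $\phi\in\APTre{c}$. That only shows the unbent broken line exists. It does not exclude a broken line $\bl_1$ that comes in from direction $p$ along a different ray, bends on some $\APTre{c}$ wall lying outside the open region covered by Lemma~\ref{hopefully}, and then reaches $\chi_h$ along a final segment other than the one you examined. So ``hence $\bl_1$ cannot bend at all'' does not follow. The remark that $p$ pairs with $\phi\ck$ so as to ``prevent crossings'' does not supply the missing argument.

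A smaller point: Theorem~\ref{B cone prod aff} as stated does not say that the coefficient of $\thet_{p+k\nu_c(\delta)}$ is $1$. That has to come out of the final count of pairs.

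The fix is to run the region argument on the \emph{final} domain of linearity $L$ of $\bl_1$, not the initial one. You already derived the needed fact in Step~2 but never used it: $\lambda_{\bl_1}\in p+\integers\nu_c(\delta)$.

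\begin{enumerate}
\item By Proposition~\ref{E delta in tubes}, $\br{\nu_c(\delta),\phi}=0$ for $\phi\in\APTre{c}$. Hence $\br{\lambda_{\bl_1},\phi}=\br{p,\phi}\le0$ for every $\phi$ in the set $\Gamma$ of Lemma~\ref{hopefully}.
\item Choose $C$ to be a \emph{maximal} imaginary cone containing $p$ and $\nu_c(\delta)$. It then contains every candidate $\lambda=p+(k-2a)\nu_c(\delta)\in\d_\infty$. Use a chi sequence for $\lambda$ built from this $C$, so that $\br{\chi_h,\phi}<0$ strictly for all $\phi\in\Gamma$ when $h\gg0$. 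Your ``same closed side'' condition is not enough: Lemma~\ref{hopefully} needs $C$ maximal and speaks only about the open region $\sett{x\in\d_\infty^+:\br{x,\phi}<0\,\forall\phi\in\Gamma}$.
\item Every point $\chi_h+t\lambda_{\bl_1}$, $t\ge0$, of $L$ then lies in that open region. It lies in $\d_\infty^+$ by Theorem~\ref{simplification for boundary dinf}. So $L$ meets no wall $\d_\beta$ with $\beta\in\APTre{c}$.
\item If $\bl_1$ bent at all, its last bend point would be such a point of $L$. Hence $\bl_1$ is unbent, with monomial $x^p$.
\item Since $p$ is in the relative boundary, $p-k\nu_c(\delta)\notin\d_\infty$. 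So $\bl_2$ must carry $x^{k\nu_c(\delta)}$.
\end{enumerate}

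This leaves a single contributing pair, with $\lambda=p+k\nu_c(\delta)$ and coefficient $1$, which is exactly how the paper finishes.
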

\begin{proof}
Theorem~\ref{expand d inf} says that structure constants $a(p,k\nu_c(\delta),\lambda)$ are zero unless ${\lambda\in P\cap\d_\infty}$.
But since $p$ is in the boundary of $\d_\infty$, which is the star of the ray spanned by $\nu_c(\delta)$, there is an imaginary cone $C$ containing $p$ and $\nu_c(\delta)$, so Theorem~\ref{B cone prod aff} applies to say something much stronger:
The structure constants $a(p,k\nu_c(\delta),\lambda)$ are zero unless $\lambda$ is of the form $p+(k-2a)\nu_c(\delta)$ for $0\le a\in\integers$.

More specifically, choose the cone $C$ containing $p$ and $\nu_c(\delta)$ to be maximal in~$\F_{B^T}$.
The cone $C$ is defined, as a subset of $\d_\infty$, by inequalities of the form $\br{x,\phi}\le0$ for $\phi\in\pm\APTre{c}$.
Let~$\Gamma$ be the set of all roots $\phi\in\pm\APTre{c}$ such that $\br{x,\phi}\le0$ holds for $x\in C$.
In particular, $\br{p,\phi}\le0$ for all $\phi\in\Gamma$.

Suppose $\lambda\in P\cap\d_\infty$ is of the form $p+(k-2a)\nu_c(\delta)$.
In particular, since $\br{\nu_c(\delta),\phi}=0$ for all $\phi\in\APTre{c}$ by Proposition~\ref{E delta in tubes}, $\lambda$ is in $C$.
Choose a chi sequence for $\lambda$.
Since the chi sequence limits to a point in the relative interior of $C$ and since~$C$ is maximal in $\F_{B^T}$, for $h\gg0$ we have $\br{\chi_h,\phi}<0$ for all $\phi\in\Gamma$.

Theorem~\ref{mut pair affine periodic} says that $a(p,k\nu_c(\delta),\lambda)=\ap_{\chi_h}(p,k\nu_c(\delta),\lambda)$.
By Theorem~\ref{simplification for nuc delta}, there are exactly two periodic broken lines for $k\nu_c(\delta)$ with endpoint $\chi_h$, one with monomial~$x^{k\nu_c(\delta)}$ and the other with monomial $x^{-k\nu_c(\delta)}y^{k\delta}$.
Theorem~\ref{simplification for boundary dinf} says that any periodic broken line for~$p$ with endpoint $\chi_h$ is contained in $\d_\infty^+$ and only bends on walls whose normal vectors are in~$\APTre{c}$.

Suppose $(\bl_1,\bl_2)$ is a periodic pair of broken lines for $p$ and $k\nu_c(\delta)$ respectively, with $\lambda_{\bl_1}+\lambda_{\bl_2}=\lambda$, both having endpoint $\chi_h$.
Then since $\lambda_{\bl_1}=\lambda-\lambda_{\bl_2}$, since $\lambda=p+(k-2a)\nu_c(\delta)$, and since $\lambda_{\bl_2}=\pm k\nu_c(\delta)$, we have $\lambda_{\bl_1}=p+\ell\nu_c(\delta)$, for an integer~$\ell$.

Consider the domain $L$ of linearity of $\bl_1$ that contains the endpoint $\chi_h$.
Following~$\bl_1$ backwards from $\chi_h$, we leave $\chi_h$ in the direction $\lambda_{\bl_1}=p+\ell\nu_c(\delta)$.
But $\br{\chi_h,\phi}<0$ for all $\phi\in\Gamma$ and $\br{p,\phi}\le0$ for all $\phi\in\Gamma$ and (by Proposition~\ref{E delta in tubes}), $\br{\nu_c(\delta),\phi}=0$ for all $\phi\in\APTre{c}$.
Thus $\br{x,\phi}<0$ holds for all $x\in L$ and $\phi\in\Gamma$.
We see that $L$ is contained in the cone ${\sett{x\in V^*:\br{x,\phi}<0\,\forall\phi\in\Gamma}}$, so Lemma~\ref{hopefully} implies that $L$ does not intersect any walls $\d_\beta$ with $\beta\in\APTre{c}$.
But as mentioned above, $\bl_1$ only bends on walls with normals in $\APTre{c}$, so we see that $\bl_1$ never bends.
Therefore the monomial on $\bl_1$ is $x^p$, and since $\lambda=\lambda_{\bl_1}+\lambda_{\bl_2}\in\d_\infty$, we see that the monomial on $\bl_2$ is $x^{k\nu_c(\delta)}$ rather than $x^{-k\nu_c(\delta)}y^{k\delta}$.

We have shown that there is only one possibility for $\lambda$, namely $\lambda=p+k\nu_c(\delta)$, and furthermore that $a(p,k\nu_c(\delta),p+k\nu_c(\delta))=\ap_{\chi_h}(p,k\nu_c(\delta),p+k\nu_c(\delta))=1$.
\end{proof}

\subsection{Proof of Theorem~\ref{im exch}}
Suppose $\beta_{[0]}$ and $\beta_{[\ell]}$ are in a $c$\nobreakdash-orbit~$\SimplesT{c}_o$ of size~$k$, with $0<\ell<k$.
Then $\nu_c(\delta-\beta_{[0]})=\kappa_{[1,k-1]}$ and $\nu_c(\delta-\beta_{[\ell]})=\kappa_{[\ell+1,k+\ell-1]}$, and we need to show that 
\begin{multline*}
\thet_{\kappa_{[1,k-1]}}\cdot\thet_{\kappa_{[\ell+1,k+\ell-1]}}\\
=\thet_{\kappa_{[1,k-1]}+\kappa_{[\ell+1,k+\ell-1]}}+y^{\beta_{[\ell+1,k]}}\thet_{2\kappa_{[1,\ell-1]}}+y^{\beta_{[1,\ell]}}\thet_{2\kappa_{[\ell+1,k-1]}}.
\end{multline*}
By Theorem~\ref{expand d inf}, we only consider structure constants $a(\kappa_{[1,k-1]},\kappa_{[\ell+1,k+\ell-1]},\lambda)$ for $\lambda\in P\cap\d_\infty$.
Given a chi sequence for some $\lambda\in P\cap\d_\infty$, Theorem~\ref{mut pair affine periodic} says that the structure constants are $\ap_{\chi_h}(\kappa_{[1,k-1]},\kappa_{[\ell+1,k+\ell-1]},\lambda)$ for $h\gg0$.

For $\lambda\in P\cap\d_\infty$ and $h\gg0$, suppose $(\bl_0,\bl_\ell)$ is a pair of periodic broken lines for $\kappa_{[1,k-1]}$ and $\kappa_{[\ell+1,k+\ell-1]}$ respectively, both having endpoint $\chi_h$, with $\lambda_{\bl_0}+\lambda_{\bl_\ell}=\lambda$.
As in the proof of Theorem~\ref{thet xi}, Theorem~\ref{simplification for boundary dinf} applies to say that both $\bl_0$ and~$\bl_\ell$ are contained in $\d_\infty^+$ and only bend on walls whose normal vectors are in~$\APTre{c}$.

Our proof follows the same general outline as the proof of Theorem~\ref{thet xi}, but is more complicated.
We describe it now in five steps.
First, in Cases~1 and~2 below, we consider all sequences of bends in $\bl_0$ that can conceivably happen.
Specifically, we rule out bends that would cause $\lambda_{\bl_0}+\lambda_{\bl_\ell}$ to be a linear combination of the vectors $\nu_c(\SimplesT{c}_o)$ with some coefficients nonnegative, contradicting the fact that $\lambda\in\d_\infty$.
Corollary~\ref{crucial cor} implies that subsequent bends of $\bl_0$ or bends of $\bl_\ell$ do not resolve the contradiction.
Throughout, we use the symbol ``$\le$'' to denote componentwise comparison of coordinates in the linearly independent set $\nu_c(\SimplesT{c}_o)$.
Second and symmetrically, in Cases A and B bellow, we consider all sequences of bends in $\bl_\ell$ that can conceivably happen.
Since additional bends do not ``rescue'' the broken lines that we have ruled out in these cases, we only need to consider pairs $(\bl_0,\bl_\ell)$ such that $\bl_0$ is a broken line not ruled out in Case 1 or 2 and $\bl_\ell$ is a broken line not ruled out in Case A or B.
Third, we begin to determine which pairs can occur, making those determinations that don't depend on a specific choice of chi sequence.  
The possibilities are combinations of the earlier cases, so we call them ``scenarios'' rather than ``cases'' to avoid confusion.  
We rule out many scenarios because of the requirement that $\lambda$ is in $\d_\infty$.
We also narrow many scenarios, essentially because the bends of one of the broken lines mean that the broken line ends on one side of some hyperplane while $\lambda$ is on the other. 
Fourth, we adopt a specific choice of chi sequence to eliminate most scenarios, and fifth, we eliminate all but one pair of broken lines in each remaining scenario.
 
We write $\lambda^{(0)}_{\bl_0}$ for the exponent vector on $x$ in the monomial labeling the unbounded domain of linearity of $\bl_0$, then $\lambda^{(1)}_{\bl_0}$ for the corresponding vector for the next domain of linearity, etc.
Similarly, we write $\lambda^{(0)}_{\bl_\ell}$, $\lambda^{(1)}_{\bl_\ell}$, and so forth.
We have
\[\lambda^{(0)}_{\bl_0}+\lambda^{(0)}_{\bl_\ell}={\kappa_{[1,k-1]}+\kappa_{[\ell+1,k+\ell-1]}}=2\kappa_{[1,k]}-\kappa_{[0]}-\kappa_{[\ell]}.\]
Thus the total contribution of all bends can't decrease the $\kappa_{[0]}$- or $\kappa_{[\ell]}$-coordinate by more than~$1$ and can't decrease any other coordinate by more than~$2$.

We now begin the first step, determining the conceivable sequences of bends of~$\bl_0$.
Proposition~\ref{nuc phip phi} implies that ${\br{\kappa_{[1,k-1]},\beta_{[i,j]}\ck}=0}$ unless either $i=1$, in which case $\br{\kappa_{[1,k-1]},\beta_{[i,j]}\ck}=-1$ or $j=k$, in which case $\br{\kappa_{[1,k-1]},\beta_{[i,j]}\ck}=1$.
Thus there are two cases for the first bend of $\bl_0$.

\smallskip

\noindent
\textbf{Case 1.}
\textit{The first bend of $\bl_0$ is on a wall normal to $\beta_{[1,j]}$.}
Since $\lambda_{\bl_\ell}\le\kappa_{[\ell+1,k+\ell-1]}$, Proposition~\ref{crucial prop} implies that 
\[\lambda^{(1)}_{\bl_0}+\lambda_{\bl_\ell}\le2\kappa_{[1,k]}-\kappa_{[0]}-\kappa_{[\ell]}-\kappa_{[0,j-1]}-\kappa_{[1,j]}=\kappa_{[j,k-1]}+\kappa_{[j+1,k-1]}-\kappa_{[\ell]}.\]
Since $\lambda^{(1)}_{\bl_0}+\lambda_{\bl_\ell}$ must be in $\d_\infty$, we have $j\le\ell$.
Thus $\lambda^{(1)}_{\bl_0}=\kappa_{[j+1,k-1]}-\kappa_{[0,j-1]}$.
Because $\br{\kappa_{[1,k-1]},\beta_{[1,j]}\ck}=-1$, after this first bend, $\bl_0$ has passed to the positive side of $\beta_{[1,j]}^\perp$.

Now consider a second bend of $\bl_0$, on a wall normal to $\beta_{[i',j']}$.
Again by Proposition~\ref{crucial prop}, 
\[\lambda^{(2)}_{\bl_0}+\lambda_{\bl_\ell}\le\kappa_{[j,k-1]}+\kappa_{[j+1,k-1]}-\kappa_{[\ell]}-\kappa_{[i'-1,j'-1]}-\kappa_{[i',j']}.\]
Since $\lambda^{(2)}_{\bl_0}+\lambda_{\bl_\ell}$ must be in~$\d_\infty$, either $\ell<i'\le j'<k$ or $j<i'\le j'\le\ell$.
But if $\ell<i'\le j'<k$, then $\br{\lambda^{(1)}_{\bl_0},\beta_{[i',j']}}=0$ by Proposition~\ref{nuc phip phi}, and if $j<i'\le j'\le\ell$, then by the same proposition, ${\br{\lambda^{(1)}_{\bl_0},\beta_{[i',j']}}=0}$ unless $i'=j+1$, in which case $\br{\lambda^{(1)}_{\bl_0},\beta_{[i',j']}}=-1$.
Thus the second bend, if any, is on a wall normal to $\beta_{[j+1,j']}$ for some $j<j'\le\ell$.
By Proposition~\ref{crucial prop}, 
\[\lambda^{(2)}_{\bl_0}=\kappa_{[j+1,k-1]}-\kappa_{[0,j-1]}-\kappa_{[j,j'-1]}-\kappa_{[j+1,j']}=\kappa_{[j'+1,k-1]}-\kappa_{[0,j'-1]}.\]
After this bend, $\bl_0$ remains on the positive side of $\beta_{[1,j]}^\perp$ and has passed to the positive side of $\beta_{[j+1,j']}^\perp$.
Since $\beta_{[1,j']}=\beta_{[1,j]}+\beta_{[j+1,j']}$, we see that $\bl_0$ has passed to the positive side of $\beta_{[1,j']}^\perp$.

The status of $\bl_0$ after the second bend is analogous to the status after the first bend, except that $j$ has been replaced by $j'$.
Further bends proceed in the same manner.
We conclude that, in any case, $\lambda_{\bl_0}=\kappa_{[j+1,k-1]}-\kappa_{[0,j-1]}$ for some $0<j\le\ell$ and that $\chi_h$ is on the positive side of $\beta_{[1,j]}^\perp$.
We also easily check that $\const_{\bl_0}=1$ and $\beta_{\bl_0}=\beta_{[1,j]}$.

\smallskip

\noindent
\textbf{Case 2.}
\textit{The first bend of $\bl_0$ is on a wall normal to $\beta_{[i,k]}$, and $\bl_\ell$.}
In this case,
\[\lambda^{(1)}_{\bl_0}+\lambda_{\bl_\ell}\le2\kappa_{[1,k]}-\kappa_{[0]}-\kappa_{[\ell]}-\kappa_{[i-1,k-1]}-\kappa_{[i,k]}=\kappa_{[1,i-1]}+\kappa_{[1,i-2]}-\kappa_{[\ell]},\] 
and since $\lambda^{(1)}_{\bl_0}+\lambda_{\bl_\ell}$ must be in $\d_\infty$, we have $i>\ell$.
Also, $\lambda^{(1)}_{\bl_0}=\kappa_{[1,i-2]}-\kappa_{[i,k]}$.
After this first bend, $\bl_0$ is on the negative side of $\beta^\perp_{[i,k]}$.

Consider a second bend, on a wall normal to $\beta_{[i',j']}$.
Proposition~\ref{crucial prop} implies that 
\[\lambda^{(2)}_{\bl_0}+\lambda_{\bl_\ell}\le2\kappa_{[1,k]}-\kappa_{[0]}-\kappa_{[\ell]}-\kappa_{[i-1,k-1]}-\kappa_{[i,k]}-\kappa_{[i'-1,j'-1]}-\kappa_{[i',j']}.\]
Because $\lambda^{(2)}_{\bl_0}+\lambda_{\bl_\ell}\in\d_\infty$, either $1<i'\le j'\le\ell$ or $\ell<i'\le j'<i$.
If $1<i'\le j'\le\ell$, then $\br{\lambda^{(1)}_{\bl_0},\beta_{[i',j']}}=0$ and if $\ell<i'\le j'<i$, then $\br{\lambda^{(1)}_{\bl_0},\beta_{[i',j']}}=0$ unless $j'=i-1$, in which case $\br{\lambda^{(1)}_{\bl_0},\beta_{[i',j']}}=1$.
Thus the second bend, if any, is on a wall normal to $\beta_{[i',i-1]}$ for some $\ell<i'<i$, and 
\[\lambda^{(2)}_{\bl_0}=\kappa_{[1,i-2]}-\kappa_{[i,k]}-\kappa_{[i'-1,i-2]}-\kappa_{[i',i-1]}=\kappa_{[1,i'-2]}-\kappa_{[i',k]}.\]
After this bend, $\bl_0$ remains on the negative side of $\beta^\perp_{[i,k]}$ and has passed to the negative side of $\beta_{[i',i-1]}$, and thus is on the negative side of $\beta^\perp_{[i',k]}$.
Further bends work in the same manner, so in any case $\lambda_{\bl_0}=\kappa_{[1,i-2]}-\kappa_{[i,k]}$ for some $i$ with $\ell<i\le k$, and~$\chi_h$ is on the negative side of $\beta^\perp_{[i,k]}$.
Also, $\const_{\bl_0}=1$ and $\beta_{\bl_0}=\beta_{[i,k]}$.

\smallskip

As our second step, we determine the conceivable sequences of bends of $\bl_\ell$, using Cases 1 and 2 and the symmetry of reversing the roles of $0$ and $\ell$.
There are two cases for the first bend of $\bl_\ell$.
Either $\bl_\ell$ bends on a wall normal to $\beta_{[\ell+1,j]}$ or $\bl_\ell$ bends on a wall normal to $\beta_{[i,\ell]}$.

\smallskip

\noindent
\textbf{Case A.}
\textit{The first bend of $\bl_\ell$ is on a wall normal to $\beta_{[\ell+1,j]}$.}
In this case, $\lambda_{\bl_\ell}=\kappa_{[j-k+1,\ell-1]}-\kappa_{[\ell,j-1]}$ for some $\ell<j\le k$ (possibly larger than the $j$ that identifies this case) and $\chi_h$ is on the positive side of $\beta_{[\ell+1,j]}^\perp$.
Also, $\const_{\bl_\ell}=1$ and $\beta_{\bl_\ell}=\beta_{[\ell+1,j]}$.

\smallskip

\noindent
\textbf{Case B.}
\textit{The first bend of $\bl_\ell$ is on a wall normal to $\beta_{[i,\ell]}$.}
In this case $\lambda_{\bl_\ell}=\kappa_{[\ell+1,i+k-2]}-\kappa_{[i,\ell]}$ for some $0<i\le\ell$, and $\chi_h$ is on the negative side of $\beta^\perp_{[i,\ell]}$.
Also, $\const_{\bl_\ell}=1$ and $\beta_{\bl_0}=\beta_{[i,\ell]}$.

Our third step is to determine which pairs $(\bl_0,\bl_\ell)$ can actually occur. 

\smallskip

\noindent
\textbf{Scenario 0+0.}
\textit{Neither $\bl_0$ nor $\bl_\ell$ bends.}
Then $\lambda_{\bl_0}+\lambda_{\bl_\ell}={\kappa_{[1,k-1]}+\kappa_{[\ell+1,k+\ell-1]}}$, there are no conditions on where $\chi_h$ is, and the monomial for this pair is~$1$.

\smallskip

\noindent
\textbf{Scenario 1+0.}
\textit{$\bl_0$ is as in Case 1 and $\bl_\ell$ does not bend.}
In this scenario, we have $\lambda=\lambda_{\bl_0}+\lambda_{\bl_\ell}=\kappa_{[j,k-1]}+\kappa_{[j+1,k-1]}-\kappa_{[\ell]}$ for some $0<j\le\ell$ and $\chi_h$ is on the positive side of $\beta^\perp_{[1,j]}$.

Suppose $j<\ell$.  
Then $\lambda=\kappa_{[j,k-1]}+\kappa_{[j+1,\ell-1]}+\kappa_{[\ell+1,k-1]}$.
The second and/or third term vanishes if $j=\ell-1$ and/or $\ell=k-1$.
Each of these terms that doesn't vanish spans a ray of $\F_{B^T}$, and any imaginary cone $C$ of $\F_{B^T}$ that contains $\lambda$ also contains these rays.
In particular, any imaginary cone $C$ containing $\lambda$ is spanned by $\kappa_{[j,k-1]}$ and by rays that are compatible with $\kappa_{[j,k-1]}$.   
By Proposition~\ref{nuc phip phi}, we compute that $\br{\kappa_{[j,k-1]]},\beta_{[1,j]}}=-1$.
Furthermore, we check that if $\kappa_{[i',j']}$ spans a ray compatible with $\kappa_{[j,k-1]}$, then $\br{\kappa_{[i',j']},\beta_{[1,j]}}\le0$.
Specifically, if $\beta_{[i',j']}$ is spaced with $\beta_{[j,k-1]}$, then $\br{\kappa_{[i',j']},\beta_{[1,j]}}=0$ and if $\beta_{[i',j']}$ is nested inside or outside of $\beta_{[j,k-1]}$, then $\br{\kappa_{[i',j']},\beta_{[1,j]}}\in\set{-1,0}$.

We see that if $j<\ell$, then every imaginary cone of $\F_{B^T}$ containing $\lambda$ has its relative interior strictly on the negative side of $\beta^\perp_{[1,j]}$.
But the chi sequence limits to a point in the relative interior of a cone of $\F_{B^T}$ containing $\lambda$, so we have reached a contradiction, because $h\gg0$, to the fact that $\chi_h$ is on the positive side of $\beta^\perp_{[1,j]}$. 
This contradiction shows that Scenario 1+0 can only occur with $j=\ell$ and thus $\lambda=\lambda_{\bl_0}+\lambda_{\bl_\ell}=2\kappa_{[\ell+1,k-1]}$, $\chi_h$ is on the positive side of $\beta^\perp_{[1,\ell]}$, and the monomial for this pair is~$y^{\beta_{[1,\ell]}}$.

\smallskip

\noindent
\textbf{Scenario 2+0.}
\textit{$\bl_0$ is as in Case 2 and $\bl_\ell$ does not bend.}
In this scenario, we have $\lambda=\lambda_{\bl_0}+\lambda_{\bl_\ell}=\kappa_{[1,i-2]}+\kappa_{[1,i-1]}-\kappa_{[\ell]}$ for some $\ell<i\le k$, and~$\chi_h$ is on the negative side of~$\beta^\perp_{[i,k]}$.
If $i>\ell+1$, then $\lambda=\kappa_{[1,i-1]}+\kappa_{[1,\ell-1]}+\kappa_{[\ell+1,i-2]}$, with one or more of the last two terms vanishing, namely if $\ell=1$ and/or $i=\ell+2$.
We compute $\br{\lambda,\beta_{[i,k]}}=1$ and check that if $\kappa_{[i',j']}$ spans a ray compatible with $\kappa_{[1,i-1]}$, then $\br{\kappa_{[i',j']},\beta_{[i,k]}}\ge0$.
Arguing as in Scenario 1+0, we conclude that every imaginary cone of $\F_{B^T}$ containing $\lambda$ has its relative interior strictly on the positive side of $\beta_{[i,k]}^\perp$, contradicting the fact that $\chi_h$ is on the negative side of~$\beta^\perp_{[i,k]}$.
By this contradiction, we conclude that Scenario 2+0 only occurs with $i=\ell+1$ and thus $\lambda=\lambda_{\bl_0}+\lambda_{\bl_\ell}=2\kappa_{[1,\ell-1]}$, $\chi_h$ is on the negative side of $\beta^\perp_{[\ell+1,k]}$, and the monomial for this pair is~$y^{\beta_{[\ell+1,k]}}$.

\smallskip

\noindent
\textbf{Scenario 0+A.}
\textit{$\bl_0$ does not bend and $\bl_\ell$ is as in Case A.}
In this scenario, $\lambda=\lambda_{\bl_0}+\lambda_{\bl_\ell}=\kappa_{[j+1,\ell+k-1]}+\kappa_{[j,\ell+k-1]}-\kappa_{[k]}$ for some $\ell<j\le k$ and $\chi_h$ is on the positive side of $\beta_{[\ell+1,k+j]}^\perp$.
This scenario is symmetric to Scenario 1+0 by reversing the roles of $0$ and $\ell$.
We see that $j=k$, so that $\lambda=\lambda_{\bl_0}+\lambda_{\bl_\ell}=2\kappa_{[1,\ell-1]}$, $\chi_h$ is on the positive side of $\beta_{[\ell+1,k]}^\perp$, and the monomial for this pair is~$y^{\beta_{[\ell+1,k]}}$.

\smallskip

\noindent
\textbf{Scenario 0+B.}
\textit{$\bl_0$ does not bend and $\bl_\ell$ is as in Case B.}
In this scenario, $\lambda=\lambda_{\bl_0}+\lambda_{\bl_\ell}=\kappa_{[\ell+1,i+k-2]}+\kappa_{[\ell+1,i+k-1]}-\kappa_{[0]}$ for some $0<i\le\ell$ and $\chi_h$ is on the negative side of $\beta^\perp_{[i,\ell]}$.
This scenario is symmetric to Scenario 2+0 by reversing the roles of $0$ and $\ell$.
We see that $i=1$, so that $\lambda=\lambda_{\bl_0}+\lambda_{\bl_\ell}=2\kappa_{[\ell+1,k-1]}$, $\chi_h$ is on the negative side of $\beta^\perp_{[1,\ell]}$, and the monomial for this pair is~$y^{\beta_{[1,\ell]}}$.

\smallskip

\noindent
\textbf{Scenario 1+A.}
\textit{$\bl_0$ is as in Case 1 and $\bl_\ell$ is as in Case A.}
In this scenario, $\lambda=\lambda_{\bl_0}=\kappa_{[j+1,k-1]}-\kappa_{[0,j-1]}$ for some $0<j\le\ell$ and $\lambda_{\bl_\ell}=\kappa_{[j'-k+1,\ell-1]}-\kappa_{[\ell,j'-1]}$ for some $\ell<j'\le k$.
The requirement that $\lambda\in\d_\infty$ implies that in fact $0<j<\ell$
and $\ell<j'<k$.
Furthermore, in this scenario, $\chi_h$ is on the positive side of $\beta_{[1,j]}^\perp$ and on the positive side of $\beta_{[\ell+1,j']}^\perp$, so that $\br{\chi_h,\beta_{[1,j]}+\beta_{[\ell+1,j']}}>0$.
We compute
\[\lambda_{\bl_0}+\lambda_{\bl_\ell}
=\kappa_{[j,\ell-1]}+\kappa_{[j+1,\ell-1]}+\kappa_{[j',k-1]}+\kappa_{[j'+1,k-1]}.\]
This is the sum of four vectors that span rays of an imaginary cone (except that $\kappa_{[j+1,\ell-1]}$ and/or $\kappa_{[j'+1,k-1]}$ might be zero, namely if $j=\ell-1$ and/or $j'=k-1$).

Any imaginary cone of $\F_{B^T}$ containing $\lambda$ is spanned by $\kappa_{[j,\ell-1]}$ and $\kappa_{[j',k-1]}$ and by rays compatible with both.
By Proposition~\ref{nuc phip phi}, ${\br{\kappa_{[j,\ell-1]},\beta_{[1,j]}+\beta_{[\ell+1,j']}}=-1}$ and ${\br{\kappa_{[j',k-1]},\beta_{[1,j]}+\beta_{[\ell+1,j']}}=-1}$.
Furthermore, if $\kappa_{[i',j']}$ spans a ray compatible with both $\kappa_{[j,\ell-1]}$ and $\kappa_{[j',k-1]}$, then ${\br{\kappa_{[i',j']},\beta_{[1,j]}+\beta_{[\ell+1,j']}}\le0}$.

We see that every imaginary cone of $\F_{B^T}$ containing $\lambda$ has its relative interior strictly on the negative side of $(\beta_{[1,j]}+\beta_{[\ell+1,j']})^\perp$.
Since the chi sequence limits to a point in the relative interior of a cone of $\F_{B^T}$ containing $\lambda$, we have contradicted the fact that $\br{\chi_h,\beta_{[1,j]}+\beta_{[\ell+1,j']}}>0$.
Thus Scenario 1+A can't occur.

\smallskip

\noindent
\textbf{Scenario 1+B.}
\textit{$\bl_0$ is as in Case 1 and $\bl_\ell$ is as in Case B.}
In this scenario, $\lambda_{\bl_0}=\kappa_{[j+1,k-1]}-\kappa_{[0,j-1]}$ for some $0<j\le\ell$ and $\lambda_{\bl_\ell}=\kappa_{[\ell+1,i+k-2]}-\kappa_{[i,\ell]}$ for some $0<i\le\ell$.
The requirement that $\lambda=\lambda_{\bl_0}+\lambda_{\bl_\ell}\in\d_\infty$ implies that $j<i$.
Furthermore, in this scenario, $\chi_h$ is on the positive side of $\beta_{[1,j]}^\perp$ and on the negative side of~$\beta_{[i,\ell]}^\perp$ and thus $\br{\chi_h,\beta_{[1,j]}-\beta_{[i,\ell]}}>0$.
Using the fact that $j<i$, we compute ${\lambda=2\kappa_{[\ell+1,k-1]}+\kappa_{[j+1,i-1]}+\kappa_{[j,i-2]}}$.

If $j=i-1$, then $\lambda=2\kappa_{[\ell+1,k-1]}$.
In this case, Proposition~\ref{nuc phip phi} says that $\br{\lambda,\beta_{[1,j]}-\beta_{[i,\ell]}}=0$.
If $j<i-1$, then $\lambda=2\kappa_{[\ell+1,k-1]}+\kappa_{[j,i-1]}+\kappa_{[j+1,i-2]}$ (with $\kappa_{[j+1,i-2]}=0$ if $j=i-2$).
In this case, by Proposition~\ref{nuc phip phi}, ${\br{\lambda,\beta_{[1,j]}-\beta_{[i,\ell]}}=-2}$.
Furthermore, if $\kappa_{[i',j']}$ spans a ray compatible with both $\kappa_{[\ell+1.k-1]}$ and $\kappa_{[j,i-1]}$, then ${\br{\kappa_{[i',j']},\beta_{[1,j]}-\beta_{[i,\ell]}}\le0}$.
As in previous scenarios, we conclude that Scenario 1+B can't happen when $j\neq i-1$.

We see that in Scenario 1+B, $\lambda=\lambda_{\bl_0}+\lambda_{\bl_\ell}=2\kappa_{[\ell+1,k-1]}$, there exists $i$ with $1<i\le\ell$ such that $\chi_h$ is on the positive side of $\beta_{[1,i-1]}^\perp$ and on the negative side of $\beta_{[i,\ell]}^\perp$, and the monomial for this pair is~$y^{\beta_{[1,\ell]}}$.

\smallskip

\noindent
\textbf{Scenario 2+A.}
\textit{$\bl_0$ is as in Case 2 and $\bl_\ell$ is as in Case A.}
This scenario is symmetric to Scenario 1+B by reversing the roles of $0$ and $\ell$.
We see that in Scenario~2+A, $\lambda=\lambda_{\bl_0}+\lambda_{\bl_\ell}=2\kappa_{[1,\ell-1]}$, there exists $i$ with $\ell+1<i\le k$ such that $\chi_h$ is on the positive side of $\beta_{[\ell+1,i-1]}^\perp$ and on the negative side of $\beta_{[i,k]}^\perp$, and the monomial for this pair is~$y^{\beta_{[\ell+1,k]}}$.
%

\smallskip

\noindent
\textbf{Scenario 2+B.}
\textit{$\bl_0$ is as in Case 2 and $\bl_\ell$ is as in Case B.}
In this scenario, $\lambda_{\bl_0}=\kappa_{[1,i-2]}-\kappa_{[i,k]}$ for some $\ell<i\le k$ and $\lambda_{\bl_\ell}=\kappa_{[\ell+1,i'+k-2]}-\kappa_{[i',\ell]}$ for some $0<i'\le\ell$.
Since $\lambda\in\d_\infty$, we see that in fact $\ell+1<i\le k$ and $1<i'\le\ell$.
Furthermore,~$\chi_h$ is on the negative side of $\beta^\perp_{[i,k]}$ and on the negative side of $\beta^\perp_{[i',\ell]}$, so that $\br{\chi_h,\beta_{[i,k]}+\beta_{[i',\ell]}}<0$.
We compute 
\[\lambda=\lambda_{\bl_0}+\lambda_{\bl_\ell}=\kappa_{[\ell+1,i-1]}+\kappa_{[\ell+1,i-2]}+\kappa_{[1,i'-1]}+\kappa_{[1,i'-2]}.\]
This is the sum of four vectors that span rays of an imaginary cone in $\F_{B^T}$ (except that $\kappa_{[\ell+1,i-2]}$ and/or $\kappa_{[1,i'-2]}$ might be zero, namely if $i=\ell+2$ and/or ${i'=2}$).
We rule out this scenario similarly to Scenario 1+A:
Any imaginary cone of $\F_{B^T}$ containing $\lambda$ is spanned by $\kappa_{[\ell+1,i-1]}$ and $\kappa_{[1,i'-1]}$ and rays compatible with both.  
Both $\kappa_{[\ell+1,i-1]}$ and $\kappa_{[1,i'-1]}$ pair to $1$ with $\beta_{[i,k]}+\beta_{[i',\ell]}$, and any vector spanning a ray compatible with both $\kappa_{[\ell+1,i-1]}$ and $\kappa_{[1,i'-1]}$ pairs nonnegatively with $\beta_{[i,k]}+\beta_{[i',\ell]}$.
This contradicts the fact that $\br{\chi_h,\beta_{[i,k]}+\beta_{[i',\ell]}}<0$, and we see that Scenario 2+B can't occur.

\smallskip

We now summarize the scenarios that are not yet ruled out and give $\lambda=\lambda_{\bl_0}+\lambda_{\bl_\ell}$, the monomial in $y$ contributed by the pair, and the restrictions on $\chi_h$ in each case.
\begin{itemize}
\item 
Scenario 0+0:
$\lambda={\kappa_{[1,k-1]}+\kappa_{[\ell+1,k+\ell-1]}}$, monomial $1$, no restriction on~$\chi_h$.
\item
Scenario 1+0: 
$\lambda=2\kappa_{[\ell+1,k-1]}$, monomial $y^{\beta_{[1,\ell]}}$, and $\br{\chi_h,\beta_{[1,\ell]}}>0$.
\item
Scenario 2+0: 
$\lambda=2\kappa_{[1,\ell-1]}$, monomial $y^{\beta_{[\ell+1,k]}}$, and $\br{\chi_h,\beta_{[\ell+1,k]}}<0$.
\item
Scenario 0+A: 
$\lambda=2\kappa_{[1,\ell-1]}$, monomial $y^{\beta_{[\ell+1,k]}}$, and $\br{\chi_h,\beta_{[\ell+1,k]}}>0$.
\item
Scenario 0+B: 
$\lambda=2\kappa_{[\ell+1,k-1]}$, monomial $y^{\beta_{[1,\ell]}}$, and $\br{\chi_h,\beta_{[1,\ell]}}<0$.
\item
Scenario 1+B: 
$\lambda=2\kappa_{[\ell+1,k-1]}$, monomial $y^{\beta_{[1,\ell]}}$, and there exists $i$ with $1<i\le\ell$ such that $\br{\chi_h,\beta_{[1,i-1]}}>0$ and $\br{\chi_h,\beta_{[i,\ell]}}<0$.
\item
Scenario 2+A: 
$\lambda=2\kappa_{[1,\ell-1]}$, monomial $y^{\beta_{[\ell+1,k]}}$, and there exists $i$ with $\ell+1<i\le k$ such that $\br{\chi_h,\beta_{[\ell+1,i-1]}}>0$ and $\br{\chi_h,\beta_{[i,k]}}<0$.
\end{itemize} 
We see that there are only three possibilities for $\lambda$ and all of them are contained in the imaginary cone of $\F_{B^T}$ spanned by $\nu_c(\delta)=\kappa_{[1,k]}$, $\kappa_{[1,\ell-1]}$, and $\kappa_{[\ell+1,k-1]}$.
If $\ell=1$ and/or $\ell=k-1$, then $\kappa_{[1,\ell-1]}=0$ and/or $\kappa_{[\ell+1,k-1]}=0$, but these special cases do not alter the remainder of the argument, except to preemptively eliminate some scenarios.

Our fourth step is to adopt a specific choice of chi sequence.
Since all possibilities for $\lambda$ are in a common imaginary cone, we can choose the same chi sequence for all possible lambda.
There are two convenient choices, related by reversing the roles of $0$ and $\ell$, but we only need one.
Let $L=\sett{\kappa_{[1,k]},\kappa_{[\ell+1,\ell+k-1]},\kappa_{[1,\ell-1]},\kappa_{[\ell+1,k-1]}}$ and let $C$ be the imaginary cone spanned by~$L$.
Choose a chi sequence starting with this choice of $C$, so that the chi sequence limits to the relative interior of a cone $C'$ containing $C$.
Using Proposition~\ref{nuc phip phi}, we make the following observations:
\begin{itemize}
\item 
$\br{\chi_h,\beta_{[1,\ell]}}>0$ because $\br{\xi,\beta_{[1,\ell]}}\ge0$ for all $\xi\in L$ and $\br{\xi,\beta_{[1,\ell]}}=1$ for $\xi=\kappa_{[\ell+1,\ell+k-1]}\in L$.
\item 
$\br{\chi_h,\beta_{[\ell+1,k]}}<0$ because $\br{\xi,\beta_{[\ell+1,k]}}\le0$ for all $\xi\in L$ and $\br{\xi,\beta_{[\ell+1,k]}}=-1$ for $\xi=\kappa_{[\ell+1,\ell+k-1]}\in L$.
\item
$\br{\chi_h,\beta_{[1,i-1]}}<0$ and $\br{\chi_h,\beta_{[i,\ell]}}>0$ for all $1<i\le\ell$.  
This is because $\br{\xi,\beta_{[1,i-1]}}\le0$ for all $\xi\in L$ and ${\br{\xi,\beta_{[1,i-1]}}=-1}$ for $\xi=\kappa_{[1,\ell-1]}\in L$ and because $\br{\xi,\beta_{[i,\ell]}}\ge0$ for all $\xi\in L$ and $\br{\xi,\beta_{[i,\ell]}}=1$ for ${\xi=\kappa_{[\ell+1,\ell+k-1]}\in L}$.
\item
$\br{\chi_h,\beta_{[\ell+1,i-1]}}<0$ and $\br{\chi_h,\beta_{[i,k]}}>0$ for all $\ell+1<i\le k$.  
This is because $\br{\xi,\beta_{[\ell+1,i-1]}}\le0$ for all $\xi\in L$ and ${\br{\kappa_{[\ell+1,\ell+k-1]},\beta_{[\ell+1,i-1]}}=-1}$ and because $\br{\xi,\beta_{[i,k]}}\ge0$ for all $\xi\in L$ and ${\br{\kappa_{[\ell+1,k-1]},\beta_{[i,k]}}=1}$.
\end{itemize}
Comparing these observations with the summary of scenarios above, we rule out all scenarios except Scenarios 0+0, 1+0, and 2+0, for this choice of chi sequence.

It may appear that the proof is finished, but recall that each of Cases 1 and 2 specify more that one broken line.  
Our fifth step is to complete the proof by determining which broken lines $\bl_\ell$, for this choice of chi sequence, can appear in Scenarios 1+0 and 2+0.

In Scenario 1+0, $\bl_0$ ends on the positive side of $\beta_{[1,\ell]}^\perp$, but may have bent multiple times.
Suppose that $\bl_0$ bends more than once.
By inspection of Case 1, we see that for the last bend, $\bl_0$ starts on the negative side of $\beta_{[j+1,\ell]}^\perp$ and the positive side of $\beta_{[1,j]}^\perp$ for some $0<j<\ell$ and ends up still on the positive side of $\beta_{[1,j]}^\perp$ and passes to the positive side of $\beta_{[j+1,\ell]}^\perp$.
This is a contradiction to the observations above, which say that $\br{\chi_h,\beta_{[1,j]}}<0$.
We conclude that $\bl_0$ bends only once, on the wall normal to $\beta_{[1,\ell]}$.
Since $\bl_\ell$ never bends, there is a unique pair $(\bl_0,\bl_\ell)$ in Scenario~1+0.

Similarly, in Scenario 2+0, if $\bl_\ell$ bends more than once, its last bend starts and ends on the negative side of $\beta_{[i,k]}^\perp$ for some $\ell+1<i\le k$, contradicting the observations above, which say that $\br{\chi_h,\beta_{[i,k]}}>0$.
We conclude that $\bl_\ell$ bends only once, on the wall normal to $\beta_{[\ell+1,k]}$, and that there is a unique pair $(\bl_0,\bl_\ell)$ in Scenario 2+0.

In all, there are three pairs, which contribute the three desired terms in the expansion of $\thet_{\kappa_{[1,k-1]}}\cdot\thet_{\kappa_{[\ell+1,k+\ell-1]}}$.
\qed

\subsection{Proof of Theorems~\ref{subalgebra} and~\ref{tube subalgebra}}  
The first assertion of Theorem~\ref{subalgebra} is a restatement of Theorem~\ref{expand d inf}.
The second assertion follows from Theorems~\ref{thet xi}, \ref{delta cheby} and~\ref{expansion prod}, because $\d_\infty$ is a union of cones of $\F_{B^T}$.
The second assertion of Theorem~\ref{tube subalgebra} follows from the first assertion in the same way.
The first assertion of Theorem~\ref{tube subalgebra} is a restatement of the following theorem, which refines Theorem~\ref{expand d inf} by decomposing $\SimplesT{c}$ into $c$-orbits.
Recall that $\d_\infty$ is the nonnegative linear span of the vectors $\sett{\nu_c(\beta):\beta\in\SimplesT{c}}$.

\begin{theorem}\label{expand tube}
Suppose $B$ is acyclic of affine type and let $\tB$ be an extension of~$B$. 
Suppose $\SimplesT{c}_o=\set{\beta_1,\ldots,\beta_k}$ is a $c$-orbit in $\SimplesT{c}$ and suppose $\v$ is a monomial in a finite set of theta functions $\thet_\lambda$, with each $\lambda$ in the nonnegative integer span of~$\nu_c(\SimplesT{c}_o)$.
Then $\v$ is a finite $\k[y]$-linear combination of theta functions $\thet_\kappa$, with each $\kappa$ in the nonnegative integer span of $\nu_c(\SimplesT{c}_o)$.
\end{theorem}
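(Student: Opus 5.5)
Since each factor $\thet_\lambda$ has $\lambda\in\d_\infty$, Theorem~\ref{expand d inf} already gives $\v$ as a finite $\k[y]$-linear combination of $\thet_\kappa$ with $\kappa\in P\cap\d_\infty$. It remains to show that any $\kappa$ with nonzero coefficient lies in the nonnegative span of $\nu_c(\SimplesT{c}_o)$. We work with signed-nondegenerating coefficients and remove that hypothesis at the end with Proposition~\ref{struct plus plus}. We argue by induction on the number of factors, multiplying one theta function at a time. The inductive step is to show that for $p_1,p_2$ in the nonnegative span of $\nu_c(\SimplesT{c}_o)$, every $\lambda\in P\cap\d_\infty$ with $a(p_1,p_2,\lambda)\neq0$ is also in that span. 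Because $\nu_c(\SimplesT{c})$ is linearly independent and spans the cone $\d_\infty$, we can write $\lambda=\sum_{\beta\in\SimplesT{c}}m_\beta\nu_c(\beta)$ with all $m_\beta\ge0$. We must show $m_\beta=0$ for every $\beta\notin\SimplesT{c}_o$.

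We fix a chi sequence for $\lambda$ and take $h\gg0$. By Theorem~\ref{mut pair affine periodic}, $a(p_1,p_2,\lambda)=\ap_{\chi_h}(p_1,p_2,\lambda)$, so only periodic pairs $(\bl_1,\bl_2)$ with endpoint $\chi_h$ matter. The key claim is that a periodic broken line for $p$ in the span of $\nu_c(\SimplesT{c}_o)$ bends only on walls normal to roots in $\APTre{c;o}$. Granting this, Corollary~\ref{crucial cor} (via Proposition~\ref{crucial prop}) says that each bend adds to the $x$-exponent a nonpositive combination of $\nu_c(\SimplesT{c}_o)$ only. Hence $\lambda_{\bl_1}+\lambda_{\bl_2}$ equals $p_1+p_2$ minus such a combination, so its coefficients on $\nu_c(\beta)$ for $\beta\notin\SimplesT{c}_o$ are zero. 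Here we use that $\nu_c(\beta)$ and $\nu_c(\beta')$ for different orbits are independent and that, by Proposition~\ref{nuc phip phi}, pairings across different orbits vanish.

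To prove the key claim, we first check that Theorem~\ref{simplification for boundary dinf} applies. It does when $p$ is in the relative boundary of $\d_\infty$, which holds whenever $|\SimplesT{c}|>|\SimplesT{c}_o|$: then $p$ avoids the interior of the span of all of $\nu_c(\SimplesT{c})$. In that case the broken line lies in $\d_\infty^+$ and bends only on walls normal to roots of $\APTre{c}$. For such a root $\phi\in\APTre{c;o'}$ with $o'\neq o$, the pairing $\br{p,\phi\ck}=0$ by Proposition~\ref{nuc phip phi}, so the unbounded domain is parallel to the wall and cannot bend there. After bends on $\APTre{c;o}$-walls the exponent stays in the span of $\nu_c(\SimplesT{c}_o)$, so by induction along the broken line no bend on an $o'$-wall is ever possible. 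When $\SimplesT{c}$ is a single orbit the theorem reduces to Theorem~\ref{expand d inf}, and there is nothing to prove.

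The main obstacle is checking that the hypotheses of Theorem~\ref{simplification for boundary dinf} really hold when $p$ is a general point of the span of a single orbit, not only a ray. Concretely, I expect to need that this span meets the relative interior of $\d_\infty$ only when $\SimplesT{c}_o=\SimplesT{c}$. If that caused trouble, the fallback would be to mimic its proof directly (via Lemmas~\ref{per finite}, \ref{no dinf}, and~\ref{scatter region}): for $h\gg0$, bends on infinite-orbit walls are excluded because they would push $\br{\lambda_\bl,\delta}$ negative.
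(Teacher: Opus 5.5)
There is a genuine gap, at exactly the step you flagged, and the expectation you state there is false. Because $\delta=\sum_{\beta\in\SimplesT{c}_o}\beta$ for \emph{every} orbit, the nonnegative integer span of $\nu_c(\SimplesT{c}_o)$ always contains $\nu_c(\delta)$. That vector spans the imaginary ray, so it lies in the relative interior of $\d_\infty$ (the complement of the $\g$-vector fan). Hence so does $\nu_c(\delta)+p$ for every $p$ in that span.

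Your induction cannot avoid such vectors: by Theorem~\ref{thet xi}, the product $\thet_{\nu_c(\beta)}\cdot\thet_{\nu_c(\delta-\beta)}$ already contains $\thet_{\nu_c(\delta)}$. For these $p$ your key claim is not merely unproved but false. Take $p=\nu_c(\delta)$. By Theorem~\ref{simplification for nuc delta}, for $h\gg0$ there is a periodic broken line with final monomial $x^{-p}y^{\delta}$ whose unbounded part lies in $\d_\infty^-$. Meanwhile $\chi_h\in\d_\infty^+$ by Proposition~\ref{delta limit eta}.

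A bend on a wall normal to $\phi\in\APTre{c}$ changes the exponent by a multiple of $\omega_c(\,\cdot\,,\phi)$, which pairs to zero with $\delta$ (Lemma~\ref{scatter region}). Since also $\br{p,\delta}=0$, if every bend were on such a wall then $\br{\bl(t),\delta}$ would be constant along $\bl$. That is impossible, so this broken line must bend on walls normal to roots in infinite $c$-orbits. Neither Theorem~\ref{simplification for boundary dinf} nor your fallback can handle it. Both need the whole broken line to lie in $\d_\infty^+$, which comes from the unbounded domain sitting in a maximal $\g$-vector cone, i.e.\ from $p$ being in the relative boundary of $\d_\infty$.

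There is also a smaller, repairable slip. With several orbits, $\nu_c(\SimplesT{c})$ is linearly dependent, since all orbit sums equal $\nu_c(\delta)$. So ``the coefficients on $\nu_c(\beta)$ for $\beta\notin\SimplesT{c}_o$'' are not well defined. What your bending argument actually yields is that $\lambda$ lies in the linear span of $\nu_c(\SimplesT{c}_o)$. You then still need to show that this, together with $\lambda\in\d_\infty$, forces nonnegative integer coefficients.

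The missing idea is to split off the imaginary part before using broken lines. Write $p_i=q_i+m_i\nu_c(\delta)$, where $m_i$ is the smallest coefficient of $p_i$ in the basis $\nu_c(\SimplesT{c}_o)$ of its span. Then $q_i$ is still in the nonnegative integer span of $\nu_c(\SimplesT{c}_o)$ but lies in the relative boundary of $\d_\infty$, and Theorem~\ref{boundary with delta} gives $\thet_{p_i}=\thet_{q_i}\cdot\thet_{m_i\nu_c(\delta)}$. The remaining steps are as follows.
\begin{enumerate}
\item Your broken-line argument is exactly right for $\thet_{q_1}\cdot\thet_{q_2}$; this is how the paper handles that product.
\item Theorem~\ref{imag general} expands $\thet_{m_1\nu_c(\delta)}\cdot\thet_{m_2\nu_c(\delta)}$ in theta functions on the imaginary ray.
\item Theorem~\ref{boundary with delta} reassembles each product $\thet_{j\nu_c(\delta)}\cdot\thet_\kappa$ into theta functions indexed by the nonnegative integer span of $\nu_c(\SimplesT{c}_o)$. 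When a resulting index has to be split again, use Theorem~\ref{imag general} alongside it.
\end{enumerate}
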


Theorem~\ref{expand tube} is readily proved using results that we have already proved.  
It is enough to prove the theorem for a product of two theta functions $\thet_{p_1}$ and $\thet_{p_2}$ with $p_1,p_2$ in the nonnegative integer span of $\nu_c(\SimplesT{c}_o)$.
Theorem~\ref{boundary with delta} says that $\thet_{p_1}\cdot\thet_{p_2}$ is $\thet_{m_1\nu_c(\delta)}\cdot\thet_{m_2\nu_c(\delta)}\cdot\thet_{q_1}\cdot\thet_{q_2}$, where $q_1=p_1-m_1\nu_c(\delta)$ and $q_2=p_2-m_2\nu_c(\delta)$ are both in the relative boundary of~$\d_\infty$.

Since $\delta$ is the sum of the orbit $\SimplesT{c}_o$, $q_1$ and $q_2$ are also in the nonnegative integer span of $\nu_c(\SimplesT{c}_o)$.
Theorem~\ref{simplification for boundary dinf} implies that all broken lines used to compute structure constants for $\thet_{q_1}\cdot\thet_{q_2}$ only bend on walls whose normal vectors are in~$\APTre{c}$.
But if ${\gamma\in\APTre{c;o'}}$ with $o\neq o'$, then, by Proposition~\ref{nuc phip phi}, $\br{\nu_c(q),\gamma}=0$ for any $q$ in the integer span of $\nu_c(\SimplesT{c}_o)$.
Thus the first bend of a broken line used to compute structure constants for $\thet_{q_1}\cdot\thet_{q_2}$ is on a wall whose normal vector is in~$\APTre{c;o}$.
Furthermore, Proposition~\ref{crucial prop} implies that, after the first bend, the monomial on the broken line is still in the integer span of $\nu_c(\SimplesT{c}_o)$.
We see that the broken line only bends on walls whose normal vectors are in~$\APTre{c;o}$, and the monomial obtained from the broken line is in the integer span of $\nu_c(\beta_1),\ldots,\nu_c(\beta_k)$.
Thus $\thet_{q_1}\cdot\thet_{q_2}$ expands as a $\k[y]$-linear combination of theta functions $\thet_\kappa$, with each~$\kappa$ in the integer span of $\nu_c(\SimplesT{c}_o)$.
Theorem~\ref{expand d inf} now says that all of these~$\kappa$ are in the \emph{nonnegative} integer span of $\nu_c(\SimplesT{c}_o)$.

Theorem~\ref{imag general} says that $\thet_{m_1\nu_c(\delta)}\cdot\thet_{m_2\nu_c(\delta)}$ is a finite $\k[y]$-linear combination of theta functions $\thet_\kappa$, with each $\kappa$ a multiple of $\nu_c(\delta)$.
Applying Theorem~\ref{boundary with delta} again, we see that $\thet_{m_1\nu_c(\delta)}\cdot\thet_{m_2\nu_c(\delta)}\cdot\thet_{q_1}\cdot\thet_{q_2}$ is a finite $\k[y]$-linear combination of theta functions $\thet_\kappa$, with each $\kappa$ in the nonnegative integer span of $\nu_c(\SimplesT{c}_o)$.
\qed

\subsection{Proof of Theorems~\ref{gen clus alg prop}, \ref{gen clus alg} and~\ref{gen clus alg almost}}
Most of the work remaining for the proof of these theorems is accomplished by two lemmas that we now work to prove (Lemmas~\ref{J mut} and~\ref{x mut}).
We continue to use the combinatorics of compatibility among roots in~$\APTre{c}$, as reviewed in Section~\ref{agaf sec}.

\begin{lemma}\label{J mut}
Suppose $J_o$ and $J_o'$ are distinct maximal sets of pairwise compatible real roots in $\APTre{c;o}$ with $J_o'=(J_o\setminus\set\gamma)\cup\set{\gamma'}$.
The generalized seed $(\x'_{J_o},\pp'_{J_o},B'_{J_o})$ obtained from $(\x_{J_o},\pp_{J_o},B_{J_o})$ by mutating $x_\gamma$ to obtain $x'_\gamma$ coincides with the generalized seed $(\x_{J_o'},\pp_{J_o'},B_{J_o'})$, identifying $x'_\gamma$ with $x_{\gamma'}$.
\end{lemma}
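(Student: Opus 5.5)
The plan is a direct combinatorial verification, split according to the type of $\gamma$ in $J_o$, which is either the maximal root or not (the three rows of Figure~\ref{def fig}). In each case I will compute three things: the mutated exchange matrix $\mu_\gamma(B_{J_o})$, the mutated coefficients $\pp'$ via \eqref{p mut}, and the new cluster variable. I will then compare them with $B_{J_o'}$, $\pp_{J_o'}$ and the indexing of $J_o'$. The identification of $x'_\gamma$ with $x_{\gamma'}$ is essentially a definition. The content lies in checking that $B'=B_{J_o'}$ and $\pp'=\pp_{J_o'}$. Since a seed is determined by its data, this is all that is needed.

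\textbf{Step 1: the maximal root.} Let $\gamma=\delta-\beta$ be the maximal root, with $\beta'$, $\phi$, $\phi'$ as in the first row. Theorem~\ref{exchangeable} shows that exchanging $\gamma$ replaces it with $\gamma'=\delta-\beta'$, and $\gamma'$ is maximal in $J_o'$. In $J_o'$ the roles are then swapped: $\beta$ and $\beta'$ trade places, and so do $\phi$ and $\phi'$. So the column of $B_{J_o'}$ at $\gamma'$ is the negative of the column of $B_{J_o}$ at $\gamma$, which matches the first case of \eqref{b mut}. Also $p'_{\gamma;\ell}=p_{\gamma;2-\ell}$, which gives $z^{\phi+\beta'},z_*,z^{\phi'+\beta}$, exactly the first-row coefficients for $J_o'$.

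For $j\neq\gamma$, I must check two things. The entries $b_{\psi j}$ change only where $b_{\psi\gamma}b_{\gamma j}\neq0$, that is, for $\psi,j\in\set{\phi,\phi'}$. The coefficients of $\phi,\phi'$ (and any other $j$ with $b_{\gamma j}\neq0$) change by \eqref{p mut}, using the tropical $\oplus$ as a componentwise minimum. Here $\phi$ and $\phi'$ are the next smaller roots of $\gamma$, so they are rows two or three of the figure, with next larger root $\gamma$. After the exchange their next larger root is $\gamma'$, with $\beta,\beta'$ swapped. I will check that the new $p_{\phi;0},p_{\phi;1}$ computed from \eqref{p mut} equal the monomials $z^{\phi''+\beta'}$ or $1$ prescribed for $J_o'$. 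Because the tropical semifield is a minimum of exponents and the two monomials involved have disjoint supports, each minimum is $1$ or a single factor, which makes this computation routine.

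\textbf{Step 2: a non-maximal root.} Let $\gamma$ be non-maximal, with next larger root $\phi$ and pieces $\phi',\phi'',\phi'''$. Exchanging $\gamma$ within the support of $\phi$ (a type-A-like flip) replaces $\gamma$ by the root $\gamma'$ obtained by moving from the row-two configuration to the row-three configuration, or vice versa. The column at $\gamma$ is negated, and $p_{\gamma;0}$ and $p_{\gamma;1}$ swap, consistent with the swap of rows two and three. The remaining checks are the same as in Step~1, applied to the neighbours of $\gamma$, namely $\phi,\phi',\phi'',\phi'''$ (and, if $\phi$ is maximal, possibly also its $d=2$ column). For these I verify:
\begin{enumerate}[label=(\roman*)]
\item the $2\times2$ block updates of $B$ via \eqref{b mut};
\item the coefficient updates via \eqref{p mut}, including the exponents $\frac{\ell}{d_j}$ when $j$ is the maximal root with $d_j=2$.
\end{enumerate}

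\textbf{Main obstacle.} The hardest part is the bookkeeping for the neighbours, above all when $j$ is the maximal root, since $d_j=2$ and the middle coefficient $p_{j;1}=z_*$ must be preserved by \eqref{p mut}. I would first check directly that the $z_*$ exponent survives: it appears in neither $p_{k;0}$ nor $p_{k;d_k}$. I would then reduce the whole computation to comparing supports on the cycle $\SimplesT{c}_o$.
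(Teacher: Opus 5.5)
Your strategy matches the paper's: identify $x'_\gamma$ with $x_{\gamma'}$, then verify the exchange matrix and coefficients by a local computation, split into $\gamma$ maximal and $\gamma$ non-maximal. The paper computes one orientation of the non-maximal flip and gets the other because generalized mutation is an involution. However, two parts of your plan would not go through as written.

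\textbf{The tropical sums.} The monomials in the denominators of \eqref{p mut} do not have disjoint supports, so the shortcut you describe gives wrong coefficients.
\begin{itemize}
\item Take $\gamma$ maximal and $\phi$ its next smaller root with $b_{\phi\gamma}=2$, so $b_{\gamma\phi}=-1$ and $p_{\gamma;2}=z^{\phi+\beta'}$. Then $\phi$ is in the second-row configuration relative to $\gamma$, so $p_{\phi;0}=z^{\psi+\beta'}$ and $p_{\phi;1}=1$ for a next smaller root $\psi$ of $\phi$.
\item The denominator is $z^{\psi+\beta'}\oplus z^{\phi+\beta'}=z^{\psi+\beta'}$, because $\SuppT(\psi)\subseteq\SuppT(\phi)$. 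This gives $p'_{\phi;0}=1$ and $p'_{\phi;1}=z^{\phi-\psi}$, which is the third-row value for $\phi$ in $J_o'$.
\item If you take the minimum to be $1$, you get $p'_{\phi;0}=z^{\psi+\beta'}$. That is wrong, and not even normalized.
\item Now take $\gamma$ non-maximal with maximal next larger root $\phi$, in the notation of the second row of Figure~\ref{def fig}. The terms $p_{\phi;0}\,p_{\gamma;0}^{2}$ and $p_{\phi;2}=z^{\gamma+\beta'}$ overlap only partially. Their minimum is $z^{\phi''+\beta'}=p_{\gamma;0}$, which is neither $1$ nor either term.
\item This equality is exactly what makes $p'_{\phi;1}=z_*$. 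So checking the $z_*$ exponent alone does not settle your ``main obstacle''.
\end{itemize}
Each $\oplus$ has to be computed from the actual nesting of supports.

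\textbf{Locality on the combinatorial side.} You establish locality only for mutation: entries and coefficients not attached to a neighbour of $\gamma$ are fixed by $\mu_\gamma$, using the normalization when $b_{\gamma j}=0$. You also need the matching statement for the recipe of Figure~\ref{def fig}. Applied to $J_o'$, it must reproduce every entry and coefficient of the $J_o$ seed outside the set consisting of $\gamma$, its next larger root, its next smaller roots, and the root sharing its next larger root.

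This is true, since all the data are determined by next-larger/next-smaller relations and by the uncovered element of each support. But it needs an argument. In the non-maximal flip, the uncovered element of $\SuppT(\phi)$ switches between $\beta$ and $\beta'$, and you must check that this affects only $p_\phi$ and data attached to the next smaller roots of $\phi$. The paper proves this as a separate lemma.

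\textbf{A smaller point.} Theorem~\ref{exchangeable} concerns $c$-clusters and does not tell you which root replaces the maximal $\gamma$. That it is $\delta-\beta'$ follows directly from the nested/spaced criterion in Proposition~\ref{tube compat}.
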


The following lemmas simplify the proof of Lemma~\ref{J mut}.
Suppose $J_o$ is a maximal set of pairwise compatible real roots in $\APTre{c;o}$ and choose $\gamma\in J_o$.
Let $R_\gamma$ be the subset of $J_o$ consisting of $\gamma$, the zero, one, or two next smaller roots from $\gamma$ in~$J_o$, the zero or one next larger roots from $\gamma$ in $J_o$ and the zero or one roots in $J_o$ that have the same next larger root as $\gamma$.
The definition of $B_{J_o}$ indicates that if $\psi\not\in R_\gamma$, then $b_{\psi\gamma}=0$.
Thus the skew-symmetrizability of $B_{J_o}$ implies that also $b_{\gamma\psi}=0$.

\begin{lemma}\label{unchanging exch}
Suppose $J_o'$ is obtained by exchanging $\gamma$ from~$J_o$.
\begin{enumerate}[label=\bf\arabic*., ref=\arabic*] 
\item \label{unch B exch}   
If $\psi,\phi\in J_o$ have $\set{\psi,\phi}\not\subseteq R_\gamma$, then the $\psi,\phi$-entry of $B_{J_o}$ is the same as the $\psi,\phi$-entry of $B_{J_o'}$.
\item\label{unch p exch}
If $\psi\in J_o$ has $\psi\not\in R_\gamma$, then $p_\psi$ is the same in $\pp_{J_o}$ as in $\pp_{J_o'}$.
\end{enumerate}
\end{lemma}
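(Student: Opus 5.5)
The plan is to show that everything in the definition of the column of $B_{J_o}$ indexed by a root $\psi$, and of the coefficient tuple $p_\psi$, is determined by a small local configuration around $\psi$. That configuration consists of the following data:
\begin{itemize}
\item whether $\psi$ is the maximal root;
\item its next larger root $\phi$, if any;
\item the roots having the same next larger root as $\psi$;
\item its next smaller roots;
\item the two distinguished elements $\beta,\beta'$ of $\SimplesT{c}_o$;
\item the resulting pieces $\phi',\phi'',\phi'''$, or $\phi,\phi'$ in the maximal case.
\end{itemize}
I would then show that this configuration is the same in $J_o$ and in $J_o'$ whenever $\psi\notin R_\gamma$.

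The first step is to make the exchange explicit. Since compatibility in $\APTre{c;o}$ is ``nested or spaced'', a maximal compatible set $J_o$ is a laminar family of paths in the cycle $\SimplesT{c}_o$, and it has a forest-like Hasse diagram under containment of supports. Each non-maximal root has exactly one uncovered element, meaning an element of its support lying in no child. Using the three pictures in Figure~\ref{def fig}, I would identify $\gamma'$ concretely.
\begin{itemize}
\item If $\gamma$ is maximal with $\gamma=\delta-\beta$, then $\gamma'=\delta-\beta'$. This uses Theorem~\ref{exchangeable} together with the uniqueness of the exchange.
\item If $\gamma$ has next larger root $\phi$, then $\gamma'$ is the other path inside $\SuppT(\phi)$ built from the same children of $\gamma$ (or of its sibling) together with the other uncovered element. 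This again uses the uniqueness of the exchange.
\end{itemize}
From this description I would read off the key combinatorial claim: the containment (parent/child/sibling) relations of $J_o'$ agree with those of $J_o$ except among the roots of $R_\gamma\cup\set{\gamma'}$. In particular, the following hold for $\psi\notin R_\gamma$.
\begin{itemize}
\item The parent of $\psi$ is unchanged.
\item The children of $\psi$ are unchanged.
\item The siblings of $\psi$ are unchanged.
\item The uncovered elements of $\psi$ and of its parent are unchanged, since the children of these roots are not $\gamma$.
\end{itemize}
Note that a child of $\psi$'s parent equal to $\gamma$ would make $\psi$ a sibling of $\gamma$, and one with $\psi$ the parent of $\gamma$ would put $\psi\in R_\gamma$. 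Hence $\beta,\beta'$ and the pieces $\phi',\phi'',\phi'''$ (or $\phi,\phi'$) attached to $\psi$ are also the same. This immediately gives Assertion~\ref{unch p exch}, because $d_\psi$ and $p_{\psi;\ell}$ are defined purely from this local data.

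For Assertion~\ref{unch B exch}, note first that $\psi,\phi$ must index rows and columns of both matrices, so $\psi,\phi\neq\gamma$. If $\phi\notin R_\gamma$, the column indexed by $\phi$ is the same in both matrices by the local-data claim, so $b_{\psi\phi}$ agrees. If instead $\phi\in R_\gamma$, then $\psi\notin R_\gamma$. In that case I would use skew-symmetrizability: $b_{\psi\phi}$ is determined by $b_{\phi\psi}$ together with $d_\psi$ and $d_\phi$. The entry $b_{\phi\psi}$ lies in the unchanged column $\psi$.

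The remaining point is the ratio of $d_\phi$ to $d_\psi$. Here I would simply note that $b_{\phi\psi}=0$ unless $\phi$ lies in $\psi$'s local configuration. If $b_{\phi\psi}=0$, both entries vanish in both matrices. If $b_{\phi\psi}\neq0$, then $\phi$ lies in $\psi$'s local configuration and $\psi$ lies in $\phi$'s. I would then check that the status (maximal or not) of $\phi$ is unchanged, so its $d$ is unchanged.

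The main obstacle I expect is the key combinatorial claim: making the explicit description of $\gamma'$ precise in all cases, including degenerate ones where some pieces are zero, the maximal case, and small orbits. I would handle this by a direct case check against the three rows of Figure~\ref{def fig}.
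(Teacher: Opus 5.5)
Your proposal is correct and is essentially the paper's argument. The paper likewise observes that every entry of $B_{J_o}$ and every $p_\psi$ is determined by next-larger, next-smaller, and shared-next-larger relations, and that exchanging $\gamma$ changes these relations only among roots of $R_\gamma$; your skew-symmetrizability step for $\phi\in R_\gamma$ is only a different bookkeeping of the paper's symmetric observations.

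Two small corrections for the write-up:
\begin{itemize}
\item In the non-maximal case, $\gamma'$ is the uncovered element of $\phi$ together with at most one next smaller root of $\gamma$ and the sibling of $\gamma$ itself if there is one. It is not built from the sibling's children.
\item Lemma~\ref{J mut} uses this lemma with $\gamma$ identified with $\gamma'$. So you should not discard the case $\psi=\gamma$ or $\phi=\gamma$. Instead, note that your local-data claim already forces the entries in row and column $\gamma$ (respectively $\gamma'$) outside $R_\gamma$ to vanish in both matrices.
\end{itemize}
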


\begin{proof}
Exchanging $\gamma$ from $J_o$ does not change any next-smallest-root relationships among roots in $J_o$ except among roots in $R_\gamma$.
Thus the lemma follows from the four observations below for $\psi,\phi\in J_o$.

First, if $\SuppT(\psi)\not\subseteq\SuppT(\phi)$ and $\SuppT(\phi)\not\subseteq\SuppT(\psi)$, then $b_{\psi\phi}$ and $b_{\phi\psi}$ are nonzero if and only if $\psi$ and $\phi$ have the same next larger root in~$J_o$, or equivalently, they are next smallest roots from the same root.
Furthermore, when the values of $b_{\psi\phi}$ and $b_{\phi\psi}$ are nonzero, they are determined by where their support sits inside that next larger root.
Second, if $\SuppT(\psi)\subseteq\SuppT(\phi)$, then $b_{\psi\phi}$ and $b_{\phi\psi}$ are nonzero if and only if $\psi$ is a next smaller root from $\phi$ in $J_o$ (or equivalently, $\phi$ is the next larger root from $\psi$).
Furthermore, when the values of $b_{\psi\phi}$ and $b_{\phi\psi}$ are nonzero, they are determined by where $\SuppT(\psi)$ sits inside $\SuppT(\phi)$.
Third, if $\psi$ is maximal in $J_o$, then $p_\psi$ is determined by the next smaller roots from $\psi$ in $J_o$.
Fourth, if $\psi$ is not maximal in $J_o$, then $p_\psi$ is determined by the next smaller roots from $\psi$ in $J_o$, from the next larger root from $\psi$ in $J_o$, and from the root that has the same next larger root from $\psi$.
\end{proof}

\begin{lemma}\label{unchanging mut}
Suppose $(\x'_{J_o},\pp'_{J_o},B'_{J_o})$ is obtained from $(\x_{J_o},\pp_{J_o},B_{J_o})$ by mutating at~$\gamma$.
\begin{enumerate}[label=\bf\arabic*., ref=\arabic*] 
\item \label{unch B mut}   
If $\psi,\phi\in J_o$ have $\set{\psi,\phi}\not\subseteq R_\gamma$, then the $\psi,\phi$-entry of $B_{J_o}$ is the same as the $\psi,\phi$-entry of $B'_{J_o}$.
\item\label{unch p mut}
If $\psi\in J_o$ has $\psi\not\in R_\gamma$, then $p_\psi$ in $\pp_{J_o}$ is the same as $p_\psi$ in $\pp'_{J_o}$.
\end{enumerate}
\end{lemma}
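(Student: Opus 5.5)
Both assertions follow by reading off the generalized mutation rules \eqref{x mut eq}, \eqref{p mut} and the matrix mutation rule \eqref{b mut} together with the support of column $\gamma$ of $B_{J_o}$. The key fact, already noted before Lemma~\ref{unchanging exch}, is that $b_{\psi\gamma}=0$ and $b_{\gamma\psi}=0$ whenever $\psi\notin R_\gamma$.

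For Assertion~\ref{unch B mut}, take $\psi,\phi\in J_o$ with $\set{\psi,\phi}\not\subseteq R_\gamma$. If $\gamma\in\set{\psi,\phi}$, then the other index lies outside $R_\gamma$, so the entry is $0$. Mutation at $\gamma$ only negates it, and it stays $0$.

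If $\gamma\notin\set{\psi,\phi}$, then by \eqref{b mut} the new entry is
\[
b_{\psi\phi}+[-b_{\psi\gamma}]_+b_{\gamma\phi}+b_{\psi\gamma}[b_{\gamma\phi}]_+ .
\]
At least one of $\psi,\phi$ is outside $R_\gamma$. If $\psi\notin R_\gamma$, then $b_{\psi\gamma}=0$ and both correction terms vanish. If $\phi\notin R_\gamma$, then $b_{\gamma\phi}=0$ and again both correction terms vanish. So the entry is unchanged.

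For Assertion~\ref{unch p mut}, take $\psi\notin R_\gamma$; in particular $\psi\neq\gamma$. The second case of \eqref{p mut} applies with $j=\psi$ and $k=\gamma$. The only dependence on $p_\gamma$ is through the exponents $[b_{\gamma\psi}]_+$ and $[-b_{\gamma\psi}]_+$, which are $0$ because $b_{\gamma\psi}=0$. The formula then reduces to
\[
p'_{\psi;\ell}=\frac{p_{\psi;\ell}}{p_{\psi;0}\oplus p_{\psi;d_\psi}} .
\]
The normalization condition $p_{\psi;0}\oplus p_{\psi;d_\psi}=1$, built into the definition of a normalized generalized seed and checked for $\pp_{J_o}$ after Figure~\ref{def fig}, makes the denominator $1$. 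Hence $p'_{\psi;\ell}=p_{\psi;\ell}$ for every $\ell$, and $p_\psi$ is unchanged.

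The only step needing care is the vanishing of $b_{\gamma\psi}$, not just $b_{\psi\gamma}$. This comes from skew-symmetrizability of $B_{J_o}$, which the paper has already established, so I expect no real obstacle.
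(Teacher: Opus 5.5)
Your proof is correct and follows the paper's argument essentially line for line: the same case split on whether $\gamma\in\set{\psi,\phi}$ using $b_{\psi\gamma}=b_{\gamma\psi}=0$ for $\psi\notin R_\gamma$, and the same reduction of \eqref{p mut} to $p_{\psi;\ell}/(p_{\psi;0}\oplus p_{\psi;d_\psi})=p_{\psi;\ell}$ via the normalization condition. Your conclusion $b'_{\psi\phi}=b_{\psi\phi}$ in the case $\gamma\notin\set{\psi,\phi}$ is the correct one; the paper writes $-b_{\psi\phi}$ at that point, which is a typo.
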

\begin{proof}
Suppose $\set{\psi,\phi}\not\subseteq R_\gamma$.
If $\psi=\gamma$, then $\phi\not\in R_\gamma$, so $b_{\psi\phi}=0$.
Also, $b'_{\psi\phi}=-b_{\psi\phi}=0$.
Similarly, if $\phi=\gamma$, then $b'_{\psi\phi}=-b_{\psi\phi}=0$.
If $\gamma\not\in\set{\psi,\phi}$, then since at least one of $\psi$ or $\phi$ is not in $R_\gamma$, at least one of $b_{\psi\gamma}$ or $b_{\gamma\phi}$ is zero.
Therefore $b'_{\psi\phi}=-b_{\psi\phi}$.

If $\psi\not\in R_\gamma$, then $b_{\gamma\psi}=0$ and $p'_{\psi;\ell}= \frac{p_{\psi;\ell}}{p_{\psi;0}\oplus p_{\psi;d_\psi}}=p_{\psi;\ell}$ for all $\ell$  from $0$ to $d_\psi$.
\end{proof}

\begin{proof}[Proof of Lemma~\ref{J mut}]
Identifying $x'_\gamma$ with $x_{\gamma'}$, the clusters $\x'_{J_o}$ and $\x_{J_o'}$ coincide by construction.
We must check that $(\pp'_{J_o},B'_{J_o})$ and $(\pp_{J_o'},B_{J_o'})$ coincide.
There are three cases.
The relevant information for the first two cases is shown in Figures~\ref{proof fig max} and~\ref{proof fig nonmax}.  
The third case will follow immediately from the second.
In each figure, we omit the entries of exchange matrices and the coefficients that don't change when~$\gamma$ is exchanged out of $J_o$, according to Lemma~\ref{unchanging exch}.
According to Lemma~\ref{unchanging mut}, those entries and coefficients also don't change when $(\x_{J_o},\pp_{J_o},B_{J_o})$ is mutated at position~$\gamma$.
Exchange matrix entries are shown in tables.
In each figure, information for the seed determined by $J_o$ is in the top row and information for $J_o'$ in the bottom row.
In each case, we need to show that the matrix entries and coefficients shown in the second row agree with the entries and coefficients obtained from the data in the first row by generalized seed mutation at position~$\gamma$.
In both cases, the matrix entries are easily checked to be obtained by matrix mutation as desired.
\begin{figure}
\begin{tabular}{|ccc|}\hline&&\\[-5pt]
\begin{minipage}[m]{139pt}
\scalebox{0.95}{\includegraphics{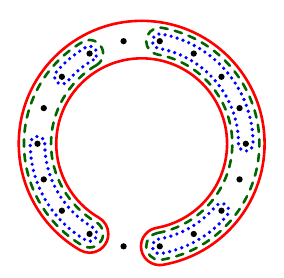}
\begin{picture}(0,0)(68,-64)
\put(-12,-36){$\beta$}
\put(-15,29){$\beta'$}
\put(23,-13){$\beta''$}
\put(-39,11){$\beta'''$}
\put(10,63){\textcolor{red}{$\gamma$}}
\put(-71,10){\textcolor{darkgreen}{$\phi$}}
\put(55,-23){\textcolor{darkgreen}{$\phi'$}}
\put(-63,-42){\textcolor{blue}{$\psi_1$}}
\put(-53,50){\textcolor{blue}{$\psi_2$}}
\put(45,42){\textcolor{blue}{$\psi_3$}}
\put(27,-60){\textcolor{blue}{$\psi_4$}}
\put(-7.5,-2){\large$J_o$}
\end{picture}}
\end{minipage}
&
\begin{minipage}[m]{115pt}
$
\begin{array}{l|rrr}
		&\phi		&\phi'	&\gamma\\\hline
\phi		&0		&-1		&2\\
\phi'		&1		&0		&-2\\
\gamma	&-1		&1		&0
\end{array}
$
\end{minipage}
& 
\begin{minipage}[m]{70pt}
$\begin{aligned}
p_{\gamma;0}&=z^{\phi'+\beta}\\
p_{\gamma;1}&=z_*\\
p_{\gamma;2}&=z^{\phi+\beta'}\\
p_{\phi;0}&=z^{\psi_2+\beta'}\\
p_{\phi;1}&=1\\
p_{\phi';0}&=1\\
p_{\phi';1}&=z^{\psi_3+\beta''}
\end{aligned}$
\end{minipage}\\[64pt]\hline&&\\[-5pt]
\begin{minipage}[m]{139pt}
\scalebox{0.95}{\includegraphics{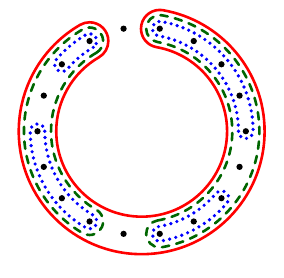}
\begin{picture}(0,0)(68,-70)
\put(-12,-36){$\beta$}
\put(-15,29){$\beta'$}
\put(23,-13){$\beta''$}
\put(-39,11){$\beta'''$}
\put(-27,-69){\textcolor{red}{$\gamma'$}}
\put(-71,10){\textcolor{darkgreen}{$\phi$}}
\put(55,-23){\textcolor{darkgreen}{$\phi'$}}
\put(-63,-42){\textcolor{blue}{$\psi_1$}}
\put(-53,50){\textcolor{blue}{$\psi_2$}}
\put(45,42){\textcolor{blue}{$\psi_3$}}
\put(27,-60){\textcolor{blue}{$\psi_4$}}
\put(-7.5,-2){\large$J_o'$}
\end{picture}}
\end{minipage}
&
\begin{minipage}[m]{115pt}
$
\begin{array}{l|rrr}
		&\phi		&\phi'	&\gamma'\\\hline
\phi		&0		&1		&-2\\
\phi'		&-1		&0		&2\\
\gamma'	&1		&-1		&0
\end{array}
$
\end{minipage}
& 
\begin{minipage}[m]{70pt}
$\begin{aligned}
p_{\gamma';0}&=z^{\phi+\beta'}\\
p_{\gamma';1}&=z_*\\
p_{\gamma';2}&=z^{\phi'+\beta}\\
p_{\phi;0}&=1\\
p_{\phi;1}&=z^{\psi_1+\beta'''}\\
p_{\phi';0}&=z^{\psi_4+\beta}\\
p_{\phi';1}&=1
\end{aligned}$
\end{minipage}\\[62pt]
\hline
\end{tabular}
\caption{Seeds for $J_o$ and $J_o'=(J_o\setminus\set\gamma)\cup\gamma'$ for $\gamma$ maximal}
\label{proof fig max}
\vspace{-0.5em}
\end{figure}

In the case where $\gamma$ is maximal in $J_o$ (Figure~\ref{proof fig max}), we now check the coefficients by writing down the formulas defining generalized seed mutation.
\begin{align*}
p'_{\gamma';0}&=p_{\gamma;2}=z^{\phi+\beta'}\\
p'_{\gamma';1}&=p_{\gamma;1}=z_*\\
p'_{\gamma';2}&=p_{\gamma;0}=z^{\phi'+\beta}\\
p'_{\phi;0}
&=\frac{p_{\phi;0}\,p_{\gamma;0}^{[b_{\gamma \phi}]_+}\,p_{\gamma;2}^{0}}{p_{\phi;0}\,p_{\gamma;0}^{[b_{\gamma \phi}]_+}\oplus p_{\phi;1}\,p_{\gamma;2}^{[-b_{\gamma \phi}]_+}}
=\frac{z^{\psi_2+\beta'}}{z^{\psi_2+\beta'}\oplus z^{\phi+\beta'}}
=1\\
p'_{\phi;1}
&=\frac{p_{\phi;1}\,p_{\gamma;0}^{0}\,p_{\gamma;2}^{[-b_{\gamma \phi}]_+}}{p_{\phi;0}\,p_{\gamma;0}^{[b_{\gamma \phi}]_+}\oplus p_{\phi;1}\,p_{\gamma;2}^{[-b_{\gamma \phi}]_+}}
=\frac{z^{\phi+\beta'}}{z^{\psi_2+\beta'}\oplus z^{\phi+\beta'}}
=z^{\psi_1+\beta'''}\\
p'_{\phi';0}
&=\frac{p_{\phi';0}\,p_{\gamma;0}^{[b_{\gamma \phi'}]_+}\,p_{\gamma;2}^{0}}{p_{\phi';0}\,p_{\gamma;0}^{[b_{\gamma \phi'}]_+}\oplus p_{\phi';1}\,p_{\gamma;2}^{[-b_{\gamma \phi'}]_+}}
=\frac{z^{\phi'+\beta}}{z^{\phi'+\beta}\oplus z^{\psi_3+\beta''}}
=z^{\psi_4+\beta}\\
p'_{\phi';1}
&=\frac{p_{\phi';1}\,p_{\gamma;0}^{0}\,p_{\gamma;2}^{[-b_{\gamma \phi'}]_+}}{p_{\phi';0}\,p_{\gamma;0}^{[b_{\gamma \phi'}]_+}\oplus p_{\phi';1}\,p_{\gamma;2}^{[-b_{\gamma \phi'}]_+}}
=\frac{z^{\psi_3+\beta''}}{z^{\phi'+\beta}\oplus z^{\psi_3+\beta''}}
=1.
\end{align*}

\begin{figure}
\begin{tabular}{|cc|}\hline&\\[-5pt]
\begin{minipage}[m]{175pt}
\qquad\scalebox{1.0}{\includegraphics{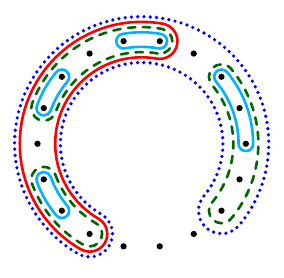}
\begin{picture}(0,0)(68,-64)
\put(-40,-2){\small$\beta$}
\put(8,24){\small$\beta'$}
\put(-27,25){\small$\beta''$}
\put(19,-12){\small$\beta'''$}
\put(17,-54){\small$\beta^{(4)}$}
\put(-17,-59){\small$\beta^{(5)}$}
\put(-71,13){\textcolor{red}{$\gamma$}}
\put(16,62){\textcolor{blue}{$\phi$}}
\put(-60,-50){\textcolor{darkgreen}{$\phi'$}}
\put(-45,53){\textcolor{darkgreen}{$\phi''$}}
\put(57,-25){\textcolor{darkgreen}{$\phi'''$}}
\put(-72,-29){\textcolor{lightblue}{$\psi_1$}}
\put(-71,30){\textcolor{lightblue}{$\psi_2$}}
\put(-7,66){\textcolor{lightblue}{$\psi_3$}}
\put(56,23){\textcolor{lightblue}{$\psi_4$}}
\put(-7.5,-5){\large$J_o$}
\end{picture}}\qquad
\end{minipage}
&
\begin{minipage}[m]{175pt}
$
\begin{array}{l|rrrrr}
		&\phi		&\phi'	&\phi''	&\phi'''	&\gamma\\\hline
\phi		&0		&0		&0		&1		&-1\\
\phi'		&0		&0		&-1		&0		&1\\
\phi''		&0		&1		&0		&0		&-1\\
\phi'''		&-d_\phi	&0		&0		&0		&1\\
\gamma	&d_\phi	&-1		&1		&-1		&0
\end{array}$\\[10pt]

$\begin{array}{ll}
\begin{aligned}
p_{\gamma;0}&=z^{\phi''+\beta'}\\
p_{\gamma;1}&=1
\end{aligned}
\quad&
\begin{aligned}
p_{\phi';0}&=z^{\psi_1+\beta}\\
p_{\phi';1}&=1
\end{aligned}
\\[20pt]
\begin{aligned}
p_{\phi'';0}&=1\\
p_{\phi'';1}&=z^{\psi_2+\beta''}
\end{aligned}
\quad&
\begin{aligned}
p_{\phi''';0}&=1\\
p_{\phi''';1}&=z^{\psi_4+\beta'''}
\end{aligned}
\end{array}$
\end{minipage}
\\[80pt]
\multicolumn{2}{|c|}{If $\beta^{(4)}\neq\beta^{(5)}$, write $\bar\phi$ for the next larger root from $\phi$ in $J_o$ (not pictured).}\\[5pt]
\multicolumn{2}{|c|}{
$\left\{
\begin{array}{lllll}
p_{\phi;0}=z^{\phi'''+\beta^{(4)}}&
p_{\phi;1}=z_*&
p_{\phi;2}=z^{\gamma+\beta'}&&
\text{if }\beta^{(4)}=\beta^{(5)}\\[5pt]
p_{\phi;0}=z^{\phi'''+\beta^{(4)}}&
p_{\phi;1}=1&&&
\text{if }\beta^{(4)}\in\SuppT(\bar\phi)\\[5pt]
p_{\phi;0}=1&
p_{\phi;1}=z^{\gamma+\beta'}&&&
\text{if }\beta^{(5)}\in\SuppT(\bar\phi)
\end{array}\right.$
}\\[25pt]\hline&\\[-5pt]
\begin{minipage}[m]{139pt}
\scalebox{1.0}{\includegraphics{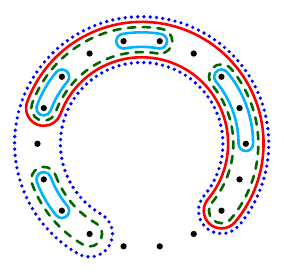}
\begin{picture}(0,0)(68,-64)
\put(-40,-2){\small$\beta$}
\put(8,24){\small$\beta'$}
\put(-27,25){\small$\beta''$}
\put(19,-12){\small$\beta'''$}
\put(17,-54){\small$\beta^{(4)}$}
\put(-17,-59){\small$\beta^{(5)}$}
\put(42,45){\textcolor{red}{$\gamma'$}}
\put(16,62){\textcolor{blue}{$\phi$}}
\put(-60,-50){\textcolor{darkgreen}{$\phi'$}}
\put(-45,53){\textcolor{darkgreen}{$\phi''$}}
\put(57,-25){\textcolor{darkgreen}{$\phi'''$}}
\put(-72,-29){\textcolor{lightblue}{$\psi_1$}}
\put(-71,30){\textcolor{lightblue}{$\psi_2$}}
\put(-7,66){\textcolor{lightblue}{$\psi_3$}}
\put(56,23){\textcolor{lightblue}{$\psi_4$}}
\put(-7.5,-5){\large$J_o'$}
\end{picture}}\qquad
\end{minipage}
&
\begin{minipage}[m]{175pt}
$
\begin{array}{l|rrrrr}
		&\phi		&\phi'	&\phi''	&\phi'''	&\gamma'\\\hline
\phi		&0		&-1		&0		&0		&1\\
\phi'		&d_\phi	&0		&0		&0		&-1\\
\phi''		&0		&0		&0		&-1		&1\\
\phi'''		&0		&0		&1		&0		&-1\\
\gamma'	&-d_\phi	&1		&-1		&1		&0
\end{array}$\\[10pt]

$\begin{array}{ll}
\begin{aligned}
p_{\gamma';0}&=1\\
p_{\gamma';1}&=z^{\phi''+\beta'}
\end{aligned}
\quad&
\begin{aligned}
p_{\phi';0}&=z^{\psi_1+\beta}\\
p_{\phi';1}&=1
\end{aligned}
\\[20pt]
\begin{aligned}
p_{\phi'';0}&=z^{\psi_3+\beta'}\\
p_{\phi'';1}&=1
\end{aligned}
\quad&
\begin{aligned}
p_{\phi''';0}&=1\\
p_{\phi''';1}&=z^{\psi_4+\beta'''}
\end{aligned}
\end{array}$
\end{minipage}
\\[80pt]
\multicolumn{2}{|c|}{If $\beta^{(4)}\neq\beta^{(5)}$, then $\bar\phi$ is also the next larger root from $\phi$ in $J_o'$.}\\[5pt]
\multicolumn{2}{|c|}{
$\left\{
\begin{array}{lllll}
p_{\phi;0}=z^{\gamma'+\beta^{(4)}}&
p_{\phi;1}=z_*&
p_{\phi;2}=z^{\phi'+\beta}&&
\text{if }\beta^{(4)}=\beta^{(5)}\\[5pt]
p_{\phi;0}=z^{\gamma'+\beta^{(4)}}&
p_{\phi;1}=1&&&
\text{if }\beta^{(4)}\in\SuppT(\bar\phi)\\[5pt]
p_{\phi;0}=1&
p_{\phi;1}=z^{\phi'+\beta}&&&
\text{if }\beta^{(5)}\in\SuppT(\bar\phi)
\end{array}\right.$
}\\[25pt]\hline\end{tabular}
\caption{Seeds for $J_o$ and $J_o'=(J_o\setminus\set\gamma)\cup\gamma'$ for $\gamma$ not maximal}
\label{proof fig nonmax}
\end{figure}

\clearpage

We now check coefficients for one of the cases where $\gamma$ is not maximal (Figure~\ref{proof fig nonmax}).
\begin{align*}
p'_{\gamma';0}&=p_{\gamma;1}=1\\
p'_{\gamma';1}&=p_{\gamma;0}=z^{\phi''+\beta'}\\
p'_{\phi';0}&
=\frac{p_{\phi';0}p_{\gamma ;0}^{[b_{\gamma \phi'}]_+}}{p_{\phi';0}\,p_{\gamma ;0}^{[b_{\gamma \phi'}]_+}\oplus p_{\phi';1}\,p_{\gamma ;1 }^{[-b_{\gamma \phi'}]_+}}
=\frac{z^{\psi_1+\beta}}{z^{\psi_1+\beta}\oplus 1}
=z^{\psi_1+\beta}\\
p'_{\phi';1}&
=\frac{p_{\phi';1}p_{\gamma ;1 }^{[-b_{\gamma \phi'}]_+}}{p_{\phi';0}\,p_{\gamma ;0}^{[b_{\gamma \phi'}]_+}\oplus p_{\phi';1}\,p_{\gamma ;1 }^{[-b_{\gamma \phi'}]_+}}
=\frac{1}{z^{\psi_1+\beta}\oplus 1}
=1\\
p'_{\phi'';0}&
=\frac{p_{\phi'';0}p_{\gamma ;0}^{[b_{\gamma \phi''}]_+}}{p_{\phi'';0}\,p_{\gamma ;0}^{[b_{\gamma \phi''}]_+}\oplus p_{\phi'';1}\,p_{\gamma ;1 }^{[-b_{\gamma \phi''}]_+}}
=\frac{z^{\phi''+\beta'}}{z^{\phi''+\beta'}\oplus z^{\psi_2+\beta''}}
=z^{\psi_3+\beta'}\\
p'_{\phi'';1}&
=\frac{p_{\phi'';1}p_{\gamma ;1 }^{[-b_{\gamma \phi''}]_+}}{p_{\phi'';0}\,p_{\gamma ;0}^{[b_{\gamma \phi''}]_+}\oplus p_{\phi'';1}\,p_{\gamma ;1 }^{[-b_{\gamma \phi''}]_+}}
=\frac{z^{\psi_2+\beta''}}{z^{\phi''+\beta'}\oplus z^{\psi_2+\beta''}}
=1\\
p'_{\phi''';0}&
=\frac{p_{\phi''';0}p_{\gamma ;0}^{[b_{\gamma \phi'''}]_+}}{p_{\phi''';0}\,p_{\gamma ;0}^{[b_{\gamma \phi'''}]_+}\oplus p_{\phi''';1}\,p_{\gamma ;1 }^{[-b_{\gamma \phi'''}]_+}}
=\frac{1}{1\oplus z^{\psi_4+\beta'''}}
=1\\
p'_{\phi''';1}&
=\frac{p_{\phi''';1}p_{\gamma ;1 }^{[-b_{\gamma \phi'''}]_+}}{p_{\phi''';0}\,p_{\gamma ;0}^{[b_{\gamma \phi'''}]_+}\oplus p_{\phi''';1}\,p_{\gamma ;1 }^{[-b_{\gamma \phi'''}]_+}}
=\frac{z^{\psi_4+\beta'''}}{1\oplus z^{\psi_4+\beta'''}}
=z^{\psi_4+\beta'''}
\end{align*}

If $\beta^{(4)}=\beta^{(5)}$, then
\begin{align*}
p'_{\phi;0}&
=\frac{p_{\phi;0}\,p_{\gamma ;0}^{[b_{\gamma \phi}]_+}}{p_{\phi;0}\,p_{\gamma ;0}^{[b_{\gamma \phi}]_+}\oplus p_{\phi;2}\,p_{\gamma ;1 }^{[-b_{\gamma \phi}]_+}}
=\frac{z^{\phi'''+\beta^{(4)}}z^{2\phi''+2\beta'}}{z^{\phi'''+\beta^{(4)}}z^{2\phi''+2\beta'}\oplus z^{\gamma+\beta'}}
=z^{\gamma'+\beta^{(4)}}\\
p'_{\phi;1}&
=\frac{p_{\phi;1}\,p_{\gamma ;0}^{\frac{1}{2}[b_{\gamma \phi}]_+}\,p_{\gamma ;1 }^{\frac{1}{2}[-b_{\gamma \phi}]_+}}{p_{\phi;0}\,p_{\gamma ;0}^{[b_{\gamma \phi}]_+}\oplus p_{\phi;2}\,p_{\gamma ;1 }^{[-b_{\gamma \phi}]_+}}
=\frac{z_*z^{\phi''+\beta'}}{z^{\phi'''+\beta^{(4)}}z^{2\phi''+2\beta'}\oplus z^{\gamma+\beta'}}
=z_*\\
p'_{\phi;2}&
=\frac{p_{\phi;2}p_{\gamma ;1 }^{[-b_{\gamma \phi}]_+}}{p_{\phi;0}\,p_{\gamma ;0}^{[b_{\gamma \phi}]_+}\oplus p_{\phi;2}\,p_{\gamma ;1 }^{[-b_{\gamma \phi}]_+}}
=\frac{z^{\gamma+\beta'}}{z^{\phi'''+\beta^{(4)}}z^{2\phi''+2\beta'}\oplus z^{\gamma+\beta'}}
=z^{\phi'+\beta}
\end{align*}

If $\beta^{(4)}\in\SuppT(\bar\phi)$, then
\begin{align*}
p'_{\phi;0}&
=\frac{p_{\phi;0}p_{\gamma ;0}^{[b_{\gamma \phi}]_+}}{p_{\phi;0}\,p_{\gamma ;0}^{[b_{\gamma \phi}]_+}\oplus p_{\phi;1}\,p_{\gamma ;1 }^{[-b_{\gamma \phi}]_+}}
=\frac{z^{\phi'''+\beta^{(4)}}z^{\phi''+\beta'}}{z^{\phi'''+\beta^{(4)}}z^{\phi''+\beta'}\oplus 1}
=z^{\gamma'+\beta^{(4)}}\\
p'_{\phi;1}&
=\frac{p_{\phi;1}p_{\gamma ;1 }^{[-b_{\gamma \phi}]_+}}{p_{\phi;0}\,p_{\gamma ;0}^{[b_{\gamma \phi}]_+}\oplus p_{\phi;1}\,p_{\gamma ;1 }^{[-b_{\gamma \phi}]_+}}
=\frac{1}{z^{\phi'''+\beta^{(4)}}z^{\phi''+\beta'}\oplus 1}
=1
\end{align*}

If $\beta^{(5)}\in\SuppT(\bar\phi)$, then
\begin{align*}
p'_{\phi;0}&
=\frac{p_{\phi;0}p_{\gamma ;0}^{[b_{\gamma \phi}]_+}}{p_{\phi;0}\,p_{\gamma ;0}^{[b_{\gamma \phi}]_+}\oplus p_{\phi;1}\,p_{\gamma ;1 }^{[-b_{\gamma \phi}]_+}}
=\frac{z^{\phi''+\beta'}}{z^{\phi''+\beta'}\oplus z^{\gamma+\beta'}}
=1\\
p'_{\phi;1}&
=\frac{p_{\phi;1}p_{\gamma ;1 }^{[-b_{\gamma \phi}]_+}}{p_{\phi;0}\,p_{\gamma ;0}^{[b_{\gamma \phi}]_+}\oplus p_{\phi;1}\,p_{\gamma ;1 }^{[-b_{\gamma \phi}]_+}}
=\frac{z^{\gamma+\beta'}}{z^{\phi''+\beta'}\oplus z^{\gamma+\beta'}}
=z^{\phi'+\beta}
\end{align*}
In the other case where $J_o$ is not maximal, we could obtain the analog of Figure~\ref{proof fig nonmax} by swapping the top row and the bottom row, swapping the symbols $\gamma$ and $\gamma'$, and swapping the symbols $J_o$ and $J_o'$.
Since generalized seed mutation is an involution, the computations above imply that the analogous computations work in the other direction.
\end{proof}

We consider the map $\mapchar_o$ as in Section~\ref{imag sec}, but temporarily write $\mapchar_o^{J_o}$ to record the choice of~$J_o$.
Thus~$\mapchar_o^{J_o}$ is the map from $\set{z_\beta:\beta\in\SimplesT{c}_o}\cup\set{z_*}\cup\set{x_\gamma:\gamma\in J_o}$ to~$\T_o(\tB)$ that sends $z_\beta$ to~$y^\beta$ for all $\beta\in\SimplesT{c}_o$, sends $z_*$ to $\thet_{\nu_c(\delta)}$, and sends $x_\gamma$ to~$\thet_{\nu_c(\gamma)}$ for all~$\gamma\in J_o$.
Then~$\mapchar_o^{J_o}$ extends uniquely to a homomorphism from the field of rational functions in $\set{x_\gamma:\gamma\in J_o}$ with coefficients in $\k[z_*,\,z_\beta^{\pm1}]_{\beta\in\SimplesT{c}_o}$ to the field of rational functions in $\set{\thet_{\nu_c(\gamma)}:\gamma\in J_o\cup\set\delta}$ with coefficients in $\k[y^{\pm\beta}]_{\beta\in\SimplesT{c}_o}$, and then restricts uniquely to a homomorphism from $\A(\x_{J_o},\pp_{J_o},B_{J_o})$.
We will write~$\mapchar_o^{J_o}$ for all of these maps.

The next key ingredient in the proof is the following lemma, which validates the identification of $x'_\gamma$ with $x_{\gamma'}$ in Lemma~\ref{J mut}.

\begin{lemma}\label{x mut}
Suppose $J_o$ and $J_o'$ are distinct maximal sets of pairwise compatible real roots in $\APTre{c;o}$ with $J_o'=(J_o\setminus\set\gamma)\cup\set{\gamma'}$.
If $x'_\gamma$ is the cluster variable obtained by mutating $(\x_{J_o},\pp_{J_o},B_{J_o})$ at $\gamma$, then $\mapchar_o^{J_o}$ sends $x_{\gamma'}$ to $\thet_{\nu_c(\gamma')}$.
\end{lemma}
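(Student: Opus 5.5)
The plan is to compute the image under $\mapchar_o^{J_o}$ of the generalized exchange relation \eqref{x mut eq} at position $\gamma$, and to show that it equals a known product formula for theta functions. By \eqref{x mut eq}, $x_\gamma x'_\gamma=\sum_{\ell=0}^{d_\gamma}p_{\gamma;\ell}\prod_{\psi\in J_o}x_\psi^{[b_{\psi\gamma}]_+-\ell b_{\psi\gamma}/d_\gamma}$. The map $\mapchar_o^{J_o}$ is a homomorphism of fields, and $\thet_{\nu_c(\gamma)}\neq0$. So it suffices to prove the identity
\[\thet_{\nu_c(\gamma)}\cdot\thet_{\nu_c(\gamma')}=\sum_{\ell=0}^{d_\gamma}\mapchar_o^{J_o}(p_{\gamma;\ell})\prod_{\psi\in J_o}\thet_{\nu_c(\psi)}^{[b_{\psi\gamma}]_+-\ell b_{\psi\gamma}/d_\gamma}\]
in the cluster algebra, and then divide by $\thet_{\nu_c(\gamma)}$. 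I would split into the cases of Figure~\ref{def fig}.

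First I would treat the case where $\gamma$ is maximal in $J_o$, so $d_\gamma=2$. Here $b_{\phi\gamma}=2$ and $b_{\phi'\gamma}=-2$, and all other entries in the column vanish. The right side is then $y^{\phi'+\beta}\thet_{\nu_c(\phi)}^2+\thet_{\nu_c(\delta)}\thet_{\nu_c(\phi)}\thet_{\nu_c(\phi')}+y^{\phi+\beta'}\thet_{\nu_c(\phi')}^2$, using $\mapchar_o(z^{\psi})=y^\psi$ and $\mapchar_o(z_*)=\thet_{\nu_c(\delta)}$. On the other side, $\gamma=\delta-\beta$, and exchanging $\gamma$ gives $\gamma'=\delta-\beta'$ (Figure~\ref{proof fig max}). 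The identity is then exactly the second line of Theorem~\ref{im exch}. What needs checking is that the labels $\phi,\phi'$ in the definition of the seed agree with $\phi=\sum_{i=1}^{\ell-1}c^i\beta$ and $\phi'=\sum_{i=1}^{m-1}c^i\beta'$ in that theorem. This should be immediate, because the seed definition explicitly says to define them ``as in Theorem~\ref{im exch}''. I would still verify that the orientation of the cycle matches, so that $y^{\phi'+\beta}$ multiplies $\thet_{\nu_c(\phi)}^2$ and not the other way round.

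Next I would treat the case where $\gamma$ is not maximal, so $d_\gamma=1$. Then the relation is an ordinary binomial exchange relation
\[\thet_{\nu_c(\gamma)}\thet_{\nu_c(\gamma')}=y^{\phi''+\beta'}\thet_{\nu_c(\phi')}\thet_{\nu_c(\phi''')}+\thet_{\nu_c(\phi)}\thet_{\nu_c(\phi'')},\]
or the same with the two coefficients swapped, in the mirror case. Any vanishing $\phi'$, $\phi''$, $\phi'''$ is interpreted as $1$. Here $\gamma$ and $\gamma'$ are $c$-real-exchangeable, because they lie in a real cluster together with the rest of $J_o$ and some non-tube root. So these are cluster variables related by a usual exchange relation. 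The difficulty is that the coefficients need not be those of $\tB$; compare Theorem~\ref{clus mon thm} and Remark~\ref{theta thinks it's principal}. I would therefore prove the relation directly as a structure-constant computation, following the proof of Theorem~\ref{thet xi}:
\begin{enumerate}[label=\rm(\alph*)]
\item By Theorems~\ref{expand d inf} and~\ref{mut pair affine periodic}, only periodic pairs with endpoint $\chi_h$, for $h\gg0$, matter.
\item By Theorem~\ref{simplification for boundary dinf}, these broken lines bend only on walls normal to roots in $\APTre{c;o}$.
\item By Proposition~\ref{nuc phip phi}, Proposition~\ref{crucial prop} and Corollary~\ref{crucial cor}, any bend lowers the coordinates of $\lambda$ in the basis $\nu_c(\SimplesT{c}_o)$, and $\lambda$ must stay in $\d_\infty$. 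This leaves the unbent pair, giving $\thet_{\nu_c(\gamma+\gamma')}$, which by Theorem~\ref{expansion prod} equals one of the two monomial terms since $\gamma+\gamma'$ is a sum of compatible tube roots. It also leaves a pair bending exactly once, on the wall normal to $\phi''+\beta'$ (the common support), which yields $y^{\phi''+\beta'}$ times the other term.
\end{enumerate}
The redundant candidates would then be eliminated by a well-chosen chi sequence, exactly as in the fourth and fifth steps of the proof of Theorem~\ref{im exch}.

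I expect the main obstacle to be this non-maximal case. The broken-line bookkeeping is the hard part: ruling out scenarios and pinning down which broken line bends for a specific chi sequence. This has to be done uniformly across the two configurations in Figure~\ref{def fig} and across the degenerate subcases where some of $\phi',\phi'',\phi'''$ vanish. I would first try to reduce it to the Case~1 and Case~2 analysis already carried out in the proof of Theorem~\ref{im exch}, since the support combinatorics are the same with $\gamma$ in place of $\delta-\beta$.
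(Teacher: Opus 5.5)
Your treatment of the case where $\gamma$ is maximal in $J_o$ matches the paper's argument. You evaluate the three-term relation $x_\gamma x_{\gamma'}=z^{\phi'+\beta}x_\phi^2+z_*x_\phi x_{\phi'}+z^{\phi+\beta'}x_{\phi'}^2$, apply $\mapchar_o^{J_o}$, and recognize the second line of Theorem~\ref{im exch}.

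The non-maximal case, however, is only a plan, and step (c) claims more than coordinate counting gives. Write $\beta=\beta_{[0]}$, $\phi''=\beta_{[1,a]}$, $\beta'=\beta_{[a+1]}$. By Proposition~\ref{nuc phip phi}, the broken line for $\phi'+\beta+\phi''$ can bend first on the wall normal to $\beta_{[i,a+1]}$ for every $1\le i\le a+1$. For $i>1$ the resulting $\lambda$ still has nonnegative coordinates in $\nu_c(\SimplesT{c}_o)$, so it lies in $\d_\infty$. Likewise, the other broken line can bend on $\beta_{[1,j]}$ for $j\le a$. These partial bends survive your step (c). So do multi-bend sequences that together reach $\beta_{[1,a+1]}$, and the duplicate pair in which the other broken line does the bending. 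All of them must be excluded by side-of-hyperplane and chi-sequence arguments, as in the third to fifth steps of the proof of Theorem~\ref{im exch} but redone for this configuration. You have not done this.

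Two smaller slips:
\begin{itemize}
\item $\phi''+\beta'$ is not the common support of $\gamma$ and $\gamma'$; that is $\phi''$.
\item The unbent term equals $\thet_{\nu_c(\phi)}\thet_{\nu_c(\phi'')}$ because $\nu_c$ is linear on the star of $\delta$ and $\gamma+\gamma'=\phi+\phi''$ with $\phi,\phi''$ nested. It is not because $\gamma,\gamma'$ are compatible; they are not.
\end{itemize}

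More importantly, none of this computation is needed. Your stated obstacle is that for general $\tB$ the cluster variables are $\Clear(\thet_\lambda)$ rather than $\thet_\lambda$. That obstacle disappears in principal coefficients, where cluster monomials are exactly theta functions (Theorem~\ref{clus mon thm} and Remark~\ref{inescapable}). So the ordinary exchange relation for the $c$-real-exchangeable pair $\gamma,\gamma'$ (Theorem~\ref{real exchangeable}) is already a theta-function identity. Proposition~\ref{struct plus plus} then carries it over to any $\tB$.

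This is Proposition~\ref{real exch}:
\[\thet_{\nu_c(\gamma)}\thet_{\nu_c(\gamma')}=\thet_{\nu_c(\phi)}\thet_{\nu_c(\phi'')}+y^{\phi''+\beta'}\thet_{\nu_c(\phi')}\thet_{\nu_c(\phi''')}.\]
It is precisely the image under $\mapchar_o^{J_o}$ of $x_\gamma x_{\gamma'}=z^{\phi''+\beta'}x_{\phi'}x_{\phi'''}+x_\phi x_{\phi''}$, which is the same relation in both non-maximal configurations of Figure~\ref{def fig}. The paper's proof of the non-maximal case is just this citation; the exchange relation itself is taken from the principal-coefficient computation in the companion paper on dominance regions.

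Your broken-line route could plausibly be completed with the tools of Section~\ref{tools sec}, since the setting is the same as in Theorem~\ref{im exch}. But it would only reprove Proposition~\ref{real exch}, and as written it leaves the lemma unproved when $\gamma$ is not maximal.
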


Theorem~\ref{im exch} is one ingredient of the proof of Lemma~\ref{x mut}.
We will also need a description of the exchange relations that expand $\thet_{\nu_c(\gamma)}\cdot\thet_{\nu_c(\gamma')}$ for pairs $\gamma,\gamma'\in\APTre{c}$ that are $c$\nobreakdash-real-exchangeable.
These pairs are characterized by the following result that is the concatenation of \cite[Proposition~4.8]{affdenom} with part of \cite[Theorem~7.2]{affdenom}.
(See also \cite[Definition~4.3]{affdenom}.)

\begin{theorem}\label{real exchangeable}  
For $\gamma,\gamma'\in\APTre{c}$, the following are equivalent:
\begin{enumerate}[label=\rm(\roman*), ref=(\roman*)]
\item \label{real ex cond}
$\gamma$ and $\gamma'$ are $c$-real-exchangeable.
\item 
There exists a $c$-orbit (of size $k$) in $\SimplesT{c}$, a choice of $\beta_{[0]}$ in the orbit such that $\gamma=\kappa_{[i,j]}$ and $\gamma'=\kappa_{[i',j']}$ with $1\le i\le j<j'\le k$ and $1\le i<i'\le j'\le k$ and $i'\le j+1$ (or the same with $\gamma$ and $\gamma'$ swapped).
\end{enumerate}
\end{theorem}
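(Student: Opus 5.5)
The statement is a concatenation of \cite[Proposition~4.8]{affdenom} with part of \cite[Theorem~7.2]{affdenom}, so the shortest route is to quote those results. If one wants a proof from the material assembled here, I would argue combinatorially, as follows. (In condition (ii) I read $\gamma=\kappa_{[i,j]}$ as $\gamma=\beta_{[i,j]}$, that is, as labelling roots.)

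\emph{Step 1: reduce to one tube.} Two $c$-exchangeable roots are distinct and not $c$-compatible. By Proposition~\ref{tube compat}, roots of $\APTre{c}$ lying in different orbits are spaced and hence compatible. So $\gamma,\gamma'$ lie in a common $\APTre{c;o}$ and are neither nested nor spaced. Choosing $\beta_{[0]}$ suitably in the cycle $\SimplesT{c}_o$, this says exactly that the two paths $\SuppT(\gamma),\SuppT(\gamma')$ either overlap without nesting or are disjoint but adjacent. After relabelling, this is $i<i'\le j+1$ and $j<j'$.

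\emph{Step 2: separate real from imaginary exchangeability.} The extra requirement $j'\le k$ says that the union of the two supports is a path that does not wrap all the way around the cycle. When it fails, both supports have size $k-1$ and their union is the whole cycle, so $\{\gamma,\gamma'\}=\{\delta-\beta,\delta-\beta'\}$. Theorem~\ref{exchangeable} identifies these pairs as exchangeable but \emph{not} real-exchangeable. Thus (i)$\Rightarrow$(ii) follows once Step 1 is combined with the fact that real-exchangeable pairs are exchangeable.

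\emph{Step 3: (ii)$\Rightarrow$(i).} Given a pair satisfying (ii), I would build a common cluster complement explicitly. Take the root $\beta_{[i,j']}$ whose support is the union of the two supports; it is a real root in $\APTre{c;o}$ because $j'-i+1<k$. Add $\beta_{[i,i'-1]}$ and $\beta_{[j+1,j']}$ if nonzero, and $\beta_{[i',j]}$ if $i'\le j$. Complete this to a maximal compatible set $J_o$ by adding roots nested in or spaced from these; $J_o$ exchanges $\gamma$ for $\gamma'$. This is the standard flip of the cyclohedron: a square flip when $i'=j+1$, and a flip in a quadrilateral with an inner diagonal otherwise.

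It remains to make the exchange happen between \emph{real} clusters. Since $\beta_{[i,j']}$ does not have support of size $k-1$, the face $J_o\setminus\{\gamma\}$ together with maximal sets in the other tubes gives a codimension-$2$ cone of the link of $\delta$. I would show that this cone lies in the relative boundary of $\d_\infty$ and is therefore a face of $\g$-vector cones. Adding a suitable real root outside $\APTre{c}$ would then yield two real $c$-clusters differing by $\gamma\leftrightarrow\gamma'$.

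This last step, passing from the exchange inside the star of $\delta$ to an exchange between real clusters, is the main obstacle. I would first try to exploit $\nu_c$ and the fact that $\F_{B^T}$ is a complete simplicial fan, so that the codimension-$1$ cone spanned by $(J\setminus\{\gamma\})\cup\{\gamma'\}$-type faces meets $\gamma$ and $\gamma'$ on both sides. Failing that, I would simply invoke \cite[Theorem~7.2]{affdenom} for this step.
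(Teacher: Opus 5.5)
Quoting \cite[Proposition~4.8]{affdenom} together with \cite[Theorem~7.2]{affdenom} is exactly what the paper does, and you are right to read $\kappa_{[i,j]}$ as $\beta_{[i,j]}$. The combinatorial sketch you offer as a substitute, however, contains concrete errors.

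The first error concerns the boundary case $i=1$, $j'=k$. The printed bounds permit it, and in that case the two supports together cover the whole cycle. For instance, $\beta_{[1,k-1]}=\delta-\beta_{[0]}$ and $\beta_{[2,k]}=\delta-\beta_{[1]}$ satisfy (ii) with $i=1$, $j=k-1$, $i'=2$, $j'=k$. Yet Theorem~\ref{exchangeable} says they are \emph{not} $c$-real-exchangeable. So your Step~2 dichotomy fails in both directions. These imaginary pairs do satisfy $j'\le k$. Conversely, take the incompatible pair $\beta_{[1,4]}$, $\beta_{[4,7]}$ with $k=6$: the supports meet in two separate pieces, their union is the whole cycle, and neither support has size $k-1$. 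This pair admits no labelling as in (ii), yet it is not of the form $\delta-\beta,\delta-\beta'$. For the same reason, the claim $j'-i+1<k$ in your Step~3 does not follow from (ii). The printed (ii) has to be read with the extra requirement $j'-i+1<k$, i.e.\ the union of the supports is a proper arc. That is the version consistent with Theorem~\ref{exchangeable}. It is also the one the paper actually uses afterwards: there $\gamma$ is a non-maximal root of $J_o$, and the supports of $\gamma$ and $\gamma'$ together make up the support of the next larger root $\phi$ (Figure~\ref{proof fig nonmax}).

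Second, the common complement you build in Step~3 is off by one. The root $\beta_{[j+1,j']}$ is disjoint from and adjacent to $\gamma=\beta_{[i,j]}$. Likewise $\beta_{[i,i'-1]}$ is disjoint from and adjacent to $\gamma'=\beta_{[i',j']}$. By Proposition~\ref{tube compat}, they are therefore incompatible with $\gamma$ and $\gamma'$ respectively; when $i'=j+1$ they \emph{are} $\gamma'$ and $\gamma$. You must omit the two free elements $\beta_{[i'-1]}$ and $\beta_{[j+1]}$. Use instead the nonzero ones among $\beta_{[i,j']}$, $\beta_{[i,i'-2]}$, $\beta_{[i',j]}$, $\beta_{[j+2,j']}$; these are $\phi,\phi',\phi'',\phi'''$ of Figure~\ref{proof fig nonmax}.

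Third, relating real exchanges to flips inside a tube is needed in both directions, not only for (ii)$\Rightarrow$(i). A real cluster has $n$ elements, and at most $n-2$ of them lie in $\APTre{c}$. So the common complement $C\setminus\{\gamma\}$ of a real exchange contains at least two roots not compatible with $\delta$, whereas your Step~3 adds only one. In Step~1 you use nothing but incompatibility of $\gamma$ and $\gamma'$, which cannot exclude pairs like $\beta_{[1,4]},\beta_{[4,7]}$. Proving that a real exchange partner of $\gamma$ must be its flip in a tube, with proper-arc union, is precisely what \cite[Theorem~7.2]{affdenom} provides. So the sketch, even once repaired, rests on the same citation as the paper and should not be presented as an independent proof.
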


Condition~\ref{real ex cond} of Theorem~\ref{real exchangeable} fails unless $\gamma$ and $\gamma'$ are in the span of $\SimplesT{c}_o$ for some $c$-orbit, in which case it is equivalent to the existence of a maximal set $J_o$ of pairwise compatible roots in $\APTre{c;o}$ that contains $\gamma$ as a non-maximal root such that $\gamma'$ is the root obtained by exchanging $\gamma$ out of~$J_o$.
Thus, we can describe the exchange relation in the notation of Figure~\ref{proof fig nonmax}.
The following fact was established as \cite[(4.3)]{affdomreg} in the proof of \cite[Proposition~4.42]{affdomreg}.

\begin{proposition}\label{real exch}
If $\gamma,\gamma'\in\APTre{c}$ are $c$-real-exchangeable, then
\[\thet_{\nu_c(\gamma)}\cdot\thet_{\nu_c(\gamma')}=\thet_{\nu_c(\phi)}\thet_{\nu_c(\phi'')}+y^{\phi''+\beta'}\thet_{\nu_c(\phi')}\thet_{\nu_c(\phi''')}.\]
\end{proposition}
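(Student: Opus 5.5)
The plan is to argue exactly as in the proofs of Theorems~\ref{thet xi} and~\ref{im exch}, by expanding $\thet_{\nu_c(\gamma)}\cdot\thet_{\nu_c(\gamma')}$ with periodic broken lines. By Proposition~\ref{struct plus plus} I may assume $\tB$ has signed-nondegenerating coefficients. By Theorem~\ref{real exchangeable}, I fix a $c$-orbit of size $k$ and a labeling in which $\gamma=\beta_{[i,j]}$ and $\gamma'=\beta_{[i',j']}$ with $i<i'\le j+1\le j'$. These are the roots in the notation of Figure~\ref{proof fig nonmax}. Then $\phi=\beta_{[i,j']}$ is the union of the supports, $\phi''$ is the overlap $\beta_{[i',j]}$ (possibly zero), and $\phi',\phi'''$ are the two complementary pieces. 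The role of $\beta'$ is played by the simple root at which the exchange takes place.

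The first step is to apply Theorem~\ref{expand d inf}, which restricts the products to $\lambda\in P\cap\d_\infty$. For $h\gg0$, Theorem~\ref{mut pair affine periodic} then reduces each structure constant to periodic pairs $(\bl,\bl')$ with endpoint $\chi_h$. Both $\nu_c(\gamma)$ and $\nu_c(\gamma')$ lie in the relative boundary of $\d_\infty$, so Theorem~\ref{simplification for boundary dinf} applies. Both broken lines therefore lie in $\d_\infty^+$ and bend only on walls $\beta_{[a,b]}^\perp$ in the same orbit (the other orbits pair to zero by Proposition~\ref{nuc phip phi}).

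Next I do the coordinate bookkeeping in the basis $\nu_c(\SimplesT{c}_o)$, using Proposition~\ref{crucial prop} and Corollary~\ref{crucial cor}. The starting sum is $\kappa_{[i,j]}+\kappa_{[i',j']}$, every bend subtracts $\kappa_{[a-1,b-1]}+\kappa_{[a,b]}$, and the total must stay nonnegative. As in Case~1 and Case~2 of the proof of Theorem~\ref{im exch}, Proposition~\ref{nuc phip phi} shows that the first bend of each line can only occur on walls $\beta_{[a,b]}$ with an endpoint adjacent to the end of its support. Nonnegativity then forces the total bending to remove exactly $\kappa_{[i'-1,j]}+\kappa_{[i',j+1]}$ (suitably interpreted when $\phi''=0$). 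This leaves only two possible values of $\lambda$:
\begin{itemize}
\item $\lambda=\nu_c(\gamma)+\nu_c(\gamma')=\nu_c(\phi)+\nu_c(\phi'')$, with monomial $1$;
\item $\lambda=\nu_c(\phi')+\nu_c(\phi''')$, with monomial $y^{\phi''+\beta'}$, coming from a single bend of one of the two lines on the wall normal to $\phi''+\beta'$.
\end{itemize}
Multiply-bending lines are handled exactly as in Case~1 of Theorem~\ref{im exch}: each extra bend must continue a nested chain and pushes $\chi_h$ across a hyperplane $\beta_{[a,b]}^\perp$.

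The main obstacle is the final uniqueness step. In the second scenario either $\bl$ or $\bl'$ could be the one that bends, and I must show that exactly one pair survives, bending once, with coefficient $1$. I would handle this as in the end of the proof of Theorem~\ref{thet xi}. I choose the chi sequence from the maximal imaginary cone spanned by $\nu_c(\delta)$, $\nu_c(\phi')$, $\nu_c(\phi''')$ and compatible rays. I then use Proposition~\ref{nuc phip phi} to evaluate the signs $\br{\chi_h,\beta_{[a,b]}}$ for the candidate walls. This should show that $\chi_h$ lies on the side of $(\phi''+\beta')^\perp$ that only one of $\nu_c(\gamma),\nu_c(\gamma')$ can reach by bending, and that intermediate multi-bend routes end on the wrong side. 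Since $\lambda$ is fixed, the chi sequence can be chosen separately for each of the two possible values of $\lambda$.

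Finally, because $\phi$ and $\phi''$ are nested and $\phi'$ and $\phi'''$ are spaced, both pairs are compatible, so each pair spans a $\g$-vector cone. Theorem~\ref{clus mon thm} (equivalently Theorem~\ref{expansion prod}) then rewrites $\thet_{\nu_c(\phi)+\nu_c(\phi'')}$ and $\thet_{\nu_c(\phi')+\nu_c(\phi''')}$ as the products appearing on the right side of the claimed identity.
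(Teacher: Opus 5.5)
\textbf{This is a different route from the paper's, and as written it has a genuine gap.}

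The paper does no broken-line analysis for this statement. Since $\gamma,\gamma'$ are $c$-real-exchangeable, in principal coefficients $\thet_{\nu_c(\gamma)}$ and $\thet_{\nu_c(\gamma')}$ are cluster variables related by an ordinary exchange relation, which was already written out as \cite[(4.3)]{affdomreg}. Cluster monomials are theta functions in principal coefficients, and Proposition~\ref{struct plus plus} then transfers the identity to arbitrary $\tB$. A self-contained proof in the style of Theorem~\ref{im exch} is plausible. However, your assertion that nonnegativity forces the total bending to remove exactly $\kappa_{[i'-1,j]}+\kappa_{[i',j+1]}$, leaving only two values of $\lambda$, is false. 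You also never consider pairs in which both lines bend.

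\textbf{The counterexamples.} In your labeling, Proposition~\ref{nuc phip phi} and Corollary~\ref{crucial cor} allow the line for $\nu_c(\gamma)$ to bend first on any $\beta_{[a,j+1]}^\perp$ with $i'\le a\le j+1$. They allow the line for $\nu_c(\gamma')$ to bend first on any $\beta_{[i',b]}^\perp$ with $i'\le b\le j+1$. All of these keep the total nonnegative. Take $\gamma=\beta_{[1,2]}$ and $\gamma'=\beta_{[2,3]}$ in an orbit of size $k\ge4$.
\begin{itemize}
\item Bending the first line once on $\beta_{[3]}^\perp$ gives $\lambda=\kappa_{[1]}-\kappa_{[3]}+\kappa_{[2,3]}=\kappa_{[1,2]}\in\d_\infty$ with monomial $y^{\beta_{[3]}}$. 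This is a third candidate value of $\lambda$.
\item Bending the first line on $\beta_{[3]}^\perp$ and the second on $\beta_{[2]}^\perp$ gives $\lambda=0$ with monomial $y^{\beta_{[2,3]}}$. If not excluded, this would double the genuine coefficient.
\end{itemize}
In general, the first line on $\beta_{[a,j+1]}^\perp$ together with the second on $\beta_{[i',a-1]}^\perp$, for $i'<a\le j+1$, produces exactly $\nu_c(\phi')+\nu_c(\phi''')$ with monomial $y^{\beta_{[i',j+1]}}$.

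\textbf{What is needed to exclude them.} These configurations can only be removed using the position of $\chi_h$, as in the scenarios eliminated in the proof of Theorem~\ref{im exch}. For example, in the single-bend case the first line ends on the negative side of $\beta_{[3]}^\perp$. But $\br{\kappa_{[1,2]},\beta_{[3]}\ck}=1$, and every ray compatible with $\kappa_{[1,2]}$ pairs nonnegatively with $\beta_{[3]}$, so $\chi_h$ lies on the positive side.

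For the correct $\lambda=\nu_c(\phi')+\nu_c(\phi''')$ you must also pin down the chi sequence. The phrase ``$\nu_c(\delta),\nu_c(\phi'),\nu_c(\phi''')$ and compatible rays'' does not fix the relevant signs. Including $\nu_c(\gamma')$ and $\nu_c(\phi'')$ works. Then $\chi_h$ lies on the negative side of $\beta_{[i',j+1]}^\perp$ and on the positive side of each $\beta_{[a,j+1]}^\perp$ with $a>i'$. This rules out:
\begin{itemize}
\item every bend of the second line;
\item every multi-bend version of the first line;
\item every pair in which both lines bend.
\end{itemize}
What remains is the single bend of the first line on $\beta_{[i',j+1]}^\perp$.

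This computation also shows the coefficient is $y^{\beta_{[i',j+1]}}$, i.e.\ $\beta'=\beta_{[j+1]}$. You should check this against the conventions of Figure~\ref{proof fig nonmax} rather than leaving $\beta'$ as ``the simple root at which the exchange takes place.''
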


Proposition~\ref{real exch} appears in \cite{affdomreg} in the notation of Theorem~\ref{real exchangeable} as an exchange relation between cluster variables with principal coefficients.
Since, in the principal coefficients case, each cluster variable equals the theta function with the same $\g$-vector, we obtain the proposition in the principal coefficients case, and thus in the general case by Proposition~\ref{struct plus plus}.

\begin{proof}[Proof of Lemma~\ref{x mut}]
Suppose $\gamma$ is maximal in $J_o$ and adopt the notation of Figure~\ref{proof fig max}.
The exchange relation in $\A(\x_{J_o},\pp_{J_o},B_{J_o})$ is 
\[x_\gamma x_{\gamma'}=
p_{\gamma;0}\prod_{\psi\in J_o}x_\psi^{[b_{\psi\gamma}]_+}
+
p_{\gamma;1}\prod_{\psi\in J_o}x_\psi^{\frac12[b_{\psi\gamma}]_++\frac12[-b_{\psi\gamma}]_+}
+
p_{\gamma;2}\prod_{\psi\in J_o}x_\psi^{[-b_{\psi\gamma}]_+,}
\]
which evaluates to 
\[x_\gamma x_{\gamma'}=
z^{\phi'+\beta}x_\phi^2
+
z_*x_\phi x_{\phi'}
+
z^{\phi+\beta'}x_{\phi'}^2.
\]
Applying $\mapchar_o^{J_o}$ to that relation and appealing to Theorem~\ref{im exch}, we see that $\mapchar_o^{J_o}$ sends $x'_\gamma$ to $\thet_{\nu_c(\gamma')}$.
(In the first place, this is true of $\mapchar_o^{J_o}$ as a map on rational functions, but since $x'_\gamma\in\A(\x_{J_o},\pp_{J_o},B_{J_o})$, it is also true of the restriction of $\mapchar_o^{J_o}$ to $\A(\x_{J_o},\pp_{J_o},B_{J_o})$.)

Next, suppose $\gamma$ is not maximal in $J_o$.
Consider first the case where $\gamma$ and $\gamma'$ are as in Figure~\ref{proof fig nonmax} (as opposed to swapped in that figure), and adopt the notation of that figure.
The exchange relation in $\A(\x_{J_o},\pp_{J_o},B_{J_o})$ is 
\[x_\gamma x_{\gamma'}
=
p_{\gamma;0}\prod_{\psi\in J_o}x_\psi^{[b_{\psi\gamma}]_+}
+
p_{\gamma;1}\prod_{\psi\in J_o}x_\psi^{[-b_{\psi\gamma}]_+},\]
which evaluates to $x_\gamma x_{\gamma'}=z^{\phi''+\beta'}x_{\phi'}x_{\phi'''}+x_{\phi}x_{\phi''}$.
As in the argument above, we conclude (using Proposition~\ref{real exch} rather than Theorem~\ref{im exch}) that $\mapchar_o^{J_o}$ sends $x'_\gamma$ to~$\thet_{\nu_c(\gamma')}$.
If instead $\gamma$ and $\gamma'$ are swapped in the figure, the exchange relation is the same and we again conclude that $\mapchar_o^{J_o}$ sends $x'_\gamma$ to $\thet_{\nu_c(\gamma')}$.
\end{proof}

We now prove Theorems~\ref{gen clus alg prop} and~\ref{gen clus alg}.
To begin, assume that~$\tB$ has nondegenerate coefficients.
Under this assumption, $\set{y^\beta:\beta\in\SimplesT{c}_o}$ (the image of $\set{z_\beta:\beta\in\SimplesT{c}_o}$) is algebraically independent.
The assumption also implies that elements of $\A(\tB)$ have well-defined $\g$-vectors.
The $\g$-vectors $\set{\nu_c(\gamma):\gamma\in J_o\cup\set{\delta}}$ are linearly independent because they span an imaginary cone in~$\F^{B^T}$.
Thus, $\set{\thet_{\nu_c(\gamma)}:\gamma\in J_o\cup\set{\delta}}$ (the image of ${\set{z_*}\cup\set{x_\gamma:\gamma\in J_o}}$) is algebraically independent.
We see that the image of $\set{z_\beta:\beta\in\SimplesT{c}_o}\cup\set{z_*}\cup\set{x_\gamma:\gamma\in J_o}$  under $\mapchar_o^{J_o}$ is an algebraically independent set in $\T_o(\tB)$.

Since the set map $\mapchar_o^{J_o}$ sends the tropical variables and the initial cluster to an algebraically independent set, the map $\mapchar_o^{J_o}$ on the field of rational functions in $\set{x_\gamma:\gamma\in J_o}$ with coefficients in $\k[z_*,\,z_\beta^{\pm1}]_{\beta\in\SimplesT{c}_o}$ is an isomorphism to the field of rational functions in $\set{\thet_{\nu_c(\gamma)}:\gamma\in J_o}$ with coefficients in $\k[\thet_{\nu_c(\delta)},\,y^{\pm\beta}]_{\beta\in\SimplesT{c}}$.
Therefore, also, the restriction of $\mapchar_o^{J_o}$ to $\A(\x_{J_o},\pp_{J_o},B_{J_o})$ is an isomorphism to its image.

In particular, $\mapchar_o^{J_o}$ restricts to a one-to-one map on cluster variables.
Now Lemmas~\ref{J mut} and~\ref{x mut} and a simple induction show that $\mapchar_o^{J_o}$ sends every cluster variable to a theta function $\thet_\gamma$ for $\gamma\in\APTre{c}$.
We index the cluster variables as $x_\gamma$ accordingly.
The same induction also shows that every seed in $\A(\x_{J_o},\pp_{J_o},B_{J_o})$ is of the form $(\x_{J_o'},\pp_{J_o'},B_{J_o'})$ for some maximal set~$J_o'$ of pairwise compatible roots in~$\APTre{c;o}$.
Using the combinatorial description, in Section~\ref{agaf sec}, of imaginary cones in terms of sets of compatible roots in $\APT{c}$, we conclude that the map $J_o'\mapsto(\x_{J_o'},\pp_{J_o'},B_{J_o'})$ is bijection from maximal sets of pairwise compatible roots to seeds.
This completes Assertion~\ref{gca prop bij} of Theorem~\ref{gen clus alg prop}, and Assertion~\ref{gca prop cyclo} also follows from the combinatorial description.
To complete the proof of Theorem~\ref{gen clus alg prop}, note that we can number the roots in $\SimplesT{c}_o$ as $0,1,\ldots,|J_o|$ in cyclic order and take $J_o$ to be $\set{\beta_{[1,j]}:j=1,\ldots,|J_o|}$ to make $B_{J_o}$ an acyclic exchange matrix of type $C_{|J_o|}$.

We continue to prove Theorem~\ref{gen clus alg} under the assumption that~$\tB$ has nondegenerate coefficients.
The induction above also shows that all the maps $\mapchar_o^{J_o}$ for various~$J_o$ coincide, and this is Assertion~\ref{gca no J} of Theorem~\ref{gen clus alg}.
We return to the notation $\mapchar_o$ for this map.
In the paragraph above, we established Assertion~\ref{gca bij} of Theorem~\ref{gen clus alg}.
Before that, we established that the map $\mapchar_o$ on $\A(\x_{J_o},\pp_{J_o},B_{J_o})$ is an isomorphism to its image.
Since $\mapchar_o$ maps $\set{z_\beta:\beta\in\SimplesT{c}_o}\cup\set{z_*}\cup\set{x_\gamma:\gamma\in J_o}$ into $\T_o(\tB)$, the image of $\mapchar_o$ is contained in $\T_o(\tB)$.
The image contains $\set{y^\beta:\beta\in\SimplesT{c}_o}$ and $\set{\thet_{\nu_c(\gamma)}:\gamma\in\APTre{c;o}}$, so Theorem~\ref{tube subalgebra} implies that the image is all of $\T_o(\tB)$.
This proves Assertion~\ref{gca nondegen}.

We have proved some assertions of Theorem~\ref{gen clus alg} under the assumption that~$\tB$ has nondegenerate coefficients.
Now remove that assumption on $\tB$, but suppose~$\tB'$ is another extension of $B$ with nondegenerate coefficients.
As in Section~\ref{coeffs sec}, construct theta functions for $\tB$ and $\tB'$ with the same indeterminates $x_1,\ldots,x_n$, use different tropical variables for each, and write $y$ for coefficients for $\tB$ and $y'$ for coefficients for~$\tB'$.
Proposition~\ref{where prin plus} implies that $\T_o(\tB)$ can be obtained from $\T_o(\tB')$ by replacing each $y'$ by $y$ throughout.
Write $\mapchar_o'$ for the map from $\A(\x_{J_o},\pp_{J_o},B_{J_o})$ to~$\T_o(\tB')$, which we have proved is an isomorphism.

The map $\mapchar_o$ is the composition of $\mapchar_o'$ followed by the specialization map from $\T_o(\tB')$ to $\T_o(\tB)$.
Since $\mapchar_o'$ is an isomorphism, Assertion~\ref{gca spec} follows.
Since also specialization is surjective, we have Assertion~\ref{gca hom}.
(Recall that the existence of a unique extension was proved above, before we imposed the assumption of nondegenerate coefficients.)
Assertions~\ref{gca no J} and~\ref{gca bij} for $\mapchar_o$ also follow from the same assertions for $\mapchar'_i$, which we already proved.
This completes the proof of Theorem~\ref{gen clus alg}.

We conclude with the proof of Theorem~\ref{gen clus alg almost}.
Let $J=\cup_oJ_o$ and write $\A_o$ for $\A(\x_{J_o},\pp_{J_o},B_{J_o})$.
The set map $\mapchar$ extends uniquely to a ring homomorphism ${\mapchar:\A(\x_J,\pp_J,B_J)\to\I(\tB)}$ exactly as in the proof of Theorem~\ref{gen clus alg}.
The set map~$\mapchar$ also restricts to set maps $\mapchar_o$.
Theorem~\ref{gen clus alg} says that each set map $\mapchar_o$ extends uniquely to a surjective homomorphism ${\mapchar_o:\A_o\to\T_o(\tB)\subseteq\I(\tB)}$  and that each is independent of the choice of $J_o$.
These maps together define a $\k[z_*]$-multilinear homomorphism from the product $\bigtimes_o\A_o$ to $\I(\tB)$ sending a tuple $(f_o:f_o\in\A_o)$ to $\prod_o\mapchar_o(f_o)$.
The universal property of the tensor product gives a unique $\k[z_*]$-linear homomorphism from $\bigotimes_{\k[z_*]}\A_o$ to $\I(\tB)$ that sends $\otimes_o f_o$ to $\prod_o\mapchar_o(f_o)$.
But this map extends the set map $\mapchar$, so the uniqueness of $\mapchar$ says that this homomorphism is~$\mapchar$, and thus ${\mapchar(\otimes_o f_o)=\prod_o\mapchar_o(f_o)}$.

The second assertion of Theorem~\ref{subalgebra} can be rephrased as the statement that $\I(\tB)$ is spanned, over $\k$, by monomials that are Laurent in the coefficients $\set{y^\beta:\beta\in\SimplesT{c}}$ and ordinary in $\set{\thet_{\nu_c(\gamma)}:\gamma\in\APTre{c}}$.
Each such monomial can be factored into monomials contained in the $\T_o(\tB)$, so the surjectivity in Assertion~\ref{gca almost hom} of Theorem~\ref{gen clus alg almost} follows from ${\mapchar(\otimes_o f_o)=\prod_o\mapchar_o(f_o)}$, the second assertion of Theorem~\ref{tube subalgebra}, and the surjectivity of each $\mapchar_o$.
Assertion~\ref{gca almost no J} of Theorem~\ref{gen clus alg almost} follows from the independence of the $\mapchar_o$ of the choice of the $J_o$.
Assertion~\ref{gca almost bij} follows from the analogous results for the $\mapchar_o$ because the set of cluster variables of $\A(\x_J,\pp_J,B_J)$ is the disjoint union of the cluster variables in the $\A(\x_{J_o},\pp_{J_o},B_{J_o})$ and because $\APTre{c}$ is the disjoint union of the $\APTre{c;o}$.

Now assume $\tB$ has nondegenerate coefficients.
As in the proof of Theorem~\ref{gen clus alg}, the set $\set{\thet_{\nu_c(\gamma)}:\gamma\in J\cup\set\delta}$ is algebraically independent.
The only linear dependences among the vectors $\SimplesT{c}$ come from the fact that $y^\delta=\prod_{\beta\in\SimplesT{c}_o}y^\beta$ for all $o$ (because $\sum_{\beta\in\SimplesT{c}_o}\beta=\delta$).
Thus since $\tB$ has nondegenerate coefficients the only algebraic dependences among the functions $\set{y^\beta:\beta\in\SimplesT{c}}$ come from the identities $\sett{\,\prod_{\beta\in\SimplesT{c}_o}z_\beta-\prod_{\beta\in\SimplesT{c}_{o'}}z_\beta:o\neq o'}$.
This is Assertion~\ref{gca almost ker}.

Finally, as in the proof of Theorem~\ref{gen clus alg}, let $\tB$ be an arbitrary extension and let~$\tB'$ be an extension with nondegenerate coefficients, so that $\mapchar'$ is the homomorphism from $\A(\x_J,\pp_J,B_J)$ to $\I(\tB')$ whose kernel generated is by the identities $\sett{\,\prod_{\beta\in\SimplesT{c}_o}z_\beta-\prod_{\beta\in\SimplesT{c}_{o'}}z_\beta:o\neq o'}$.
We can obtain $\I(\tB)$ from $\I(\tB')$ by replacing each $y'$ by $y$ throughout, and $\mapchar$ is the composition of $\mapchar'$ followed by the specialization map from $\I(\tB')$ to $\I(\tB)$.
Because $\prod_{\beta\in\SimplesT{c}_o}z_\beta=y^\delta$ for all $o$, the kernel of the specialization homomorphism on $\A(\x_J,\pp_J,B_J)$ that replaces each $z_\beta$ by $y^\beta$ contains $\sett{\,\prod_{\beta\in\SimplesT{c}_o}z_\beta-\prod_{\beta\in\SimplesT{c}_{o'}}z_\beta:o\neq o'}$.
We see that the kernel of $\mapchar$ equals the kernel of the specialization map on~$\A(\x_J,\pp_J,B_J)$.  
This is Assertion~\ref{gca almost spec}.

We have completed the proofs of Theorems~\ref{gen clus alg prop}, \ref{gen clus alg} and~\ref{gen clus alg almost}.\qed

\bibliographystyle{plain}
\bibliography{bibliography}

@article {ga,
    AUTHOR = {Fomin, Sergey and Zelevinsky, Andrei},
     TITLE = {{$Y$}-systems and generalized associahedra},
   JOURNAL = {Ann. of Math. (2)},
  FJOURNAL = {Annals of Mathematics. Second Series},
    VOLUME = {158},
      YEAR = {2003},
    NUMBER = {3},
     PAGES = {977--1018},
      IS:SN = {0003-486X},
     CODEN = {ANMAAH},
   MRCLASS = {17B20 (20F55 52B12)},
  MRNUMBER = {2031858 (2004m:17010)},
MRREVIEWER = {Ivan V. Arzhantsev},
       DOI = {10.4007/annals.2003.158.977},
       URL = {http://0-dx.doi.org.ilsprod.lib.neu.edu/10.4007/annals.2003.158.977},
}

@article {ca2,
    AUTHOR = {Fomin, Sergey and Zelevinsky, Andrei},
     TITLE = {Cluster algebras. {II}. {F}inite type classification},
   JOURNAL = {Invent. Math.},
  FJOURNAL = {Inventiones Mathematicae},
    VOLUME = {154},
      YEAR = {2003},
    NUMBER = {1},
     PAGES = {63--121},
      ISSN = {0020-9910},
     CODEN = {INVMBH},
   MRCLASS = {17B20 (05E15 16S99 52B12)},
  MRNUMBER = {2004457 (2004m:17011)},
MRREVIEWER = {Eric N. Sommers},
       DOI = {10.1007/s00222-003-0302-y},
       URL = {http://0-dx.doi.org.ilsprod.lib.neu.edu/10.1007/s00222-003-0302-y},
}

@article {camb_fan,
    AUTHOR = {Reading, Nathan and Speyer, David E.},
     TITLE = {Cambrian fans},
   JOURNAL = {J. Eur. Math. Soc. (JEMS)},
  FJOURNAL = {Journal of the European Mathematical Society (JEMS)},
    VOLUME = {11},
      YEAR = {2009},
    NUMBER = {2},
     PAGES = {407--447},
      ISSN = {1435-9855},
   MRCLASS = {20F55 (13F60)},
  MRNUMBER = {2486939 (2011a:20102)},
MRREVIEWER = {Laurent Demonet},
       DOI = {10.4171/JEMS/155},
       URL = {http://0-dx.doi.org.ilsprod.lib.neu.edu/10.4171/JEMS/155},
}

@article {ca4,
    AUTHOR = {Fomin, Sergey and Zelevinsky, Andrei},
     TITLE = {Cluster algebras. {IV}. {C}oefficients},
   JOURNAL = {Compos. Math.},
  FJOURNAL = {Compositio Mathematica},
    VOLUME = {143},
      YEAR = {2007},
    NUMBER = {1},
     PAGES = {112--164},
      ISSN = {0010-437X},
   MRCLASS = {16S99 (05E15 14M17 22E46)},
  MRNUMBER = {2295199 (2008d:16049)},
MRREVIEWER = {Christof Gei{\ss}},
       DOI = {10.1112/S0010437X06002521},
       URL = {http://0-dx.doi.org.ilsprod.lib.neu.edu/10.1112/S0010437X06002521},
}

@article {cambrian,
    AUTHOR = {Reading, Nathan},
     TITLE = {Cambrian lattices},
   JOURNAL = {Adv. Math.},
  FJOURNAL = {Advances in Mathematics},
    VOLUME = {205},
      YEAR = {2006},
    NUMBER = {2},
     PAGES = {313--353},
      ISSN = {0001-8708},
     CODEN = {ADMTA4},
   MRCLASS = {05E25 (06B10 20F55 52C07)},
  MRNUMBER = {2258260 (2007g:05195)},
MRREVIEWER = {Axel Hultman},
       DOI = {10.1016/j.aim.2005.07.010},
       URL = {http://0-dx.doi.org.ilsprod.lib.neu.edu/10.1016/j.aim.2005.07.010},
}

@article{ca3,
    AUTHOR = {Berenstein, Arkady and Fomin, Sergey and Zelevinsky, Andrei},
     TITLE = {Cluster algebras. {III}. {U}pper bounds and double {B}ruhat
              cells},
   JOURNAL = {Duke Math. J.},
  FJOURNAL = {Duke Mathematical Journal},
    VOLUME = {126},
      YEAR = {2005},
    NUMBER = {1},
     PAGES = {1--52},
      ISSN = {0012-7094},
   MRCLASS = {16S99 (05E15 14M17 22E46)},
  MRNUMBER = {2110627},
       DOI = {10.1215/S0012-7094-04-12611-9},
       URL = {https://doi-org.prox.lib.ncsu.edu/10.1215/S0012-7094-04-12611-9},
}

@article {MSW2,
    AUTHOR = {Musiker, Gregg and Schiffler, Ralf and Williams, Lauren},
     TITLE = {Bases for cluster algebras from surfaces},
   JOURNAL = {Compos. Math.},
  FJOURNAL = {Compositio Mathematica},
    VOLUME = {149},
      YEAR = {2013},
    NUMBER = {2},
     PAGES = {217--263},
      ISSN = {0010-437X,1570-5846},
   MRCLASS = {13F60 (05C70 05E15)},
  MRNUMBER = {3020308},
MRREVIEWER = {Olga\ Kravchenko},
       DOI = {10.1112/S0010437X12000450},
       URL = {https://doi-org.prox.lib.ncsu.edu/10.1112/S0010437X12000450},
}

@article {ca1,
    AUTHOR = {Fomin, Sergey and Zelevinsky, Andrei},
     TITLE = {Cluster algebras. {I}. {F}oundations},
   JOURNAL = {J. Amer. Math. Soc.},
  FJOURNAL = {Journal of the American Mathematical Society},
    VOLUME = {15},
      YEAR = {2002},
    NUMBER = {2},
     PAGES = {497--529 (electronic)},
      ISSN = {0894-0347},
   MRCLASS = {16S99 (14M99 17B99)},
  MRNUMBER = {1887642 (2003f:16050)},
MRREVIEWER = {Eric N. Sommers},
       DOI = {10.1090/S0894-0347-01-00385-X},
       URL = {http://0-dx.doi.org.ilsprod.lib.neu.edu/10.1090/S0894-0347-01-00385-X},
}

@ARTICLE{FeShThTu12,
    AUTHOR = {Felikson, Anna and Shapiro, Michael and Thomas, Hugh and
              Tumarkin, Pavel},
     TITLE = {Growth rate of cluster algebras},
   JOURNAL = {Proc. Lond. Math. Soc. (3)},
  FJOURNAL = {Proceedings of the London Mathematical Society. Third Series},
    VOLUME = {109},
      YEAR = {2014},
    NUMBER = {3},
     PAGES = {653--675},
      ISSN = {0024-6115,1460-244X},
   MRCLASS = {13F60},
  MRNUMBER = {3260289},
MRREVIEWER = {Calin\ Chindris},
       DOI = {10.1112/plms/pdu010},
       URL = {https://doi-org.prox.lib.ncsu.edu/10.1112/plms/pdu010},
}

@article {Seven,
    AUTHOR = {Seven, Ahmet I.},
     TITLE = {Cluster algebras and semipositive symmetrizable matrices},
   JOURNAL = {Trans. Amer. Math. Soc.},
  FJOURNAL = {Transactions of the American Mathematical Society},
    VOLUME = {363},
      YEAR = {2011},
    NUMBER = {5},
     PAGES = {2733--2762},
      ISSN = {0002-9947},
     CODEN = {TAMTAM},
   MRCLASS = {13F60 (05E15 15B48)},
  MRNUMBER = {2763735},
MRREVIEWER = {Kyungyong Lee},
       DOI = {10.1090/S0002-9947-2010-05255-9},
       URL = {http://dx.doi.org/10.1090/S0002-9947-2010-05255-9},
}

@article {typefree,
    AUTHOR = {Reading, Nathan and Speyer, David E.},
     TITLE = {Sortable elements in infinite {C}oxeter groups},
   JOURNAL = {Trans. Amer. Math. Soc.},
  FJOURNAL = {Transactions of the American Mathematical Society},
    VOLUME = {363},
      YEAR = {2011},
    NUMBER = {2},
     PAGES = {699--761},
      ISSN = {0002-9947},
     CODEN = {TAMTAM},
   MRCLASS = {20F55},
  MRNUMBER = {2728584 (2011j:20100)},
MRREVIEWER = {Alessandro Conflitti},
       DOI = {10.1090/S0002-9947-2010-05050-0},
       URL = {http://dx.doi.org/10.1090/S0002-9947-2010-05050-0},
}

@article {Howlett,
    AUTHOR = {Howlett, Robert B.},
     TITLE = {Coxeter groups and {$M$}-matrices},
   JOURNAL = {Bull. London Math. Soc.},
  FJOURNAL = {The Bulletin of the London Mathematical Society},
    VOLUME = {14},
      YEAR = {1982},
    NUMBER = {2},
     PAGES = {137--141},
      ISSN = {0024-6093},
     CODEN = {LMSBBT},
   MRCLASS = {20F05 (14B05 20H15)},
  MRNUMBER = {647197 (83g:20032)},
MRREVIEWER = {Wim H. Hesselink},
       DOI = {10.1112/blms/14.2.137},
       URL = {http://dx.doi.org/10.1112/blms/14.2.137},
}

@article {sortable,
    AUTHOR = {Reading, Nathan},
     TITLE = {Clusters, {C}oxeter-sortable elements and noncrossing
              partitions},
   JOURNAL = {Trans. Amer. Math. Soc.},
  FJOURNAL = {Transactions of the American Mathematical Society},
    VOLUME = {359},
      YEAR = {2007},
    NUMBER = {12},
     PAGES = {5931--5958},
      ISSN = {0002-9947},
     CODEN = {TAMTAM},
   MRCLASS = {20F55 (05A18 05E15)},
  MRNUMBER = {2336311 (2009d:20093)},
MRREVIEWER = {Pavlo Pylyavskyy},
       DOI = {10.1090/S0002-9947-07-04319-X},
       URL = {http://dx.doi.org/10.1090/S0002-9947-07-04319-X},
}

@article {MRZ,
    AUTHOR = {Marsh, Robert and Reineke, Markus and Zelevinsky, Andrei},
     TITLE = {Generalized associahedra via quiver representations},
   JOURNAL = {Trans. Amer. Math. Soc.},
  FJOURNAL = {Transactions of the American Mathematical Society},
    VOLUME = {355},
      YEAR = {2003},
    NUMBER = {10},
     PAGES = {4171--4186},
      ISSN = {0002-9947},
     CODEN = {TAMTAM},
   MRCLASS = {52B11 (05E15 16G20 17B20 17B37)},
  MRNUMBER = {1990581},
MRREVIEWER = {Ivan V. Arzhantsev},
       DOI = {10.1090/S0002-9947-03-03320-8},
       URL = {http://dx.doi.org/10.1090/S0002-9947-03-03320-8},
}

@article{afframe,
    AUTHOR = {Reading, Nathan and Speyer, David E.},
     TITLE = {Cambrian frameworks for cluster algebras of affine type},
   JOURNAL = {Trans. Amer. Math. Soc.},
  FJOURNAL = {Transactions of the American Mathematical Society},
    VOLUME = {370},
      YEAR = {2018},
    NUMBER = {2},
     PAGES = {1429--1468},
      ISSN = {0002-9947},
   MRCLASS = {13F60 (20F55)},
  MRNUMBER = {3729507},
       DOI = {10.1090/tran/7193},
       URL = {https://doi-org.prox.lib.ncsu.edu/10.1090/tran/7193},
}

@article {CarrDevadoss,
    AUTHOR = {Carr, Michael P. and Devadoss, Satyan L.},
     TITLE = {Coxeter complexes and graph-associahedra},
   JOURNAL = {Topology Appl.},
  FJOURNAL = {Topology and its Applications},
    VOLUME = {153},
      YEAR = {2006},
    NUMBER = {12},
     PAGES = {2155--2168},
      ISSN = {0166-8641},
     CODEN = {TIAPD9},
   MRCLASS = {52B11 (05B45 14H10 14P25)},
  MRNUMBER = {2239078},
MRREVIEWER = {Seth Sullivant},
       DOI = {10.1016/j.topol.2005.08.010},
       URL = {http://dx.doi.org/10.1016/j.topol.2005.08.010},
}

@article{GHKK,
    AUTHOR = {Gross, Mark and Hacking, Paul and Keel, Sean and Kontsevich,
              Maxim},
     TITLE = {Canonical bases for cluster algebras},
   JOURNAL = {J. Amer. Math. Soc.},
  FJOURNAL = {Journal of the American Mathematical Society},
    VOLUME = {31},
      YEAR = {2018},
    NUMBER = {2},
     PAGES = {497--608},
      ISSN = {0894-0347},
   MRCLASS = {13F60 (14J33)},
  MRNUMBER = {3758151},
MRREVIEWER = {Ralf Schiffler},
       DOI = {10.1090/jams/890},
       URL = {https://doi-org.prox.lib.ncsu.edu/10.1090/jams/890},
}

@article {Rupel,
    AUTHOR = {Rupel, Dylan and Stella, Salvatore},
     TITLE = {Some consequences of categorification},
   JOURNAL = {SIGMA Symmetry Integrability Geom. Methods Appl.},
  FJOURNAL = {SIGMA. Symmetry, Integrability and Geometry. Methods and
              Applications},
    VOLUME = {16},
      YEAR = {2020},
     PAGES = {Paper No. 007, 8},
      ISSN = {1815-0659},
   MRCLASS = {13F60 (16G20)},
  MRNUMBER = {4057626},
MRREVIEWER = {Juan\ Bosco\ Fr\'ias-Medina},
       DOI = {10.3842/SIGMA.2020.007},
       URL = {https://doi-org.prox.lib.ncsu.edu/10.3842/SIGMA.2020.007},
}

@article {affdenom,
    AUTHOR = {Reading, Nathan and Stella, Salvatore},
     TITLE = {An affine almost positive roots model},
   JOURNAL = {J. Comb. Algebra},
  FJOURNAL = {Journal of Combinatorial Algebra},
    VOLUME = {4},
      YEAR = {2020},
    NUMBER = {1},
     PAGES = {1--59},
      ISSN = {2415-6302,2415-6310},
   MRCLASS = {20F55 (05E16 13F60 17B22)},
  MRNUMBER = {4073889},
       DOI = {10.4171/jca/37},
       URL = {https://doi.org/10.4171/jca/37},
}

@article {afforb,
    AUTHOR = {Reading, Nathan and Stella, Salvatore},
     TITLE = {The action of a {C}oxeter element on an affine root system},
   JOURNAL = {Proc. Amer. Math. Soc.},
  FJOURNAL = {Proceedings of the American Mathematical Society},
    VOLUME = {148},
      YEAR = {2020},
    NUMBER = {7},
     PAGES = {2783--2798},
      ISSN = {0002-9939,1088-6826},
   MRCLASS = {20F55},
  MRNUMBER = {4099768},
MRREVIEWER = {Neil\ J. Y. Fan},
       DOI = {10.1090/proc/14769},
       URL = {https://doi.org/10.1090/proc/14769},
}

@article {affscat,
    AUTHOR = {Reading, Nathan and Stella, Salvatore},
  journal = {arXiv:2205.05125},
    title={Cluster scattering diagrams of acyclic affine type},
    year={2022},
}

@article {scatcomb,
    AUTHOR = {Reading, Nathan},
     TITLE = {A combinatorial approach to scattering diagrams},
   JOURNAL = {Algebr. Comb.},
  FJOURNAL = {Algebraic Combinatorics},
    VOLUME = {3},
      YEAR = {2020},
    NUMBER = {3},
     PAGES = {603--636},
   MRCLASS = {13F60},
  MRNUMBER = {4113600},
MRREVIEWER = {Ibrahim Saleh},
       DOI = {10.5802/alco.107},
       URL = {https://doi-org.prox.lib.ncsu.edu/10.5802/alco.107},
}

@article {scatfan,
    AUTHOR = {Reading, Nathan},
     TITLE = {Scattering fans},
   JOURNAL = {Int. Math. Res. Not. IMRN},
  FJOURNAL = {International Mathematics Research Notices. IMRN},
      YEAR = {2020},
    NUMBER = {23},
     PAGES = {9640--9673},
      ISSN = {1073-7928,1687-0247},
   MRCLASS = {13F60 (05E14 14J33)},
  MRNUMBER = {4182806},
MRREVIEWER = {Fan\ Qin},
       DOI = {10.1093/imrn/rny260},
       URL = {https://doi.org/10.1093/imrn/rny260},
}

@book {Kac,
    author = {V. Kac},
    title = {Infinite-dimensional {L}ie algebras},
    edition = {Third},
    publisher = {Cambridge University Press, Cambridge},
    year = {1990},
    pages = {xxii+400},
}

@article {RSDom,
    AUTHOR = {Rupel, Dylan and Stella, Salvatore},
     TITLE = {Dominance regions for rank two cluster algebras},
   JOURNAL = {Ann. Comb.},
  FJOURNAL = {Annals of Combinatorics},
    VOLUME = {27},
      YEAR = {2023},
    NUMBER = {4},
     PAGES = {873--894},
      ISSN = {0218-0006,0219-3094},
   MRCLASS = {13F60},
  MRNUMBER = {4657334},
       DOI = {10.1007/s00026-023-00636-4},
       URL = {https://doi.org/10.1007/s00026-023-00636-4},
}

@article {FanQin,
    AUTHOR = {Qin, Fan},
     TITLE = {Bases for upper cluster algebras and tropical points},
   JOURNAL = {J. Eur. Math. Soc. (JEMS)},
  FJOURNAL = {Journal of the European Mathematical Society (JEMS)},
    VOLUME = {26},
      YEAR = {2024},
    NUMBER = {4},
     PAGES = {1255--1312},
      ISSN = {1435-9855,1435-9863},
   MRCLASS = {13F60},
  MRNUMBER = {4721032},
       DOI = {10.4171/jems/1308},
       URL = {https://doi.org/10.4171/jems/1308},
}

@article{affdomreg,
      title={Dominance regions for affine cluster algebras}, 
      author={Nathan Reading and Dylan Rupel and Salvatore Stella},
      year={2025},
     journal={arXiv:2512.02218},
      eprint={2512.02218},
      archivePrefix={arXiv},
      primaryClass={math.RT},
      url={https://arxiv.org/abs/2512.02218}, 
}

@article {ChekhovShapiro,
    AUTHOR = {Chekhov, Leonid and Shapiro, Michael},
     TITLE = {Teichm\"uller spaces of {R}iemann surfaces with orbifold
              points of arbitrary order and cluster variables},
   JOURNAL = {Int. Math. Res. Not. IMRN},
  FJOURNAL = {International Mathematics Research Notices. IMRN},
      YEAR = {2014},
    NUMBER = {10},
     PAGES = {2746--2772},
      ISSN = {1073-7928,1687-0247},
   MRCLASS = {32G15 (13F60 30F60)},
  MRNUMBER = {3214284},
       DOI = {10.1093/imrn/rnt016},
       URL = {https://doi-org.prox.lib.ncsu.edu/10.1093/imrn/rnt016},
}

@article{MandelQin,
      title={Bracelets bases are theta bases}, 
      author={Travis Mandel and Fan Qin},
      year={2023},
      journal={arXiv:2301.11101},
      eprint={2301.11101},
      archivePrefix={arXiv},
      primaryClass={math.QA},
      url={https://arxiv.org/abs/2301.11101}, 
}

@article {affncA,
    AUTHOR = {Brestensky, Laura G. and Reading, Nathan},
     TITLE = {Noncrossing partitions of an annulus},
   JOURNAL = {Comb. Theory},
  FJOURNAL = {Combinatorial Theory},
    VOLUME = {5},
      YEAR = {2025},
    NUMBER = {1},
     PAGES = {Paper No. 12, 49},
      ISSN = {2766-1334},
   MRCLASS = {20F55 (05E16 20F36)},
  MRNUMBER = {4882076},
}

@article {McSul,
    AUTHOR = {McCammond, Jon and Sulway, Robert},
     TITLE = {Artin groups of {E}uclidean type},
   JOURNAL = {Invent. Math.},
  FJOURNAL = {Inventiones Mathematicae},
    VOLUME = {210},
      YEAR = {2017},
    NUMBER = {1},
     PAGES = {231--282},
      ISSN = {0020-9910,1432-1297},
   MRCLASS = {20F36},
  MRNUMBER = {3698343},
MRREVIEWER = {Kisnney\ Emiliano de Almeida},
       DOI = {10.1007/s00222-017-0728-2},
       URL = {https://doi-org.prox.lib.ncsu.edu/10.1007/s00222-017-0728-2},
}

@misc{affncD,
      title={Symmetric noncrossing partitions of an annulus with double points}, 
      author={Nathan Reading},
      year={2025},
      eprint={2312.17331},
      archivePrefix={arXiv},
      primaryClass={math.CO},
      url={https://arxiv.org/abs/2312.17331}, 
}

@incollection {Markl,
    AUTHOR = {Markl, Martin},
     TITLE = {Simplex, associahedron, and cyclohedron},
 BOOKTITLE = {Higher homotopy structures in topology and mathematical
              physics ({P}oughkeepsie, {NY}, 1996)},
    SERIES = {Contemp. Math.},
    VOLUME = {227},
     PAGES = {235--265},
 PUBLISHER = {Amer. Math. Soc., Providence, RI},
      YEAR = {1999},
      ISBN = {0-8218-0913-X},
   MRCLASS = {57P99 (18C10)},
  MRNUMBER = {1665469},
MRREVIEWER = {Donald\ M.\ Davis},
       DOI = {10.1090/conm/227/03259},
       URL = {https://doi-org.prox.lib.ncsu.edu/10.1090/conm/227/03259},
}

@incollection {BottTaubes,
    AUTHOR = {Bott, Raoul and Taubes, Clifford},
     TITLE = {On the self-linking of knots},
      NOTE = {Topology and physics},
   JOURNAL = {J. Math. Phys.},
  FJOURNAL = {Journal of Mathematical Physics},
    VOLUME = {35},
      YEAR = {1994},
    NUMBER = {10},
     PAGES = {5247--5287},
      ISSN = {0022-2488,1089-7658},
   MRCLASS = {57M25 (81T18 81T40)},
  MRNUMBER = {1295465},
MRREVIEWER = {Zhenghan\ Wang},
       DOI = {10.1063/1.530750},
       URL = {https://doi-org.prox.lib.ncsu.edu/10.1063/1.530750},
}

@article{AkagiNakanishi,
      title={Relation between generalized and ordinary cluster algebras}, 
      author={Ryota Akagi and Tomoki Nakanishi},
      year={2026},
      journal={arXiv:2512.21062},
      eprint={2512.21062},
      archivePrefix={arXiv},
      primaryClass={math.RT},
      url={https://arxiv.org/abs/2512.21062}, 
}

@article{RamosWhiting,
      title={Generalized cluster algebras are subquotients of cluster algebras}, 
      author={Rolando Ramos and David Whiting},
      year={2025},
      journal={arXiv:2504.20931},
      eprint={2504.20931},
      archivePrefix={arXiv},
      primaryClass={math.RA},
      url={https://arxiv.org/abs/2504.20931}, 
}

@article{canonical,
      title={Mutation of theta functions},
      author={Nathan Reading and Salvatore Stella},
      year={2026},
      journal={arXiv:2603.19391},
      eprint={2603.19391},
      archivePrefix={arXiv},
      primaryClass={math.CO},
      url={https://arxiv.org/abs/2603.19391},
}

@article {Tomoki,
    AUTHOR = {Nakanishi, Tomoki},
     TITLE = {Structure of seeds in generalized cluster algebras},
   JOURNAL = {Pacific J. Math.},
  FJOURNAL = {Pacific Journal of Mathematics},
    VOLUME = {277},
      YEAR = {2015},
    NUMBER = {1},
     PAGES = {201--217},
      ISSN = {0030-8730,1945-5844},
   MRCLASS = {13F60},
  MRNUMBER = {3393688},
MRREVIEWER = {Hugh\ Ross\ Thomas},
       DOI = {10.2140/pjm.2015.277.201},
       URL = {https://doi-org.prox.lib.ncsu.edu/10.2140/pjm.2015.277.201},
}

@article{PlamondonStella,
      title={On the growth of friezes via theta functions},
      author={Pierre-Guy Plamondon and Salvatore Stella},
      year={2026},
      journal={In preparation},
}
\vspace{-0.175 em}

\end{document}